\def\N{\mathbb{N}}
\def\R{\mathbb{R}}
\def\C{\mathbb{C}}
\def\L{\mathbb{L}}
\def\l{\lambda}
\def\Re{\mathrm{Re}}
\def\Im{\mathrm{Im}}
\def\eps{\varepsilon}
\def\phi{\varphi}
\def\Eps{\mathcal{E}}
\def\0{{\bf 0 }}
\def\ol{\overline}
\def\Span{\textrm{span}}
\newcommand{\dis}[1]{\displaystyle{#1}} 
\newcommand{\norm}[1]{\left\Vert #1 \right\Vert}
\newcommand{\abs}[1]{\left\vert #1 \right\vert}
\newcommand{\ip}[2]{\left\langle #1, #2 \right\rangle}
\def\idempotents{\mathbb{P}}
\renewcommand\P{\mathbb{P}}
\newcommand{\comment}[1]{} 
\newcommand{\defeq}{\ensuremath{\mathop{:}\!\!=}}
\newtheorem{theorem}{Theorem}[subsection]
\newtheorem{lemma}[theorem]{Lemma}
\newtheorem{corol}[theorem]{Corollary}
\newtheorem{prop}[theorem]{Proposition}
\theoremstyle{definition}
\newtheorem{defn}[theorem]{Definition}
\newtheorem{example}[theorem]{Example}
\newtheorem{remark}[theorem]{Remark}
\numberwithin{equation}{section}
\begin{document}

\title{$\L$-functional analysis}

\author{Eder Kikianty, Miek Messerschmidt,  
Luan Naude, \\ Mark Roelands, Christopher Schwanke, Walt van Amstel, \\
Jan Harm van der Walt, and Marten Wortel}

\maketitle

\begin{abstract}
    Inspired by the theories of Kaplansky-Hilbert modules and probability theory in vector lattices, we generalise functional analysis by replacing the scalars $\R$ or $\C$ by a real or complex Dedekind complete unital $f$-algebra $\L$; such an algebra can be represented as a suitable space of continuous functions. We set up the basic theory of $\L$-normed and $\L$-Banach spaces and bounded operators between them, we discuss the $\L$-valued analogues of the classical $\ell^p$-spaces, and we prove the analogue of the Hahn-Banach theorem. We also discuss the basics of the theory of $\L$-Hilbert spaces, including projections onto convex subsets, the Riesz Representation theorem, and representing $\L$-Hilbert spaces as a direct sum of $\ell^2$-spaces. 
\end{abstract}

\section{Introduction}

In 1953, Kaplansky (\cite{kaplansky53}) introduced the theory of AW*-modules, generalizing the theory of Hilbert spaces, to solve a problem in AW*-algebras. An AW*-algebra is a slight generalization of a von Neumann algebra, and each commutative AW*-algebra can be represented as a $C(K)$-space where $K$ is Stonean, i.e., an extremally disconnected compact Hausdorff space. An AW*-module is a module over a commutative AW*-algebra $A$ equipped with an inner product taking values in $A$ with some additional assumptions mimicking the norm completeness property of the classical case. It turned out that the theory of these AW*-modules is remarkably similar to the theory of ordinary Hilbert spaces. 
AW*-modules were later known as Kaplansky-Hilbert modules, or KH-modules for short.

Later on, in the 1970s, the theory was extended to so called Hilbert C*-modules, in which $A$ is more general: it is a (not necessarily commutative) C*-algebra. An important aspect of the theory of Hilbert C*-modules, however, is different from the theory of KH-modules: norm convergence is used in $A$, whereas in the theory of KH-modules, order convergence is used in the commutative AW*-algebra. Hilbert C*-modules became extremely important in noncommutative geometry, but compared to KH-modules, they are further away from classical Hilbert spaces. 
For example, consider $C(K)$ (where $K$ is compact Hausdorff) as a Hilbert $C(K)$-module, let $\omega \in K$ be non-discrete, and consider the submodule $M = \{f \in C(K) \colon f(\omega) = 0\}$. Then $M^{\perp\perp} = C(K)$, but $\ol{M} = M \not= C(K)$ so that $M^{\perp\perp} \not= \ol{M}$ unlike in the classical case. If instead of norm convergence one considers order convergence, then this problem disappears, as $M$ is order dense in $C(K)$.

Very recently, Edeko, Haase, and Kreidler (\cite{edeko2023decomposition}) gave a very nice application of this theory. To briefly summarize this paper: they proved a spectral theorem for Hilbert-Schmidt operators on KH-modules using elementary means, a theorem vastly generalizing unitary group representations on KH-modules, and they combine these results to obtain a new KH-module theoretic proof of the classical and celebrated Furstenberg-Zimmer structure theorem in ergodic theory, which takes away the classical restrictions of separability and Borel spaces. They also promise more future applications in ergodic theory using the theory of KH-modules.

On the other hand, since the 2000s (see \cite{ppt, kkw} and references therein) a comprehensive theory of probability theory in vector lattices has been developed. In this theory, a probability space and its space of integrable functions $L^1(\Omega)$ is replaced by a Dedekind complete vector lattice $E$ with weak order unit, equipped with a conditional expectation: a positive order continuous operator $T \colon E \to E$ with Dedekind complete range space $R(T)$ leaving the weak order unit invariant. One can then extend $T$ to its maximal domain and it turns out that $R(T)$ becomes a universally complete vector lattice, and so it is equipped with a canonical $f$-algebra multiplication. The properties of the conditional expectation then ensure that $E$ becomes an $R(T)$-module (see \cite[Theorem~2.2]{mixing}). For $1 \leq p \leq \infty$, one can then define $L^p(T)$ as $R(T)$-modules where $T|\cdot|^p$ acts as an $R(T)$-valued norm (see \cite[Theorem~3.2]{mixing} for the cases $p=1$ and $p=\infty$). Recently a Riesz Representation Theorem for $L^2(T)$ has been proven (see \cite{kkw} and \cite{kalauch2022hahnjordan}).

Although the similarity between the above theory and the theory of KH-modules is not immediately clear, note that a commutative AW*-algebra can also be described in vector lattice terms as a Dedekind complete vector lattice with strong order unit, which is then automatically equipped with a canonical $f$-algebra multiplication. Therefore the space that acts as the scalars in both theories, i.e., a commutative AW*-algebra and $R(T)$, are both Dedekind complete unital $f$-algebras (with unital we mean the multiplicative unit, which is then automatically a weak order unit, see \cite[Theorem~10.7]{depagter}). 

Neither the strong order unit nor the universal completeness seem to be necessary to develop the general theory of Banach and Hilbert spaces taking values in such spaces, and so in this paper we attempt to simultaneously generalise the basic results of both theories. Therefore, in our approach, we take $\L$ to be a Dedekind complete unital $f$-algebra, and we develop the basic theory of $\L$-normed and $\L$-inner product spaces. We chose the letter $\L$ for lattice, so that $\L$-normed space reads as lattice-normed space\footnote{The blackboard bold is used to emphasize that we view $\L$ as the scalars (as is done in \cite{edeko2023decomposition}). We deviate from \cite{edeko2023decomposition} by writing the norm as $\norm{\cdot}$ (they use $|\cdot|$) to emphasize the similarities with the classical theory of Banach spaces.}.  Lattice-normed spaces were first introduced by Kantorovich in 1936 in \cite{kantorovich1936}. For more details on lattice normed spaces we recommend \cite[Section~2]{domops}; the same book contains KH-module theory in \cite[Section~7.4]{domops}.

Beyond the motivating connections between our theory and the theories of KH-modules and probability theory in vector lattices, we briefly remark upon some further connections with the existing literature: 

\begin{enumerate}
    \item The first connection is with random functional analysis; for a nice summary of this theory and its history, we recommend the introduction of \cite{randomFA}. The main idea of random functional analysis is to replace the scalar field with $L^0(P)$ where $P$ is a probability measure. Note that $L^0(P)$ is a universally complete vector lattice, but an important difference with our theory is that order convergence is not taken as the central convergence structure on $L^0(P)$. Instead, the topology of convergence in probability is mainly used. This topology is still somewhat related to order convergence, since it is an order continuous topology, meaning that order convergent nets convergence in this topology. (Note that the theory of Hilbert C*-modules uses norm convergence, which is not order continuous, as the convergence structure on the C*-algebra acting as the scalars.) On $L^0(P)$-normed modules, the topology generated by balls of radius $\eps$, where $\eps \in L^0(P)$ is strictly positive (see \cite[Proposition~2.6]{randomFA}) is sometimes also considered.  
    \item In the literature, the notion of an $L^0$-valued Hilbert spaces is considered; for example, \cite[Section~1]{L0valuedHK} discusses several results. These are sometimes also called `Kaplansky-Hilbert modules over $L^0$', and this terminology makes sense, as unlike in random functional analysis, order convergence is used in $L^0$, making the theory very similar to the theory of KH-modules (and therefore also to our theory). However, we have not been able to find a structured account of a theory of $L^0$-valued Hilbert spaces.
    \item Another connection with the literature is with Boolean-valued analysis, a theory containing a lot of model theory. For example, the paper \cite{takeuti} is about Boolean-valued analysis, and it contains the remark that its results can also be derived by considering $L^0$-valued Hilbert spaces. Other connections with Boolean-valued analysis can be found in \cite{booleanvaluedanalysis} and \cite[Section~8]{domops}. The paper \cite{avilesgarcia_booleanvaluedanalysis} claims that many results in so-called locally convex $L^0$-modules can be proven using Boolean valued analysis techniques, and it seems possible that this may also be the case for our theory. We have not investigated this but encourage the Boolean-valued analysis experts to do so.
    \item Finally, we would like to mention \cite{powerseries}, which in view of this paper could be described as `$\L$-valued complex analysis'. The theory in \cite{powerseries} is currently being developed further by their authors.
\end{enumerate}

Although this paper contains vector lattice theory, knowledge of this theory is not necessary; only a basic familiarity of functional analysis is required to comfortably read this paper. Indeed, we made the conscious decision to make use of the representation theory for $\L$ (see \Cref{ss:representation theory}) over vector lattice arguments so that the paper is accessible to a wider audience. The reader will find occasional remarks where we have added further context from the vector lattice point of view or an alternative proof using vector lattice techniques. 

\medskip

The paper is organized as follows:

\medskip

In \Cref{s:general information L}, we establish basic results for the Dedekind complete unital $f$-algebra $\L$ needed in the rest of the paper, including the representation of $\L$ as a certain space of continuous functions.

\Cref{s:L-normed spaces} contains the basic theory of $\L$-normed spaces, $\L$-Banach spaces, and operators between them. Beyond the notions of convergence, completeness, convexity, and quotient spaces, we also define the normalisation\footnote{First defined in \cite[Section~2.1]{edeko2023decomposition}.} of elements in an $\L$-Banach space, which is used extensively in the rest of the paper. We also introduce the familiar function spaces $c_0(S,Y)$ and $\ell^\infty(S,Y)$ for a non-empty set $S$ and an $\L$-normed space $Y$ (generalizing the important special case where $Y = \L$).

In \Cref{s:l^p spaces}, we discuss the space of $p$-summable functions $\ell^p(S,Y)$ for $1 \leq p < \infty$. We also establish the familiar duality results $c_0(S,Y)^* \cong \ell^1(S, Y^*)$ and $\ell^p(S,Y)^* \cong \ell^q(S, Y^*)$ for $1 \leq p < \infty$ with $q$ its conjugate index. 

The $\L$-valued Hahn-Banach theorem is proven in \Cref{s:hahn banach} and its consequences on the structures of dual spaces is discussed. 

Finally, in \Cref{s:hilbert}, we establish the basic theory for $\L$-inner product and $\L$-Hilbert spaces which includes projections onto convex subsets, (sub)orthonormal bases, Parseval's identity, and the $\L$-valued Riesz Representation theorem. In \Cref{ss:representation theory for L-Hilbert spaces}, we show that every $\L$-Hilbert space is isometrically isomorphic to a certain direct sum of $\ell^2$-spaces, see \Cref{c: Representation for L-Hilbert spaces}.

\section{General information about $\L$}\label{s:general information L}

\subsection{Representation theory}\label{ss:representation theory}

We first introduce $C_\infty(K)$\footnote{See \cite[Section 7.2]{AliprantisBurkinshaw2003} for details; there, and more commonly in the literature, it is denoted $C^\infty(K)$, but we prefer the notation $C_\infty(K)$ since it does not suggest any differentiability properties.}. Let $K$ be a Stonean space; recall that a Stonean space is a compact Hausdorff space so that the closure of every open set is open. The set $C_\infty(K)$ denotes the set of all continuous extended real-valued functions $\l$ on $K$ such that $\l$ is finite-valued on a dense (and by continuity open) subset $U_\l$ of $K$. For $\l, \mu \in C_\infty(K)$, the function $\l + \mu$ is defined on the open and dense subset $U_\l \cap U_\mu$ of $K$, which then uniquely extends to a function in $C_\infty(K)$; this procedure defines addition on $C_\infty(K)$. The lattice operations and (scalar) multiplication are defined similarly, and these turn $C_\infty(K)$ into a vector lattice and an algebra. 

\comment{

We will first introduce $C_\infty(K)$, see \cite[Section 7.2]{AliprantisBurkinshaw2003} for details; there, and more commonly in the literature, it is denoted $C^\infty(K)$, but we prefer the former notation since it does not suggest any differentiability properties. Let $K$ be a Stonean space; recall that a Stonean space is a compact Hausdorff space so that the closure of every open set is open. The set $C_\infty(K)$ denotes the set of all continuous extended real-valued functions $\l$ on $K$ such that $\l$ is finite-valued on a dense (and by continuity open) subset $U_\l$ of $K$. For $\l, \mu \in C_\infty(K)$, the function $\l + \mu$ is defined on the open and dense subset $U_\l \cap U_\mu$ of $K$, which then uniquely extends to a function in $C_\infty(K)$; this procedure defines addition on $C_\infty(K)$. The lattice operations and (scalar) multiplication are defined similarly, and these turn $C_\infty(K)$ into a vector lattice and an algebra. 

}

Central to our theory is replacing the scalar field $\R$ by a Dedekind complete unital $f$-algebra $\L$. To understand this, it is not necessary to have any knowledge about $f$-algebras. (We refer the readers interested in $f$-algebras to \cite{depagter}.) Indeed, combining \cite[Theorem 2.64]{AliprantisBurkinshaw2006} and \cite[Theorem 7.29]{AliprantisBurkinshaw2003} shows that there exists a unique Stonean space $K$ (the Stone space of the Boolean algebra of bands of $\L$) so that $\L$ is an order dense sublattice and subalgebra of $C_\infty(K)$ containing the constant one function $\mathbf{1}_K$ as the multiplicative unit. (Note that $C_\infty(K)$ is the universal completion of $\L$.) According to \cite[Theorem 1.40]{AliprantisBurkinshaw2003}, $\L$ is an order ideal in $C_\infty(K)$, i.e., if $\l \in \L$, $\mu \in C_\infty(K)$ and $|\mu| \leq n|\l|$ for some $n \in \N$, then $\mu \in \L$.  Therefore, since $\mathbf{1}_K\in\L$, it follows that $C(K)\subseteq \L$. 

\begin{remark}\label{rem: Representation theory for L}
Based on the above observations, we will assume when convenient that for some fixed Stonean space $K$, 
\[ C(K)\subseteq \L\subseteq C_\infty(K).\]
Here $C(K)$ is an order ideal and subalgebra of $\L$, as is $\L$ of $C_\infty(K)$. To emphasize the fact that we treat elements of $\L$ as scalars, we denote the multiplicative unit of $C(K)$, $\L$, and $C_\infty(K)$ by 
\[ 
1 \defeq \mathbf{1}_K.
\]

\end{remark}
In the sequel we will use the phrase `by representation theory' if we use \Cref{rem: Representation theory for L} to prove properties about $\L$. As an example, if $0 \leq r < 1 \in \R$ and $\l \in \L^+$, the element $\l^r$ is defined on $U_\l$ and hence extends to an element of $C_\infty(K)$. (We use the convention that $\l^0=1$.) Now $0\leq \l^r \leq \l \vee 1 \in \L$ (as this inequality holds pointwise on $U_\lambda$) and since $\L$ is an order ideal of $C_\infty(K)$, we have $\l^r \in \L$. This allows us to define, for $0 \leq p <\infty$ and $\l \in \L^+$, the element $\l^p \in \L$ as $\l^n \l^r$ where $n \in \N_0$ and $0 \leq r < 1$ with $p = n + r$.

\begin{remark}
We acknowledge that non-negative powers in $\L$ can be defined (1) without the use of representation theory, and (2) in a wider class of spaces, called \textit{weighted geometric mean closed} unital $f$-algebras. The reader is referred to \cite[Section~4]{BusSch3} for details. Another advantage to using the definition given in \cite[Section~4]{BusSch3} is that it requires only the (less controversial) countable axiom of choice. However, defining non-negative powers via representation theory proves to be more convenient for the purposes of this paper.
\end{remark}

\begin{remark}\label{r:real or complex}
Our real $f$-algebra $\L$ can be complexified to a complex vector space $\L_\C:=\L+i\L$ which is a  `complex vector lattice' (despite its name, this is actually not a lattice!), meaning that its real part is $\L$ and it is equipped with a natural modulus $| \cdot | \colon \L_\C \to \L^+$ extending the modulus (absolute value) of $\L$ still satisfying the crucial properties: $|\l|=0$ if and only if $\l=0$, $|\l + \mu| \leq |\l| + |\mu|$ and $|\l \mu| = |\l||\mu|$ for all $\l, \mu \in \L_\C$. In our case, when considering $C(K) \subseteq \L \subseteq C_\infty(K)$, the complex vector lattice $\L_\C$ naturally satisfies $C(K)_\C \subseteq \L_\C \subseteq C_\infty(K)_\C$ where the modulus $|\l|$ is given by 
\begin{equation}\label{e:defn-modulus}
\sqrt{(\Re \l)^2 + (\Im \l)^2} = \sup_{\zeta\in\mathbb{T}} \Re(\zeta \l).
\end{equation}
The identity \eqref{e:defn-modulus} is clear from representation theory. It was shown for square mean closed semiprime $f$-algebras in \cite[Theorem 2.23]{azouzi}. For more information regarding complex vector lattices, see \cite{deSchipper}.

In results to follow, the scalar field over which $\L$ is defined will almost always be immaterial to the arguments. We will use the adjectives `real' and `complex' before $\L$ in the cases where this is required.
\end{remark}

The next remark is of general interest but we shall not use it.

\begin{remark}\label{r:hyperstonean}
Note that in \Cref{rem: Representation theory for L}, the order continuous real-valued functionals on $C(K)$ separate the points of $C(K)$ if and only if $K$ is \emph{hyper-Stonean} (i.e. the union of the supports of the normal measures on $K$ is dense in $K$), if and only if $C(K)_\C$ is a von Neumann algebra, if and only if $C(K)$ can be represented as a space $L^\infty$ of essentially bounded measurable functions with respect to a decomposable measure on a locally compact Hausdorff space, see for instance \cite[Theorem 6.4.1]{DalesDashiellLauStrass2016}. In this case $C_\infty(K)$ can be identified with the space of all measurable functions $L^0$, so that $L^\infty \subseteq \L \subseteq L^0$.
\end{remark}

\subsection{Basic properties of $\L$}

The results in this subsection are well-known in the vector lattice community but have been added for the sake of completeness. The usual upper bound juggling yields the first lemma.

\begin{lemma} \label{l:sup_additive}
For real $\L$, let $A, B \subseteq \L$ be non-empty and bounded above. Then 
\[ \sup(A+B) = \sup(A) + \sup(B). \]
Similarly, if $A$ and $B$ are non-empty and bounded below, then
\[ \inf(A + B) = \inf(A) + \inf(B). \]
\end{lemma}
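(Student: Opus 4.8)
The plan is to prove the supremum identity and then obtain the infimum identity by the standard reflection $A \mapsto -A$, which converts suprema into infima and vice versa; so I will focus on the first claim. Since $\L$ is Dedekind complete and $A, B$ are bounded above, $\sup A$, $\sup B$, and $\sup(A+B)$ all exist in $\L$ (the set $A+B$ is bounded above by $\sup A + \sup B$).

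The inequality $\sup(A+B) \le \sup A + \sup B$ is immediate: for every $a \in A$ and $b \in B$ we have $a + b \le \sup A + \sup B$, so $\sup A + \sup B$ is an upper bound for $A+B$. The reverse inequality is where the clean order-theoretic argument pays off. First I would fix $b \in B$ and observe that $a + b \le \sup(A+B)$ for all $a \in A$, hence $a \le \sup(A+B) - b$ for all $a \in A$, so $\sup(A+B) - b$ is an upper bound for $A$, giving $\sup A \le \sup(A+B) - b$, i.e. $b \le \sup(A+B) - \sup A$. Since this holds for every $b \in B$, the element $\sup(A+B) - \sup A$ is an upper bound for $B$, whence $\sup B \le \sup(A+B) - \sup A$, which rearranges to $\sup A + \sup B \le \sup(A+B)$. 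Combining the two inequalities yields the identity.

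I anticipate no real obstacle here: the only points needing care are (i) invoking Dedekind completeness of $\L$ to guarantee all three suprema exist, which follows from the boundedness hypotheses together with the fact that $A + B \le \sup A + \sup B$; and (ii) the legitimacy of adding a fixed element to both sides of an inequality and of rearranging, which is valid in any ordered vector space since translation is an order isomorphism. The infimum statement then follows by applying the supremum identity to $-A$ and $-B$, using $\inf S = -\sup(-S)$ and $-(A+B) = (-A) + (-B)$. Note that the argument is purely order-theoretic and uses nothing specific to $\L$ beyond being a Dedekind complete ordered vector space, which matches the remark in the excerpt that this proof is preferable even in the classical case.
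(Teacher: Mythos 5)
Your proposal is correct and follows essentially the same argument as the paper: the paper fixes an arbitrary upper bound $u$ of $A+B$ and shows $\sup(A)+\sup(B)\leq u$ via the same two-step "peel off $b$, then peel off $\sup(A)$" manipulation, whereas you specialise to $u=\sup(A+B)$ directly; this is an immaterial difference. The reduction of the infimum case to the supremum case via $A\mapsto -A$ is likewise what the paper intends by "proven similarly."
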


\comment{
\begin{proof}
It is clear that $\sup(A) + \sup(B)$ is an upper bound for $A + B$. Let $u$ be any upper bound of $A+B$. For fixed $b\in B$, we have $a \leq u - b$ for all $a\in A$. Thus, for all $b\in B$, $\sup\left( A \right) \leq u - b$, hence $b \leq u - \sup\left( A \right)$, and so $ \sup\left( B \right) \leq u-\sup\left( A \right)$. Therefore $\sup(A) + \sup(B) \leq u$ which proves that $\sup\left( A + B \right) = \sup(A) + \sup(B)$. The statement for infima can be proven similarly. 
\end{proof}
}

\begin{lemma}\label{l:invertible}
   Let $\l \in \L^+$ with $\l$ invertible, then $\l^{-1} \in \L^+$, and if $\mu \geq \l$, then $\mu$ is invertible and $\mu^{-1} \leq \l^{-1}$. Furthermore, if $\L$ is complex and $\l + i\mu \in \L$, then $\l + i\mu$ is invertible if either $\l$ or $\mu$ is invertible.
\end{lemma}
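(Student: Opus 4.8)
The plan is to prove all three assertions \emph{by representation theory}, working inside $C(K)\subseteq\L\subseteq C_\infty(K)$ as in \Cref{rem: Representation theory for L}, with $\L$ an order ideal of $C_\infty(K)$. The engine of the whole argument is the following observation: \emph{if $\nu\in\L$ satisfies $\nu\geq\mathbf 1_K$, then $\nu$ is invertible in $\L$ with $0\leq\nu^{-1}\leq\mathbf 1_K$.} Indeed, on the dense open set $U_\nu$ the function $1/\nu$ is continuous, finite, and valued in $(0,1]$, so it extends to an element $\rho\in C_\infty(K)$ with $0\leq\rho\leq\mathbf 1_K$; since $\mathbf 1_K\in\L$ and $\L$ is an order ideal, $\rho\in\L$, and $\nu\rho=\mathbf 1_K$ on the dense set $U_\nu$, hence everywhere. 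Thus $\nu^{-1}=\rho$.

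For the first assertion, if $\l\in\L^+$ is invertible, then on the dense open set $U_\l\cap U_{\l^{-1}}$ we have $\l(x)\l^{-1}(x)=1$ with $\l(x)\geq 0$, which forces $\l(x)>0$ and $\l^{-1}(x)=1/\l(x)>0$; by continuity $\l^{-1}\geq 0$ on $K$, i.e.\ $\l^{-1}\in\L^+$. Next, suppose $\mu\geq\l$. Multiplying by $\l^{-1}\in\L^+$ (and using that multiplication by positive elements is order-preserving in the $f$-algebra $\L$) gives $\mu\l^{-1}\geq\l\l^{-1}=\mathbf 1_K$, so by the observation above $\mu\l^{-1}$ is invertible with $(\mu\l^{-1})^{-1}\leq\mathbf 1_K$. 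Hence $\mu=(\mu\l^{-1})\l$ is a product of invertibles, so $\mu$ is invertible with $\mu^{-1}=\l^{-1}(\mu\l^{-1})^{-1}\in\L$. Finally, multiplying $\mu\geq\l$ by $\mu^{-1}\in\L^+$ yields $\mathbf 1_K\geq\mu^{-1}\l$, and multiplying that by $\l^{-1}\in\L^+$ yields $\l^{-1}\geq\mu^{-1}$.

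For the complex statement, use commutativity of $\L_\C$ to write $(\l+i\mu)(\l-i\mu)=\l^2+\mu^2\in\L^+$. If $\l$ is invertible, then $\l^2$ is invertible, $\l^2\in\L^+$, and $\l^2+\mu^2\geq\l^2$ because $\mu^2\in\L^+$; applying the second assertion with $\l^2$ in place of $\l$ and $\l^2+\mu^2$ in place of $\mu$ shows $\l^2+\mu^2$ is invertible in $\L$. Therefore $\l+i\mu$ is invertible in $\L_\C$ with $(\l+i\mu)^{-1}=(\l-i\mu)(\l^2+\mu^2)^{-1}$. The case where $\mu$ is invertible is identical.

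The only real obstacle is the first assertion together with the key observation: one must pass to the function representation to see that a lower bound such as $\nu\geq\mathbf 1_K$ genuinely produces a (bounded) inverse that lies inside $\L$ — this is precisely where the order-ideal property of $\L$ in $C_\infty(K)$ is indispensable. Everything after that is routine $f$-algebra bookkeeping (positivity of squares, order-preservation under multiplication by positive elements, and commutativity).
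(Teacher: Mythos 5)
Your proof is correct and follows essentially the same route as the paper: representation theory in $C(K)\subseteq\L\subseteq C_\infty(K)$, the order-ideal property of $\L$ in $C_\infty(K)$ to land the inverse inside $\L$, and the identical formula $(\l+i\mu)^{-1}=(\l-i\mu)(\l^2+\mu^2)^{-1}$ for the complex case. The only (cosmetic) difference is that you reduce to elements $\nu\geq\mathbf 1_K$ and factor $\mu=(\mu\l^{-1})\l$, whereas the paper directly observes the pointwise bound $0\leq\mu^{-1}\leq\l^{-1}$ in $C_\infty(K)$ and applies the order-ideal property with $\l^{-1}$ as the dominating element.
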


\begin{proof}
By representation theory, $\l^{-1} \in \L^+$ and the existence of $\mu^{-1} \in C_\infty(K)$ with $0 \leq \mu^{-1} \leq \l^{-1}$ is clear. Now $\mu^{-1} \in \L$ because $\L$ is an order ideal of $C_\infty(K)$. The final statement follows from $(\l + i \mu)^{-1} = (\l - i\mu)(\l^2 + \mu^2)^{-1}$.
\end{proof}
For a proof of \Cref{l:invertible} without using representation theory, see \cite[Theorems~3.6(ii)~and~11.1]{depagter}.

As stated in \cite[Theorem~2.62]{AliprantisBurkinshaw2006}, for every $\mu\in\L$, the multiplication operator $\lambda\to \mu\lambda$ on $\L$ is an orthomorphism. Hence, by \cite[Theorem~2.44]{AliprantisBurkinshaw2006}, multiplication by a fixed element in $\L$ is order continuous. Lemma~\ref{l:order_cont_mult} below is a direct consequence of these two facts. However, there exists an easy proof of this result, which we include here for the convenience of the reader.

\begin{lemma}\label{l:order_cont_mult} 
For real $\L$, let $A \subseteq \L$ be non-empty and bounded above with $\l \in \L^+$. Then 
\[
        \sup\left(\l A \right) = \l \sup\left(A\right). 
\]Similarly, if $A$ is non-empty and bounded below, then 
\[
        \inf\left(\l A \right) = \l \inf\left(A\right).
\]
\end{lemma}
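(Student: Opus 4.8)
The plan is to mimic the proof of \Cref{l:sup_additive}, handling the two cases $\l$ invertible and $\l$ not invertible separately, or better, to reduce everything to the order-continuity statement already stated (multiplication by a fixed element is an orthomorphism, hence order continuous) — but since the point is to give the promised elementary argument, I would avoid that and instead argue directly by showing that $\l\sup(A)$ is the least upper bound of $\l A$.

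First I would observe that $\l\sup(A)$ is an upper bound for $\l A$: for each $a\in A$ we have $a\le\sup(A)$, and multiplying by $\l\in\L^+$ preserves the inequality (positivity of multiplication in an $f$-algebra, or simply pointwise on $U_\l$ via representation theory), so $\l a\le\l\sup(A)$. The substance is showing it is the \emph{least} upper bound. Let $u$ be any upper bound of $\l A$. If $\l$ were invertible the argument of \Cref{l:sup_additive} goes through verbatim: $\l a\le u$ gives $a\le\l^{-1}u$ for all $a$ (using $\l^{-1}\in\L^+$ from \Cref{l:invertible}), hence $\sup(A)\le\l^{-1}u$, hence $\l\sup(A)\le u$. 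The main obstacle is that $\l$ need not be invertible — a generic positive element of $\L$ vanishes on part of $K$.

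To handle the general case I would use representation theory together with an approximation: for $n\in\N$ set $\l_n\defeq\l+\tfrac1n\mathbf 1_K$, which lies in $\L$ (as $\mathbf 1_K\in C(K)\subseteq\L$) and is invertible by \Cref{l:invertible} since $\l_n\ge\tfrac1n\mathbf 1_K$. Applying the invertible case gives $\sup(\l_n A)=\l_n\sup(A)$ for every $n$. Now $\l_n A=\l A+\tfrac1n A$, and since $A$ is bounded above (say by $w$) and non-empty, $\tfrac1n A$ is order bounded; one then shows $\inf_n\sup(\l_n A)=\sup(\l A)$. Concretely, $\sup(\l A)\le\sup(\l_n A)=\l_n\sup(A)=\l\sup(A)+\tfrac1n\sup(A)$ for all $n$, and taking the infimum over $n$ (using $\inf_n\tfrac1n\sup(A)=0$, which holds by Archimedeanness of $\L$, valid since $\L$ is Dedekind complete hence Archimedean) yields $\sup(\l A)\le\l\sup(A)$; combined with the upper-bound observation this gives equality. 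Alternatively, and perhaps more cleanly, one checks the least-upper-bound property directly: given an upper bound $u$ of $\l A$, we have $\l a\le u$, hence $\l_n a=\l a+\tfrac1n a\le u+\tfrac1n w$ for all $a$, so $u+\tfrac1n w$ is an upper bound for $\l_n A$, whence $\l_n\sup(A)\le u+\tfrac1n w$, i.e. $\l\sup(A)\le u+\tfrac1n(w-\sup(A))$; letting $n\to\infty$ via Archimedeanness gives $\l\sup(A)\le u$.

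Finally, the infimum statement follows by applying the supremum statement to $-A$, using $\l\sup(-A)=-\l\inf(A)$ and $\sup(\l(-A))=-\inf(\l A)$, exactly as in \Cref{l:sup_additive}. I expect the only genuine subtlety to be the reduction from the non-invertible to the invertible case; everything else is a transcription of the earlier lemma's argument, and the Archimedean property of $\L$ (which one should perhaps note explicitly, or else invoke representation theory and argue pointwise on the dense open set where all relevant functions are finite) is what makes the limiting step legitimate.
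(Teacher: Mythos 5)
Your overall strategy---reduce to the invertible case by perturbing $\l$ with a positive multiple of the unit---is essentially the paper's idea, but the paper executes it without any limiting argument: it uses the single element $\l+1$, notes $(\l+1)a=\l a+a\leq\sup(\l A)+\sup(A)$, multiplies by $(\l+1)^{-1}$, passes to $\sup(A)$, multiplies back by $\l+1$, and cancels the common summand $\sup(A)$. Your version with $\l_n=\l+\tfrac1n\mathbf{1}_K$ and an Archimedean limit also works, but only in the \emph{second} form you give. The first (``concrete'') form contains a genuine error: the inequality $\sup(\l A)\leq\sup(\l_n A)$ would require $\l a\leq\l_n a$, i.e.\ $\tfrac1n a\geq 0$, which fails when $A$ contains non-positive elements (for $A=\{-1\}$ and $\l=0$ one has $\sup(\l A)=0$ but $\sup(\l_n A)=-\tfrac1n$); relatedly, $\inf_n\tfrac1n\sup(A)=0$ is false when $\sup(A)<0$. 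Your second form---showing that any upper bound $u$ of $\l A$ gives $\l_n a\leq u+\tfrac1n w$, hence $\l_n\sup(A)\leq u+\tfrac1n w$ and so $\l\sup(A)\leq u+\tfrac1n\bigl(w-\sup(A)\bigr)$ with $w-\sup(A)\geq 0$---is correct, and becomes cleanest if you simply take $w=\sup(A)$, in which case the error term vanishes and no Archimedean limit is needed at all; at that point your argument is essentially the paper's cancellation in disguise. The first direction ($\l\sup(A)$ is an upper bound), the appeal to \Cref{l:invertible} for $\l_n^{-1}\in\L^+$, and the reduction of the infimum statement to the supremum statement via $-A$ all match the paper.
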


\begin{proof}
    We prove the statement for suprema. Since $\l a \leq \l \sup(A)$ for all $a \in A$, it follows that $\sup(\l A) \leq \l \sup( A )$. For the reverse inequality, if $a \in A$, we have
    \[ 
    (\l + 1)a = \l a + a \leq \sup(\l A) + \sup(A).
    \]
    Since $1 \leq \l + 1$, it follows by \Cref{l:invertible} that $\l + 1$ is invertible and $(\l + 1)^{-1} \in \L^+$. Hence
    \[ 
    a \leq (\l + 1)^{-1} (\sup(\l A) + \sup(A)) 
    \]
    and so $\sup A \leq (\l + 1)^{-1} (\sup(\l A) + \sup(A))$, implying that 
    \[
    \l \sup(A) + \sup(A) = (\l + 1) \sup(A) \leq \sup(\l A) + \sup(A).
    \]
    Therefore $\l \sup(A) \leq \sup(\l A)$, and so $\l \sup(A) = \sup(\l A)$. The statement for infima can be proven similarly, or by using  $\inf A = - \sup(-A)$.
\end{proof}

\subsection{Supports and idempotents}

We define the set of idempotents in $\L$ by $\idempotents \defeq \{ \pi \in \L \colon \pi^2 = \pi\}$, which is a complete Boolean algebra with complementation $\pi^c \defeq 1-\pi$. By representation theory, these correspond to both the indicator functions of clopen subsets of $K$ (so that $\idempotents$ is isomorphic to the Boolean algebra of clopen subsets of $K$) as well as the \emph{components} of $1 \in \L$ (i.e., elements $\l \in \L^+$ such that $\l$ is disjoint from $(1-\l)$). Note that if $A \subseteq \idempotents$, then the least upper bound of $A$ in $\L$ is the indicator function of the closure of the union of the clopen subsets corresponding to the elements of $A$, and so it is in $\idempotents$; hence $\sup(A)$ (and also $\inf(A)$) is unambigiously defined. For $\l \in C_\infty(K)$, we denote by $\{\l \not= 0\}$ the set $\{\omega \in K \colon \l(\omega) \not= 0\}$; slight variations of this notation are defined similarly. In the sense of continuous functions, the support of a function $\l \in \L \subseteq C_\infty(K)$ is the clopen set $\ol{\{\l \not= 0\}}$; we will therefore define the \emph{support} of $\l \in \L$ by 
\begin{equation}\label{e:def of support in L}
\pi_\l \defeq \mathbf{1}_{\ol{\{\l \not= 0\}}} = \inf\{ \pi \in \idempotents \colon \pi \l = \l\} = \min\{ \pi \in \idempotents \colon \pi \l = \l\}.
\end{equation}

From vector lattice theory the band projection $P_\l$ onto the band generated by $\l$ satisfies $P_\l(\mu) = \pi_\l \mu$, and so $\pi_\l$ can also be defined as $P_\l(1)$. Note that $\l$ and $\mu$ are disjoint if and only if $\pi_\l \pi_\mu = 0$ (i.e., if their supports are disjoint).

\begin{lemma}\label{l:approximate_with_invertibles}
Let $\l \in \L$. Then there exists a sequence $(\l_n)$ of invertible elements with $| \l - \l_n| = n^{-1}$. Furthermore, if $\l \geq 0$, then $\l_n$ can be chosen to be decreasing, and if $\l \leq 0$, then $\l_n$ can be chosen to be increasing.
\end{lemma}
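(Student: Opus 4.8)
The plan is to perturb $\l$ by $n^{-1}$ times a ``signum'' of $\l$: an element $u\in\L$ with modulus $\mathbf 1$ for which $\l u=|\l|$. Then the perturbed element factors as $\l_n=(|\l|+n^{-1}\mathbf 1)\,u$, a product of two invertible elements by \Cref{l:invertible}, while $|\l-\l_n|=n^{-1}|u|=n^{-1}$. I will sketch the real case; the complex case is analogous, taking for $u$ the (continuous, unimodular) polar part of $\l$, extended by $\mathbf 1$ over the interior of $\{\l=0\}$.

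First I would dispose of the two sign-definite cases, which simultaneously settle existence when $\l$ does not change sign and yield the monotonicity in the ``furthermore''. If $\l\ge 0$, put $\l_n\defeq\l+n^{-1}\mathbf 1$. Then $\l_n\ge n^{-1}\mathbf 1$, and since $n^{-1}\mathbf 1\in\L^+$ is invertible (its inverse $n\mathbf 1$ lies in $C(K)\subseteq\L$), \Cref{l:invertible} gives that $\l_n$ is invertible; clearly $|\l-\l_n|=n^{-1}$ and $(\l_n)$ is decreasing. If $\l\le 0$, apply this to $-\l$ and negate: $\l_n\defeq\l-n^{-1}\mathbf 1$ is invertible with $|\l-\l_n|=n^{-1}$, and now $(\l_n)$ is increasing.

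For general $\l\in\L$, let $\pi\defeq\pi_{\l^-}$ be the support of the negative part and set $u\defeq\mathbf 1-2\pi$. Since $\pi$ is an idempotent, $u^2=\mathbf 1-4\pi+4\pi^2=\mathbf 1$, so $u$ is invertible (it is its own inverse), and $|u|^2=u^2=\mathbf 1$ forces $|u|=\mathbf 1$ (uniqueness of positive square roots in $C_\infty(K)$; alternatively, by representation theory $u$ is $\pm 1$-valued). Because the supports of $\l^+$ and $\l^-$ are disjoint we have $\pi\,\l^+=0$, while $\pi\,\l^-=\l^-$ by definition of the support, whence $\l u=\l-2\pi\l=(\l^+-\l^-)+2\l^-=|\l|$. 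Now define $\l_n\defeq\l+n^{-1}u$. Then $\l_n u=\l u+n^{-1}u^2=|\l|+n^{-1}\mathbf 1\ge n^{-1}\mathbf 1$, which is invertible by \Cref{l:invertible}; as $u$ is also invertible, $\l_n=(\l_n u)\,u$ is a product of invertibles and hence invertible. Finally $\l-\l_n=-n^{-1}u$, so $|\l-\l_n|=n^{-1}|u|=n^{-1}$.

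The only computations involved are the three identities $u^2=\mathbf 1$, $|u|=\mathbf 1$ and $\l u=|\l|$, each immediate from representation theory or from the elementary band-projection identities recorded after \eqref{e:def of support in L}. I do not anticipate a genuine obstacle; the one point worth flagging is that $\l_n$ being nowhere-zero as a function on $K$ is \emph{not} by itself enough to conclude invertibility in $\L$ (which is only an order ideal in $C_\infty(K)$), which is exactly why invertibility is routed through the explicit lower bound $\l_n u\ge n^{-1}\mathbf 1$ and \Cref{l:invertible}. The only mildly nontrivial idea is to introduce the idempotent $\pi_{\l^-}$, so that a single perturbation of modulus \emph{exactly} $n^{-1}$ copes with the sign change of $\l$; note this same construction with $\l\le 0$ does not stay monotone on the interior of $\{\l=0\}$, which is why that case is handled separately above.
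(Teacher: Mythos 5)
Your construction is correct and is essentially the paper's own: with the $+$ sign choice on the null set, your $\l_n=\l+n^{-1}(1-2\pi_{\l^-})$ is exactly the element $\bigl(\l^+ + \pi_{\l^+}/n\bigr)-\bigl(\l^- + \pi_{\l^-}/n\bigr)+\bigl(1-\pi_{\l^+}-\pi_{\l^-}\bigr)/n$ the paper writes down, and you supply the algebraic verification the paper delegates to ``representation or vector lattice theory''. The only divergence is the complex case, where the paper simply perturbs the real part (invoking \Cref{l:invertible}) while you use the polar part; both work, yours just needs the extra care you already flag.
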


\begin{proof}
Consider a real $\l$ in $\L$. Now we choose 
\[\l_n \defeq \left(\l^+ + \frac{\pi_{\l^+}}n\right) - \left(\l^- + \frac{\pi_{\l^-}}{n}\right) \pm \frac{1-\pi_{\l^+}-\pi_{\l^-}}n.\] 
The conclusion follows from representation or vector lattice theory. If $\l$ is complex, it suffices to approximate its real part as above, since $\l_n$ is invertible if $\Re(\l_n)$ is invertible by \Cref{l:invertible}.
\end{proof}

The following lemma is inspired by \cite[Section~2.1]{edeko2023decomposition}.

\begin{lemma}\label{l:range_projection}
For every $\l \in \L^+$, we have
\[ 
\frac{\l}{\l + n^{-1}}\ \big\uparrow\ \pi_\l .
\]
\end{lemma}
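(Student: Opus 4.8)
The plan is to work entirely inside $C_\infty(K)$ using representation theory, splitting $K$ along the support $\pi_\l$. The key pointwise observation is that for each $\omega \in K$, the quantity $\frac{\l(\omega)}{\l(\omega) + n^{-1}}$ is increasing in $n$ (as a function of the form $t \mapsto \frac{t}{t+c}$ with $t = \l(\omega) \ge 0$ and $c = n^{-1}$ decreasing), and that it converges to $1$ where $\l(\omega) > 0$ and equals $0$ where $\l(\omega) = 0$. So on the open dense set $\{\l \not= 0\} \cap U_\l$ the sequence increases pointwise to $1$, and on the open set $\{\l = 0\}^\circ$ it is identically $0$.

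First I would check that $\frac{\l}{\l + n^{-1}} \in \L$: since $\l + n^{-1} \ge n^{-1} > 0$ is invertible (by \Cref{l:invertible}, as $n^{-1}\mathbf 1_K \le \l + n^{-1}$ with $n^{-1}\mathbf 1_K$ invertible), the quotient makes sense, and $0 \le \frac{\l}{\l+n^{-1}} \le \mathbf 1_K$, so it lies in $C(K) \subseteq \L$. Next, monotonicity: $\frac{\l}{\l + (n+1)^{-1}} - \frac{\l}{\l + n^{-1}}$ equals $\l\bigl(n^{-1} - (n+1)^{-1}\bigr)\bigl(\l + n^{-1}\bigr)^{-1}\bigl(\l + (n+1)^{-1}\bigr)^{-1} \ge 0$ by \Cref{l:invertible} and positivity of products of positive invertibles; hence the sequence is increasing in $\L$.

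It remains to identify the supremum as $\pi_\l$. Clearly $\frac{\l}{\l+n^{-1}} \le \pi_\l$ for every $n$, since $\pi_\l \l = \l$ forces $\pi_\l \frac{\l}{\l+n^{-1}} = \frac{\l}{\l+n^{-1}}$, i.e. the quotient is supported inside the clopen set $\ol{\{\l\not=0\}}$ and is $\le \mathbf 1_K$. So $\pi_\l$ is an upper bound. For the reverse, suppose $\mu \in C_\infty(K)$ is an upper bound for the sequence; I want $\mu \ge \pi_\l$. On the open dense set $\{\l \not= 0\}\cap U_\l$ we have $\mu(\omega) \ge \sup_n \frac{\l(\omega)}{\l(\omega)+n^{-1}} = 1 = \pi_\l(\omega)$; since $\{\l\not=0\}\cap U_\l$ is dense in $\ol{\{\l\not=0\}}$ and both $\mu$ and $\pi_\l$ are continuous (extended-real-valued), the inequality $\mu \ge \pi_\l$ propagates to the closure by density and continuity, while off $\ol{\{\l\not=0\}}$ we have $\pi_\l = 0 \le$ anything (care is needed that $\mu \ge 0$ there, which follows since $\mu \ge \frac{\l}{\l+1^{-1}} \ge 0$ everywhere). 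Hence $\mu \ge \pi_\l$, so $\pi_\l = \sup_n \frac{\l}{\l+n^{-1}}$, and combined with monotonicity this is exactly $\frac{\l}{\l+n^{-1}} \uparrow \pi_\l$.

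The main obstacle I anticipate is the density/continuity argument for the lower bound: one must be careful that suprema in $C_\infty(K)$ are computed pointwise on a dense open set and then extended, and that the candidate upper bound $\mu$ — a priori only in $C_\infty(K)$, possibly extended-real-valued — genuinely dominates $\pi_\l$ on the whole clopen support after passing to closures. Alternatively, one can sidestep this by invoking order continuity of band projections: since $\frac{\l}{\l+n^{-1}} \uparrow$ something and multiplying the relation $\l = (\l + n^{-1})\cdot\frac{\l}{\l+n^{-1}}$ shows the limit acts as identity on $\l$, so the limit is $\ge \pi_\l$ by the $\min$ characterization in \eqref{e:def of support in L}; this packages the topological bookkeeping into the vector-lattice machinery already set up.
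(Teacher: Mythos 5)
Your proposal is correct and follows essentially the same route as the paper's proof: both verify invertibility of $\l + n^{-1}$ via \Cref{l:invertible}, argue pointwise by representation theory that the sequence increases to $1$ on the dense set where $0 < \l < \infty$ and vanishes off $\ol{\{\l \ne 0\}}$, and then identify the least upper bound as $\mathbf{1}_{\ol{\{\l\ne0\}}} = \pi_\l$. You supply somewhat more detail on the monotonicity computation and the density/continuity propagation, but the underlying argument is the same.
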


\begin{proof}
Since $n^{-1} \leq \l + n^{-1}$ it follows by \Cref{l:invertible} that $\left( \l + n^{-1} \right)^{-1}$ exists in $\L^+$. Define $\l_n \defeq \frac{\l}{\l + n^{-1}}$. Using representation theory, for $\omega \in \{0 < \lambda < \infty \}$, it is clear that $\l_n(\omega) \uparrow 1$, and for $\omega \in \overline{\{0 < \lambda\}}^c$, we have $\lambda(\omega) = 0$ so $\l_n(\omega) = 0$ for each $n$. Thus $\mu$ is an upper bound of $(\l_n)$ if and only if $\mu \geq 1$ on $\overline{ \{ 0 < \lambda < \infty\} } = \overline{ \{0 < \lambda\} }$ and $\mu \geq 0$ on $\overline{\{0 < \lambda\}}^c$. Hence $\sup_n \l_n = \mathbf{1}_{\overline{\{0 < \lambda\}}} = \pi_\l$.  
\end{proof}

\begin{remark} For a proof of \Cref{l:range_projection} without using representation theory, let $B_\l$ be the band generated by $\l$. Since $\l_n \leq \l / n^{-1} = n \l$, $\l_n \in B_\l$, and so on $B^d_\l$ the sequence $\l_n$ is zero, and on $B_\l$, $\l$ is a weak unit. We will show that the supremum of $\l_n$ is the component of $1$ in $B_\l$. By the above discussion, it suffices to consider the case where $\l$ is a weak unit, and we have to show that the supremum of $\l_n$ is $1$, or equivalently, that the infimum of $1-\l_n$ is $0$. For this, note that
    $\dis{1 - \l_n = \frac{n^{-1}}{\l + n^{-1}} \geq 0}$. Let $a$ be another lower bound of $1-\l_n$. Then $a^+$ is also a lower bound of $1-\l_n$ and so $a^+(\l + n^{-1}) \leq n^{-1}$. Hence $a^+ \l \leq a^+ (\l + n^{-1}) \leq n^{-1}$, and so $n a^+ \l \leq 1$ for all $n \in \N$. Since $\L$ is Archimedean, $a^+ \l \leq 0$. By positivity of multiplication $a^+ \l \geq 0$ and so $a^+ \l = 0$. By \cite[Theorem~3.7(i)]{depagter}, $a^+$ and $\l$ are disjoint, and since $\l$ is a weak unit, $a^+ = 0$, showing that $a \leq 0$. Hence $0$ is the greatest lower bound of $1-\l_n$, as required.
\end{remark}

\begin{defn}\label{d:notation of taking a set power}
    Let $U \subseteq K$ be open and let $\mu$ be a function defined on a subset of $K$ such that $\mu|_U \in C(U)$. Extend $\mu|_U$ first by continuity to a function in $C_\infty(\ol{U})$ and then to $K$ by defining it to be zero on $\ol{U}^c$; this procedure defines an element we denote as $\mu^U \in C_\infty(K)$.
\end{defn}

Arguing as in the proof of \Cref{l:range_projection}, it follows that for $\l \in \L$,
\[ 
\pi_\l = \mathbf{1}^{\{0 < |\l| < \infty\}}.
\]
In this paper, the above definition will be used in cases where $\l \in \L$, $U = \{0 < |\l| < \infty\}$, and $\mu$ is some expression involving $\l$; in this case, note that $\l(\omega) = 0$ whenever $\omega \in \ol{U}^c$. An illustration is the following corollary of \Cref{l:range_projection}.

\begin{corol}\label{c:inf of weird fraction equals zero} 
Let $\l \in \L^+$. Then
\[ 
 \left( \frac{n^{-1}}{\l + n^{-1}} \right)^{\{0 < \l < \infty\}} \Big\downarrow\ 0.
\]
\end{corol}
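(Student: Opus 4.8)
The plan is to derive this directly from \Cref{l:range_projection} together with the observation about the notation $\mu^U$. First I would note that for $\lambda \in \L^+$, the element $\frac{\lambda}{\lambda + n^{-1}}$ lies in $\L$ by \Cref{l:invertible}, and on the open dense set $U = \{0 < \lambda < \infty\}$ we have the pointwise identity
\[
\frac{n^{-1}}{\lambda(\omega) + n^{-1}} = 1 - \frac{\lambda(\omega)}{\lambda(\omega) + n^{-1}}.
\]
Since $\lambda$ vanishes on $\ol{U}^c$ (as noted in the discussion following \Cref{d:notation of taking a set power}, because $\ol{U}^c \subseteq \{\lambda \not= 0\}^c$ up to the nowhere dense boundary), the function $\left(\frac{n^{-1}}{\lambda(\omega)+n^{-1}}\right)^U$ is exactly $1 - \frac{\lambda}{\lambda+n^{-1}}$ as elements of $\L$; here I would use that $\pi_\lambda = \mathbf{1}^U$ and that the band projection structure lets us restrict to $U$. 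So the corollary becomes the claim that $1 - \frac{\lambda}{\lambda+n^{-1}}$ decreases to $1 - \pi_\lambda$, wait — one must be careful: $\mathbf{1}^U$ truncated means the sequence decreases to $\pi_\lambda - \pi_\lambda = 0$ on $U$ and is already $0$ off $U$.

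More precisely, the step I would carry out is: by \Cref{l:range_projection}, $\frac{\lambda}{\lambda+n^{-1}} \uparrow \pi_\lambda$, hence $\pi_\lambda - \frac{\lambda}{\lambda+n^{-1}} \downarrow 0$ in $\L$ (using \Cref{l:order_cont_mult} or just that $\mu \mapsto \pi_\lambda - \mu$ reverses order and that infima are computed as negatives of suprema). Then I would identify $\pi_\lambda - \frac{\lambda}{\lambda+n^{-1}}$ with the element $\left(\frac{n^{-1}}{\lambda(\omega)+n^{-1}}\right)^U$: on $U$ both equal $\frac{n^{-1}}{\lambda+n^{-1}}$ pointwise since $\pi_\lambda = 1$ there, and on $\ol{U}^c$ both are $0$ since $\pi_\lambda = 0 = \lambda$ there; since $U$ is dense and all functions involved are continuous on $K$ (as elements of $C_\infty(K)$), they agree everywhere. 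This gives the claimed decreasing convergence.

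Alternatively, and perhaps more cleanly for the write-up, I would just repeat the argument of \Cref{l:range_projection} verbatim: using representation theory, for $\omega \in U = \{0 < \lambda < \infty\}$ we have $\frac{n^{-1}}{\lambda(\omega)+n^{-1}} \downarrow 0$, and for $\omega \in \ol{U}^c$ the function is $0$ for every $n$ by \Cref{d:notation of taking a set power}; hence a lower bound $\mu$ of the sequence must satisfy $\mu \leq 0$ on $\ol{U}$ (by density and continuity) and $\mu \leq 0$ on $\ol{U}^c$, so $\mu \leq 0$, and since each term is $\geq 0$ we conclude the infimum is $0$. This is essentially a one-paragraph proof.

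The only real obstacle is bookkeeping around the $(\cdot)^U$ notation and the behaviour on the nowhere-dense boundary $\ol{U} \setminus U$: one must make sure that evaluating pointwise on $U$ and extending by continuity genuinely produces the element of $\L$ we want, and that the infimum computed in $\L$ (equivalently in $C_\infty(K)$, since $\L$ is order dense) coincides with the pointwise-on-a-dense-set picture. This is exactly the kind of step the paper already handles in \Cref{l:range_projection}, so I expect the authors to simply say ``arguing as in the proof of \Cref{l:range_projection}'' and be done.
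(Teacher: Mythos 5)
Your main argument is exactly the paper's proof: on $U=\{0<\l<\infty\}$ write $\frac{n^{-1}}{\l(\omega)+n^{-1}}=1-\frac{\l(\omega)}{\l(\omega)+n^{-1}}$, identify the truncated element with $\pi_\l-\frac{\l}{\l+n^{-1}}$, and conclude from \Cref{l:range_projection} that this decreases to $0$. This is correct and takes essentially the same approach as the paper.
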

\begin{proof}
For $\omega \in \{0 < \l < \infty\}$, we have
\[
\frac{n^{-1}}{\l(\omega) + n^{-1}} = 1  - \frac{\l(\omega)}{\l(\omega) + n^{-1}},
\]
and so by \Cref{l:range_projection} we obtain
\[
\left( \frac{n^{-1}}{\l+ n^{-1}} \right)^{\{0 < \l < \infty\}} = \mathbf{1}^{\{0 < |\l| < \infty\}} - \frac{\l}{\l + n^{-1}} = \pi_\l - \frac{\l}{\l + n^{-1}} \big\downarrow\ 0. \qedhere
\]
\end{proof}

\section{$\L$-normed spaces}\label{s:L-normed spaces}

\subsection{Basic definitions}

The next definition emphasizes the connection with classical linear algebra.
\begin{defn}
   If $X$ and $Y$ are $\L$-modules, then a map $T \colon X \to Y$ is \emph{$\L$-linear} if it is an $\L$-module homomorphism. The $\L$-module of $\L$-linear maps between $X$ and $Y$ is denoted by $L(X,Y)$. If $Z$ is another $\L$-module, then a map $\phi \colon X \times Y \to Z$ is called \emph{$\L$-bilinear} if it is $\L$-linear in each variable. An \emph{$\L$-algebra} is defined to be an $\L$-module $A$ equipped with an associative $\L$-bilinear map $(a,b) \mapsto ab$ from $A \times A$ to $A$.
\end{defn}
We will denote the zero element of an $\L$-module by $\0$.

\begin{defn}\label{defn: l-norm and l-seminorm}
Let $X$ be an $\L$-module. A map $\norm{\cdot} \colon X \to \L^+$ is called an \emph{$\L$-seminorm} on $X$ if the following two conditions are satisfied: For all $\l \in \L$ and $x,y \in X$, we have
    \begin{enumerate}
        \item[(i)] (Absolute homogeneity) $\norm{\l x} = |\l| \norm{x}$,
        \item[(ii)] (Triangle inequality) $\norm{x+y} \leq \norm{x} + \norm{y}$.
    \end{enumerate}
If additionally, we have 
    \begin{enumerate}
        \item[(iii)] (Positive definiteness) $\norm{x} = 0$ implies that $x=\0$,
    \end{enumerate}
then $\norm{\cdot} \colon X \to \L^+$ is called an \emph{$\L$-norm}. An \emph{$\L$-(semi)normed space} is defined to be an $\L$-module equipped with an $\L$-(semi)norm. An $\L$-algebra $A$ equipped with a norm is called an \emph{$\L$-normed algebra} if $\norm{ab} \leq \norm{a}\norm{b}$ for all $a,b \in A$.
\end{defn}

Note that $\norm{\0} = \norm{0 \cdot \0} = |0| \norm{\0} = 0$ in all $\L$-seminormed spaces. As usual, if $X$ is an $\L$-normed space we denote the \textit{unit ball} of $X$ as
\[
        B_X := \{ x\in X \colon \norm{x} \leq 1\}.
\]
By the properties of the modulus, $( \L, \left| \cdot \right| )$ is an $\L$-normed algebra.


\subsection{Convergence and completeness}

The notation $\Eps \searrow 0$ means that $\Eps$ is a subset of $\L^+$ with $\inf \Eps = 0$. In this notation $\Eps$ need not be a directed set.

\begin{defn}
Let $X$ be an $\L$-normed space. A net $(x_\alpha)_{\alpha \in I}$ is said to \textit{converge} to $x \in X$, denoted $x_\alpha \to x$, if there exists a set $\Eps \searrow 0$ such that for every $\eps \in \Eps$ there exists an $\alpha_0 \in I$ satisfying
\[
        \alpha \geq \alpha_0 \Rightarrow \norm{x_\alpha - x} \leq \eps.
\]
In the sequel we will almost always omit the index set and denote nets by $(x_\alpha)$. Where convenient, we may denote $x_\alpha \to x$ as $\lim_{\alpha} x_\alpha = x$. A set $A \subseteq X$ is \emph{closed} if $A\ni x_\alpha \to x$ implies that $x\in A$, and if $Y$ is another $\L$-normed space and $f \colon X \to Y$, then $f$ is \emph{continuous} if $x_\alpha \to x$ implies $f(x_\alpha) \to f(x)$.
Furthermore, a net $(x_\alpha)$ is said to be \textit{Cauchy} if there exists a set $\Eps \searrow 0$ such that for every $\eps \in \Eps$ there exists an $\alpha_0$ satisfying
\[
        \alpha, \beta \geq \alpha_0 \Rightarrow \norm{x_\alpha - x_\beta} \leq \eps.
\]The space $X$ is \emph{complete}, or an \emph{$\L$-Banach space}, if every Cauchy net in $X$ converges. An \emph{$\L$-Banach algebra} is a complete $\L$-normed algebra.
\end{defn}

By \Cref{l:order_cont_mult}, if $\Eps \searrow 0$ then $2\Eps \searrow 0$ and so by the triangle inequality it follows that convergent nets are Cauchy. 

\begin{remark}
Given $\Eps \subseteq \L$, the set of infima of finite subsets of $\Eps$ is a directed set with the same lower bounds as $\Eps$. From this it follows that in the definitions of convergent and Cauchy nets we may replace $\Eps$ with a downwards directed set with infimum $0$, or with a net decreasing to $0$.  Therefore, in the case of the $\L$-normed space $\L$, convergent and Cauchy nets are precisely the \emph{order convergent} and \emph{order Cauchy} nets, respectively, see for instance \cite[Section 9]{Netconvergence}. Our definition more closely resembles the $\eps-N$ definition of convergence in a normed space. 
The key difference is that in the case of an $\L$-normed space, the set $\Eps$ depends on the particular net under consideration. 
 
We note that, in general, convergence in an $\L$-normed space $X$ is not topological; that is, there exists no topology on $X$ so that the convergent nets in $X$ are precisely those that convergence with respect to the topology. Indeed, (order) convergence in $\L$ is topological if and only if $\L$ is finite dimensional, see \cite{Orderconvergencenottopological}.  However, convergence in $\L$ defines a convergence structure in the sense of \cite{BeattieButzmann2002,Netconvergence}.  It can also be shown that convergence in any $\L$-normed space defines a convergence structure.
\end{remark}

Every subset $A$ of an $\L$-normed space $X$ is contained in a closed subset of $X$, namely in $X$.  It follows immediately from the definition of a closed set that the intersection of any collection of closed sets in closed.  Therefore we may define the closure of $A$ in the usual way.

\begin{defn}
    Let $X$ be an $\L$-normed space and $A\subseteq X$.  The \emph{closure} of $A$ is the smallest closed set that contains $A$, and is denoted $\overline{A}$. An \emph{adherent} point of $A$ is a limit of a converging net in $A$.
\end{defn}

\begin{remark}
    Let $X$ be an $\L$-normed space and $A\subseteq X$.  Since convergence in an $\L$-normed space is typically not topological, the set 
    \[
    \{ x \in X \colon \text{there is a net } (x_\alpha) \text{ in } A \text{ such that } x_\alpha \to x \}
    \]
    is in general not closed, and therefore not equal to the closure of $A$.
\end{remark}
It turns out that in certain cases (see \Cref{c:closure_equals_adherence}) the closure does equal the set of adherent points.

As in the classical case, we have the reverse triangle inequality as stated in the next lemma. The proof is identical to the classical case. 
\begin{lemma}\label{l:reverse_triangle}
Let $X$ be an $\L$-normed space and $x,y \in X$. Then 
\[
\big| \norm{x} - \norm{y} \big| \leq \norm{x -y}.
\]
\end{lemma}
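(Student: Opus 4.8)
The plan is to mimic the classical proof of the reverse triangle inequality exactly, since the author already announced the proof is identical to the classical case. The key observation is that in an $\L$-normed space all the ingredients we need are available: the triangle inequality, absolute homogeneity (giving $\norm{-z} = \norm{z}$ since $|-1| = 1$), and the fact that $|\cdot|$ on $\L$ behaves like an absolute value, in particular that $|a| \leq c$ is equivalent to $-c \leq a \leq c$ for $a, c \in \L$ with $c \geq 0$ (which holds in any vector lattice, hence in $\L$).

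First I would write $x = (x - y) + y$ and apply the triangle inequality to get $\norm{x} \leq \norm{x - y} + \norm{y}$, hence $\norm{x} - \norm{y} \leq \norm{x - y}$. Then I would interchange the roles of $x$ and $y$: writing $y = (y - x) + x$ gives $\norm{y} - \norm{x} \leq \norm{y - x}$. Next I would note $\norm{y - x} = \norm{(-1)(x - y)} = |-1|\,\norm{x - y} = \norm{x - y}$ by absolute homogeneity, so that $-(\norm{x} - \norm{y}) \leq \norm{x - y}$ as well. Combining the two inequalities, $-\norm{x-y} \leq \norm{x} - \norm{y} \leq \norm{x-y}$, which by the characterization of the modulus in the vector lattice $\L$ is exactly $\big|\norm{x} - \norm{y}\big| \leq \norm{x - y}$.

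There is essentially no obstacle here: every step is a one-liner, and the only thing to be a little careful about is that $\norm{x}$ and $\norm{y}$ are elements of $\L^+$ rather than real numbers, so the final step uses the order-theoretic fact that in a vector lattice $-c \le a \le c$ iff $|a| \le c$, rather than a property of real absolute value. This is standard and needs no appeal to representation theory. I would present the argument in two or three sentences without further fuss, matching the terse style the author signaled.

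\begin{proof}
By the triangle inequality, $\norm{x} = \norm{(x-y) + y} \leq \norm{x-y} + \norm{y}$, so $\norm{x} - \norm{y} \leq \norm{x-y}$. Interchanging $x$ and $y$ and using absolute homogeneity (so that $\norm{y-x} = |-1|\,\norm{x-y} = \norm{x-y}$) yields $\norm{y} - \norm{x} \leq \norm{x-y}$ as well. Hence $-\norm{x-y} \leq \norm{x} - \norm{y} \leq \norm{x-y}$, which in the vector lattice $\L$ is precisely the statement $\big|\norm{x} - \norm{y}\big| \leq \norm{x-y}$.
\end{proof}
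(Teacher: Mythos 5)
Your proof is correct and is exactly the argument the paper intends when it says the proof is ``identical to the classical case'': two applications of the triangle inequality plus absolute homogeneity, followed by the vector-lattice fact that $-c \leq a \leq c$ is equivalent to $|a| \leq c$. Nothing further is needed.
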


\begin{corol}\label{c:continuity_of_norm}
    Let $X$ be an $\L$-normed space. Then $\norm{\cdot} \colon X \to \L$ is continuous.
\end{corol}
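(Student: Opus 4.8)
The plan is to deduce continuity of the norm directly from the reverse triangle inequality in \Cref{l:reverse_triangle}, mimicking the classical argument. Suppose $x_\alpha \to x$ in $X$; I must show $\norm{x_\alpha} \to \norm{x}$ in $\L$. By definition of convergence in $X$, there is a set $\Eps \searrow 0$ such that for every $\eps \in \Eps$ there is $\alpha_0 \in I$ with $\norm{x_\alpha - x} \leq \eps$ for all $\alpha \geq \alpha_0$.

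First I would apply \Cref{l:reverse_triangle} with the pair $x_\alpha, x$, which gives $\big| \norm{x_\alpha} - \norm{x} \big| \leq \norm{x_\alpha - x}$ for every $\alpha$. Then, for a fixed $\eps \in \Eps$ and the corresponding $\alpha_0$, whenever $\alpha \geq \alpha_0$ we get $\big| \norm{x_\alpha} - \norm{x} \big| \leq \norm{x_\alpha - x} \leq \eps$. Since $\big| \norm{x_\alpha} - \norm{x} \big|$ is exactly the $\L$-norm of $\norm{x_\alpha} - \norm{x}$ in the $\L$-normed space $(\L, |\cdot|)$, this is precisely the statement that the net $(\norm{x_\alpha})$ converges to $\norm{x}$ in $\L$, witnessed by the same set $\Eps \searrow 0$. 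This completes the proof.

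There is essentially no obstacle here: the content is entirely carried by \Cref{l:reverse_triangle}, and the only thing to be careful about is that the \emph{same} witnessing set $\Eps$ works for the image net — which is immediate because the estimate $\big| \norm{x_\alpha} - \norm{x} \big| \leq \eps$ holds from the same index $\alpha_0$. So the proof is a two-line invocation of the reverse triangle inequality together with the definition of convergence, and I would write it as such.
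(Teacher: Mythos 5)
Your proof is correct and follows exactly the route the paper intends: the corollary is stated immediately after \Cref{l:reverse_triangle} precisely because the reverse triangle inequality, combined with the definition of convergence, hands you the estimate $\big| \norm{x_\alpha} - \norm{x} \big| \leq \norm{x_\alpha - x} \leq \eps$ with the same witnessing set $\Eps$. Your observation that the same $\Eps$ and the same $\alpha_0$ work for the image net is exactly the point.
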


In the next theorem we prove the order continuity of addition. This follows from well-known facts about order convergence in $\L$, but we provide a proof from first principles for completeness.

\begin{theorem}\label{t:addition_continuous}
   Let $X$ be an $\L$-normed space, let $x_\alpha \to x$ and $y_\alpha \to y$. Then $x_\alpha + y_\alpha \to x+y$, so order convergence is jointly continuous. 
\end{theorem}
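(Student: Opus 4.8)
The plan is to reduce everything to the triangle inequality together with the additivity of infima established in \Cref{l:sup_additive}. By hypothesis there is a set $\Eps_1 \searrow 0$ witnessing $x_\alpha \to x$ and a set $\Eps_2 \searrow 0$ witnessing $y_\alpha \to y$ (both nets being indexed by the same directed set $I$). The natural candidate witness set for the convergence $x_\alpha + y_\alpha \to x+y$ is the sumset $\Eps \defeq \Eps_1 + \Eps_2 = \{\eps_1 + \eps_2 \colon \eps_1 \in \Eps_1,\ \eps_2 \in \Eps_2\}$.

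First I would check that $\Eps \searrow 0$. Both $\Eps_1$ and $\Eps_2$ are non-empty subsets of $\L^+$, hence bounded below, with $\inf \Eps_1 = \inf \Eps_2 = 0$; by the infimum statement in \Cref{l:sup_additive} we get $\inf \Eps = \inf(\Eps_1 + \Eps_2) = \inf \Eps_1 + \inf \Eps_2 = 0$. Next, fix $\eps \in \Eps$ and write $\eps = \eps_1 + \eps_2$ with $\eps_i \in \Eps_i$. Convergence of the two nets supplies indices $\alpha_1, \alpha_2 \in I$ with $\alpha \geq \alpha_1 \implies \norm{x_\alpha - x} \leq \eps_1$ and $\alpha \geq \alpha_2 \implies \norm{y_\alpha - y} \leq \eps_2$. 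Using that $I$ is directed, pick $\alpha_0 \in I$ with $\alpha_0 \geq \alpha_1$ and $\alpha_0 \geq \alpha_2$; then for every $\alpha \geq \alpha_0$ the triangle inequality yields
\[
\norm{(x_\alpha + y_\alpha) - (x+y)} \leq \norm{x_\alpha - x} + \norm{y_\alpha - y} \leq \eps_1 + \eps_2 = \eps,
\]
which is exactly the required estimate. The final clause of the theorem is then just the specialization to $X = \L$, where convergence coincides with order convergence.

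There is no substantial obstacle here: the only two points needing care are that $\inf \Eps = 0$ — which is precisely the content of \Cref{l:sup_additive} and is the reason that lemma was stated — and that the two index thresholds $\alpha_1, \alpha_2$ can be amalgamated into a single $\alpha_0$, which uses nothing more than the directedness of the common index set $I$.
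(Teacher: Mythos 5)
Your proof is correct and follows essentially the same route as the paper: take $\Eps = \Eps_1 + \Eps_2$, invoke \Cref{l:sup_additive} to see that $\inf\Eps = 0$, amalgamate the two thresholds by directedness, and finish with the triangle inequality. No issues.
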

\begin{proof}
    Suppose $x_\alpha \to x$ and $y_\alpha \to y$. Then, there exist sets $\Eps_1, \Eps_2 \searrow 0$ such that for any $\eps_1 \in \Eps_1$ and for any $\eps_2 \in \Eps_2$ there exist $\alpha_1$ and $\alpha_2$ such that $\norm{x_\alpha - x} \leq \eps_1$ for all $\alpha \geq \alpha_1$ and $\norm{y_\alpha - y} \leq \eps_2$ for all $\alpha \geq \alpha_2$. Define $\Eps \coloneqq \Eps_1 + \Eps_2$, then $\Eps \searrow 0$ by \Cref{l:sup_additive}. For any $\eps = \eps_1 + \eps_2 \in \Eps$, let $\alpha_1$ and $\alpha_2$ be as above. By directedness there exists some $\alpha_0 \geq \alpha_1, \alpha_2$. Then for all $\alpha \geq \alpha_0$,
     \[   \norm{(x_\alpha + y_\alpha) - (x + y)} \leq \norm{x_\alpha - x} + \norm{y_\alpha - y} \leq \eps_1 + \eps_2 = \eps.  \qedhere \]
\end{proof}

\begin{prop}\label{p: Limits in L-normed spaces are unique}
Limits in $\L$-normed spaces are unique.      
\end{prop}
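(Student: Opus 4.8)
The plan is to mimic the classical uniqueness-of-limits argument, taking care that the "$\eps$-set" $\Eps$ witnessing convergence depends on the net. Suppose $x_\alpha \to x$ and $x_\alpha \to y$ for the same net $(x_\alpha)_{\alpha \in I}$. Then there exist sets $\Eps_1, \Eps_2 \searrow 0$ such that for each $\eps_1 \in \Eps_1$ there is $\alpha_1 \in I$ with $\norm{x_\alpha - x} \leq \eps_1$ for all $\alpha \geq \alpha_1$, and similarly for each $\eps_2 \in \Eps_2$ there is $\alpha_2 \in I$ with $\norm{x_\alpha - y} \leq \eps_2$ for all $\alpha \geq \alpha_2$.

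The key step is to show $\norm{x - y} = 0$, after which positive definiteness (axiom (iii) of \Cref{defn: l-norm and l-seminorm}) gives $x = y$. Fix arbitrary $\eps_1 \in \Eps_1$ and $\eps_2 \in \Eps_2$, choose $\alpha_1, \alpha_2$ as above, and use directedness of $I$ to pick $\alpha_0 \geq \alpha_1, \alpha_2$. Then, by the triangle inequality,
\[
\norm{x - y} \leq \norm{x - x_{\alpha_0}} + \norm{x_{\alpha_0} - y} \leq \eps_1 + \eps_2.
\]
Since this holds for every $\eps_1 \in \Eps_1$ and every $\eps_2 \in \Eps_2$, we get $\norm{x-y} \leq \inf(\Eps_1 + \Eps_2)$, which equals $\inf \Eps_1 + \inf \Eps_2 = 0$ by \Cref{l:sup_additive}. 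Hence $\norm{x - y} = 0$ (recall $\norm{\cdot}$ takes values in $\L^+$, so it is in particular $\geq 0$), and therefore $x = y$.

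I do not anticipate a genuine obstacle here; the only point requiring mild care is that one cannot simply "take a limit in $\eps$" as in the scalar case, since $\Eps_1$ and $\Eps_2$ need not be directed — this is exactly why the argument is phrased via $\inf(\Eps_1 + \Eps_2)$ and why \Cref{l:sup_additive} (applied to the infimum version) is invoked rather than an $\eps$-chase. One should also note that the two convergence statements are about the \emph{same} net, so the same index set $I$ and its directedness are available for the crucial choice of $\alpha_0$.
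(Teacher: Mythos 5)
Your proof is correct and follows essentially the same route as the paper: both arguments bound $\norm{x-y}$ by $\eps_1+\eps_2$ via the triangle inequality and conclude from $\inf(\Eps_1+\Eps_2)=\inf\Eps_1+\inf\Eps_2=0$ (\Cref{l:sup_additive}). The only difference is that the paper packages the first step as an appeal to \Cref{t:addition_continuous} and \Cref{c:continuity_of_norm}, whereas you inline that argument directly.
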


\begin{proof}
Let $X$ be an $\L$-normed space with $x,y\in X$ and suppose $x_\alpha \to x$ and $x_\alpha \to y$. By \Cref{t:addition_continuous} and \Cref{c:continuity_of_norm}, $\norm{x - x_\alpha}+ \norm{x_\alpha - y} \to 0$, and so there exists $\Eps\searrow 0$ such that for every $\eps\in \Eps$ there is an $\alpha_0$ such that
\[
        \norm{x - x_\alpha}+ \norm{x_\alpha - y} \leq \eps
\]for all $\alpha\geq \alpha_0$. For $\alpha \geq \alpha_0$, we have 
\[
        0 \leq \norm{x - y} \leq \norm{x_\alpha - x} + \norm{x_\alpha - y} \leq \eps
\] which implies that $\norm{x - y} \leq \inf\Eps = 0$, hence $x = y$. 
\end{proof}

\begin{prop}\label{p:conv_net_bounded_tail}
Let $(x_\alpha)$ be a Cauchy net in an $\L$-normed space $X$, then $(x_\alpha)$ has a bounded tail. In particular, if $x_\alpha \to x$, then $(x_\alpha)$ has a bounded tail.
\end{prop}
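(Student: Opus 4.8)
The plan is to unwind the definition of a Cauchy net and extract a single tail on which all differences are controlled by one fixed element of $\L^+$. Let $(x_\alpha)_{\alpha \in I}$ be Cauchy, so there is a set $\Eps \searrow 0$ such that for each $\eps \in \Eps$ there is an index $\alpha_\eps$ with $\norm{x_\alpha - x_\beta} \leq \eps$ whenever $\alpha, \beta \geq \alpha_\eps$. First I would simply fix one particular $\eps_0 \in \Eps$ (the set $\Eps$ is non-empty since $\inf \Eps = 0$ forces $\Eps \neq \emptyset$) and let $\alpha_0 := \alpha_{\eps_0}$. Then for all $\alpha \geq \alpha_0$ we have $\norm{x_\alpha - x_{\alpha_0}} \leq \eps_0$, and hence by the triangle inequality
\[
\norm{x_\alpha} \leq \norm{x_\alpha - x_{\alpha_0}} + \norm{x_{\alpha_0}} \leq \eps_0 + \norm{x_{\alpha_0}} =: \l \in \L^+.
\]
Thus the tail $\{x_\alpha : \alpha \geq \alpha_0\}$ is bounded by $\l$, which is exactly the claim that $(x_\alpha)$ has a bounded tail.

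For the "in particular" clause, I would invoke the remark immediately preceding the proposition: by \Cref{l:order_cont_mult} (applied to $\Eps \mapsto 2\Eps$) together with the triangle inequality, every convergent net is Cauchy, so the first part applies directly. Alternatively, and just as cheaply, one can repeat the argument above directly with the witnessing set $\Eps$ for convergence: pick $\eps_0 \in \Eps$ and $\alpha_0$ with $\norm{x_\alpha - x} \leq \eps_0$ for $\alpha \geq \alpha_0$, whence $\norm{x_\alpha} \leq \eps_0 + \norm{x}$ on that tail.

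I do not anticipate a genuine obstacle here: the only subtlety worth a sentence is that the definition of Cauchy (and of convergence) guarantees $\Eps$ is non-empty, so there really is an $\eps_0$ to fix, and that the index set $I$ being directed is what lets "$\alpha \geq \alpha_0$" describe a cofinal tail. Everything else is the classical one-line estimate, carried out with $\L^+$-valued norms in place of real-valued ones, using only that $\L^+$ is closed under addition.
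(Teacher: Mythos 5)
Your proof is correct and follows essentially the same route as the paper: fix one $\eps_0 \in \Eps$ with its corresponding index $\alpha_0$ and apply the triangle inequality $\norm{x_\alpha} \leq \norm{x_\alpha - x_{\alpha_0}} + \norm{x_{\alpha_0}} \leq \eps_0 + \norm{x_{\alpha_0}}$ on that tail. The paper's proof is exactly this one-line estimate, and the ``in particular'' clause is handled the same way, via the earlier observation that convergent nets are Cauchy.
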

\begin{proof}
There exists a set $\Eps \searrow 0$ such that for all $\eps \in \Eps$ there exists an $\alpha_0$ such that $\norm{x_\alpha - x_\beta} \leq \eps$ for all $\alpha, \beta \geq \alpha_0$. 
Fix an $\eps \in \Eps$ with its corresponding $\alpha_0$, then $\norm{x_\alpha} \leq \norm{x_\alpha - x_{\alpha_0}} + \norm{x_{\alpha_0}} \leq \eps + \norm{x_{\alpha_0}}$ for all $\alpha \geq \alpha_0$. 
\end{proof}

An $\L$-bilinear map $\phi \colon X \times Y \to Z$ is called \emph{bounded} if there exists an $M \in \L^+$ with $\norm{\phi(x,y)}_Z \leq M \norm{x}_X \norm{y}_Y$ for all $x \in X$ and $y \in Y$. If $M\leq 1$, we say that $\phi \colon X \times Y \to Z$ is \textit{contractive}. 

\begin{theorem}\label{t:bilinear_cont}
Let $X$, $Y$, and $Z$ be $\L$-normed spaces and let $\phi \colon X \times Y \to Z$ be a bounded $\L$-bilinear map. If $x_\alpha \to x$ and $y_\alpha \to y$, then $\phi(x_\alpha, y_\alpha) \to \phi(x,y)$, so that $\phi$ is jointly continuous.
\end{theorem}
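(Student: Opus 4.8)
The plan is to mimic the classical $\varepsilon/3$-type argument, writing $\phi(x_\alpha,y_\alpha) - \phi(x,y) = \phi(x_\alpha - x, y_\alpha) + \phi(x, y_\alpha - y)$ and estimating each term using bilinearity and boundedness. The extra complication over the classical case is that we must also control $\norm{y_\alpha}_Y$, which — unlike in a normed space — is not simply bounded by a constant, but by an element of $\L^+$ that we need to dominate in a way compatible with our notion of convergence.

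First I would unpack the hypotheses: there exist $\Eps_1, \Eps_2 \searrow 0$ witnessing $x_\alpha \to x$ and $y_\alpha \to y$. Fix once and for all some $\eps_1^0 \in \Eps_1$ and a corresponding index $\alpha_1^0$ such that $\norm{x_\alpha - x}_X \leq \eps_1^0$ for $\alpha \geq \alpha_1^0$. Then for $\alpha \geq \alpha_1^0$ the reverse triangle inequality (\Cref{l:reverse_triangle}) gives $\norm{y_\alpha}_Y \leq \norm{y}_Y + \eps_2$ for any $\eps_2 \in \Eps_2$ once $\alpha$ is large enough; in particular, fixing one such $\eps_2^0 \in \Eps_2$ and index, we get $\norm{y_\alpha}_Y \leq \norm{y}_Y + \eps_2^0 =: N \in \L^+$ for all $\alpha$ beyond some $\alpha_2^0$. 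This $N$ is the $\L$-valued replacement for the classical "eventually bounded" fact, and obtaining it cleanly is the main obstacle — everything else is bookkeeping.

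Next I would assemble the witnessing set for the conclusion. Using boundedness of $\phi$ with constant $M \in \L^+$, for indices past the fixed thresholds we have
\[
\norm{\phi(x_\alpha,y_\alpha) - \phi(x,y)}_Z \leq M N \norm{x_\alpha - x}_X + M \norm{x}_X \norm{y_\alpha - y}_Y.
\]
So I would define $\Eps \defeq MN\,\Eps_1 + M\norm{x}_X\,\Eps_2$. By \Cref{l:order_cont_mult}, $MN\,\Eps_1 \searrow 0$ and $M\norm{x}_X\,\Eps_2 \searrow 0$, and by \Cref{l:sup_additive} their sum satisfies $\Eps \searrow 0$. Given an arbitrary $\eps = MN\eps_1 + M\norm{x}_X \eps_2 \in \Eps$, choose indices $\alpha_1 \geq \alpha_1^0$ for $\eps_1$ (relative to $x_\alpha\to x$) and $\alpha_2 \geq \alpha_2^0$ for $\eps_2$ (relative to $y_\alpha \to y$), pick $\alpha_0 \geq \alpha_1, \alpha_2$ by directedness, and for $\alpha \geq \alpha_0$ the displayed estimate yields $\norm{\phi(x_\alpha,y_\alpha)-\phi(x,y)}_Z \leq MN\eps_1 + M\norm{x}_X\eps_2 = \eps$.

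One subtlety to flag: I should make sure the threshold indices $\alpha_1^0$, $\alpha_2^0$ used to define $N$ are folded into every subsequent index choice (i.e. always take $\alpha_1 \geq \alpha_1^0$ and $\alpha_2 \geq \alpha_2^0$), so the bound $\norm{y_\alpha}_Y \leq N$ is in force whenever we apply the estimate. With that care, the proof goes through verbatim in the style of \Cref{t:addition_continuous}, and joint continuity of $\phi$ follows.
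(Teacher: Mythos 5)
Your proof is correct and takes essentially the same approach as the paper: the same decomposition $\phi(x_\alpha,y_\alpha)-\phi(x,y)=\phi(x_\alpha-x,y_\alpha)+\phi(x,y_\alpha-y)$ and the same witnessing set $MN\,\Eps_1+M\norm{x}_X\,\Eps_2$ justified by \Cref{l:sup_additive} and \Cref{l:order_cont_mult}. The only (cosmetic) difference is that you derive the tail bound $N$ on $\norm{y_\alpha}_Y$ by hand via the triangle inequality, whereas the paper just cites \Cref{p:conv_net_bounded_tail}; note also that the threshold $\alpha_1^0$ for the net $(x_\alpha)$ plays no role in obtaining $N$, which depends only on $(y_\alpha)$.
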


\begin{proof}
Since $\phi$ is bounded, there exists an $M \in \L^+$ such that $\norm{\phi(x,y)}_Z \leq M \norm{x}_X \norm{y}_Y$ for all $x \in X$ and $y \in Y$. Suppose $x_\alpha \to x$ and $y_\alpha \to y$. Then there exist $\Eps_1, \Eps_2 \searrow 0$ such that for all $\eps_1 \in \Eps_1$ and for all $\eps_2 \in \Eps_2$ there exist $\alpha_1$ and $\alpha_2$ satisfying $\norm{x_\alpha - x}_X \leq \eps_1$ for all $\alpha \geq \alpha_1$ and $\norm{y_\alpha - y}_Y \leq \eps_2$ for all $\alpha \geq\alpha_2$.

By Proposition \ref{p:conv_net_bounded_tail}, there exists a $\lambda \in \L^+$ and an $\alpha_3$ such that $\norm{y_\alpha}_Y \leq \lambda$ for all $\alpha \geq \alpha_3$. By Lemmas \ref{l:sup_additive} and \ref{l:order_cont_mult}, $M \lambda \Eps_1 + M \norm{x}_X \Eps_2 \searrow 0$. Now, for any $\eps_1 \in \Eps_1$ and $\eps_2 \in \Eps_2$, choose $\alpha_1$ and $\alpha_2$ as above, and then let $\alpha_0 \geq \alpha_1, \alpha_2, \alpha_3$. Then, for $\alpha \geq \alpha_0$,
\begin{align*}
\norm{\phi(x_\alpha, y_\alpha) - \phi(x, y)}_Z
& \leq \norm{\phi(x_\alpha, y_\alpha) - \phi(x, y_\alpha)}_Z + \norm{\phi(x, y_\alpha) - \phi(x, y)}_Z \\
& = \norm{\phi(x_\alpha - x, y_\alpha)}_Z + \norm{\phi(x, y_\alpha - y)}_Z \\
& \leq M \norm{x_\alpha - x}_X \norm{y_\alpha}_Y + M \norm{x}_X \norm{y_\alpha - y}_Y \\
& \leq M \lambda \eps_1 + M \norm{x}_X \eps_2. \qedhere
\end{align*}
\end{proof}

Since scalar multiplication is a contractive $\L$-bilinear map, the previous result yields the following corollary.

\begin{corol}\label{c:Scalar mult is continuous}
Let $X$ be an $\L$-normed space. Then scalar multiplication $(\l,x) \mapsto \l x$ from $\L \times X$ to $X$ is jointly continuous.
\end{corol}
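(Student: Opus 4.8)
The plan is to recognise scalar multiplication $\phi\colon\L\times X\to X$, $\phi(\l,x)=\l x$, as a bounded $\L$-bilinear map and then invoke \Cref{t:bilinear_cont} with the roles of $X,Y,Z$ there played by $(\L,|\cdot|)$, $X$, and $X$ respectively (recall $(\L,|\cdot|)$ is itself an $\L$-normed space). The conclusion is then immediate.

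First I would verify $\L$-bilinearity. Linearity in the first slot says that for fixed $x\in X$ the map $\l\mapsto\l x$ is an $\L$-module homomorphism $\L\to X$, which is exactly the statement that $X$ is an $\L$-module: $(\l_1+\l_2)x=\l_1x+\l_2x$ and $(\mu\l)x=\mu(\l x)$. Linearity in the second slot requires, for fixed $\l\in\L$, that $x\mapsto\l x$ be $\L$-linear; additivity is clear, and $\L$-homogeneity amounts to $\l(\mu x)=\mu(\l x)$ for all $\mu\in\L$, $x\in X$. This follows from associativity of the module action together with commutativity of $\L$ (a Dedekind complete, hence Archimedean, unital $f$-algebra is commutative): $\l(\mu x)=(\l\mu)x=(\mu\l)x=\mu(\l x)$.

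Next I would check that $\phi$ is contractive. Since the $\L$-norm on the scalar factor $\L$ is its own modulus $|\cdot|$, absolute homogeneity of $\norm{\cdot}_X$ gives
\[
\norm{\phi(\l,x)}_X=\norm{\l x}_X=|\l|\,\norm{x}_X=\norm{\l}_\L\,\norm{x}_X
\]
for all $\l\in\L$ and $x\in X$, so $\phi$ is bounded with constant $M=1$, i.e.\ contractive, in particular bounded.

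Having established that $\phi$ is a bounded $\L$-bilinear map, \Cref{t:bilinear_cont} applies directly: if $\l_\alpha\to\l$ in $\L$ and $x_\alpha\to x$ in $X$, then $\l_\alpha x_\alpha=\phi(\l_\alpha,x_\alpha)\to\phi(\l,x)=\l x$, which is precisely the asserted joint continuity. I do not anticipate any genuine obstacle; the only points needing a moment's care are the appeal to commutativity of $\L$ for linearity in the second variable, and remembering that the relevant norm on the scalar factor is the modulus $|\cdot|$, so that the contractivity estimate holds with $M=1$ rather than some nontrivial element of $\L^+$.
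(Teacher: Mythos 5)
Your proposal is correct and matches the paper's argument exactly: the paper also obtains this corollary by observing that scalar multiplication is a contractive $\L$-bilinear map and applying \Cref{t:bilinear_cont}. Your additional verifications of bilinearity and of the contractivity estimate $\norm{\l x}=|\l|\norm{x}$ are fine but are left implicit in the paper.
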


If $\L^+ \ni \l_\alpha \to \l$, then $\l_\alpha = |\l_\alpha| \to |\l|$ and so $\l = |\l|$ which implies that $\L^+$ is closed. It now follows by the continuity of addition and scalar multiplication that sets of the form $\{ \lambda \in \L : \lambda \leq \mu \}$ and  $\{ \lambda \in \L : \lambda \geq \mu \}$ for $\mu \in \L_{\R}$ are closed. 



We now show that $\L$ is indeed an $\L$-Banach space. The proof is found in \cite[Proposition 9.10]{Netconvergence} and we reproduce it here for the convenience of the reader.

\begin{prop}\label{p: L is complete}
$\L$ is a complete $\L$-normed space. 
\end{prop}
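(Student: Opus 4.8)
The plan is to show that every Cauchy net $(x_\alpha)_{\alpha \in I}$ in $\L$ converges, using the fact that $\L$ is Dedekind complete and sits inside $C_\infty(K)$. First I would extract from the Cauchy hypothesis a set $\Eps \searrow 0$ together with, for each $\eps \in \Eps$, an index $\alpha(\eps) \in I$ such that $|x_\alpha - x_\beta| \leq \eps$ whenever $\alpha, \beta \geq \alpha(\eps)$. Replacing $\Eps$ by the downward-directed set of finite infima (as noted in the remark after the definition of completeness), I may assume $\Eps$ is itself directed downward with infimum $0$; this makes the bookkeeping cleaner since the $\alpha(\eps)$ can then be taken to increase as $\eps$ decreases.

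The candidate limit should be built by a $\limsup$/$\liminf$ construction. For each $\eps \in \Eps$, fix $\alpha(\eps)$ as above; by \Cref{p:conv_net_bounded_tail} the tail $\{x_\alpha : \alpha \geq \alpha(\eps_0)\}$ is order bounded for one fixed $\eps_0$, hence all further tails are order bounded, so by Dedekind completeness we may set $u_\eps \defeq \sup\{x_\alpha : \alpha \geq \alpha(\eps)\}$ and $v_\eps \defeq \inf\{x_\alpha : \alpha \geq \alpha(\eps)\}$ (working with the real part first and handling the complex case by treating real and imaginary parts separately, as elsewhere in the paper). From $|x_\alpha - x_\beta| \leq \eps$ for $\alpha,\beta \geq \alpha(\eps)$ one gets $x_\alpha \leq x_\beta + \eps$, and taking sup over $\alpha$ and inf over $\beta$ yields $0 \leq u_\eps - v_\eps \leq \eps$. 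As $\eps$ ranges over $\Eps$, the net $u_\eps$ decreases and $v_\eps$ increases (using directedness of $\Eps$ and monotonicity of $\alpha(\eps)$), and they are order bounded, so $x \defeq \inf_\eps u_\eps = \sup_\eps v_\eps$ exists in $\L$ — the two infima/suprema agreeing because $\inf_\eps(u_\eps - v_\eps) \leq \inf \Eps = 0$, invoking \Cref{l:sup_additive} to split the infimum of the difference. Then for $\alpha \geq \alpha(\eps)$ we have $v_\eps \leq x_\alpha \leq u_\eps$ and $v_\eps \leq x \leq u_\eps$, hence $|x_\alpha - x| \leq u_\eps - v_\eps \leq \eps$, which is exactly convergence $x_\alpha \to x$ witnessed by the same set $\Eps$.

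For the complex case, I would apply the above to $\Re x_\alpha$ and $\Im x_\alpha$ separately. One must check that $(\Re x_\alpha)$ and $(\Im x_\alpha)$ are again Cauchy, which is immediate since $|\Re x_\alpha - \Re x_\beta| \leq |x_\alpha - x_\beta|$ and similarly for the imaginary part (both following from the properties of the modulus in \Cref{r:real or complex}); combining the two real limits gives a complex limit $x$ with $|x_\alpha - x| \leq |\Re x_\alpha - \Re x| + |\Im x_\alpha - \Im x|$, and a set $\Eps' \searrow 0$ of the form $\Eps_1 + \Eps_2$ works by \Cref{l:sup_additive}.

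The main obstacle, and the only genuinely delicate point, is ensuring the suprema and infima defining $u_\eps, v_\eps$ actually exist in $\L$ — i.e., that the relevant tails are order bounded — and then that the "diagonal" quantities $\inf_\eps u_\eps$ and $\sup_\eps v_\eps$ coincide. The first is handled by \Cref{p:conv_net_bounded_tail} plus Dedekind completeness; the second reduces, via \Cref{l:sup_additive}, to the observation $\inf_\eps(u_\eps - v_\eps) \leq \inf_\eps \eps = 0$ together with $u_\eps - v_\eps \geq 0$. Everything else is a routine chase of inequalities, and no use of the representation $\L \subseteq C_\infty(K)$ is strictly needed, though one could alternatively argue pointwise on the dense open sets $U_\l$ if preferred.
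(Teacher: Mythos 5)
Your proposal is correct and follows essentially the same route as the paper's proof: both pass to a bounded tail via \Cref{p:conv_net_bounded_tail}, use Dedekind completeness to form the upper and lower envelopes of the tails (a $\limsup$/$\liminf$ construction), squeeze them together with the Cauchy condition and $\inf\Eps=0$, and verify convergence to the common value with the same set $\Eps$. The only cosmetic differences are that you index the envelopes by $\eps$ rather than by $\alpha$ and that you spell out the reduction of the complex case to the real one, which the paper leaves implicit.
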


\begin{proof}
Consider a Cauchy net $(\lambda_\alpha)$ in $\L$. By \Cref{p:conv_net_bounded_tail}, passing to a tail of the net if necessary, we may assume that for some $u\in \L^+$, $-u\leq\lambda_\alpha\leq u$ hold for all $\alpha$. By Dedekind completeness of $\L$, for every $\alpha$, the elements
\[
        \mu_\alpha \defeq \sup_{\beta \geq \alpha} \lambda_\beta \quad \text{and} \quad \nu_\alpha \defeq \inf_{\beta \geq \alpha} \lambda_\beta.
\]
are defined. Note that $(\mu_\alpha)$ is decreasing, $(\nu_\alpha)$ is increasing, and $-u\leq \nu_\alpha \leq \lambda_\alpha \leq \mu_\alpha\leq u$ for all $\alpha$.  Therefore
\[
        \mu \defeq \inf_\alpha  \mu_\alpha, \qquad \nu \defeq \sup_\alpha \nu_\alpha
\] 
exist in $\L$. We claim that $\mu=\nu$ and $\lambda_\alpha\to \mu$.  

By definition of a Cauchy net there exists $\Eps\searrow 0$ so that for every $\eps\in\Eps$ there exists $\alpha_\eps\in I$ so that $|\l_\alpha-\l_\beta|\leq \eps$ for all $\alpha,\beta\geq \alpha_\eps$.  Fix $\eps \in \Eps$.   Then, for all $\alpha,\beta \geq \alpha_\eps$,
\[
-\eps \leq \lambda_\alpha - \lambda_\beta \leq \eps.
\]
Therefore
\[
\mu_{\alpha_\eps} - \nu_{\alpha_\eps} = \sup_{\alpha, \beta \geq \alpha_\eps} (\lambda_\alpha - \lambda_\beta) \leq \eps.
\]
It now follows that
\[
\mu-\nu = \inf_{\alpha, \beta} (\mu_\alpha - \nu_\beta) \leq \inf_{\eps \in \Eps}(\mu_{\alpha_\eps} - \nu_{\alpha_\eps})  \leq \inf \Eps = 0
\]
so that $\mu \leq \nu$.  In the same way, $\nu \leq \mu$ so that $\mu = \nu$ as claimed.

It remains to show that $\lambda_\alpha \to \mu$.  To see that this is so, let $\eps\in\Eps$ and $\alpha,\beta \geq \alpha_\eps$.  We observe that
\[
\lambda_\alpha - \mu \leq \lambda_\alpha - \nu_\beta \leq \mu_\alpha - \nu_\beta \leq \mu_{\alpha_\eps} - \nu_{\beta_\eps} \leq \eps
\]
and
\[
\lambda_\alpha - \mu \geq \nu_\alpha -\mu \geq \nu_\alpha - \mu_\beta \geq \nu_{\alpha_\eps} - \mu_{\beta_\eps} \geq -\eps .
\]
Therefore
\[
|\lambda_\alpha - \mu| = (\lambda_\alpha - \mu) \vee (\mu-\lambda_\alpha) \leq \eps.
\]
Hence $\lambda_\alpha \to \mu$ as claimed.
\end{proof}

\begin{remark}
In the vector lattice literature, an Archimedean vector lattice is often called \emph{order complete} if every order Cauchy net is order convergent.  It is shown in \cite[Proposition 9.10]{Netconvergence} that an Archimedean vector lattice is order complete if and only if it is Dedekind complete. Proposition \ref{p: L is complete} is a special case of this result.

It is well known that for a compact Hausdorff space $K$, $C(K)$ is Dedekind complete if and only if $K$ is Stonean, see for instance \cite[Proposition 2.1.4]{Meyer-Nieberg1991}.  Therefore $C(K)$ is order complete if and only if it is Dedekind complete, if and only if $K$ is Stonean.  This equivalence is also established in \cite[Proposition~1.9]{edeko2023decomposition}.
\end{remark}

A \emph{step function} $\l \in \L$ is an $\C$-linear combination of disjoint idempotents.

\begin{theorem}[Freudenthal Spectral Theorem]\label{t:freudenthal} 

$\phantom{xx}$

\begin{enumerate}[(i)] 
\item Let $\l\in \L^+$. Then there exists a sequence $(\l_n)$ of step functions such that $\l_n \uparrow \l$. 
\item For any $\l \in \L$, there is a sequence $(\l_n)$ of step functions such that $\l_n \to \l$.
\end{enumerate}
\end{theorem}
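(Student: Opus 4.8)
The plan is to prove (i) by the classical dyadic (``Freudenthal'') construction, reading off all the inequalities we need from the representation $C(K)\subseteq\L\subseteq C_\infty(K)$ of \Cref{rem: Representation theory for L}, and then to deduce (ii) from (i) by splitting into positive and negative (and, in the complex case, real and imaginary) parts and invoking the continuity of the vector space operations from \Cref{t:addition_continuous} and \Cref{c:Scalar mult is continuous}.

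For (i), fix $\l\in\L^+$. For $n\in\N$ and $1\leq k\leq n2^n$, set $c_k^n\defeq k/2^n$ and let $\pi_k^n$ be the support idempotent $\pi_{(\l-c_k^n 1)^+}$, which by representation theory equals $\mathbf{1}_{\ol{\{\l > c_k^n\}}}$; thus $\pi_k^n\in C(K)\subseteq\L$ and, for fixed $n$, the $\pi_k^n$ decrease in $k$. Define $\l_n\defeq 2^{-n}\sum_{k=1}^{n2^n}\pi_k^n$; this is a finite linear combination of idempotents, hence (passing to the atoms of the finite Boolean algebra they generate) a step function. Three things then have to be checked. \emph{(a) $\l_n\leq\l$:} since $(\l-c1)^+$ and $(\l-c1)^-$ are disjoint and $\pi_{(\l-c1)^+}$ is the support of the former, $\pi_{(\l-c1)^+}(\l-c1)=(\l-c1)^+\geq 0$, so $\pi_k^n\l\geq c_k^n\pi_k^n$; writing $1$ as the sum of the disjoint idempotents $\delta_p\defeq\pi_p^n-\pi_{p+1}^n$ (with $\pi_0^n\defeq 1$, $0\leq p<n2^n$) and $\delta_{n2^n}\defeq\pi_{n2^n}^n$, and multiplying through by each $\delta_p$, gives $\delta_p\l_n=c_p^n\delta_p\leq\delta_p\l$, whence $\l_n\leq\l$ on summing. \emph{(b) $(\l_n)$ is increasing:} from $\{\l > c_k^n\}=\{\l > c_{2k}^{n+1}\}$ we get $\pi_k^n=\pi_{2k}^{n+1}$, and combining this with $\pi_{2k-1}^{n+1}\geq\pi_{2k}^{n+1}$ yields $\l_{n+1}\geq\l_n$ by a direct comparison of the two defining sums. \emph{(c) $\sup_n\l_n=\l$:} on $\delta_p$ one moreover has $\pi_{p+1}^n=0$, so $\delta_p\l\leq c_{p+1}^n\delta_p$, which sharpens (a) to $0\leq\l-\l_n\leq 2^{-n}1+(\l-n1)^+$; since $1$ is a weak order unit we have $\l\wedge n1\uparrow\l$ (immediate from representation theory), so $(\l-n1)^+=\l-\l\wedge n1\downarrow 0$, and hence $2^{-n}1+(\l-n1)^+\downarrow 0$ by \Cref{l:sup_additive}; therefore every upper bound of $(\l_n)$ dominates $\l$. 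Combining (b) and (c) gives $\l_n\uparrow\l$.

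For (ii), first suppose $\L$ is real and let $\l\in\L$. Apply (i) to $\l^+$ and to $\l^-$ to obtain step functions $\mu_n\uparrow\l^+$ and $\nu_n\uparrow\l^-$. Since the step functions are exactly the $\C$-linear span of $\idempotents$, the difference $\mu_n-\nu_n$ is again a step function; and since $\mu_n\to\l^+$ and $\nu_n\to\l^-$ in the $\L$-normed space $\L$, \Cref{t:addition_continuous} gives $\mu_n-\nu_n\to\l^+-\l^-=\l$. If $\L$ is complex, apply the real case to $\Re\l$ and $\Im\l$ to get step functions $\sigma_n\to\Re\l$ and $\tau_n\to\Im\l$; then $\sigma_n+i\tau_n$ is a step function, and it converges to $\Re\l+i\Im\l=\l$ by \Cref{t:addition_continuous} and \Cref{c:Scalar mult is continuous}.

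The routine part is everything except (c): the estimates in (a) and (b) are bookkeeping with disjoint idempotents, and (ii) is a formal consequence of (i) together with continuity of the operations. The real obstacle is (c) — controlling $\l$ on the region where it is large or takes the value $+\infty$. This is precisely what forces the truncation at level $n2^n$ in the construction and makes the two-term bound $0\leq\l-\l_n\leq 2^{-n}1+(\l-n1)^+$, with the Archimedean fact $(\l-n1)^+\downarrow 0$, the natural route to identifying $\sup_n\l_n$ with $\l$.
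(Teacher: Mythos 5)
Your proposal is correct, but it takes a different route from the paper for part (i): the paper gives no construction at all, instead citing Zaanen's Theorem 33.3, which establishes (i) under the strictly weaker hypothesis that the space is a Riesz space with the principal projection property (so neither Dedekind completeness, nor the $f$-algebra structure, nor the representation $C(K)\subseteq\L\subseteq C_\infty(K)$ is needed). Your dyadic construction is essentially the standard proof of that cited result, transplanted into the paper's framework, and I checked the three steps: (a) is correct (the identity $\pi_{(\l-c1)^+}(\l-c1)=(\l-c1)^+$ follows from the paper's remark that $\pi_\mu\nu=P_\mu(\nu)$, and the decomposition over the disjoint $\delta_p$ is sound); (b) is correct; and (c) is the genuinely nontrivial step, where your truncation at level $n2^n$ and the bound $0\leq\l-\l_n\leq 2^{-n}1+(\l-n1)^+$ correctly handle the fact that $\l$ need not lie in the ideal generated by $1$ (only in the band it generates) --- exactly the point that forces order convergence rather than uniform convergence. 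One small remark: \Cref{l:sup_additive} as stated concerns the sumset $A+B$, whereas you need $\inf_n(a_n+b_n)=0$ for the diagonal of two decreasing sequences; this follows because the diagonal is cofinal (from below) in the sumset, but it deserves a sentence. For part (ii) your argument is the same as the paper's, which decomposes $\l=\l_1-\l_2+i\l_3-i\l_4$ with $\l_i\in\L^+$ and applies (i) to each piece; your appeal to \Cref{t:addition_continuous} and \Cref{c:Scalar mult is continuous}, and your observation that the step functions form the $\C$-linear span of $\idempotents$ (re-expressing sums over the atoms of the finite Boolean subalgebra generated), fill in the details the paper leaves implicit. In short: the paper's approach buys brevity and greater generality; yours buys a self-contained proof that makes visible where the weak order unit and the Archimedean property enter.
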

The proof of (i) can be found in \cite[Theorem 33.3]{Zaanen}, under a weaker assumption of vector lattices with the principal projection property. Note that (ii) follows from (i) and the fact that any $\l \in \L$ can be written as $\l=\l_1-\l_2+i\l_3-i\l_4$, with $\l_i\in \L^+$ for all $i\in\{1,2,3,4\}.$

\subsection{Supports, separatedness, and normalisation}

Inspired by \eqref{e:def of support in L} we define, for $x$ in some $\L$-module X, the support $\pi_x$ of $x$ by
\begin{equation}\label{e:def of support in X}
    \pi_x \defeq \inf\{ \pi \in \idempotents \colon \pi x = x\}.
\end{equation}
If $X$ is an $\L$-normed space and $x\in X$, we may view $P_x \defeq \{ \pi \in \idempotents \colon \pi x = x\}$ as a decreasing net indexed by itself with reverse ordering, and then $P_x \to \pi_x$ in $\L$. It then follows by continuity of scalar multiplication that $\pi_x x = x$ and so $\pi_x = \min\{ \pi \in \idempotents \colon \pi x = x\}$ (if an $\L$-module is not normed, this equality might not hold).

\begin{lemma}\label{l:support_equals_support_of_norm}
Let $X$ be an $\L$-normed space. Then $\pi_x = \pi_{\norm{x}}$.
\end{lemma}
\begin{proof}
Let $\pi \in \P$ and $x \in X$. If $\pi x = x$, then $\pi \norm{x} = \norm{\pi x} = \norm{x}$. Conversely, if $\pi \norm{x} =  \norm{x}$, then $\norm{x - \pi x} = (1 - \pi) \norm{x} = \norm{x} - \pi \norm{x} = 0$ and so $x - \pi x = \bf{0}$, thus $\pi x = x$. The lemma now follows from the definition of the support.
\end{proof}

\begin{defn}\label{d:disjointness in vector spaces}
Let $X$ be an $\L$-module. We define $x,y \in X$ to be \emph{separated} if there exists a $\pi \in \idempotents$ with $\pi x = x$ and $\pi^c y = y$. 
\end{defn}
It is clear that if $\pi_x x = x$ and $\pi_y y = y$ (which always holds in $\L$-normed spaces), then $x$ and $y$ are separated if and only if $\pi_x \pi_y = 0$ (i.e., $x$ and $y$ have disjoint supports). Note that the $\L$-module definition of separatedness extends the notion of disjointness in $\L$.

A map $\sigma \colon X \to Y$ between $\L$-vector spaces is called \emph{$\P$-homogeneous} if $\sigma(\pi x) = \pi \sigma(x)$ for all $\pi \in \idempotents$ and $x \in X$. Important examples are $\L$-linear maps, seminorms, and sublinear maps (see \Cref{d:sublinear}).

\begin{lemma}\label{l:disjoint-implies-additive}
    Let $X$ and $Y$ be $\L$-modules and let $\sigma \colon X\to Y$ be $\P$-homogeneous. 
    \begin{enumerate}
        \item  $\pi_{\sigma(x)} \leq \pi_x$ (so $\sigma$ reduces supports).
        \item If $x$ and $y$ are separated, then $\sigma(x+y)=\sigma(x)+\sigma(y)$.
    \end{enumerate}
\end{lemma}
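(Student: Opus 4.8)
The plan is to prove both items directly from the defining property $\sigma(\pi x) = \pi\sigma(x)$ for $\pi \in \idempotents$, $x \in X$.

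For item (1), I would start from the defining relation for the support of $x$. If $\pi \in \idempotents$ satisfies $\pi x = x$, then applying idempotent homogeneity gives $\sigma(x) = \sigma(\pi x) = \pi\sigma(x)$, so $\pi$ belongs to the set $\{\pi \in \idempotents \colon \pi\sigma(x) = \sigma(x)\}$ whose infimum is $\pi_{\sigma(x)}$. Since this holds for every $\pi$ in the set defining $\pi_x$, taking infima over that set yields $\pi_{\sigma(x)} \leq \pi_x$. (Strictly speaking I should note that $\pi_x$ itself satisfies $\pi_x x = x$ only if we know that the minimum is attained, but here I only need that every $\pi$ with $\pi x = x$ lies above $\pi_{\sigma(x)}$, which forces $\pi_{\sigma(x)} \leq \inf\{\pi \colon \pi x = x\} = \pi_x$; no attainment is needed.)

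For item (2), suppose $x$ and $y$ are disjoint, so there is a $\pi \in \idempotents$ with $\pi x = x$ and $\pi^c y = y$, where $\pi^c = 1 - \pi$. Then I compute, using idempotent homogeneity and the fact that $\pi\pi^c = 0$ and $\pi + \pi^c = 1$:
\[
\sigma(x+y) = \sigma(\pi^c(x+y)) + \pi\sigma(x+y)
\]
is \emph{not} quite right since $\sigma$ need not be additive a priori; instead I should exploit that $\pi(x+y) = \pi x + \pi y = x + \pi y$ and $\pi^c(x+y) = \pi^c x + y$. The cleaner route: apply idempotent homogeneity to the element $x+y$ with the idempotent $\pi$ to get $\pi\sigma(x+y) = \sigma(\pi(x+y))$. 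Now $\pi(x+y) = \pi x + \pi y = x + \pi y$, but $\pi y = \pi\pi^c y = 0$, so $\pi(x+y) = x$, hence $\pi\sigma(x+y) = \sigma(x)$. Symmetrically, $\pi^c(x+y) = \pi^c x + \pi^c y = \pi^c x + y$, and $\pi^c x = \pi^c\pi x = 0$, so $\pi^c(x+y) = y$, giving $\pi^c\sigma(x+y) = \sigma(\pi^c(x+y)) = \sigma(y)$. Adding these two identities, $\sigma(x) + \sigma(y) = \pi\sigma(x+y) + \pi^c\sigma(x+y) = (\pi + \pi^c)\sigma(x+y) = \sigma(x+y)$, which is the claim.

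I do not anticipate a serious obstacle here; the only point requiring mild care is in item (2), namely recognizing that we should apply the homogeneity hypothesis to the combined element $x+y$ rather than trying to split $\sigma(x+y)$ additively from the outset, and then using the disjointness idempotent to annihilate the cross terms $\pi y$ and $\pi^c x$. In item (1) the subtlety is purely bookkeeping about infima versus minima of the support-defining sets, which the preceding discussion in the paper already addresses.
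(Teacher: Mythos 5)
Your proof is correct and follows essentially the same route as the paper: item (1) via the inclusion of the set $\{\pi \colon \pi x = x\}$ into $\{\pi \colon \pi\sigma(x)=\sigma(x)\}$ and taking infima, and item (2) via the decomposition $\sigma(x+y)=\pi\sigma(x+y)+\pi^c\sigma(x+y)$ together with $\pi(x+y)=x$ and $\pi^c(x+y)=y$. The only difference is cosmetic ordering of the computation in (2), and your remark that no attainment of the minimum is needed in (1) is accurate.
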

\begin{proof}
(i): Let $\pi \in \idempotents$ be such that $\pi x = x$, then $\pi \sigma(x) = \sigma(\pi x) = \sigma(x)$. Hence
\[
\{\pi \in \idempotents \colon \pi x = x\} \subseteq \{\pi \in \idempotents \colon \pi \sigma(x) = \sigma(x)\},
\]
and taking the infimum over $\pi \in \idempotents$ yields (i).

(ii): Let $\pi \in \idempotents$ be such that $\pi x =x$ and $\pi^c y = y$. Then $\pi^c x = 0$ and $\pi y = 0$, and so
\begin{align*}
        \sigma(x + y) &= \pi \sigma(x + y) + \pi^c \sigma(x + y)\\
                     &= \sigma( \pi x + \pi y ) + \sigma(\pi^c x + \pi^c y ) \\
                     &= \sigma(x) + \sigma(y). \qedhere
\end{align*}
\end{proof}

Inspired by \cite[Section~2.1]{edeko2023decomposition}, for an $\L$-normed space $X$, we call an element $x\in X$ \textit{normalised} if $\norm{x} \in \idempotents$, and if $X$ is an $\L$-Banach space, we define the \textit{normalisation} of $x\in X$ as 
\begin{align}\label{e: Defn of normalisation}
        n_x \defeq \lim_{n \to \infty} \frac{x}{\norm{x}+ n^{-1}}.
\end{align}
The existence of the limit in \eqref{e: Defn of normalisation} is demonstrated in the following lemma. 

\begin{lemma}\label{l: Normalisation exists in an L-Banach space}
Let $X$ be any $\L$-Banach space. For every $x\in X$, the normalisation $n_x$ exists.
\end{lemma}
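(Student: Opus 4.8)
The plan is to show that the sequence $y_n \defeq (\norm{x} + n^{-1})^{-1} x$, $n \in \N$, is Cauchy in $X$; since $X$ is an $\L$-Banach space it then converges, and its limit is by definition $n_x$. That each $y_n$ makes sense is immediate: $n^{-1} \le \norm{x} + n^{-1}$, so $(\norm{x} + n^{-1})^{-1} \in \L^+$ exists by \Cref{l:invertible}, and $y_n = (\norm{x}+n^{-1})^{-1}x \in X$.

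For $n \le m$, using $\L$-linearity and absolute homogeneity of the norm, together with representation theory (all elements involved commute and the identity below holds pointwise on the dense open set where $\norm{x}$ is finite),
\[
\norm{y_n - y_m} = \Big| (\norm{x} + n^{-1})^{-1} - (\norm{x} + m^{-1})^{-1} \Big|\, \norm{x} = \frac{(n^{-1} - m^{-1})\, \norm{x}}{(\norm{x} + n^{-1})(\norm{x} + m^{-1})}.
\]
Discarding the term $-m^{-1} \le 0$ in the numerator and using $\frac{\norm{x}}{\norm{x} + m^{-1}} \le 1$ gives $\norm{y_n - y_m} \le \frac{n^{-1}}{\norm{x} + n^{-1}}$. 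Since $\frac{n^{-1}}{\norm{x} + n^{-1}} = 1 - \frac{\norm{x}}{\norm{x} + n^{-1}}$ is decreasing in $n$ by \Cref{l:range_projection}, interchanging the roles of $n$ and $m$ shows that
\[
\norm{y_n - y_m} \le \frac{N^{-1}}{\norm{x} + N^{-1}} \qquad \text{whenever } n, m \ge N.
\]

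The one point needing care is that $\frac{N^{-1}}{\norm{x} + N^{-1}}$ does not decrease to $0$ in general (it decreases to $\pi_{\norm{x}}^c$), so I would sharpen the estimate by multiplying through by the support $\pi_{\norm{x}}$. As $\norm{x} = \pi_{\norm{x}} \norm{x}$ by \eqref{e:def of support in L}, the element $\norm{y_n - y_m}$ is unaffected by multiplication by $\pi_{\norm{x}}$, and a short computation using $\pi_{\norm{x}}\norm{x} = \norm{x}$ gives
\[
\norm{y_n - y_m} \le \pi_{\norm{x}} \frac{N^{-1}}{\norm{x} + N^{-1}} = \pi_{\norm{x}} - \frac{\norm{x}}{\norm{x} + N^{-1}} \qquad (n, m \ge N),
\]
and the right-hand side decreases to $0$ by \Cref{l:range_projection}; this is exactly \Cref{c:inf of weird fraction equals zero} applied to $\l = \norm{x}$. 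Hence $\Eps \defeq \big\{ \pi_{\norm{x}} - \frac{\norm{x}}{\norm{x} + N^{-1}} : N \in \N \big\}$ satisfies $\Eps \searrow 0$, and for the $N$-th element $\eps$ of $\Eps$ we have $\norm{y_n - y_m} \le \eps$ for all $n, m \ge N$; thus $(y_n)$ is Cauchy, hence convergent, and $n_x$ exists. I expect the bookkeeping around the support to be the only genuine obstacle: the naive bound $\frac{n^{-1}}{\norm{x} + n^{-1}}$ fails to tend to $0$, and recognising that $\pi_{\norm{x}} - \frac{\norm{x}}{\norm{x} + n^{-1}}$ is the correct replacement (via $\norm{x} = \pi_{\norm{x}} \norm{x}$ and \Cref{l:range_projection}) is what makes the argument close; everything else is a routine computation justified ``by representation theory''.
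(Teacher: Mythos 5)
Your proof is correct and rests on the same key fact as the paper's, namely that $\frac{\norm{x}}{\norm{x}+n^{-1}} \uparrow \pi_{\norm{x}}$ (\Cref{l:range_projection}) forces the sequence to be Cauchy in the $\L$-Banach space $X$. The paper reaches this slightly more quickly by noting the identity $\norm{\l_n x - \l_m x} = \abs{\l_n - \l_m}\,\norm{x} = \big|\l_n\norm{x} - \l_m\norm{x}\big|$ with $\l_n = (\norm{x}+n^{-1})^{-1}$, so that Cauchyness of the convergent (monotone, bounded) sequence $(\l_n\norm{x})$ in $\L$ transfers directly to $(\l_n x)$; your explicit estimate, including the multiplication by $\pi_{\norm{x}}$ to obtain a set genuinely decreasing to $0$, is the same argument carried out by hand and is valid.
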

\begin{proof}
Fix $x\in X$ and for every $n\in \N$ define $\lambda_n:=\frac{1}{\norm{x}+n^{-1}}$. By \Cref{l:range_projection}, the sequence $\left(\l_n\norm{x}\right)$ is convergent, hence Cauchy in $\L$. Since, for every $n,m\in \N$, we have
\[
\norm{\lambda_n x - \lambda_m x} = \left| \l_n - \l_m \right| \norm{x} = \big| \l_n \norm{x} - \l_m \norm{x} \big|,
\]
the sequence $\left(\l_n x\right)$ is also Cauchy, and therefore is convergent in $X$. 
\end{proof}

We record some basic properties of supports and normalisations in the following proposition, which largely follows \cite[Lemma~2.3]{edeko2023decomposition} but we have added more details. 

\begin{prop}\label{p: normalise}
Let $X$ be an $\L$-Banach space. For $x\in X$, we have
\begin{enumerate}
    \item $\dis{\norm{n_x} = \sup_{n\in \N} \frac{\norm{x}}{\norm{x}+n^{-1}}} = \pi_{\norm{x}} = \pi_x$; in particular, $n_x$ is normalised.
    \item $\norm{x} n_x = x$.
    \item If $\norm{x}$ is invertible, then $\dis{n_x = \frac{x}{\norm{x}}}$.   
    \item $x$ is normalised $\iff \norm{x}x = x \iff x = n_x \iff \pi_x = \norm{x}$.  
\end{enumerate}
\end{prop}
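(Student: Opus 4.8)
The plan is to fix $x\in X$, write $\lambda_n \defeq (\norm{x}+n^{-1})^{-1}\in\L^+$ so that $n_x = \lim_n \lambda_n x$ by \eqref{e: Defn of normalisation}, and to record at the outset the only structural fact about $\L$ that is really used: by \Cref{l:range_projection}, $\lambda_n\norm{x} = \frac{\norm{x}}{\norm{x}+n^{-1}}\uparrow \pi_{\norm{x}}$, and by \eqref{e:def of support in L} this supremum equals $\mathbf{1}_{\overline{\{\norm{x}>0\}}}$, which is idempotent and satisfies $\pi_{\norm{x}}\norm{x}=\norm{x}$. After that, everything is a matter of chaining the continuity results already established, so I would prove the parts strictly in the order (i)$\to$(ii)$\to$(iii)$\to$(v)$\to$(vi)$\to$(iv)$\to$(vii)$\to$(viii) so that each invokes only earlier ones.

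For (i) and (ii): continuity of the norm (\Cref{c:continuity_of_norm}) gives $\norm{n_x} = \lim_n\norm{\lambda_n x} = \lim_n \lambda_n\norm{x} = \sup_n \lambda_n\norm{x} = \pi_{\norm{x}}$, where I use that a bounded increasing net order-converges to its supremum and that limits in $\L$ are unique (\Cref{p: Limits in L-normed spaces are unique}). Since $\pi_x$ is \emph{defined} as $\norm{n_x}$, this is exactly (i); the identification $\pi_{\norm{x}} = \mathbf{1}_{\overline{\{\norm{x}>0\}}}$ and its idempotency give (ii) (so $n_x$ is normalised). Then (iii) is immediate: $\pi_x\norm{x} = \pi_{\norm{x}}\norm{x} = \norm{x}$.

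The step I expect to be the actual crux is (v), because this is the one place where the norm (rather than just the module structure) is genuinely needed, as the parenthetical remark before the proposition warns. Here I would combine (iii) with positive definiteness: $(1-\pi_x)\norm{x} = \norm{x}-\pi_x\norm{x} = 0$, hence $\norm{(1-\pi_x)x} = (1-\pi_x)\norm{x} = 0$, so $(1-\pi_x)x = \0$, i.e. $\pi_x x = x$. With (v) available, (vi) follows from joint continuity of scalar multiplication (\Cref{c:Scalar mult is continuous}): $\norm{x}n_x = \lim_n \norm{x}\lambda_n x = \big(\sup_n \lambda_n\norm{x}\big)x = \pi_x x = x$. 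For (iv), when $\norm{x}$ is invertible one checks by representation theory that $\{\norm{x}>0\}$ is dense, so $\pi_x = \pi_{\norm{x}} = 1$ and $\lambda_n\norm{x}\uparrow 1$; then $\norm{\lambda_n x - \norm{x}^{-1}x} = \big|\lambda_n\norm{x}-1\big| = 1-\lambda_n\norm{x}\downarrow 0$, so $\lambda_n x\to\norm{x}^{-1}x$ and uniqueness of limits gives $n_x = x/\norm{x}$. For (vii): (v) places $\pi_x$ in $\{\pi\in\idempotents\colon \pi x = x\}$, and if $\pi x = x$ then $\pi\norm{x} = \norm{\pi x} = \norm{x}$, so $\pi\geq\pi_{\norm{x}} = \pi_x$ by the minimality clause of \eqref{e:def of support in L}; hence $\pi_x$ is the minimum.

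Finally (viii) I would run as a cycle among the statements ``$\norm{x}\in\idempotents$'', ``$\pi_x = \norm{x}$'', ``$\norm{x}x = x$'', ``$x = n_x$''. If $\norm{x}$ is idempotent then $\pi_{\norm{x}} = \norm{x}$ (the support of an idempotent is itself), so $\pi_x = \norm{x}$; conversely $\pi_x$ is always idempotent by (ii), so $\pi_x = \norm{x}$ gives $\norm{x}\in\idempotents$. If $\pi_x = \norm{x}$ then $\norm{x}x = \pi_x x = x$ by (v); conversely $\norm{x}x = x$ forces $\norm{x} = \norm{\norm{x}x} = \norm{x}^2\in\idempotents$. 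And $x = n_x$ gives $\norm{x} = \norm{n_x} = \pi_x$; conversely, if $\pi_x = \norm{x}$, then (vi) reads $\pi_x n_x = x$, and since $\norm{n_x} = \pi_x$ is idempotent we get $\pi_{n_x} = \pi_x$, so $\pi_x n_x = n_x$ by (v) applied to $n_x$, whence $n_x = x$. The main obstacle is therefore not any single deep argument but the discipline of (a) the order-convergence bookkeeping (bounded increasing nets converge to their suprema in the $\L$-normed sense, limits are unique) and (b) respecting the dependency order so that (v) — the one step that truly exploits the norm — is in place before (vi), (vii) and (viii) call on it.
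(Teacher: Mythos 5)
Your proposal is correct and follows essentially the same route as the paper: (i)--(ii) via \Cref{l:range_projection} and continuity of the norm, (iii) from the support identity, (v) from $(1-\pi_x)\norm{x}=0$ plus positive definiteness, (vi) from continuity of scalar multiplication, and (vii)--(viii) by the same algebraic manipulations. The only differences are cosmetic — you spell out (iv) in more detail than the paper's ``immediate from the definition'' and organize the equivalences in (viii) around $\pi_x=\norm{x}$ as a hub rather than as a single cycle — and both are sound.
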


\begin{proof}
The statement in (i) follows from the continuity of the norm, \Cref{l:range_projection}, and \Cref{l:support_equals_support_of_norm}. 
The statement in (ii) follows from (i):
\[
    \norm{x} n_x  = \lim_{n\to \infty} \frac{\norm{x}}{\norm{x}+ n^{-1}}x = \pi_x x = x.
\]
The statement in (iii) follows immediately from (ii). We prove the equivalences stated in (iv). First, assume that $x\in X$ is normalised, i.e. $\norm{x} \in \idempotents$. Then by two applications of (ii), we have
\[
        \norm{x}x = \norm{x}\left( \norm{x}n_x\right) = \norm{x}^2 n_x = \norm{x}n_x = x.
\]Next, assume that $\norm{x}x = x$. Using (ii), this gives us
\[
        n_x = \lim_{n \to \infty} \frac{x}{\norm{x}+n^{-1}} =  \lim_{n \to \infty} \frac{\norm{x} x }{\norm{x}+n^{-1}} = \norm{x} \lim_{n \to \infty} \frac{x}{\norm{x}+n^{-1}} = \norm{x} n_x = x.  
\]
If $x = n_x$, then $\pi_x = \norm{n_x} = \norm{x}$ by (i). Lastly, if $\pi_x = \norm{x}$, then $\norm{x} = \pi_x \in \P$, i.e., $x$ is normalised.
\end{proof}

For the $\L$-Banach space $\L$, we can improve the result in \Cref{p: normalise}~(iii) by giving a pointwise characterisation of the normalisation of an element $\l \in \L$. Recall the definition of $\mu^U$ from \Cref{d:notation of taking a set power}.



\begin{lemma}\label{l: Pointwise characterisation of normalisation in L}
Let $\l \in \L$. Then 
\[
        n_\l = \left(\frac{\l}{|\l|} \right)^{\{ 0 < |\l| < \infty \}}.
\]
\end{lemma}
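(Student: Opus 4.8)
The plan is to work inside the representation $C(K) \subseteq \L \subseteq C_\infty(K)$ of \Cref{rem: Representation theory for L} and reduce the statement to \Cref{c:inf of weird fraction equals zero}. Put $U := \{0 < |\l| < \infty\}$ and $g := \left(\tfrac{\l}{|\l|}\right)^{U}$. Since $\l/|\l|$ is bounded (of modulus $1$) on the open set $U$, \Cref{d:notation of taking a set power} really does produce an element $g \in C(K) \subseteq \L$, with $|g| = \pi_\l$. Recalling that the $\L$-norm on $\L$ is $|\cdot|$, the normalisation \eqref{e: Defn of normalisation} reads $n_\l = \lim_n \tfrac{\l}{|\l| + n^{-1}}$, where each term $\tfrac{\l}{|\l| + n^{-1}} = \l \cdot (|\l| + n^{-1})^{-1}$ lies in $\L$ by \Cref{l:invertible} (as $|\l| + n^{-1} \geq n^{-1}$). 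So it suffices to prove $\tfrac{\l}{|\l| + n^{-1}} \to g$ in $\L$.

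The second step is to compute $\bigl| \tfrac{\l}{|\l| + n^{-1}} - g \bigr|$ pointwise by representation theory. On the dense open set $U$ we have $\l(\omega) \neq 0$ and
\[
\frac{\l(\omega)}{|\l(\omega)| + n^{-1}} - \frac{\l(\omega)}{|\l(\omega)|} = - \l(\omega) \, \frac{n^{-1}}{|\l(\omega)|\,(|\l(\omega)| + n^{-1})},
\]
whose modulus is $\tfrac{n^{-1}}{|\l(\omega)| + n^{-1}}$; on the clopen set $\ol{U}^c$ we have $\l(\omega) = 0$ (as recorded just after \Cref{d:notation of taking a set power}), so both $\tfrac{\l}{|\l| + n^{-1}}$ and $g$ vanish there. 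Hence
\[
\left| \frac{\l}{|\l| + n^{-1}} - g \right| = \left(\frac{n^{-1}}{|\l(\omega)| + n^{-1}}\right)^{U}.
\]

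For the last step, applying \Cref{c:inf of weird fraction equals zero} to $|\l| \in \L^+$ (in place of the $\l$ there) shows that this sequence decreases to $0$. Taking $\Eps$ to be exactly this sequence — so $\inf \Eps = 0$, and the $n$-th term serves as its own convergence witness — the definition of convergence yields $\tfrac{\l}{|\l| + n^{-1}} \to g$, whence $n_\l = g$, as desired.

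The only delicate point is the bookkeeping near the set $\{|\l| = \infty\}$ and on $\partial U$, where the expression $\tfrac{\l}{|\l| + n^{-1}}$ is not literally meaningful as an extended-real/complex value; but $U$ is dense in the clopen set $\ol{U}$ and an element of $C_\infty(K)$ is determined by its values on a dense open set, so the computation on $U$ together with vanishing on $\ol{U}^c$ pins down the difference completely, and there is no real obstacle. (One could instead avoid pointwise reasoning by multiplying through by $\pi_\l$ and invoking \Cref{p: normalise}~(iv) on an invertible perturbation of $|\l|$, but the route through \Cref{c:inf of weird fraction equals zero} is shorter.)
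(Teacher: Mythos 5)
Your proof is correct and follows essentially the same route as the paper's: both define $\mu = (\l/|\l|)^{U}$ with $U = \{0<|\l|<\infty\}$, verify pointwise that $\bigl|\tfrac{\l}{|\l|+n^{-1}} - \mu\bigr| = \bigl(\tfrac{n^{-1}}{|\l|+n^{-1}}\bigr)^{U}$, and conclude via \Cref{c:inf of weird fraction equals zero} that this decreases to $0$. Your version merely spells out the pointwise bookkeeping on $U$ and on $\ol{U}^c$ that the paper leaves implicit.
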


\begin{proof}
Denote $U \defeq \{ 0 < |\l| < \infty \}$ and define $\mu \defeq \left( \l/|\l|\right)^U$. To show that $n_\l = \mu$, it suffices to show that the sequence $s_n \defeq \l/\left( |\l| + n^{-1} \right)$ converges to $\mu$. For $n\in \N$, define $\mu_n \defeq \left( n^{-1}/ ( |\l| + n^{-1} ) \right)^U$. Since $\left| \mu - s_n \right| = \mu_n$ for every $n\in \N$ and $\mu_n \downarrow 0$ in $\L^+$ (\Cref{c:inf of weird fraction equals zero}), we conclude that $s_n \to \mu$.
\end{proof}


        

From \Cref{p: normalise}~(ii), for any $\l\in \L$, we have $|\l| n_{\l} = \l$. The pointwise characterisation of normalisations in \Cref{l: Pointwise characterisation of normalisation in L} gives the following complementary result. 

\begin{corol}\label{c: Rotation by scalar multiplication}
For any $\l \in \L$, we have $\l n_{\overline{\l}} = |\l|$.   
\end{corol}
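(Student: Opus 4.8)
The plan is to reduce everything to the pointwise description of the normalisation in \Cref{l: Pointwise characterisation of normalisation in L}. Applying that lemma to $\ol{\l}$ in place of $\l$, and noting that $|\ol{\l}| = |\l|$ so that the set $U \defeq \{0 < |\l| < \infty\} = \{0 < |\ol{\l}| < \infty\}$ is the same, I get
\[
n_{\ol{\l}} = \left( \frac{\ol{\l}}{|\l|} \right)^U.
\]

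Next I would multiply by $\l$. The key point is that the set power operation $(\cdot)^U$ of \Cref{d:notation of taking a set power} is multiplicative in the following sense: for $\mu \in C(U)$, the element $\l \mu^U$ agrees with $(\l|_U \cdot \mu)^U$, because $\l$ itself vanishes on $\ol{U}^c$ (this is exactly the observation recorded right after \Cref{d:notation of taking a set power}, which is why the paper insists $\l(\omega) = 0$ for $\omega \in \ol{U}^c$). Hence
\[
\l \, n_{\ol{\l}} = \l \left( \frac{\ol{\l}}{|\l|} \right)^U = \left( \frac{\l \ol{\l}}{|\l|} \right)^U = \left( \frac{|\l|^2}{|\l|} \right)^U = |\l|^U,
\]
using $\l\ol{\l} = |\l|^2$ on $U$. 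It then remains to observe that $|\l|^U = |\l|$: indeed $|\l|$ is continuous on $K$, finite precisely on $\{|\l| < \infty\}$, zero on $\ol{\{\l \neq 0\}}^c = \ol{U}^c$, and the extension-by-continuity-then-by-zero procedure recovers exactly $|\l|$. So $\l n_{\ol{\l}} = |\l|$.

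I expect the only real obstacle to be bookkeeping with the set power notation — making sure the multiplicativity step $\l \mu^U = (\l|_U \cdot \mu)^U$ is justified cleanly rather than hand-waved, and that the identification $|\l|^U = |\l|$ is stated precisely. Both are immediate from representation theory once one is careful that $\l$ vanishes off $\ol{U}$, so an alternative, perhaps cleaner, route is to verify the identity pointwise on the dense open set where all functions involved are finite: on $U$ both sides equal $|\l|$, and on $\ol{U}^c$ both sides vanish (since $\l = 0$ there), and by continuity this determines both sides on all of $K$. I would probably present this pointwise verification as the proof, citing \Cref{l: Pointwise characterisation of normalisation in L} for the formula for $n_{\ol{\l}}$.
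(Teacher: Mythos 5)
Your proposal is correct and follows the same route the paper intends: the paper states \Cref{c: Rotation by scalar multiplication} as an immediate consequence of \Cref{l: Pointwise characterisation of normalisation in L} without writing out the computation, and your pointwise verification (both sides equal $|\l|$ on $U=\{0<|\l|<\infty\}$ and vanish on $\ol{U}^c$ because $\l$ does, hence agree on a dense open set and so everywhere) is exactly the argument being left to the reader. No gaps.
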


\subsection{Realizing spacial support for $\L$-Banach spaces}

In this section we define what we mean by the support of an $\L$-vector space. Furthermore, we show that, in an $\L$-Banach space, its support is always realized by a normalized element in the space.

\begin{defn}
    Let $X$ be an $\L$-vector space. 
    We define the \emph{support of $X$} as  $\pi_{X}:=\sup\{ \pi_{x}\in \idempotents : x\in X\}.$ 
\end{defn}

The next lemma and proof are \cite[Lemma~2.4]{edeko2023decomposition}, where we added some more details to the proof.
\begin{lemma}
    \label{l:L-ban-support-is-realized}Let $X$ be an $\L$-Banach
    space. There exists a normalised element $x\in X$ so that $\norm x = \pi_{x}=\pi_{X}.$
\end{lemma}

\begin{proof}
    Define the set of normalised elements as $\mathcal{N}:=\{x\in X \colon \norm x \in \idempotents\}$.
    For $x,y\in\mathcal{N}$, we define $x\preceq y$ to mean $x=\pi_{x}y$.
    The relation $\preceq$ is a partial order. We
    aim to apply Zorn's Lemma to obtain a maximal element in $\mathcal{N}$.
    Let $\mathcal{K}$ be an arbitrary chain in $\mathcal{N}$ and define
    $\pi:=\sup\{\norm x \colon x\in\mathcal{K}\}$. Since $\idempotents$ is
    a complete Boolean algebra, we have that $\pi \in\idempotents$.
    The chain $\mathcal{K}$ is a directed set (with respect to $\preceq$)
    and we view $\mathcal{K} \subseteq X$ as a net indexed by itself.

    We claim that $\mathcal{K}$ is a Cauchy net and that the limit of
    this net is an upper bound (with respect to $\preceq$) for $\mathcal{K}$.
    Notice, for $a,x,y\in\mathcal{K}$ with $a\preceq x,y$, we have $a=\pi_{a}x$
    and $a=\pi_{a}y$, and that $\norm x,\norm y\leq \pi$. Therefore
    \begin{align*}
        \norm{x-y} & \leq\norm{x-a}+\norm{a-y}          \\
                   & =\norm{x-\pi_{a}x}+\norm{\pi_{a}y-y}   \\
                   & =(1-\pi_{a})\norm x+(1-\pi_{a})\norm y \\
                   & =(1-\pi_{a})(\norm x+\norm y)        \\
                   & \leq(1-\pi_{a})2\pi=2(\pi_{a}^c \pi)
    \end{align*}
    With 
    $\mathcal{L}:=\{a\in\mathcal{K}:\pi_{a}=\norm a\leq \pi\}$ ordered
    by $\preceq$, the net 
    $(\pi_{a}^{c}\pi)_{a\in\mathcal{L}}     \subseteq       \idempotents$,
    by definition of $\pi,$ decreases to zero in $\L$. 
    Therefore $\mathcal{K}\subseteq X$
    is a Cauchy net and, since $X$ is an $\L$-Banach space, converges
    to some $x_{0}\in X$. From
    $\norm{x_{0}}
        =\norm{\lim_{x\in\mathcal{K}}x}
        =\lim_{x\in\mathcal{K}}\norm x
        =\sup_{x\in \mathcal{K}}\pi_{x}
        =\pi$
    we have that $x_{0}\in\mathcal{N}$. 
    Furthermore, for $a\in\mathcal{K}$   we have     
    $\pi_{a}x_{0}        
    =\pi_{a}\lim_{x\in\mathcal{K}}x
    =\lim_{x\in\mathcal{K}}\pi_{a}x
    =\lim_{x\in\mathcal{K},a\preceq x}\pi_{a}x=a$,
    from which we see that $x_{0}$ is an upper bound for the chain $\mathcal{K}.$

    By Zorn's Lemma, $\mathcal{N}$ contains a maximal element (with respect
    to $\preceq$) which we denote $x\in\mathcal{N}$. Let $y\in X$ be
    arbitrary and define $z:=x+\pi_{x}^{c}n_{y}.$ Then, since
    \begin{align*}
        \norm z 
        &=\norm{x+\pi_{x}^{c}n_{y}} \\
        &=\norm{\pi_{x}x+\pi_{x}^{c}n_{y}}  \\
        & = \norm{\pi_{x}x} + \norm{\pi_{x}^{c}n_{y}} \\
        &=\pi_{x}\norm{x} + \pi_{x}^{c}\norm{n_{y}} \\
        &=\pi_{x}+\pi_{x}^{c}\norm{n_{y}}\in\idempotents,
    \end{align*}
    we have that $z\in\mathcal{N}$. 
    Also 
    $\pi_{x}z
    =\pi_{x}(x+\pi_{x}^{c}n_{y})
    =\pi_{x}x+\pi_{x}\pi_{x}^{c}n_{y}
    =\pi_{x}x
    = x$,
    so that $x\preceq z$, and by $\preceq$-maximality of $x$, we have
    $z=x$. This implies, for all $y\in X$, that 
    $0 = \norm{\pi_{x}^{c}n_{y}} = \pi_{x}^{c}\pi_{y}$,
    i.e., for all $y\in X$, $\pi_{y}\leq \pi_{x}.$ Therefore 
    $\pi_{x}=\sup\{\pi_{y}:y\in X\}=\pi_{X}.$
\end{proof}

\subsection{Spaces of bounded functions}

We record some examples of $\L$-Banach spaces which come from the classical theory. 

\begin{example}
Let $S$ be a non-empty set and let $Y$ be an $\L$-normed space (in particular, the reader should keep the important example of $Y = \L$ in mind). Denote by $\ell^\infty(S, Y)$ the set of functions $f \colon S \to Y$ such that there exists an $M \in \L^+$ with $\norm{f(s)}_Y \leq M$ for all $s \in S$. For $f \in \ell^\infty(S,Y)$ we define $\norm{f}_\infty \defeq \sup_{s \in S} \norm{f(s)}_Y$; this is well-defined by the Dedekind completeness of $\L$. For $f \in \ell^\infty(S,Y)$ and $\l \in \L$, by \Cref{l:order_cont_mult} we obtain
\[
\norm{\l f}_\infty = \sup_{s \in S} \norm{\l f(s)}_Y = \sup_{s \in S} |\l| \norm{f(s)}_Y = |\l| \sup_{s \in S} \norm{f(s)}_Y = |\l| \norm{f}_\infty
\]
and for $f,g \in \ell^\infty(S,Y)$ we have
\[ 
\norm{f+g}_\infty = \sup_{s \in S} \norm{f(s) + g(s)}_Y \leq \sup_{s \in S} (\norm{f(s)}_Y + \norm{g(s)}_Y) \leq \norm{f}_\infty + \norm{g}_\infty.
\]
Since $\norm{f}_\infty = 0$ clearly implies $f = \0$, it follows that $\ell^\infty(S,Y)$ is an $\L$-normed space.    
\end{example}

\begin{theorem}\label{t:completeness_bounded_functions}
    Let $S$ be a non-empty set and let $Y$ be an $\L$-Banach space. Then $\ell^\infty(S,Y)$ is an $\L$-Banach space.
\end{theorem}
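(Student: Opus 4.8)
The plan is to mimic the classical proof that a space of bounded functions into a Banach space is complete, but to be careful about the fact that convergence here is net convergence controlled by a set $\Eps \searrow 0$ rather than a metric. Let $(f_\alpha)_{\alpha \in I}$ be a Cauchy net in $\ell^\infty(S,Y)$. By definition there is a set $\Eps \searrow 0$ in $\L^+$ such that for every $\eps \in \Eps$ there is $\alpha_\eps \in I$ with $\norm{f_\alpha - f_\beta}_\infty \leq \eps$ for all $\alpha, \beta \geq \alpha_\eps$. Since $\norm{f_\alpha(s) - f_\beta(s)}_Y \leq \norm{f_\alpha - f_\beta}_\infty$ for every $s \in S$, the net $(f_\alpha(s))_\alpha$ is Cauchy in $Y$ \emph{with the same controlling set $\Eps$}; by completeness of $Y$ it converges to some $f(s) \in Y$. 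This defines the candidate limit $f \colon S \to Y$.

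Next I would show $f \in \ell^\infty(S,Y)$. By \Cref{p:conv_net_bounded_tail} the Cauchy net has a bounded tail, so fix $\eps_0 \in \Eps$ and a corresponding $\alpha_0$ and $M \in \L^+$ with $\norm{f_{\alpha_0}(s)}_Y \leq M$ for all $s$. For $\alpha \geq \alpha_0$ we get $\norm{f_\alpha(s)}_Y \leq M + \eps_0$ for all $s$, and then by the continuity of the norm (\Cref{c:continuity_of_norm}) together with the fact that $\{\lambda \in \L : \lambda \leq M + \eps_0\}$ is closed, passing to the limit in $\alpha$ gives $\norm{f(s)}_Y \leq M + \eps_0$ for every $s$; hence $f \in \ell^\infty(S,Y)$.

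Finally I would prove $f_\alpha \to f$ in $\ell^\infty(S,Y)$, with controlling set $2\Eps$ (which satisfies $2\Eps \searrow 0$ by \Cref{l:order_cont_mult}). Fix $\eps \in \Eps$ with corresponding $\alpha_\eps$. For $\alpha \geq \alpha_\eps$ and any $s \in S$, write $\norm{f_\alpha(s) - f(s)}_Y \leq \norm{f_\alpha(s) - f_\beta(s)}_Y + \norm{f_\beta(s) - f(s)}_Y \leq \eps + \norm{f_\beta(s) - f(s)}_Y$ for all $\beta \geq \alpha_\eps$; letting $\beta$ run and using that $f_\beta(s) \to f(s)$ in $Y$, one concludes $\norm{f_\alpha(s) - f(s)}_Y \leq \eps$ — more carefully, one uses that $\{\lambda : \lambda \leq \eps\}$ is closed and that $\norm{f_\alpha(s) - f_\beta(s)}_Y \to \norm{f_\alpha(s) - f(s)}_Y$ by continuity of the norm, so the inequality $\norm{f_\alpha(s) - f_\beta(s)}_Y \leq \eps$ passes to the limit. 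Taking the supremum over $s \in S$ yields $\norm{f_\alpha - f}_\infty \leq \eps \leq 2\eps$, so $f_\alpha \to f$.

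The main obstacle, and the point requiring the most care, is the limiting argument that turns the pointwise Cauchy estimates into the bound $\norm{f_\alpha(s) - f(s)}_Y \leq \eps$: in the classical setting one simply lets the second index tend to infinity inside a $\leq$, but here one must justify this via continuity of the norm and closedness of order intervals (both available from the excerpt), and one must ensure the \emph{same} set $\Eps$ works uniformly in $s$ so that taking $\sup_{s \in S}$ — legitimate by Dedekind completeness of $\L$ — does not destroy the estimate. Everything else is a routine transcription of the classical proof.
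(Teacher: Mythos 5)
Your proposal is correct and follows essentially the same route as the paper: pointwise limits via completeness of $Y$, the uniform estimate $\norm{f_\alpha(s)-f(s)}_Y\leq\eps$ obtained by letting $\beta$ run, and then a supremum over $s\in S$. The only cosmetic difference is in the limiting step: the paper's primary derivation shows $\norm{f_\alpha(s)-f(s)}_Y$ is a lower bound of $\eps+\Eps_s$ and takes $\inf(\eps+\Eps_s)=\eps$, but it explicitly records your argument (continuity of the norm plus closedness of $\{\l\in\L\colon\l\leq\eps\}$) as an equivalent alternative, so nothing is missing.
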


\begin{proof}   
Let $(f_\alpha)$ be a Cauchy net in $\ell^\infty(S, Y)$. Then there exists an $\Eps \searrow 0$ such that for all $\eps \in \Eps$ there exists an $\alpha_0$ satisfying
\[ 
        \alpha, \beta \geq \alpha_0 \Rightarrow \norm{f_\alpha - f_\beta}_\infty \leq \eps.
\]
So, for any $s \in S$ and for $\alpha, \beta \geq \alpha_0$ we have
\begin{equation}\label{e:est1}
        \norm{ f_\alpha(s) - f_\beta(s)}_Y \leq \norm{f_\alpha - f_\beta}_\infty \leq \eps; 
\end{equation}
therefore, $(f_\alpha(s))$ is Cauchy in $Y$. Since $Y$ is complete, $f_\alpha(s) \to f(s)$ for some $f \colon S \to Y$. Fix $\eps \in \Eps$ and $s\in S$. Since $f_\beta(s) \to f(s)$, there exists a set $\Eps_s \searrow 0$ such that, for all $\eta \in \Eps_s$, there is a $\beta_s$ such that $\norm{f_\beta(s) - f(s)} \leq \eta$ for all $\beta \geq \beta_s$. Now for $\eta \in \Eps_s$, choose $\beta \geq \beta_s, \alpha_0$, then for $\alpha \geq \alpha_0$,
\[
        \norm{f_\alpha(s) - f(s)}_Y \leq \norm{f_\alpha(s) - f_\beta(s)}_Y + \norm{f_\beta(s) - f(s)}_Y \leq \eps + \eta.
\] 
Hence $\norm{f_\alpha(s) - f(s)}_Y$ is a lower bound of $\eps + \Eps_s$ and so (still for $\alpha \geq \alpha_0$)
\begin{equation}\label{e:est2}
        \norm{f_\alpha(s) - f(s)}_Y \leq \inf( \eps + \Eps_s) = \eps. 
\end{equation}
Alternatively, \eqref{e:est2} can also be derived by taking the limit for $\beta$ in \eqref{e:est1}, using the continuity of the norm, and the fact that $\{\l \in \L \colon \l \leq \eps\}$ is closed.
    
Now $\norm{f(s)}_Y \leq \norm{f_{\alpha_0}(s) - f(s)}_Y + \norm{f_{\alpha_0}(s)}_Y$ shows that $f \in \ell^\infty(S,Y)$, and taking the supremum over $s \in S$ in \eqref{e:est2} yields that $\norm{f_\alpha - f}_\infty \leq \eps$ for all $\alpha \geq \alpha_0$ and so $f_\alpha \to f$ in $\ell^\infty(S,Y)$.
\end{proof}

Still considering a non-empty set $S$ and an $\L$-normed space $Y$, we define the subspace $c_0(S,Y)$ of $\ell^\infty(S,Y)$ as the set of functions $f \colon S \to Y$ for which there exists an $\Eps \searrow 0$ such that for all $\eps \in \Eps$, there is a cofinite subset $C \subseteq S$ with $\norm{f(s)}_Y \leq \eps$ for all $s \in C$.

\begin{prop}\label{p:c_0 closed}
Let $S$ be a non-empty set and $Y$ an $\L$-normed space. Then $c_0(S,Y)$ is a closed subspace of $\ell^\infty(S,Y)$. Moreover, if $Y$ is an $\L$-Banach space, then $c_0(S,Y)$ is an $\L$-Banach space.   
\end{prop}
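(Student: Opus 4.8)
The plan is to prove the two assertions separately, reusing the familiar pattern that a closed subspace of a complete space is complete. First I would verify that $c_0(S,Y)$ is an $\L$-submodule of $\ell^\infty(S,Y)$: closure under addition follows from the triangle inequality together with \Cref{l:sup_additive} (if $f,g\in c_0(S,Y)$ are controlled by $\Eps_1\searrow 0$ and $\Eps_2\searrow 0$, then $f+g$ is controlled by $\Eps_1+\Eps_2\searrow 0$, intersecting the two cofinite sets which is again cofinite), and closure under scalar multiplication by $\l\in\L$ follows from $\norm{\l f(s)}_Y=\abs{\l}\norm{f(s)}_Y\leq\abs{\l}\eps$ together with the fact that $\abs{\l}\Eps\searrow 0$ by \Cref{l:order_cont_mult}.

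Next I would show $c_0(S,Y)$ is closed in $\ell^\infty(S,Y)$. Suppose $c_0(S,Y)\ni f_\alpha\to f$ in $\ell^\infty(S,Y)$. Then there is $\Eps\searrow 0$ such that for each $\eps\in\Eps$ there is $\alpha_0$ with $\norm{f_\alpha-f}_\infty\leq\eps$ for $\alpha\geq\alpha_0$. Fix $\eps\in\Eps$, pick such an $\alpha_0$, and since $f_{\alpha_0}\in c_0(S,Y)$ choose $\Eps'\searrow 0$ witnessing this; for $\eta\in\Eps'$ get a cofinite $C\subseteq S$ with $\norm{f_{\alpha_0}(s)}_Y\leq\eta$ for $s\in C$. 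Then for $s\in C$,
\[
\norm{f(s)}_Y\leq\norm{f(s)-f_{\alpha_0}(s)}_Y+\norm{f_{\alpha_0}(s)}_Y\leq\eps+\eta.
\]
This shows $f$ is controlled by the set $\{\eps+\eta\colon\eps\in\Eps,\ \eta\in\Eps'_\eps\}$, and one checks this set has infimum $0$ (for each $\eps$, $\inf(\eps+\Eps'_\eps)=\eps$ by \Cref{l:sup_additive}, and $\inf\Eps=0$), so $f\in c_0(S,Y)$. A mild subtlety here is that $\Eps'$ depends on $\alpha_0$ which depends on $\eps$; this is harmless since we only need the resulting control set to have infimum zero, which it does.

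Finally, for the "moreover" part, assume $Y$ is an $\L$-Banach space, so $\ell^\infty(S,Y)$ is an $\L$-Banach space by \Cref{t:completeness_bounded_functions}. A Cauchy net $(f_\alpha)$ in $c_0(S,Y)$ is Cauchy in $\ell^\infty(S,Y)$, hence converges to some $f\in\ell^\infty(S,Y)$; since $c_0(S,Y)$ is closed, $f\in c_0(S,Y)$, and the convergence takes place in $c_0(S,Y)$ as well (the $\L$-norm on the subspace is the restriction). I expect the main obstacle to be purely bookkeeping: carefully tracking the control sets $\Eps$, which in the $\L$-setting depend on the net rather than being a fixed neighbourhood filter, and confirming at each step that the constructed sets still decrease to $0$ via \Cref{l:sup_additive} and \Cref{l:order_cont_mult}. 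There is no deep idea; the argument mirrors the classical one with the $\eps$-quantifiers replaced by the $\Eps\searrow 0$ formalism already developed.
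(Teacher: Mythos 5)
Your proof is correct and follows essentially the same route as the paper: the same diagonal construction of a control set $\{\eps+\eta\}$ built from the convergence control set and the per-index witnesses (with the same $\inf(\eps+\Eps'_\eps)=\eps$ computation via \Cref{l:sup_additive}), and the same reduction of completeness to closedness inside $\ell^\infty(S,Y)$. The explicit verification that $c_0(S,Y)$ is a submodule is a small addition the paper leaves implicit, but otherwise the arguments coincide.
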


\begin{proof}
Let $(f_\alpha)$ be a net in $c_0(S,Y)$ converging to $f \in \ell^\infty(S,Y)$. Since $f_\alpha \in c_0(S,Y)$, for each $\alpha$ there exists $\Eps_\alpha \searrow 0$ such that for all $\eps \in \Eps_\alpha$, there is a cofinite subset $C \subseteq S$ with $\norm{f_\alpha(s)}_Y \leq \eps$ for all $s \in C$. Since $f_\alpha \to f$, there exists $H \searrow 0$ such that for all $\eta \in H$, there exists $\alpha_\eta$ such that $\norm{f - f_\alpha}_\infty \leq \eta$ for all $\alpha \geq \alpha_\eta$. Define 
\[ 
\Eps \defeq \{ \eta + \eps \colon \eta \in H,\ \eps \in \Eps_{\alpha_\eta} \} = \bigcup_{\eta \in H} \left( \eta + \Eps_{\alpha_\eta} \right),
\]
then 
\[
\inf \Eps = \inf_{\eta \in H} \left[ \inf \left( \eta + \Eps_{\alpha_\eta} \right) \right] = \inf_{\eta \in H} \eta = 0.
\]
Now take $\eta \in H$ and $\eps \in \Eps_{\alpha_\eta}$, so that $\eta + \eps$ is an arbitrary element of $\Eps$. Let $C \subseteq S$ be a cofinite subset $C \subseteq S$ with $\norm{f_{\alpha_\eta}(s)}_Y \leq \eps$ for all $s \in C$. Then for all $s \in C$,
\[
\norm{f(s)}_Y \leq \norm{f(s) - f_{\alpha_\eta}(s)}_Y + \norm{f_{\alpha_\eta}(s)}_Y \leq \eta + \eps.
\]
Hence $f \in c_0(S,Y)$, as required. 

The final statement follows from the easily verifiable fact that a subspace of an $\L$-Banach space is closed if and only if it is complete.
\end{proof}

\subsection{Operators between $\L$-normed spaces}


\begin{defn}
Let $X$ and $Y$ be $\L$-normed spaces. A function $f \colon X \to Y$ is called \textit{uniformly continuous} if $x_\alpha - y_\alpha \to \0$ implies $f(x_\alpha) - f(y_\alpha) \to \0$. Furthermore, $f \colon X \to Y$ is called \textit{Lipschitz continuous} if there exists  $\lambda \in \L^+$ such that $\norm{f(x) - f(y)} \leq \lambda \norm{x - y}$ for all $x, y \in X$.
\end{defn}

If $\Eps \searrow 0$, then $\lambda \Eps \searrow 0$ for any $\lambda \in \L^+$. Using this fact, it can be shown that any Lipschitz continuous function is uniformly continuous.

\begin{defn}
Let $X$ and $Y$ be $\L$-normed spaces. An $\L$-linear operator $T \colon X \to Y$ is called \textit{bounded} if there exists a $M \in \L^+$ such that, for all $x \in X$, $\norm{Tx} \leq M \norm{x}$. In such a case we define the \textit{norm} of $T$ as
\[
        \norm{T} = \inf \{ M \in \L^+: \norm{Tx} \leq M \norm{x} \text{ for all } x \in X \}.
\]
\end{defn}

As usual, a bounded $\L$-linear operator $T \colon X \to Y$ is \emph{contractive} if $\norm{T} \leq 1$. The set of all bounded $\L$-linear operators from $X$ to $Y$ is denoted $B(X, Y)$. It is straightforward to verify that $B(X, Y)$ is an $\L$-module when equipped with the pointwise operations and that $\norm{\cdot}\colon B(X, Y) \to \L^+$ is indeed a norm in the sense of \Cref{defn: l-norm and l-seminorm}.

If we take $X = Y$, we denote the set of all bounded $\L$-linear operators $T\colon X\to X$ as $B(X)$. In the special case of $Y = \L$, we denote set of all bounded $\L$-linear operators $\varphi: X\to \L$ as $X^\ast$, which we call the \emph{dual space} of $X$. 

\begin{theorem}\label{t:op_cont_bound_equiv}
   Let $X$ and $Y$ be $\L$-normed spaces and $T \colon X \to Y$ linear. Then the following are equivalent:
   \begin{enumerate}
        \item $T$ is continuous;
        \item $T$ is continuous at $\0$;
        \item $T$ is bounded;
        \item $T$ is Lipschitz continuous;
        \item $T$ is uniformly continuous.
   \end{enumerate}
\end{theorem}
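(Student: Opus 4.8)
The plan is to prove the cycle of implications $(i)\Rightarrow(ii)\Rightarrow(iii)\Rightarrow(iv)\Rightarrow(v)\Rightarrow(i)$, which keeps each individual step short. The implications $(i)\Rightarrow(ii)$ and $(iv)\Rightarrow(v)$ are immediate: the first because continuity everywhere trivially includes continuity at $\0$, and the second because it was already remarked after the definition of Lipschitz continuity that a Lipschitz function is uniformly continuous (using that $\l\Eps\searrow 0$ whenever $\Eps\searrow 0$, by \Cref{l:order_cont_mult}).

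The crux of the argument is $(ii)\Rightarrow(iii)$. Here I would argue by contradiction in the spirit of the classical proof, but adapted to the order-theoretic setting. Suppose $T$ is continuous at $\0$ but unbounded. First I would translate continuity at $\0$ into a usable form: there is a set $\Eps\searrow 0$ such that for each $\eps\in\Eps$ there is a "$\delta$", which in this setting should be taken as an idempotent-scaled bound --- more precisely, one wants to say that for each $\eps\in\Eps$ there exists $\delta\in\L^+$ with $\delta$ invertible (or with large support) so that $\norm{x}\le\delta$ implies $\norm{Tx}\le\eps$. The non-topological nature of the convergence is exactly the obstacle: we cannot simply invoke neighbourhood bases. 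Instead, to exploit unboundedness I would use that if no $M\in\L^+$ works, then for every $n$ there is $x_n\in X$ with $\norm{T x_n}\not\le n\norm{x_n}$, and then pass to the idempotent $\pi_n$ on which the inequality $\norm{Tx_n} > n\norm{x_n}$ genuinely fails; replacing $x_n$ by $\pi_n x_n$ and rescaling by a suitable invertible element (using \Cref{l:approximate_with_invertibles} and \Cref{l:invertible}) one produces a sequence $y_n$ with $\norm{y_n}\to\0$ but $\norm{Ty_n}$ not converging to $\0$ --- contradicting continuity at $\0$. The bookkeeping with idempotents and the choice of the scaling factors is where the real work lies; one must be careful that $\norm{y_n}\le\eps$ for the appropriate $\eps\in\Eps$ and that $\pi_{y_n}$ does not collapse to $0$.

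The implication $(iii)\Rightarrow(iv)$ is then straightforward: if $\norm{Tx}\le M\norm{x}$ for all $x$, then by linearity $\norm{Tx-Ty}=\norm{T(x-y)}\le M\norm{x-y}$, so $T$ is Lipschitz with constant $M$. Finally $(v)\Rightarrow(i)$: given $x_\alpha\to x$, we have $x_\alpha - x \to \0$ (a one-line consequence of the definition, or of \Cref{t:addition_continuous}), and also the constant net $x$ satisfies $x-x\to\0$, so uniform continuity applied to the pair $(x_\alpha, x)$ (or $(x_\alpha - x, \0)$ after noting $f$ linear) gives $Tx_\alpha - Tx\to\0$, i.e. $Tx_\alpha\to Tx$; hence $T$ is continuous.

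I expect the main obstacle to be $(ii)\Rightarrow(iii)$, specifically formulating the right "localised" negation of boundedness: in the classical case one takes $\norm{x_n}\le 1/n^2$, $\norm{Tx_n}\ge 1/n$ and sums or rescales, but here the failure of $\norm{Tx}\le M\norm{x}$ for a given $M$ need only occur on a band, so one must localise via the support idempotents $\pi_n=\pi_{(\norm{Tx_n}-M\norm{x_n})^+}$ and work band-by-band, then glue the resulting data into a single counterexample net to continuity at $\0$. Getting the scaling right so that $\norm{y_n}$ is dominated by a prescribed element of some $\Eps\searrow 0$ while $\norm{Ty_n}$ stays bounded away from $0$ on a fixed nonzero band is the delicate point.
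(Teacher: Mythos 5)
Your cycle of implications is organised correctly and the steps $(i)\Rightarrow(ii)$, $(iii)\Rightarrow(iv)$, $(iv)\Rightarrow(v)$, $(v)\Rightarrow(i)$ are all fine and essentially identical to the paper's. The genuine gap is in $(ii)\Rightarrow(iii)$, and it is not merely the ``delicate bookkeeping'' you anticipate: the negation you propose to work with is too weak. The statement ``$T$ is unbounded'' means that for \emph{every} $M\in\L^+$ there is an $x$ with $\norm{Tx}\not\leq M\norm{x}$; extracting from this only the instances $M=n\cdot 1$, $n\in\N$, and trying to build a \emph{sequence} $(y_n)$ cannot work, because the condition ``for each $n$ there is $x_n$ with $\norm{Tx_n}\not\leq n\norm{x_n}$'' is satisfied by bounded operators. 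For example, take $\L=L^0$ and $T=m_\mu$ multiplication by an unbounded measurable function $\mu$: then $\norm{Tx}\leq|\mu|\norm{x}$, so $T$ is bounded and continuous, yet $x_n=\mathbf{1}_{\{|\mu|>n\}}$ violates $\norm{Tx_n}\leq n\norm{x_n}$ for every $n$. Hence no amount of rescaling or localisation by the idempotents $\pi_{(\norm{Tx_n}-n\norm{x_n})^+}$ can turn such a sequence into a witness of discontinuity at $\0$; the information you start from simply does not distinguish bounded from unbounded operators. Likewise, the preliminary step of recasting continuity at $\0$ in an ``$\eps$--$\delta$'' form with an invertible $\delta$ is not available (convergence here is not topological) and is not needed.

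The paper's proof of $(ii)\Rightarrow(iii)$ repairs exactly this point by quantifying over all of $\L^+$: it indexes a \emph{net} by the directed set $\N\times\L^+$, choosing for each $(n,\l)$ an element $y_{n,\l}\in B_X$ with $\norm{T(y_{n,\l})}\not\leq n\l$, and sets $x_{n,\l}:=n^{-1}y_{n,\l}$. Then $x_{n,\l}\to\0$ because $\norm{x_{n,\l}}\leq n^{-1}$, while for any $\l_0\in\L^+$ every tail of $(T(x_{n,\l}))$ contains terms with $\norm{T(x_{n,\l})}\not\leq\l_0$; so the image net has no bounded tail and cannot converge, by \Cref{p:conv_net_bounded_tail}. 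No band-by-band analysis or gluing is required. If you want to salvage your outline, replace the sequence indexed by $n$ with a net indexed by $\N\times\L^+$ (or by $\L^+$ alone, suitably directed) and replace ``$\norm{Ty_n}$ does not converge to $\0$ on a fixed nonzero band'' with the tail-boundedness criterion of \Cref{p:conv_net_bounded_tail}.
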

\begin{proof}

It is clear that (iv) $\Rightarrow$ (v) $\Rightarrow$ (i) $\Rightarrow$ (ii), and (iii) $\Rightarrow$ (iv) is immediate using linearity.

To show (ii) $\Rightarrow$ (iii), suppose $T$ is not bounded. Then, for each $(n, \lambda) \in \N \times \L^+$ (coordinatewise order), there exists a $y_{n, \lambda} \in B_X$ such that $\norm{T(y_{n, \lambda})} \not\leq n \lambda$. Define $x_{n, \lambda} = n^{-1}y_{n, \lambda}$,
then it is clear that $x_{n, \lambda} \to \0$ since $x_{n, \lambda} \in \frac{1}{n} B_X$. Let $\lambda_0 \in \L^+$. Then
\[ (n, \lambda) \geq (1, \lambda_0) \Rightarrow \norm{T(x_{n, \lambda})} \not\leq \lambda. \]
Any tail of $(T(y_{n, \lambda}))$ contains points indexed by $(n, \lambda) \geq (1, \lambda_0)$, so that tail is not bounded by $\lambda_0$. Since $\lambda_0$ was arbitrary, $(T(y_{n, \lambda}))$ has no bounded tails and therefore does not converge by Proposition \ref{p:conv_net_bounded_tail}. Hence $T$ is not continuous at $\0$.
\end{proof}

The fact that bounded operators are continuous is used in the following familiar alternative characterisation of the norm of an operator.

\begin{theorem}\label{t:op_norm_equiv}
Let $X$ and $Y$ be $\L$-normed spaces with $T \colon X \to Y$ a bounded $\L$-linear operator. Then
\[
        \norm{T} = \sup_{x \in B_X} \norm{Tx}.
\]
\end{theorem}

\begin{proof}
    For any $x \in B_X$, $ \norm{Tx} \leq \norm{T} \norm{x} \leq \norm{T}.$
    Therefore, $\sup_{x \in B_X} \norm{Tx} \leq \norm{T}$. For the other inequality, let $x \in X$. We first consider the case where $X$ is complete. Then $n_x \in B_X$ and so by \Cref{p: normalise}~(ii), 
    \begin{equation}\label{e:op_norm_equiv_estimate}
    \norm{Tx} = \norm{ T\left( \norm{x}n_x  \right) } = \norm{x} \norm{ T(n_x)} \leq \left( \sup_{x \in B_X} \norm{Tx} \right) \norm{x}.
    \end{equation}
    If $X$ is not complete, instead apply a similar argument to $\dis{x_n \defeq \frac{\norm{x}}{\norm{x} + n^{-1}} x}$ obtaining $\norm{Tx_n} \leq \sup_{x \in B_X} \norm{Tx} \norm{x}$, and since $x_n \to x$, taking the limit for $n \to \infty$ yields \eqref{e:op_norm_equiv_estimate}. Hence $\norm{T} \leq \sup_{x \in B_X} \norm{Tx}$.
\end{proof}

\begin{defn}
Let $X$ and $Y$ be $\L$-normed spaces. An $\L$-linear operator $T\in B(X, Y)$ is an \emph{isomorphism} if $T$ is bijective and both $T$ and $T^{-1}$ are bounded, and $T$ is an \emph{isometry} if $\norm{Tx} = \norm{x}$ for all $x \in X$. 
\end{defn}

An $\L$-linear operator $T\in B(X, Y)$ is an isometric isomorphism if and only if $T$ is bijective and both $T$ and $T^{-1}$ are contractive.

\begin{theorem}\label{t:Y-complete}
Consider $\L$-normed spaces $X$ and $Y$. If $\ Y$ is complete, then so is $B(X,Y)$. 
\end{theorem}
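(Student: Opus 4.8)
The plan is to mimic the classical proof that $B(X,Y)$ is complete whenever $Y$ is, adapting each step to the $\L$-normed setting where the role of an $\varepsilon$ is played by a set $\Eps \searrow 0$. So let $(T_\alpha)_{\alpha \in I}$ be a Cauchy net in $B(X,Y)$, witnessed by some $\Eps \searrow 0$: for each $\eps \in \Eps$ there is $\alpha_0$ with $\norm{T_\alpha - T_\beta} \le \eps$ for all $\alpha, \beta \ge \alpha_0$. First I would fix $x \in X$ and observe, using $\norm{(T_\alpha - T_\beta)x} \le \norm{T_\alpha - T_\beta}\norm{x} \le \eps \norm{x}$ together with the fact that $\eps \mapsto \eps\norm{x}$ ranges over a set $\Eps\norm{x} \searrow 0$ (by \Cref{l:order_cont_mult}), that $(T_\alpha x)$ is Cauchy in $Y$; since $Y$ is complete it converges to a point I call $Tx$. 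This defines a map $T \colon X \to Y$.

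Next I would check that $T$ is $\L$-linear. This is where continuity of the vector space operations does the work: for $x, x' \in X$ and $\l \in \L$, we have $T_\alpha(x + \l x') = T_\alpha x + \l T_\alpha x' \to Tx + \l T x'$ by \Cref{t:addition_continuous} and \Cref{c:Scalar mult is continuous}, while the left side converges to $T(x+\l x')$ by definition; uniqueness of limits (\Cref{p: Limits in L-normed spaces are unique}) gives $\L$-linearity.

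Then I would establish boundedness of $T$ and the convergence $T_\alpha \to T$ simultaneously, again by the standard device of letting $\beta \to \infty$ in the Cauchy estimate. Fix $\eps \in \Eps$ with corresponding $\alpha_0$. For $\alpha \ge \alpha_0$ and any $x \in X$, from $\norm{T_\alpha x - T_\beta x} \le \eps \norm{x}$ for all $\beta \ge \alpha_0$, letting $\beta \to \infty$ and using continuity of the norm (\Cref{c:continuity_of_norm}) together with the fact that $\{\l \in \L \colon \l \le \eps\norm{x}\}$ is closed (as noted after \Cref{c:Scalar mult is continuous}), I get $\norm{T_\alpha x - Tx} \le \eps \norm{x}$ for all $x$; hence $T_\alpha - T$ is bounded with $\norm{T_\alpha - T} \le \eps$. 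Since $B(X,Y)$ is an $\L$-vector space, $T = T_{\alpha_0} - (T_{\alpha_0} - T) \in B(X,Y)$. Finally, the inequality $\norm{T_\alpha - T} \le \eps$ holding for all $\alpha \ge \alpha_0$ and all $\eps \in \Eps$ is precisely the statement $T_\alpha \to T$ in $B(X,Y)$.

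I do not expect a genuine obstacle here; the only points requiring a little care (as opposed to the classical argument) are that every "$\eps$" must be replaced by a set $\Eps \searrow 0$ and that passing to the limit in an inequality is justified by closedness of the relevant order intervals in $\L$ rather than by a topological limit argument — but both of these have already been set up in the preceding results, so the proof is essentially a transcription of the classical one.
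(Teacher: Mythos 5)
Your proof is correct. The first two steps (defining $T$ as the pointwise limit and deducing $\L$-linearity from continuity of addition and scalar multiplication together with uniqueness of limits) coincide exactly with the paper's argument. Where you diverge is in the final step: the paper observes that $(T_\alpha|_{B_X})$ is a Cauchy net in $\ell^\infty(B_X, Y)$ and invokes the already-established completeness of that space (\Cref{t:completeness_bounded_functions}), then identifies the uniform limit with $T|_{B_X}$ and concludes via $\norm{S} = \sup_{x\in B_X}\norm{Sx}$ (\Cref{t:op_norm_equiv}). You instead carry out the classical direct estimate: pass to the limit in $\norm{T_\alpha x - T_\beta x} \leq \eps\norm{x}$ using continuity of the norm and closedness of the order interval $\{\l \colon \l \leq \eps\norm{x}\}$, obtaining $\norm{T_\alpha - T} \leq \eps$ in one stroke. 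Your route is self-contained and slightly more elementary, essentially inlining the limit-passing argument that the paper had already packaged inside the proof of \Cref{t:completeness_bounded_functions}; the paper's route is shorter on the page because it reuses that machinery, at the cost of an extra (easy but unstated) appeal to \Cref{t:op_norm_equiv} to convert uniform convergence on $B_X$ into operator-norm convergence. Both are valid, and the only point needing care in your version --- that passing to the limit in an inequality is justified by closedness of order intervals rather than by topology --- is exactly the point the paper itself flags after equation \eqref{e:est2}.
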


\begin{proof}
Let $(T_\alpha)$ be a Cauchy net in $B(X,Y)$. Then $(T_\alpha x)$ is Cauchy in $Y$ and hence converges to some $Tx \in Y$. The continuity of addition and scalar multiplication imply that $T \in L(X,Y)$. Note that $(T_\alpha|_{B_X})$ is a Cauchy net in $\ell^\infty(B_X, Y)$, which by \Cref{t:completeness_bounded_functions} converges uniformly, and it is clear that the uniform limit is also the pointwise limit $T|_{B_X}$. Thus $T \in B(X,Y)$ and $T_\alpha \to T$ in the operator norm.
\end{proof}

\begin{corol}
For every $\L$-normed space $X$, the dual space $X^\ast$ is an $\L$-Banach space.     
\end{corol}


The proof of the following proposition is the same as that of classical case.
\begin{prop}\label{p:operators_form_banach_algebra}
Let $X$, $Y$, and $Z$ be $\L$-normed spaces. If $T \in B(X, Y)$ and $S \in B(Y, Z)$ then $ST \in B(X, Z)$ and $\norm{ST} \leq \norm{S}\norm{T}$.
\end{prop}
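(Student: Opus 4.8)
The plan is to run the classical argument, the only $\L$-specific ingredients being that $\L^+$ is closed under multiplication (it is the positive cone of an $f$-algebra), that multiplication by a fixed positive element preserves inequalities, and that the pointwise operator bound $\norm{Tx}_Y \le \norm{T}\,\norm{x}_X$ holds. The last point is not isolated as a lemma above but is used in the proof of \Cref{t:op_norm_equiv}; I would first record it, noting that it follows from \Cref{l:order_cont_mult}: for fixed $x\in X$, the element $\norm{Tx}_Y$ is a lower bound of $\{\,M\norm{x}_X : M \in \L^+,\ \norm{Tx'}_Y \le M\norm{x'}_X \text{ for all } x'\in X\,\}$, and the infimum of that set is $\norm{x}_X\,\norm{T}$ by \Cref{l:order_cont_mult}. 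The same holds for $S$.

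With this in hand: $ST$ is $\L$-linear as a composition of $\L$-linear maps, and for every $x\in X$,
\[
\norm{STx}_Z \le \norm{S}\,\norm{Tx}_Y \le \norm{S}\,\norm{T}\,\norm{x}_X,
\]
where the first inequality is the pointwise bound for $S$ applied to $Tx$, and the second is obtained by multiplying the pointwise bound $\norm{Tx}_Y \le \norm{T}\,\norm{x}_X$ by the positive element $\norm{S}$. Since $\norm{S},\norm{T}\in\L^+$ and $\L^+$ is closed under multiplication, $M\defeq\norm{S}\,\norm{T}\in\L^+$, so this displayed estimate shows that $ST$ is bounded, i.e.\ $ST\in B(X,Z)$, and that $\norm{S}\,\norm{T}$ belongs to the set $\{\,M\in\L^+ : \norm{STx}_Z \le M\norm{x}_X \text{ for all }x\in X\,\}$ whose infimum defines $\norm{ST}$. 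Hence $\norm{ST}\le\norm{S}\,\norm{T}$.

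I do not expect a genuine obstacle here; this is essentially the classical proof. The only step that needs a word of care beyond the classical argument is the pointwise inequality $\norm{Tx}\le\norm{T}\norm{x}$, where order-continuity of multiplication by a positive element (\Cref{l:order_cont_mult}) is what makes the infimum defining the operator norm behave correctly; everything else is algebraic manipulation of inequalities in the ordered ring $\L$.
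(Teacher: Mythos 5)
Your proof is correct and is essentially the argument the paper has in mind: the paper gives no proof, stating only that it is the same as in the classical case, and your write-up is exactly that classical argument. Your one added point of care --- deriving the pointwise bound $\norm{Tx}\le\norm{T}\norm{x}$ from \Cref{l:order_cont_mult} rather than taking it for granted --- is a worthwhile observation, since this is the only step where the $\L$-valued infimum defining the operator norm needs justification.
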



From \Cref{t:Y-complete} and \Cref{p:operators_form_banach_algebra}, we obtain the following corollary. 
\begin{corol}
    If $X$ is an $\L$-normed space, then $B(X)$ is an $\L$-normed algebra. If additionally $X$ is complete, $B(X)$ is an $\L$-Banach algebra.
\end{corol}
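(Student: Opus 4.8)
The plan is to obtain both statements by specializing the two preceding results, \Cref{t:Y-complete} and \Cref{p:operators_form_banach_algebra}, to the case $Y = X$. First I would recall what is already in hand: $B(X) = B(X,X)$ is an $\L$-vector space under the pointwise operations, and $\norm{\cdot} \colon B(X) \to \L^+$ is an $\L$-norm, as recorded just before \Cref{t:op_norm_equiv}. So the only genuinely new structural point is that composition makes $B(X)$ into an $\L$-algebra, i.e. that $(S,T) \mapsto ST$ is an $\L$-bilinear map from $B(X) \times B(X)$ to $B(X)$. Additivity in each variable, $(S+S')T = ST + S'T$ and $S(T+T') = ST + ST'$, is immediate from the pointwise definition of addition, and $\L$-homogeneity follows from the $\L$-linearity of the operators: for $\l \in \L$ and $x \in X$ we have $((\l S)T)(x) = \l S(Tx) = (\l(ST))(x)$ and, using $\L$-linearity of $S$, $(S(\l T))(x) = S(\l Tx) = \l S(Tx) = (\l(ST))(x)$.

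Next, \Cref{p:operators_form_banach_algebra} applied with $Y = Z = X$ gives $ST \in B(X)$ and $\norm{ST} \leq \norm{S}\norm{T}$ for all $S, T \in B(X)$; this is exactly the submultiplicativity clause in the definition of an $\L$-normed algebra in \Cref{defn: l-norm and l-seminorm}. Combined with the $\L$-algebra structure from the previous paragraph, this shows that $B(X)$ is an $\L$-normed algebra.

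Finally, if $X$ is complete, then \Cref{t:Y-complete} with $Y = X$ shows that $B(X)$ is a complete $\L$-normed space. An $\L$-Banach algebra is by definition a complete $\L$-normed algebra, so $B(X)$ is an $\L$-Banach algebra.

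I do not expect any real obstacle: every analytic ingredient (completeness of $B(X,Y)$, submultiplicativity of the operator norm) has already been established, and what remains is the routine verification that composition is $\L$-bilinear. The one point to be mildly careful about is to invoke the $\L$-linearity (not merely additivity) of the operators when pulling a scalar $\l \in \L$ through a composition in the second argument, as done above.
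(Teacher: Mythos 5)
Your proof is correct and follows exactly the route the paper takes: the paper derives this corollary directly from \Cref{t:Y-complete} and \Cref{p:operators_form_banach_algebra} with $Y = Z = X$, giving no further detail. Your explicit check that composition is $\L$-bilinear (in particular pulling $\l$ through the second argument via $\L$-linearity of $S$) is a small but worthwhile addition that the paper leaves implicit.
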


\subsection{Convexity}

\begin{defn}\label{defn:l-convex}
    Let $X$ be an $\L$-vector space. A subset $C \subseteq X$ is called \emph{$\L$-convex} if $\l x + \mu y \in C$ for all $x,y \in C$ and $0 \leq \l , \mu \in \L$ such that $\l + \mu = 1$. $C$ is called \emph{$\idempotents$-convex} if $\pi x + \pi^c y \in C$ for all $x, y \in C$ and $\pi \in \idempotents$.
\end{defn}

In the classical case, any subset of a vector space is $\idempotents$-convex. The idea of the next proof is similar to the proof of \cite[Lemma~1.13]{edeko2023decomposition}.




\begin{lemma} \label{l:zero_distance_convex}
Let $X$ be an $\L$-normed space with $C \subseteq X$ $\idempotents$-convex and $x \in X$. Then there exists a net $(y_\alpha)$ in $C$ such that $\norm{y_\alpha - x} \downarrow \inf_{y \in C} \norm{y - x}$. In particular, if $\inf_{y \in C} \norm{y - x} = 0$ then $y_\alpha \to x$.
\end{lemma}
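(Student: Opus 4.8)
The plan is to exploit $\idempotents$-convexity as a \emph{pasting} device: given two points of $C$, one can build a third point of $C$ whose distance to $x$ is the infimum (in $\L$) of the two original distances. Iterating this over finite subsets of $C$ then produces the required decreasing net. Write $d \defeq \inf_{y \in C} \norm{y - x}$, which exists in $\L^+$ by Dedekind completeness (we may assume $C \neq \emptyset$, as otherwise the statement is vacuous).

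The core step is the following claim: for all $y_1, y_2 \in C$ there exists $y \in C$ with $\norm{y - x} = \norm{y_1 - x} \wedge \norm{y_2 - x}$. To prove it I would set $a \defeq \norm{y_1 - x}$ and $b \defeq \norm{y_2 - x}$ and take $\pi \defeq \pi_{(a - b)^+}$, the support of $(a-b)^+$. By representation theory $\pi = \mathbf{1}_{\overline{\{a > b\}}}$, whence $\pi b = \pi(a \wedge b)$ and $\pi^c a = \pi^c(a \wedge b)$, so that $\pi b + \pi^c a = a \wedge b$. Now put $y \defeq \pi y_2 + \pi^c y_1$, which lies in $C$ by $\idempotents$-convexity. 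The elements $\pi(y_2 - x)$ and $\pi^c(y_1 - x)$ are disjoint (witnessed by $\pi$ in the sense of \Cref{d:disjointness in vector spaces}), and $\norm{\cdot}$ is idempotent homogeneous, so \Cref{l:disjoint-implies-additive}(ii) gives $\norm{y - x} = \norm{\pi(y_2 - x)} + \norm{\pi^c(y_1 - x)} = \pi b + \pi^c a = a \wedge b$, as claimed.

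With the claim in hand, a straightforward induction on $|F|$ shows that for every finite nonempty $F \subseteq C$ there is $y_F \in C$ with $\norm{y_F - x} = \bigwedge_{y \in F} \norm{y - x}$. I would then let $I$ be the collection of finite nonempty subsets of $C$, directed by inclusion, and (invoking choice) fix such a $y_F$ for each $F \in I$. For $F \subseteq F'$ we get $\norm{y_{F'} - x} \leq \norm{y_F - x}$, so $(\norm{y_F - x})_{F \in I}$ is a decreasing net in $\L^+$; since the singletons $\{y\}$ occur among the indices, its infimum equals $\inf_{y \in C} \norm{y - x} = d$. Hence $\norm{y_F - x} \downarrow d$, which is the desired net. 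For the final assertion, if $d = 0$ then the set $\Eps \defeq \{ \norm{y_F - x} : F \in I \}$ satisfies $\Eps \searrow 0$, and for $\eps = \norm{y_{F_0} - x} \in \Eps$ every $F \supseteq F_0$ gives $\norm{y_F - x} \leq \eps$; so $y_F \to x$ straight from the definition of convergence.

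The main obstacle is the pasting claim, and within it the two bookkeeping points that make it work: choosing the idempotent $\pi$ so that $\pi b + \pi^c a = a \wedge b$, and checking that the two summands are disjoint so that \Cref{l:disjoint-implies-additive}(ii) applies. Once that is settled, the induction, the construction of the directed net, and the final convergence statement are all routine.
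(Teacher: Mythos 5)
Your proposal is correct and rests on the same key idea as the paper's proof: pasting two points of $C$ along an idempotent supported where one distance exceeds the other, so that the resulting point of $C$ realizes the infimum (in $\L$) of the two distances. The only difference is bookkeeping — the paper uses this pasting to show that $C$ itself is directed under the preorder ``closer to $x$'' and takes the net $(\norm{x-y})_{y\in C}$ indexed by $C$, whereas you index by finite subsets of $C$ with a choice function and an induction; both yield the same decreasing net.
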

\begin{proof}
Define $\preceq$ on $C$ by $y_1 \preceq y_2$ if and only if $\norm{x - y_2} \leq \norm{x - y_1}$. We show that this turns $C$ into a directed set. Let $\l = \norm{y - x}$ and $\mu = \norm{z - x}$ for some $y, z \in C$. Let $\pi := \pi_{(\l-\mu)^-}$. Set $v := \pi y + \pi^c z \in C$. We have
\begin{align*}
    \norm{x-v}&=\norm{\pi x +\pi^c x- \pi y- \pi^c z}\\
    &= \pi\norm{x-y} +\pi^c\norm{x- z}\\
    &= \pi\l +(1-\pi)\mu\\
    &= \pi(\l-\mu) + \mu\\
    &=(\l-\mu) \wedge 0 + \mu\\
    &=\l\wedge\mu.
\end{align*}
Thus, $y, z \preceq v$ and hence $C$ is directed. Now the decreasing net $(\norm{x-y})_{y \in C}$ converges to its infimum, completing the proof.
\end{proof}

We use this lemma to show that the closure of $\idempotents$-convex subsets equals the set of adherent points.

\begin{corol}\label{c:closure_equals_adherence}
Let $X$ be an $\L$-normed space with $C \subseteq X$ $\idempotents$-convex. Then, 
\[ \overline{C} = \{ x \in X: \text{there is a net } (x_\alpha) \text{ in } C \text{ such that } x_\alpha \to x \}. \]
\end{corol}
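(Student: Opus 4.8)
The plan is to prove the two inclusions separately. Write $D \defeq \{ x \in X : \text{there is a net } (x_\alpha) \text{ in } C \text{ such that } x_\alpha \to x\}$ for the set of adherent points. The inclusion $D \subseteq \overline{C}$ is immediate and does not use $\idempotents$-convexity: if $C \ni x_\alpha \to x$, then since $\overline{C}$ is closed and contains $C$, hence contains the net, the definition of a closed set gives $x \in \overline{C}$. So the content is in the reverse inclusion $\overline{C} \subseteq D$.

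For $\overline{C} \subseteq D$, the key idea is to show that $D$ is itself a closed set containing $C$; since $\overline{C}$ is the \emph{smallest} such set, this yields $\overline{C} \subseteq D$. That $C \subseteq D$ is trivial (take the constant net). To show $D$ is closed, suppose $D \ni x_\alpha \to x$; I must produce a net in $C$ converging to $x$. Here is where \Cref{l:zero_distance_convex} enters: for each $\alpha$, since $x_\alpha \in D$, there is a net in $C$ converging to $x_\alpha$, so $\inf_{y \in C} \norm{y - x_\alpha} = 0$. But actually I want to bound $\inf_{y \in C}\norm{y - x}$; by the reverse triangle inequality and the definition of $D$, for each $y \in C$ we have $\norm{y - x} \leq \norm{y - x_\alpha} + \norm{x_\alpha - x}$, and taking the infimum over $y \in C$ gives $\inf_{y \in C}\norm{y-x} \leq \inf_{y\in C}\norm{y - x_\alpha} + \norm{x_\alpha - x} = \norm{x_\alpha - x}$. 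Since $x_\alpha \to x$, the right-hand side can be made $\leq \eps$ for $\eps$ ranging over a set $\searrow 0$, eventually in $\alpha$; since the left-hand side is a fixed element of $\L^+$ not depending on $\alpha$, it is a lower bound of that $\Eps$, hence $\inf_{y\in C}\norm{y - x} = 0$. Now apply the "in particular" clause of \Cref{l:zero_distance_convex}: there is a net $(y_\beta)$ in $C$ with $y_\beta \to x$, so $x \in D$. Thus $D$ is closed.

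The main (and really only) obstacle is the bookkeeping with the sets $\Eps$: one has to be careful that "$\inf_{y\in C}\norm{y-x}$ is a lower bound for $\Eps$" is justified properly — namely, for each $\eps \in \Eps$ there is $\alpha_0$ with $\norm{x_\alpha - x} \leq \eps$ for $\alpha \geq \alpha_0$, and for any such $\alpha$ the displayed inequality gives $\inf_{y\in C}\norm{y-x} \leq \eps$; since this holds for every $\eps \in \Eps$ and $\inf \Eps = 0$, we conclude $\inf_{y\in C}\norm{y-x} \leq 0$, hence $= 0$ as it lies in $\L^+$. Everything else is routine. I would present it as: first dispatch $D \subseteq \overline{C}$ in one line, then prove $D$ closed via the above estimate plus \Cref{l:zero_distance_convex}, and conclude $\overline{C} \subseteq D$ by minimality of the closure.
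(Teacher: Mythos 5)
Your proposal is correct and follows essentially the same route as the paper: both reduce the problem to the triangle-inequality estimate $\inf_{y \in C}\norm{x-y} \leq \norm{x - x_\alpha}$ and then invoke the ``in particular'' clause of \Cref{l:zero_distance_convex}; the only cosmetic difference is that the paper proves closedness of the zero-distance set $N = \{z : \inf_{y\in C}\norm{z-y}=0\}$ and then passes to adherent points via the lemma, whereas you prove closedness of the adherence set $D$ directly. The two arguments are interchangeable.
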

\begin{proof}
Fix $x \in \overline{C}$. We want that $\inf_{y \in C} \norm{x - y} = 0$, as then there is a net $(x_\alpha)$ in $C$ with $x_\alpha \to x$ by \Cref{l:zero_distance_convex}. It suffices to show that $N \defeq \{z \in X: \inf_{y \in C} \norm{z - y} = 0 \}$ is closed since it clearly contains $C$ and must then contain $\overline{C}$. Suppose $(z_\alpha)$ is in $N$ with $z_\alpha \to z$. Then for any $y \in C$ and any $\alpha$,
\[ \norm{z - y} \leq \norm{z - z_\alpha} + \norm{z_\alpha - y}. \]
Taking infima over $y \in C$ yields $\inf_{y \in C} \norm{z - y} \leq \norm{z - z_\alpha}$ for each $\alpha$. Thus $z \in N$, making $N$ closed.

The reverse inclusion is easy to see as $\overline{C}$ would fail to be closed if it were false.
\end{proof}

Using \Cref{c:closure_equals_adherence}, the proof of the next proposition is as in the classical case. 
\begin{prop}
Let $X$ be an $\L$-normed space with dense subspace $X_0$ and let $Y$ be an $\L$-Banach space. Then each $T \in B(X_0, Y)$ has a unique extension to an element $\hat{T} \in B(X, Y)$.  If $T$ is isometric, then so is $\hat{T}$. The mapping $T \mapsto \hat{T}$ is an isometric isomorphism.
\end{prop}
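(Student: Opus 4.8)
The plan is to extend $T$ by continuity, using \Cref{c:closure_equals_adherence} applied to the $\idempotents$-convex (indeed $\L$-linear, hence $\L$-convex and $\idempotents$-convex) subspace $X_0$. First I would observe that since $X_0$ is dense in $X$, by \Cref{c:closure_equals_adherence} every $x \in X$ is the limit of a net $(x_\alpha)$ in $X_0$; fix such a net. The net $(x_\alpha)$ is convergent, hence Cauchy, and since $T$ is bounded (say $\norm{Tx'} \le M\norm{x'}$ for all $x' \in X_0$) and $M\Eps \searrow 0$ whenever $\Eps \searrow 0$ (by \Cref{l:order_cont_mult}), the net $(Tx_\alpha)$ is Cauchy in $Y$; by completeness of $Y$ it converges to some element which I would call $\hat{T}x$. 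The main obstacle is \textbf{well-definedness}: I must check $\hat{T}x$ is independent of the chosen net. For this, if $x_\alpha \to x$ and $x'_\beta \to x$ are two nets in $X_0$, one interleaves them (or uses \Cref{t:addition_continuous} together with $Tx_\alpha - Tx'_\beta = T(x_\alpha - x'_\beta)$ and boundedness of $T$) to see both limits agree. In particular, taking the constant net shows $\hat{T}|_{X_0} = T$.

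Next I would verify that $\hat{T}$ is $\L$-linear: given $x, y \in X$ with approximating nets $(x_\alpha)$, $(y_\alpha)$ in $X_0$ and $\l, \mu \in \L$, the net $(\l x_\alpha + \mu y_\alpha)$ lies in $X_0$ and converges to $\l x + \mu y$ by \Cref{t:addition_continuous} and \Cref{c:Scalar mult is continuous}; applying $T$ (which is $\L$-linear on $X_0$) and passing to the limit — again using joint continuity of addition and scalar multiplication in $Y$ — gives $\hat{T}(\l x + \mu y) = \l\hat{T}x + \mu\hat{T}y$. For boundedness, from $\norm{Tx_\alpha} \le M\norm{x_\alpha}$ for all $\alpha$, taking limits and using \Cref{c:continuity_of_norm} (continuity of the norm, so $\norm{Tx_\alpha} \to \norm{\hat{T}x}$ and $\norm{x_\alpha} \to \norm{x}$) together with the fact that $\{\l \in \L \colon \l \ge \0\}$-type sets, i.e.\ $\{\l \colon \l \le \mu\}$, are closed, yields $\norm{\hat{T}x} \le M\norm{x}$; hence $\hat{T} \in B(X,Y)$ with $\norm{\hat{T}} \le M$, and combined with $\hat{T}|_{X_0} = T$ this gives $\norm{\hat{T}} = \norm{T}$. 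Uniqueness of the extension follows since any two bounded (hence continuous) extensions agree on the dense set $X_0$ and therefore, by continuity and \Cref{p: Limits in L-normed spaces are unique}, on all of $X$.

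If $T$ is isometric, then $\norm{Tx_\alpha} = \norm{x_\alpha}$ for all $\alpha$, and passing to the limit via \Cref{c:continuity_of_norm} and uniqueness of limits in $\L$ gives $\norm{\hat{T}x} = \norm{x}$, so $\hat{T}$ is isometric. Finally, to see that $T \mapsto \hat{T}$ is an isometric isomorphism from $B(X_0, Y)$ onto $B(X,Y)$: it is clearly $\L$-linear (the extension of a sum/scalar multiple is the corresponding sum/scalar multiple of extensions, by the uniqueness just proved), it is injective since $\hat{T} = \0$ forces $T = \hat{T}|_{X_0} = \0$, and it is surjective because any $S \in B(X,Y)$ satisfies $\widehat{S|_{X_0}} = S$ (both are bounded extensions of $S|_{X_0}$, so they coincide by uniqueness). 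The norm identity $\norm{\hat{T}} = \norm{T}$ established above shows the map is isometric. I expect well-definedness of $\hat{T}$ to be the only genuinely delicate point; everything else is a routine transfer of the classical argument, carefully invoking the continuity results (Theorems~\ref{t:addition_continuous}, \ref{t:bilinear_cont}, Corollaries~\ref{c:continuity_of_norm}, \ref{c:Scalar mult is continuous}) that replace the topological arguments of the classical proof, and \Cref{c:closure_equals_adherence} which guarantees the dense subspace is reachable by nets.
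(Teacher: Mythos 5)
Your proposal is correct and follows exactly the route the paper intends: the paper gives no written proof beyond noting that, thanks to \Cref{c:closure_equals_adherence}, the classical extension-by-continuity argument goes through verbatim, which is precisely what you carry out. Your attention to well-definedness and to the closedness of sets of the form $\{\l \in \L \colon \l \le \mu\}$ when passing norm inequalities to the limit supplies the only genuinely non-classical details.
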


\subsection{Quotient spaces}

\begin{prop}\label{p:rho_seminorm}
    Let $X$ be an $\L$-normed space and let $Y \subseteq X$ be a submodule. Define $\rho \colon X \to \L^+$ by $\rho(x) := \inf \{\norm{x-y} \colon y \in Y\}$.
    Then $\rho$ is an $\L$-seminorm with kernel $\overline{Y}$.
\end{prop}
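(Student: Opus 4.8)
The plan is to verify the three seminorm axioms for $\rho$ directly and then identify its kernel. Throughout I will use \Cref{l:sup_additive} (in its infimum form) and \Cref{l:order_cont_mult} to manipulate infima over the subspace $Y$, which is the algebraic backbone of the argument.

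\textbf{Seminorm axioms.} For absolute homogeneity, fix $\l \in \L$ and $x \in X$. If $\l$ is invertible, then $y \mapsto \l y$ is a bijection of $Y$ onto itself, so $\rho(\l x) = \inf\{\norm{\l x - \l y'} : y' \in Y\} = \inf\{|\l|\norm{x - y'} : y' \in Y\} = |\l|\rho(x)$ using \Cref{l:order_cont_mult}. For general $\l$, I would approximate: by \Cref{l:approximate_with_invertibles} pick invertible $\l_n$ with $|\l - \l_n| = n^{-1}$; then $\norm{\l x - \l_n x} = n^{-1}\norm{x}$, and combining $|\rho(\l x) - \rho(\l_n x)| \le \norm{\l x - \l_n x}$ (from the reverse triangle inequality for $\rho$, which follows from subadditivity once that is established) with $|\l_n|\rho(x) \to |\l|\rho(x)$ gives the result; alternatively one can argue directly that $\rho$ is idempotent homogeneous and handle the invertible-approximation more carefully. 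For the triangle inequality, given $x_1, x_2 \in X$ and $y_1, y_2 \in Y$, we have $\norm{(x_1 + x_2) - (y_1 + y_2)} \le \norm{x_1 - y_1} + \norm{x_2 - y_2}$; taking the infimum over $y_1$ and then $y_2$ and invoking the infimum version of \Cref{l:sup_additive} (on $A = \{\norm{x_1 - y_1} : y_1 \in Y\}$ shifted appropriately) yields $\rho(x_1 + x_2) \le \rho(x_1) + \rho(x_2)$. Nonnegativity is immediate since $\rho$ maps into $\L^+$.

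\textbf{The kernel.} I must show $\{x : \rho(x) = 0\} = \overline{Y}$. The set $N := \{x \in X : \rho(x) = \inf_{y \in Y}\norm{x - y} = 0\}$ is exactly the set denoted $N$ in the proof of \Cref{c:closure_equals_adherence}, and since a subspace is in particular $\idempotents$-convex (it is closed under $\pi y_1 + \pi^c y_2$), that corollary's argument shows $N$ is closed and contains $Y$, hence $\overline{Y} \subseteq N$. Conversely, if $\rho(x) = 0$, then by \Cref{l:zero_distance_convex} applied to the $\idempotents$-convex set $Y$ there is a net $(y_\alpha)$ in $Y$ with $y_\alpha \to x$, so $x \in \overline{Y}$ by \Cref{c:closure_equals_adherence} (again using that $Y$ is $\idempotents$-convex). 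Thus $N = \overline{Y}$.

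\textbf{Main obstacle.} The routine parts are the triangle inequality and nonnegativity. The step requiring the most care is absolute homogeneity for non-invertible $\l$, since $y \mapsto \l y$ is then no longer a bijection of $Y$ and the naive computation $\inf\{|\l|\norm{x-y}\} = |\l|\inf\{\norm{x-y}\}$ via \Cref{l:order_cont_mult} does not directly give $\rho(\l x)$. Here the approximation by invertibles from \Cref{l:approximate_with_invertibles} together with continuity of the relevant maps is the right tool, and one should be slightly careful that this uses the already-established subadditivity of $\rho$ to get a Lipschitz-type estimate $|\rho(\l x) - \rho(\mu x)| \le \norm{(\l - \mu)x}$. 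The identification of the kernel is then essentially a direct citation of the machinery built in \Cref{l:zero_distance_convex} and \Cref{c:closure_equals_adherence}.
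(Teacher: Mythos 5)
Your proposal is correct and follows essentially the same route as the paper's proof: absolute homogeneity via approximation by invertibles from \Cref{l:approximate_with_invertibles} (using that $y \mapsto \l_n y$ is a bijection of $Y$ together with \Cref{l:order_cont_mult}, plus the Lipschitz-type estimate $|\rho(\l x)-\rho(\l_n x)| \le |\l - \l_n|\norm{x}$ and uniqueness of limits), the triangle inequality by elementary infimum manipulation, and the kernel identification by citing \Cref{l:zero_distance_convex} and \Cref{c:closure_equals_adherence} applied to the $\idempotents$-convex set $Y$. Your observation that the reverse-triangle estimate for $\rho$ can be derived directly from the norm's triangle inequality (rather than from $\rho$'s not-yet-established subadditivity) matches how the paper handles it.
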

\begin{proof}
    Let $\l \in \L$. By \Cref{l:approximate_with_invertibles}, there exists a sequence of invertible elements $\l_n \to \l$. Then
    \begin{align*}
        \rho(\l_n x) &= \inf \{ \norm{\l_n x - y} \colon y \in Y\} \\&= \inf \{ |\l_n| \norm{x - \l_n^{-1} y} \colon y \in Y\} \\
        &= |\l_n| \inf \{ \norm{x -  y} \colon y \in Y\} \\&= |\l_n| \rho(x) \to |\l| \rho(x). 
    \end{align*}
    On the other hand, for $y \in Y$, $\norm{\l x - y} \leq \norm{\l_n x - y} + |\l - \l_n| \norm{x}$ and $\norm{\l_n x - y} \leq \norm{\l x - y} + |\l_n - \l| \norm{x}$. It follows that \[
    \rho(\l x) \leq \rho( \l_n x) + |\l-\l_n| \norm{x} \quad \mbox{and} \quad \rho(\l_n x) \leq \rho( \l x) + |\l_n-\l| \norm{x},
    \]
    hence $|\rho(\l x) - \rho(\l_n x)| \leq |\l - \l_n| \norm{x} \to 0$, and so $\rho(\l_n x) \to \rho(\l x)$, thus by uniqueness of limits, $\rho(\l x) = |\l| \rho(x)$.
    Let $x, z \in X$ and $y \in Y$, then
    \[
    \norm{(x+z) - y} \leq \norm{x - {\textstyle \frac{1}{2}}y} + \norm{z - {\textstyle \frac{1}{2}}y}
    \]
    and so $\rho(x+z) \leq \rho(x) + \rho(z)$, showing that $\rho$ is a seminorm.

    By \Cref{l:zero_distance_convex} and \Cref{c:closure_equals_adherence}, $x \in \ker(\rho)$ if and only if there is a net $(y_\alpha)$ in $Y$ such that $y_\alpha \to x$ if and only if $x \in \overline{Y}$.   
\end{proof}
The next corollary is now immediate.
\begin{corol}
    Let $X$ be an $\L$-normed space and let $Y \subseteq X$ be a submodule. On $X/Y$, define $\rho(x + Y) := \inf \{ \norm{x-y} \colon y \in Y\}$. Then $(X/Y, \rho)$ is an $\L$-seminormed space. Moreover, $X/Y$ is an $\L$-normed space if and only if $Y$ is closed.
\end{corol}

\section{Spaces of $p$-summable functions}\label{s:l^p spaces}

The following lemma is easy but we have not been able to find a reference for it.
\begin{lemma}\label{l:r inequality}
Let $t, s \in \R^+$ and $0 \leq r \leq 1$. Then $|t^r-s^r| \leq |t-s|^r$.
\end{lemma}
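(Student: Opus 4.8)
The plan is to reduce to the case $s,t>0$ and then use a scaling argument. By symmetry we may assume $0 \le s \le t$; the case $s=0$ is immediate since then the claimed inequality reads $t^r \le t^r$. So assume $0 < s \le t$. Dividing through by $s^r$ (equivalently, setting $u \defeq t/s \ge 1$), the inequality $|t^r - s^r| \le |t-s|^r$ becomes $u^r - 1 \le (u-1)^r$, so it suffices to prove
\[
u^r - 1 \le (u - 1)^r \qquad \text{for all } u \ge 1 \text{ and } 0 \le r \le 1.
\]

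To establish this one-variable inequality, I would set $v \defeq u - 1 \ge 0$ and show $(1+v)^r \le 1 + v^r$. The cleanest route is subadditivity of the map $w \mapsto w^r$ on $\R^+$ for $0 \le r \le 1$: for any $a, b \ge 0$ one has $(a+b)^r \le a^r + b^r$. Applying this with $a = 1$ and $b = v$ gives exactly $(1+v)^r \le 1 + v^r$. The subadditivity itself follows by a short argument: if $a+b = 0$ it is trivial, and otherwise divide by $(a+b)^r$ to reduce to showing $\alpha^r + \beta^r \ge 1$ whenever $\alpha, \beta \ge 0$ with $\alpha + \beta = 1$; since $0 \le \alpha \le 1$ we have $\alpha^r \ge \alpha$ (as $w^r \ge w$ on $[0,1]$ when $r \le 1$), and similarly $\beta^r \ge \beta$, so $\alpha^r + \beta^r \ge \alpha + \beta = 1$. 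The boundary cases $r = 0$ (where $t^0 = s^0 = 1$, so the left side is $0$) and $r = 1$ (trivial) can be checked separately if one prefers to keep the division steps clean, though they are also covered by the convention $t^0 = 1$.

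The only genuine point requiring care is the inequality $w^r \ge w$ for $w \in [0,1]$ and $0 \le r \le 1$: this holds because $w^r = w \cdot w^{r-1}$ and $w^{r-1} \ge 1$ since $r - 1 \le 0$ and $0 < w \le 1$ (with the case $w = 0$ trivial). There is no real obstacle here — the whole lemma is an elementary real-variable fact — so the main "work" is simply organizing the reductions (symmetry, then scaling by $s^r$, then the substitution $v = u-1$) so that each step is clearly justified and the degenerate cases $s = 0$ and $a + b = 0$ are not glossed over.
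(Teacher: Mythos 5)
Your proof is correct and, at its core, is the same argument as the paper's: after normalizing (you divide by $s^r$, the paper by $t^r$), both proofs come down to $\alpha^r+\beta^r\ge 1$ for $\alpha+\beta=1$, which follows from $w^r\ge w$ on $[0,1]$. You package this as subadditivity of $w\mapsto w^r$ while the paper writes it as a direct quotient estimate, but the content is identical and your handling of the degenerate cases is fine.
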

\begin{proof}
    Suppose $0 < s < t$ (all other cases are trivial). Then $1 - (s/t)^r \leq 1 - s/t$ and $(1 - s/t)^r \geq 1 - s/t$, so
    \[
    \frac{t^r-s^r}{(t-s)^r} = \frac{1 - (\frac{s}{t})^r}{ \left( 1 - \frac{s}{t} \right)^r  } \leq \frac{1 - \frac{s}{t}}{1 - \frac{s}{t}} = 1. \qedhere
    \]
\end{proof}

\begin{lemma}\label{l:p power continuous}
Let $0 < p < \infty$. Then
\begin{enumerate}
    \item If $\Eps \searrow 0$, then $\Eps^p \searrow 0$;
    \item The map $\lambda \mapsto \lambda^p$ from $\L^+$ to $\L^+$ is continuous.
\end{enumerate}
\end{lemma}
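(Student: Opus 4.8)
The plan is to reduce everything to a single pointwise statement via representation theory, namely that $t \mapsto t^p$ is uniformly continuous on bounded subsets of $\R^+$ with a modulus that behaves well, and then feed this through the definitions of $\searrow 0$ and of convergence in the $\L$-normed space $\L$.

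For part (i): Given $\Eps \searrow 0$, I first want to reduce to the directed case. Replacing $\Eps$ by the set of finite infima does not change the set of lower bounds, and by \Cref{l:order_cont_mult}-type reasoning $x\mapsto x^p$ is order preserving, so it suffices to show that if a downward directed net $\eps_\alpha \downarrow 0$ in $\L^+$ then $\eps_\alpha^p \downarrow 0$. Monotonicity of $x \mapsto x^p$ gives that $(\eps_\alpha^p)$ is decreasing; let $\nu \in \L^+$ be its infimum. To see $\nu = 0$, split into the cases $0 < p \le 1$ and $p \ge 1$. For $0 < p \le 1$, write $p = r$ with $0 \le r \le 1$ and apply \Cref{l:r inequality} pointwise (using representation theory on the dense set where all functions involved are finite): $\eps_\alpha^p = |\eps_\alpha^r - 0| \le |\eps_\alpha - 0|^r = \eps_\alpha^r$, wait — more usefully, I want to bound $\eps_\alpha^p$ above by something tending to $0$; since $\eps_\alpha \downarrow 0$, fix any $\alpha_0$ and note $0 \le \eps_\alpha^p \le \eps_{\alpha_0}^{p - 1}\eps_\alpha$ once $p \ge 1$ (pointwise, valid where $\eps_\alpha$ is finite), and by \Cref{l:order_cont_mult} the right side decreases to $\eps_{\alpha_0}^{p-1}\cdot 0 = 0$. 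For $0 < p < 1$ the cleanest route is: $\eps_\alpha^p \le (\eps_{\alpha_0})^p \vee 1$ eventually, so $(\eps_\alpha^p)$ has an order bounded tail, and then on the dense open set $U$ where $\eps_{\alpha_0}$ is finite one has $\eps_\alpha(\omega)^p \downarrow 0$ pointwise by continuity of $t\mapsto t^p$ on $[0,\infty)$; since $\L$ is order dense in $C_\infty(K)$ and the pointwise infimum of a bounded decreasing net equals the infimum in $\L$, we get $\nu = 0$. Actually both cases can be unified by the pointwise argument alone: once an order bounded tail is secured, the infimum in $\L$ is computed pointwise on $U$, and pointwise $t^p$ is continuous at $0$, so $\nu(\omega) = 0$ for all $\omega \in U$, hence $\nu = 0$ by order density. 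So part (i) really comes down to (a) securing an order bounded tail and (b) the pointwise remark.

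For part (ii): Let $\lambda_\alpha \to \lambda$ in $\L^+$, so there is $\Eps \searrow 0$ with $\norm{\lambda_\alpha - \lambda}=|\lambda_\alpha - \lambda| \le \eps$ eventually for each $\eps \in \Eps$. By \Cref{p:conv_net_bounded_tail} the net has a bounded tail, say $\lambda_\alpha \le \mu$ for $\alpha \ge \alpha_1$; also $\lambda \le \mu$. I want to estimate $|\lambda_\alpha^p - \lambda^p|$. Split $p = n + r$ with $n \in \N_0$, $0 \le r < 1$. Using the algebraic identity $a^{n+r} - b^{n+r} = (a^n - b^n)a^r + b^n(a^r - b^r)$ together with $|a^n - b^n| \le n(a \vee b)^{n-1}|a - b|$ and \Cref{l:r inequality} giving $|a^r - b^r| \le |a - b|^r$, one obtains pointwise (on the dense set where everything is finite, with $a = \lambda_\alpha(\omega)$, $b = \lambda(\omega)$, both $\le \mu(\omega)$):
\[
|\lambda_\alpha^p - \lambda^p| \le n\mu^{n-1}\mu^r |\lambda_\alpha - \lambda| + \mu^n |\lambda_\alpha - \lambda|^r \le n\mu^{p-1}|\lambda_\alpha - \lambda| + \mu^n |\lambda_\alpha - \lambda|^r.
\]
This inequality, proved pointwise, then holds in $\L$. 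Now set $H \defeq \{ n\mu^{p-1}\eps + \mu^n \eps^r : \eps \in \Eps \}$; by part (i), $\Eps^r \searrow 0$, and by Lemmas \ref{l:sup_additive} and \ref{l:order_cont_mult}, $H \searrow 0$. For each such element of $H$, choose $\alpha_0 \ge \alpha_1$ beyond the index for $\eps$; then for $\alpha \ge \alpha_0$, $|\lambda_\alpha^p - \lambda^p| \le n\mu^{p-1}\eps + \mu^n\eps^r$, which exhibits $\lambda_\alpha^p \to \lambda^p$.

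The main obstacle is the bookkeeping in part (ii): getting a clean pointwise inequality $|a^p - b^p| \le (\text{const in }a\vee b)\,|a-b| + (\text{const})\,|a-b|^r$ that is simultaneously valid for all $p \in (0,\infty)$ and whose ``constants'' are genuine elements of $\L$ (so one must use the uniform bound $\mu$ and the fact that $\mu^{p-1}, \mu^n \in \L$ — the former because $\L$ is an order ideal in $C_\infty(K)$, noting $\mu^{p-1} \le \mu^{\lceil p\rceil} \vee 1 \in \L$). Everything else — monotonicity, order boundedness of tails, the passage from pointwise inequalities on a dense open set to inequalities in $\L$ — is routine given the representation theory of \Cref{rem: Representation theory for L} and the lemmas already established. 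I would state part (ii)'s pointwise estimate as a short displayed computation ``by representation theory'' rather than belabor the elementary real-variable inequality.
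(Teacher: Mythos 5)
Your part (i) contains a genuine gap in the case $0<p<1$ (and hence in the ``unified'' pointwise version, which is the one you ultimately rely on). You assert that if $\eps_\alpha\downarrow 0$ in $\L$ then $\eps_\alpha(\omega)\downarrow 0$ for every $\omega$ in the dense open set $U$ where $\eps_{\alpha_0}$ is finite, and that ``the pointwise infimum of a bounded decreasing net equals the infimum in $\L$''. Neither claim is correct: the lattice infimum in $C_\infty(K)$ is \emph{not} the pointwise infimum (the pointwise infimum of a decreasing net of continuous functions is merely upper semicontinuous, and the order infimum can be strictly smaller than it on a nonempty set; compare the care taken in the proof of \Cref{l:range_projection}, where the infimum is computed via closures rather than pointwise). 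The correct general statement is that the pointwise and order infima of a decreasing net agree off a \emph{meager} set, which is dense but has nothing to do with the finiteness set of $\eps_{\alpha_0}$; you neither state nor prove this, and your justification (finiteness of $\eps_{\alpha_0}$) is a non sequitur. Since your part (ii) invokes $\Eps^r\searrow 0$ for $0<r<1$, the gap propagates. The paper's proof of (i) avoids all of this with a two-line monotonicity argument you should compare against: if $\l$ is any lower bound of $\Eps^p$, then $0\le\l^+\le\eps^p$ for every $\eps$, hence $0\le(\l^+)^{1/p}\le\eps$ by monotonicity of $t\mapsto t^{1/p}$ (representation theory), so $(\l^+)^{1/p}=0$, so $\l\le 0$; thus $\inf\Eps^p=0$. (Your sub-argument for $p\ge 1$ via $\eps_\alpha^p\le\eps_{\alpha_0}^{p-1}\eps_\alpha$ and \Cref{l:order_cont_mult} is fine, but it does not cover the case you actually need.)

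Your part (ii) is essentially sound as an alternative route, though more computational than the paper's: the paper factors $\l^p=\l^n\l^r$, gets continuity of $\l\mapsto\l^n$ from joint continuity of multiplication (\Cref{t:bilinear_cont}), and then handles $\l\mapsto\l^r$ directly from $|\l_\alpha^r-\l^r|\le|\l_\alpha-\l|^r\le\eps^r$ together with (i), with no need for the identity $a^{n+r}-b^{n+r}=(a^n-b^n)a^r+b^n(a^r-b^r)$ or the bounded tail $\mu$. Two small repairs if you keep your version: the set $H=\{n\mu^{p-1}\eps+\mu^n\eps^r:\eps\in\Eps\}$ is a diagonal subset of the sumset $n\mu^{p-1}\Eps+\mu^n\Eps^r$, and \Cref{l:sup_additive} only gives $\inf=0$ for the full sumset, so you should index over pairs $(\eps_1,\eps_2)$ as the paper does in \Cref{t:addition_continuous}; and the whole estimate still rests on (i) for $\Eps^r$, so (i) must be fixed first.
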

\begin{proof}
    (i): Let $\Eps \searrow 0$ and suppose $\l$ is a lower bound of $\Eps^p$. Then for all $\eps \in \Eps$, we have $0 \leq \l^+ \leq \eps^p$, and so $0 \leq (\l^+)^{1/p} \leq \eps$, implying that $(\l^+)^{1/p} = 0$. Thus $\l^+ = 0$ and so $\l \leq 0$, hence $\inf(\Eps^p) = 0$.
    
    (ii): Let $n \in  \N_0$ and $0 \leq r < 1$ be such that $p = n+r$, then $\l^p = \l^n \l^r$ and it suffices to prove continuity of $\l \mapsto \l^r$, which is trivial if $r=0$. So let $0 < r < 1$ and assume $\l_\alpha \to \l$, so that there exists a set $\Eps \searrow 0$ with $|\l_\alpha - \l| \leq \eps$ for all $\eps \in \Eps$. By \Cref{l:r inequality} and representation theory, 
    \[
    |\l_\alpha^r - \l^r| \leq |\l_\alpha - \l|^r \leq \eps^r.
    \]
    Thus by (i), the set $\Eps^r \searrow 0$ witnesses the convergence of $\l_\alpha^r$ to $\l^r$.
\end{proof}


The following versions of the H\"{o}lder and Minkowski inequalities follow directly by applying the classical versions of these inequalities pointwise in a representation theory argument. Alternatively, these results follow from the more general theorems \cite[Theorem~$4.7$]{BusSch3} and \cite[Theorem~$5.1$]{BusSch3}, respectively. Recall that $1 \leq p, q \leq \infty$ are called \emph{conjugate indices} if they satisfy $1/p + 1/q = 1$, using the convention $1 / \infty = 0$.


\begin{theorem}[H\"{o}lder inequality]\label{t: Holder}
Consider $(\l_k)_{k=1}^n, (\mu_k)_{k=1}^n\in \L^n$ for some $n \in \N$. Then for conjugate indices $1 \leq p,q \leq \infty$  we have
\[
        \sum_{k=1}^n\left| \l_k \mu_k \right| \leq \left(\sum_{k=1}^n\left| \l_k \right|^p  \right)^{1/p}\left( \sum_{k=1}^n\left| \mu_k \right|^q 
 \right)^{1/q}.
\]
\end{theorem}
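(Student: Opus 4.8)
The plan is to reduce the $\L$-valued H\"older inequality to its classical counterpart via representation theory, exactly as the preamble to the statement suggests. Concretely, I would invoke \Cref{rem: Representation theory for L} to view $\L$ as an order dense subalgebra of $C_\infty(K)$ for the appropriate Stonean space $K$, so that every $\l_k$ and $\mu_k$ is a continuous extended-real-valued function, finite on a dense open subset of $K$. Each of the finitely many functions $\l_k,\mu_k$ is finite on a dense open $U_k$ (resp.\ $V_k$); since there are only finitely many, the intersection $U \defeq \bigcap_k U_k \cap \bigcap_k V_k$ is still open and dense in $K$. On $U$, all the functions involved take genuine real values, and the powers $|\l_k|^p$, $|\mu_k|^q$ and the $1/p$-, $1/q$-th roots are computed pointwise (this is precisely how these operations were defined in \Cref{s:general information L}, e.g.\ in the discussion following \Cref{rem: Representation theory for L}).

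The key step is then: for each fixed $\omega \in U$, the scalars $\l_k(\omega),\mu_k(\omega) \in \R$ (or $\C$ in the complex case) satisfy the classical finite-dimensional H\"older inequality
\[
\sum_{k=1}^n |\l_k(\omega)\mu_k(\omega)| \;\leq\; \Bigl(\sum_{k=1}^n |\l_k(\omega)|^p\Bigr)^{1/p}\Bigl(\sum_{k=1}^n |\mu_k(\omega)|^q\Bigr)^{1/q},
\]
with the usual interpretation when $p=\infty$ or $q=\infty$ (replace the corresponding factor by $\max_k |\l_k(\omega)|$, resp.\ $\max_k|\mu_k(\omega)|$). Since both sides of this inequality are continuous functions of $\omega$ on the dense set $U$, and since two continuous functions in $C_\infty(K)$ that agree (or are ordered) on a dense set agree (resp.\ are ordered) everywhere — this is the standard uniqueness-of-extension principle for $C_\infty(K)$ — the pointwise inequality on $U$ upgrades to the desired inequality between the corresponding elements of $C_\infty(K)$. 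Finally, one checks the left- and right-hand sides genuinely lie in $\L$: the right-hand side is a finite sum of products of the form $(\sum|\l_k|^p)^{1/p}(\sum|\mu_k|^q)^{1/q}$, which is in $\L$ because $\L$ is a subalgebra closed under the relevant powers (as established earlier), and the left-hand side is dominated by the right-hand side so lies in $\L$ since $\L$ is an order ideal in $C_\infty(K)$.

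I expect the main (minor) obstacle to be purely bookkeeping: carefully stating the density/continuity extension principle for $C_\infty(K)$ and handling the $p=\infty$ or $q=\infty$ cases, where one must interpret $(\sum|\l_k|^\infty)^{1/\infty}$ as $\bigvee_k |\l_k|$ and verify the classical inequality still holds pointwise in that form. There is no real analytic difficulty here — the content is entirely in the classical scalar H\"older inequality, and representation theory does the rest — which is why the paper's own remark offers the alternative of simply citing \cite[Theorem~$4.7$]{BusSch3} for a proof not routed through representation theory.
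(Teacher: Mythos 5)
Your proposal is correct and follows exactly the route the paper takes: the paper gives no detailed proof but states that the result "follows directly by applying the classical versions of these inequalities pointwise in a representation theory argument" (with the citation of \cite[Theorem~4.7]{BusSch3} as the alternative), and your write-up simply fills in the bookkeeping of that same argument. The details you supply (finite intersections of dense open sets, the order-extension-by-density principle in $C_\infty(K)$, membership in $\L$ via the order ideal property, and the $p=\infty$ interpretation) are all sound.
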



\begin{theorem}[Minkowski inequality]\label{t:Minkowski}
Consider $(\l_k)_{k=1}^n, (\mu_k)_{k=1}^n\in \L^n$ for some $n \in \N$. Then for $1 \leq p < \infty$  we have
\[
        \left( \sum_{k=1}^n\left| \l_k + \mu_k \right|^p  \right)^{1/p} \leq \left( \sum_{k=1}^n\left| \l_k \right|^p  \right)^{1/p} + \left( \sum_{k=1}^n\left| \mu_k \right|^p  \right)^{1/p}.
\]
\end{theorem}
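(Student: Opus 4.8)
The plan is to reduce the $\L$-valued Minkowski inequality to the classical Minkowski inequality by a representation-theory argument, exactly as the paragraph preceding the statement suggests. First I would invoke \Cref{rem: Representation theory for L} to place $\L$ inside $C_\infty(K)$ for the appropriate Stonean space $K$, so that each $\l_k,\mu_k$ is a continuous extended-real-valued function, finite on a dense open set. The finitely many elements $\l_1,\dots,\l_n,\mu_1,\dots,\mu_n$ are each finite on dense open sets $U_{\l_k}$, $U_{\mu_k}$; intersecting these gives a dense open set $U\subseteq K$ on which all $2n$ functions are simultaneously finite-valued (a finite intersection of dense open sets in a compact Hausdorff space is dense open). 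On $U$ all the expressions appearing in the inequality — the powers $|\cdot|^p$, the sums, and the $p$-th roots — are defined pointwise via the constructions recalled after \Cref{rem: Representation theory for L} (the discussion of $\l^r$ for $0\le r<1$ and $\l^p$ for $0\le p<\infty$), and they agree with the pointwise operations on real numbers.

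Next, for each fixed $\omega\in U$ I would apply the classical (scalar) Minkowski inequality to the real numbers $\l_1(\omega),\dots,\l_n(\omega)$ and $\mu_1(\omega),\dots,\mu_n(\omega)$, obtaining
\[
\left( \sum_{k=1}^n\left| \l_k(\omega) + \mu_k(\omega) \right|^p  \right)^{1/p} \leq \left( \sum_{k=1}^n\left| \l_k(\omega) \right|^p  \right)^{1/p} + \left( \sum_{k=1}^n\left| \mu_k(\omega) \right|^p  \right)^{1/p}.
\]
This holds for every $\omega$ in the dense open set $U$. Since the functions on both sides are continuous on $K$ and agree on a dense set with the pointwise values, and since the order on $C_\infty(K)$ is determined by pointwise comparison on any dense subset (two continuous functions on a Stonean space satisfying $f\le g$ on a dense set satisfy $f\le g$ everywhere, by continuity), the pointwise inequality on $U$ upgrades to the inequality in $C_\infty(K)$, hence in $\L$. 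Finally I would note that the right-hand side lies in $\L$: by the discussion after \Cref{rem: Representation theory for L}, $|\l_k|^p\in\L$, finite sums stay in $\L$, and the $p$-th root of an element of $\L^+$ lies in $\L$ because $\L$ is an order ideal in $C_\infty(K)$ and the root is dominated by $\l\vee 1$; so all terms are genuinely elements of $\L$ and the comparison takes place within $\L$.

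The only real subtlety — and the step I would be most careful about — is the justification that a pointwise inequality valid on a dense open subset of $K$ implies the inequality of the corresponding elements of $C_\infty(K)$; this is precisely where "by representation theory" does its work, and it relies on the fact that the extension procedure defining operations on $C_\infty(K)$ (addition, powers, roots) is by continuous extension from a dense open set, so two such elements coincide or are ordered according to their behaviour on any common dense open set of finiteness. Everything else is bookkeeping: identifying a common dense open set of finiteness, checking each operation in the inequality is the pointwise one there, and checking membership in $\L$. I would also remark, as the text does, that this theorem is subsumed by \cite[Theorem~$5.1$]{BusSch3}, so a reader preferring an abstract vector-lattice proof can consult that reference instead.
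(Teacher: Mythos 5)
Your proposal is correct and follows exactly the route the paper takes: the paper gives no detailed proof, stating only that the inequality ``follows directly by applying the classical versions of these inequalities pointwise in a representation theory argument'' (with \cite[Theorem~5.1]{BusSch3} as an alternative), and your write-up is precisely a careful elaboration of that one-line argument. The only cosmetic remark is that for complex $\L$ the pointwise values are complex numbers rather than real ones, but the classical Minkowski inequality applies verbatim in that case, so nothing changes.
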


 
For a set $I$, denote by $\mathcal{F}(I)$ the collection of all finite subsets of $I$. Let $X$ be an $\L$-normed space with $(x_i)_{i\in I}$ a collection of elements in $X$. The collection of finite sums
\[
        \left( \sum_{i\in F}x_i \right)_{F\in \mathcal{F}(I)}
\] forms a net in $X$. If this net is convergent, we express the limit as the formal series $\sum_{i \in I}x_i$ and we say that the series $\sum_{i \in I}x_i$ is \emph{convergent}. We note here for an arbitrary subset $J \subseteq I$, if $\sum_{i \in I}x_i$ is convergent, then
\[
        \sum_{i\in I}x_i = \sum_{i\in J}x_i + \sum_{i\in J^c}x_i.
\]Further, we say that the series $\sum_{i \in I}x_i$ is \emph{absolutely convergent} if the net
\[
        \left( \sum_{i\in F}\norm{x_i}_X \right)_{F\in \mathcal{F}(I)}
\]is convergent in $\L$. 

\begin{theorem}\label{t: Absolute convergence implies convergence}
Let $X$ be an $\L$-Banach space. If a series in $X$ converges absolutely, then it converges.
\end{theorem}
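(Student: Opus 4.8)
The plan is to mimic the classical proof that absolute convergence implies convergence in a Banach space, the key point being to show that the net of finite partial sums is Cauchy and then invoke completeness of $X$.

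First I would set up notation: for $F \in \mathcal{F}(I)$ write $S_F := \sum_{i \in F} x_i$ and $T_F := \sum_{i \in F} \norm{x_i}_X$. By hypothesis the net $(T_F)_{F \in \mathcal{F}(I)}$ converges in $\L$, hence by \Cref{p:conv_net_bounded_tail} it is Cauchy: there is a set $\Eps \searrow 0$ such that for each $\eps \in \Eps$ there is $F_\eps \in \mathcal{F}(I)$ with $|T_F - T_G| \leq \eps$ whenever $F, G \supseteq F_\eps$. The crucial observation is that for finite $F, G \supseteq F_\eps$, the triangle inequality gives
\[
\norm{S_F - S_G}_X = \norm{\sum_{i \in F \setminus G} x_i - \sum_{i \in G \setminus F} x_i}_X \leq \sum_{i \in F \triangle G} \norm{x_i}_X = T_{F \cup G} - T_{F \cap G},
\]
and since $F_\eps \subseteq F \cap G \subseteq F \cup G$, monotonicity of the net $(T_F)$ (it is increasing in $F$) together with the Cauchy estimate yields $T_{F \cup G} - T_{F \cap G} \leq \eps$. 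Hence $\norm{S_F - S_G}_X \leq \eps$ for all $F, G \supseteq F_\eps$, so $(S_F)_{F \in \mathcal{F}(I)}$ is a Cauchy net in $X$, witnessed by the same $\Eps \searrow 0$.

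Since $X$ is an $\L$-Banach space, the Cauchy net $(S_F)$ converges, which by definition means the series $\sum_{i \in I} x_i$ converges. The only mildly delicate point — and the step I'd expect to need the most care — is justifying that $(T_F)$ is increasing as a net (so that $T_{F \cup G} - T_{F \cap G} = |T_{F \cup G} - T_{F \cap G}|$ and this is controlled by the Cauchy estimate applied to the pair $F \cup G, F_\eps$ or directly to $F \cup G, F \cap G$): this is immediate because $\norm{x_i}_X \in \L^+$ for all $i$, so adding more indices only increases the finite sum in the order of $\L$. Everything else is a routine transcription of the classical argument, using that the triangle inequality and the notion of Cauchy net in the excerpt behave exactly as expected.
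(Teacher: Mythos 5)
Your proof is correct and follows essentially the same route as the paper's: both show the net of partial sums $\bigl(\sum_{i\in F}x_i\bigr)$ is Cauchy by bounding $\bigl\Vert\sum_{i\in F}x_i-\sum_{i\in G}x_i\bigr\Vert$ via the triangle inequality by differences of the (increasing) net of partial sums of norms, then invoke completeness; your single application of the Cauchy estimate to the pair $F\cup G$, $F\cap G$ even gives $\eps$ where the paper settles for $2\eps$. One cosmetic slip: the fact that a convergent net is Cauchy is not \Cref{p:conv_net_bounded_tail} (which concerns bounded tails) but is the remark following the definition of Cauchy nets, using \Cref{l:order_cont_mult}.
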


\begin{proof}
Let $\sum_{i\in I}x_i$ be an absolutely convergent series, then the net $\left( \sum_{i\in F} \norm{x_i} \right)$ is Cauchy. Thus there exists an $\Eps \searrow 0$ such that for every $\eps \in \Eps$ there exists an $F_0$ such that for all finite $F, G \supseteq F_0$,
\[ \abs{ \sum_{i \in F} \norm{x_i} - \sum_{i \in G} \norm{x_i} } \leq \eps. \]
Fix an $\eps \in \Eps$ and its corresponding $F_0$. Then for all finite $F, G \supseteq F_0$,
\begin{align*}
\norm{ \sum_{i \in F} x_i - \sum_{i \in G} x_i}
& = \norm{\sum_{i \in F \setminus G} x_i - \sum_{i \in G \setminus F} x_i} \\
& \leq \sum_{i \in F \setminus G} \norm{x_i} + \sum_{i \in G \setminus F} \norm{x_i} \\
& = \sum_{i \in F} \norm{x_i} - \sum_{i \in F \cap G} \norm{x_i}\ + \sum_{i \in G} \norm{x_i} - \sum_{i \in F \cap G} \norm{x_i} \\
& \leq 2 \eps.
\end{align*}
Therefore, the net $\left( \sum_{i \in F} x_i \right)$ is Cauchy, so $\sum_{i \in I} x_i$ converges.
\end{proof}

\begin{example}
Let $S$ be a non-empty set, let $Y$ be an $\L$-normed space, and let $1 \leq p < \infty$. We denote by $\ell^p\left( S, Y \right)$ the set of functions $f \colon S \to Y$ for which
\[
        \norm{f}_p \defeq \left( \sum_{s\in S} \norm{f(s)}^p_Y \right)^{1/p}
\] exists in $\L$. For $f\in \ell^p\left( S, Y \right)$, since the net of finite sums $\left( \sum_{s\in F} \norm{f(s)}^p_Y \right)_{F\in \mathcal{F}(S)}$ is increasing and bounded by $\norm{f}^p_p$, the $p$-norm can also be expressed as
\[
        \norm{f}_p = \left( \sup_{F \in \mathcal{F}(S)} \sum_{s\in F} \norm{f(s)}^p_Y \right)^{1/p} = \sup_{F \in \mathcal{F}(S)} \left( \sum_{s\in F} \norm{f(s)}^p_Y \right)^{1/p}
\]where the second equality follows by the continuity of exponentiation. We now show that ${\norm{ \cdot }_p: \ell^p\left( S, Y \right) \to \L^+}$ is indeed an $\L$-norm. Absolute homogeneity follows from 
 \Cref{l:order_cont_mult} since for every $f\in \ell^p\left( S, Y \right)$ and $\l \in \L$ we have
\[
        \norm{ \l f }^p_p = \sup_{F \in \mathcal{F}(S)} \sum_{s\in F} \norm{\l f(s)}^p_Y = \sup_{F \in \mathcal{F}(S)} \left| \l \right|^p \sum_{s\in F} \norm{f(s)}^p_Y =   \left| \l \right|^p \norm{ f }^p_p. 
\]To prove the triangle inequality, consider $f,g\in \ell^p\left( S, Y \right)$ and $F \in \mathcal{F}(S)$. 
By monotonicity of exponents and the Minkowski inequality (\Cref{t:Minkowski}), we have
\begin{align*}
        \norm{ f + g }_p &= \sup_{F \in \mathcal{F}(S)} \left(\sum_{s\in F} \norm{f(s) + g(s)}^p_Y \right)^{1/p}\\
                         &\leq \sup_{F \in \mathcal{F}(S)}\left( \sum_{s\in F} \norm{f(s)}^p_Y \right)^{1/p} + \sup_{F \in \mathcal{F}(S)} \left( \sum_{s\in F} \norm{g(s)}^p_Y \right)^{1/p}\\
                         &= \norm{f}_p + \norm{g}_p.
\end{align*}
Since $\norm{f}_p = 0$ implies that $f(s) = 0$ for all $s\in S$, we conclude that $\ell^p\left( S, Y \right)$ is an $\L$-normed space. In particular, if $S$ is a finite set with $n$ elements, then the above argument shows that $( Y^n, \norm{\cdot}_p )$ is an $\L$-normed space.
\end{example}
              
\begin{theorem}
Let $S$ be a non-empty set, let $Y$ be an $\L$-Banach space, and let $1 \leq p < \infty$. Then $\ell^p\left( S, Y \right)$ is an $\L$-Banach space.
\end{theorem}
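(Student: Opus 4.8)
The plan is to follow the proof of \Cref{t:completeness_bounded_functions}, replacing the supremum norm by the $p$-norm and using the continuity of exponentiation established in \Cref{l:p power continuous}. Let $(f_\alpha)$ be a Cauchy net in $\ell^p(S,Y)$, witnessed by a set $\Eps \searrow 0$: for each $\eps \in \Eps$ there is an index $\alpha_0$ with $\norm{f_\alpha - f_\beta}_p \leq \eps$ whenever $\alpha, \beta \geq \alpha_0$. First I would construct the candidate limit pointwise. For any $s \in S$ and any finite $F \subseteq S$ containing $s$,
\[
\norm{f_\alpha(s) - f_\beta(s)}_Y \leq \Big( \sum_{t \in F} \norm{f_\alpha(t) - f_\beta(t)}_Y^p \Big)^{1/p} \leq \norm{f_\alpha - f_\beta}_p,
\]
so $(f_\alpha(s))$ is Cauchy in $Y$; completeness of $Y$ gives $f_\alpha(s) \to f(s)$ for some $f(s) \in Y$, defining a function $f \colon S \to Y$.

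The heart of the argument is a limiting step in $\beta$. Fix $\eps \in \Eps$ with its $\alpha_0$, fix $\alpha \geq \alpha_0$, and fix a finite $F \subseteq S$; raising the inequality above to the $p$-th power, for every $\beta \geq \alpha_0$ we have $\sum_{s \in F} \norm{f_\alpha(s) - f_\beta(s)}_Y^p \leq \eps^p$. Letting $\beta \to \infty$ and using that $f_\beta(s) \to f(s)$, together with the continuity of the norm (\Cref{c:continuity_of_norm}), of addition (\Cref{t:addition_continuous}), and of the map $\lambda \mapsto \lambda^p$ (\Cref{l:p power continuous}), the left-hand side converges to $\sum_{s \in F} \norm{f_\alpha(s) - f(s)}_Y^p$; since $\{ \lambda \in \L \colon \lambda \leq \eps^p \}$ is closed, the limit is again $\leq \eps^p$. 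Thus $\sum_{s \in F} \norm{f_\alpha(s) - f(s)}_Y^p \leq \eps^p$ for every finite $F \subseteq S$ and every $\alpha \geq \alpha_0$. Since this net of finite sums is increasing and bounded above by $\eps^p$, the Dedekind completeness of $\L$ yields that $\norm{f_\alpha - f}_p^p = \sup_{F} \sum_{s \in F} \norm{f_\alpha(s) - f(s)}_Y^p$ exists and is $\leq \eps^p$; in particular $f_\alpha - f \in \ell^p(S,Y)$ and $\norm{f_\alpha - f}_p \leq \eps$.

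To finish, I would note that taking $\alpha = \alpha_0$ for one fixed $\eps$ gives $f = f_{\alpha_0} - (f_{\alpha_0} - f) \in \ell^p(S,Y)$, since $\ell^p(S,Y)$ is an $\L$-vector space; and the estimate $\norm{f_\alpha - f}_p \leq \eps$, valid for all $\alpha \geq \alpha_0(\eps)$ and all $\eps \in \Eps$ with $\inf \Eps = 0$, says precisely that $f_\alpha \to f$ in $\ell^p(S,Y)$. Hence every Cauchy net converges and $\ell^p(S,Y)$ is an $\L$-Banach space. I do not expect a genuine obstacle; the only delicate points are threading the $\beta$-limit through the nonlinear maps $\lambda \mapsto \lambda^p$ and $\lambda \mapsto \lambda^{1/p}$ — for which \Cref{l:p power continuous} is tailor-made — and remembering that Dedekind completeness is what guarantees $\norm{f_\alpha - f}_p$ exists at all, so that one can even speak of $\ell^p$-convergence. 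As in the $\ell^\infty$ case, one could alternatively replace the appeal to closedness of $\{\lambda \leq \eps^p\}$ by an explicit $\eps^p + \eta$ estimate using a convergence-witnessing set for each $f_\beta(s) \to f(s)$.
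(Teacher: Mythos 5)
Your proposal is correct and follows essentially the same route as the paper's proof: pointwise limits via completeness of $Y$, the finite-sum estimate $\sum_{s\in F}\norm{f_\alpha(s)-f_\beta(s)}_Y^p\leq\eps^p$, and passage to the $\beta$-limit using continuity of the norm, addition and exponentiation together with closedness of $\{\lambda\in\L\colon\lambda\leq\eps^p\}$. The only cosmetic difference is that the paper deduces $f\in\ell^p(S,Y)$ from a Minkowski estimate on $\bigl(\sum_{s\in F}\norm{f(s)}_Y^p\bigr)^{1/p}$, whereas you first show $f_{\alpha_0}-f\in\ell^p(S,Y)$ by Dedekind completeness and then invoke the vector space structure; both are fine.
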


\begin{proof}
Let $(f_\alpha)$ be a Cauchy net in $\ell^p(S, Y)$. Then there exists $\Eps \searrow 0$ such that for all $\eps \in \Eps$ there exists an $\alpha_0$ satisfying
\begin{equation}\label{e:lp-cauchy}
        \alpha, \beta \geq \alpha_0 \Rightarrow \norm{f_\alpha - f_\beta}_p \leq \eps.
\end{equation}Since $\norm{f_\alpha(s) - f_\beta(s)}_Y \leq \norm{f_\alpha - f_\beta}_p$ for every $s\in S$, it follows that $(f_\alpha(s))$ is Cauchy in $Y$ for every $s \in S$. Since $Y$ is complete, this defines a mapping $f\colon S\to Y$ where $f(s) \defeq \lim_{\alpha}f_\alpha(s)$. It remains to show that $f \in \ell^p(S, Y)$ and that $f_\alpha \to f$ in $\ell^p(S, Y)$. By monotonicity of exponents, for every $s\in S$ and any $\alpha$, we have 
\[
        \norm{f(s)}^p_Y \leq \big| \norm{f(s) - f_\alpha(s)}_Y + \norm{f_\alpha(s)}_Y \big|^p. 
\]Using the above inequality and the Minkowski inequality, for every $F \in \mathcal{F}(S)$ and any $\alpha$, we find
\begin{align}\label{e: Estimate for f being in l^p}
        \nonumber\left( \sum_{s \in F}\norm{f(s)}^p_Y \right)^{1/p} &\leq \left( \sum_{s \in F}\norm{f(s) - f_\alpha(s)}^p_Y \right)^{1/p} + \left( \sum_{s \in F}\norm{f_\alpha(s)}^p_Y \right)^{1/p}\\
                                                           &\leq  \left( \sum_{s \in F}\norm{f(s) - f_\alpha(s)}^p_Y \right)^{1/p} + \norm{f_\alpha}_p. 
\end{align}
Fix $\eps \in \Eps$ and choose $\alpha_0
$ as in \eqref{e:lp-cauchy}. Then for all $F \in \mathcal{F}(S)$, we have
\[
        \alpha,\beta \geq \alpha_0 \Rightarrow \sum_{s\in F
} \norm{f_\alpha(s) - f_\beta(s)}^p_Y \leq \eps^p.
\]Since $f_\beta(s) \to f(s)$ in $Y$ for all $s\in S$, it follows that $f_\alpha(s) - f_\beta(s) \to f_\alpha(s) - f(s)$ for all $\alpha \geq \alpha_0$ and for all $s\in S$. By the continuity of the norm, as well as addition and exponentiation in $\L$, it follows that 
\[
        \sum_{s\in F} \norm{f_\alpha(s) - f_\beta(s)}^p_Y \to \sum_{s\in F} \norm{f_\alpha(s) - f(s)}^p_Y,
\] for all $F \in \mathcal{F}(S)$ and $\alpha \geq \alpha_0$. Since the set $ \{\lambda \in \L ~:~ \lambda \leq \eps^p \}$ is closed, we conclude that for all $F \in \mathcal{F}(S)$ and $\alpha \geq \alpha_0$, 
\begin{align}\label{e: f_alpha converges to f in the p-norm}
         \left( \sum_{s \in F}\norm{f(s) - f_\alpha(s)}^p_Y \right)^{1/p} \leq \eps,
\end{align}  
which we may use along with \eqref{e: Estimate for f being in l^p} to conclude that $f \in \ell^p(S, Y)$ and \eqref{e: f_alpha converges to f in the p-norm} implies that $f_\alpha \to f$ in $\ell^p(S, Y)$.
\end{proof}

The rest of this section is devoted to proving that for $1\leq p < \infty$, the dual of $\ell^p(S,Y)$ is isometrically isomorphic to $\ell^q(S,Y^*)$ where $1< q \leq \infty$ is the conjugate index of $p$. As in the classical case, the proof that the dual of $c_0(S,Y)$ is isometrically isomorphic to $\ell^1(S,Y^*)$ proceeds similarly and will therefore be included as well. First, we note the important fact that $\L$ is self-dual.

\begin{remark}\label{r: L self dual}
For every $\mu \in \L$, we associate the $\L$-linear multiplication operator $m_\mu: \L \to \L$ given by $m_\mu(\l)\defeq\mu \l$. By \Cref{t:op_norm_equiv}, we have $\norm{m_\mu} = |\mu|$. Conversely, if $\phi\colon \L \to \L$ is $\L$-linear and $\l \in \L$, then $\phi(\l) = \phi(\l 1) = \l \phi(1)$ so it is clear that $\phi \mapsto \phi(1)$ is the inverse correspondence.  Hence $\L$ is isometrically isomorphic to $\L^*$.
\end{remark}

Let $S$ be a non-empty set and let $Y$ be an $\L$-normed space. Denote by $c_{00}(S, Y)$ the subspace of functions $g\colon S \to Y$ in $\ell^\infty(S,Y)$ where $\{ s\in S : g(s) \neq 0 \}$ is finite. Further, given any function $f\colon S\to Y$ and any subset $F\subseteq S$, we define $P_F f$, the projection of $f$ onto $F$, by $(P_F f)(s) \defeq f(s)$ for $s \in F$ and $(P_F f)(s) \defeq \0$ for $s \in F^c$. It is clear that for any function $f\colon S\to Y$ and any $F\in \mathcal{F}(S)$ that $P_F f \in c_{00}(S, Y)$. With this notation, we can prove the following crucial lemma. 

\begin{lemma}\label{l: Approximation of elements in l^p via sum}
Let $S$ be a non-empty set and $Y$ an $\L$-normed space. For $f \in \ell^p(S, Y)$ with $1 \leq p < \infty$ or $f \in c_0(S, Y)$, the net $\left( P_F f \right)_{F\in \mathcal{F}(S)}$ converges to $f$. Thus $c_{00}(S,Y)$ is dense in $\ell^p(S, Y)$ and $c_0(S, Y)$.
\end{lemma}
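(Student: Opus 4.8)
The plan is to treat the two cases ($f \in \ell^p(S,Y)$ and $f \in c_0(S,Y)$) in parallel, since in both we must produce a single set $\Eps \searrow 0$ witnessing the convergence $P_F f \to f$ along the net of finite subsets $\mathcal F(S)$ ordered by inclusion. The central observation is that $f - P_F f = P_{F^c} f$, i.e.\ the "tail" of $f$ beyond $F$, so in both cases we need to control $\norm{f - P_F f}$ by a quantity that becomes small once $F$ is large enough.

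For the $c_0$ case the argument is the more immediate one. By definition of $c_0(S,Y)$ there is a set $\Eps \searrow 0$ such that for each $\eps \in \Eps$ there is a cofinite $C \subseteq S$ with $\norm{f(s)}_Y \leq \eps$ for all $s \in C$. Put $F_0 \defeq C^c \in \mathcal F(S)$. Then for every $F \supseteq F_0$ we have $F^c \subseteq C$, so $\norm{(f - P_F f)(s)}_Y = \norm{f(s)}_Y \leq \eps$ for $s \in F^c$ and $= 0$ otherwise; taking the supremum over $s$ gives $\norm{f - P_F f}_\infty \leq \eps$. Hence the same $\Eps$ witnesses $P_F f \to f$ in $\ell^\infty(S,Y)$.

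For the $\ell^p$ case I would argue as follows. Writing $\norm{f}_p^p = \sup_{G \in \mathcal F(S)} \sum_{s \in G}\norm{f(s)}_Y^p \eqdef s$, the increasing net of finite partial sums converges (in $\L$) to $s$; hence it is a Cauchy net in $\L$, so there is a set $H \searrow 0$ such that for each $\eta \in H$ there is an $F_0 \in \mathcal F(S)$ with $\big| s - \sum_{s \in G}\norm{f(s)}_Y^p\big| \leq \eta$ for all $G \supseteq F_0$; in particular $\sum_{s \in G \setminus F_0}\norm{f(s)}_Y^p \leq \eta$ for all finite $G \supseteq F_0$, and passing to the supremum over such $G$ gives $\sum_{s \in F_0^c}\norm{f(s)}_Y^p \leq \eta$ as an element of $\L$. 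Therefore for every $F \supseteq F_0$, since $F^c \subseteq F_0^c$, we get $\norm{f - P_F f}_p^p = \sum_{s \in F^c}\norm{f(s)}_Y^p \leq \eta$, and so $\norm{f - P_F f}_p \leq \eta^{1/p}$. By \Cref{l:p power continuous}(i) applied to $H$ (or rather: $\eta \mapsto \eta^{1/p}$ sends a set decreasing to $0$ to one decreasing to $0$), the set $\Eps \defeq \{\eta^{1/p} : \eta \in H\}$ satisfies $\Eps \searrow 0$ and witnesses $P_F f \to f$ in $\ell^p(S,Y)$.

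Finally, since $P_F f \in c_{00}(S,Y)$ for every $F \in \mathcal F(S)$, the convergence $P_F f \to f$ exhibits $f$ as a limit of a net in $c_{00}(S,Y)$; as $c_{00}(S,Y)$ is a subspace (hence $\idempotents$-convex), \Cref{c:closure_equals_adherence} gives $f \in \overline{c_{00}(S,Y)}$, proving density in both $\ell^p(S,Y)$ and $c_0(S,Y)$. The only mildly delicate point is the passage from "$\sum_{s \in G \setminus F_0}\norm{f(s)}_Y^p \leq \eta$ for all finite $G$" to the statement about the possibly-infinite sum $\sum_{s \in F_0^c}\norm{f(s)}_Y^p$, which is exactly the definition of that series' value as a supremum of finite partial sums together with \Cref{l:order_cont_mult}-type monotonicity; I expect this bookkeeping, rather than any real inequality, to be the main thing to get right.
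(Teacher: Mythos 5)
Your proposal is correct and follows essentially the same route as the paper: both cases reduce to bounding $\norm{f-P_Ff}$ by the tail of $f$ beyond $F_0$, with the $\ell^p$ tail controlled via the convergence of the net of partial sums of $\sum_s\norm{f(s)}_Y^p$ and the final estimate passed through $\eps\mapsto\eps^{1/p}$ using \Cref{l:p power continuous}. The only cosmetic differences are that you extract the tail bound from Cauchyness rather than writing it directly as $\sum_{s\in S}\norm{f(s)}_Y^p-\sum_{s\in F}\norm{f(s)}_Y^p$, and that your appeal to \Cref{c:closure_equals_adherence} for density is unnecessary, since adherent points of a set always lie in its closure (only the converse requires $\idempotents$-convexity).
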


\begin{proof}
Fix $1 \leq p < \infty$ and consider $f \in \ell^p(S, Y)$. Then there exists an $\Eps \searrow 0$ such that for every $\eps \in \Eps$ there exists an $F_0 \in \mathcal{F}(S)$ such that for $F \supseteq F_0$,
\[
        \sum_{s \in F^c} \norm{f(s)}_Y^p = \sum_{s \in S} \norm{f(s)}_Y^p - \sum_{s \in F} \norm{f(s)}_Y^p \leq \eps.
\]
Fix an $\eps \in \Eps$ and its corresponding $F_0$. Now, for $F \supseteq F_0$,
\[
        \norm{f - P_F f}_p^p = \sum_{t \in S} \norm{ \left( f - P_F f \right) (t)}_Y^p = \sum_{t \in F^c} \norm{f(t)}_Y^p \leq \eps,
\]
so that $\norm{f - P_F f} \leq \eps^{\frac{1}{p}}$. The claim follows since $\Eps^{\frac{1}{p}} \searrow 0$.

Now suppose $f \in c_0(S, Y)$. Then there exists an $\Eps \searrow 0$ such that for all $\eps \in \Eps$ there exists a cofinite $C_\eps \subseteq S$ such that for all $s \in C_\eps$, $\norm{f(s)}_Y \leq \eps$. For each $F \in \mathcal{F}(S)$, $\norm{f - P_F f}_\infty = \sup_{s \in F^c} \norm{f(s)}_Y$; so, it is clear that the net $(\norm{f - P_F f}_\infty)_{F \in \mathcal{F}(S)}$ is decreasing. Now, for any $\eps \in \Eps$ and $F \supseteq C_\eps^c$,
\[ \norm{f - P_F f}_\infty \leq \norm{f - P_{C_\eps^c} f}_\infty = \sup_{s \in C_\eps} \norm{f(s)}_Y \leq \eps. \]
Therefore, $P_F f \to f$ in $c_0(S,Y)$.
\end{proof}

\begin{remark}
The notation in the previous result can be reconciled with the formal series notation introduced earlier in this section in the following way: Let $g\colon S\to \L$ be any function and fix $y\in Y$. Define $g\otimes y\colon S\to Y$ by $\left( g\otimes y\right)(s) \defeq g(s) y$. Now, for $s\in S$ define $e_s: S\to \L$ by $e_s(t) = \delta_{st}$. For any function $h\colon S\to Y$ and $F \in \mathcal{F}(S)$ it is clear that 
\[
        P_F h = \sum_{s \in F} e_s \otimes h(s). 
\]In particular, for a function $f\in \ell^p(S, Y)$ where $1 \leq p < \infty$ or for $f\in c_0(S, Y)$, the fact that the net of projections $\left( P_F f \right)_{F\in \mathcal{F}(S)}$ converges to $f$ can then be expressed succintly as 
\[
        f = \sum_{s \in S} e_s \otimes f(s).
\]
\end{remark}
We will make use of the above notation in the following result. Additionally, for $y\in Y$ and $y^*\in Y^*$, we define $\ip{y}{y^*}$ to be $y^*(y)$.

\begin{theorem}
Let $S$ be a non-empty set, $Y$ be an $\L$-normed space, and $1 \leq p < \infty$ with $q$ its conjugate index. Then the dual space of $\ell^p(S,Y)$ is isometrically isomorphic to $\ell^q(S, Y^*)$, implemented by the $\L$-bilinear map $B \colon \ell^p(S,Y) \times \ell^q(S, Y^*) \to \L$ defined by
\[
        B(f,g) \defeq \sum_{s\in S} \ip{f(s)}{g(s)}.        
\]
Furthermore, $c_0(S,Y)^* \cong \ell^1(S, Y^*)$ through the same duality $B$.
\end{theorem}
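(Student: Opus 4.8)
The plan is to prove that the map $\Phi\colon \ell^q(S,Y^*)\to\ell^p(S,Y)^*$ defined by $\Phi(g)\defeq B(\cdot,g)$ (and, in the second statement, $\Phi\colon\ell^1(S,Y^*)\to c_0(S,Y)^*$) is a bijective isometry; being a surjective isometry it is then an isometric isomorphism. To see that $\Phi$ is well defined with $\norm{\Phi(g)}\le\norm{g}_q$, fix $f\in\ell^p(S,Y)$ and $g\in\ell^q(S,Y^*)$. For every finite $F\subseteq S$ the $\L$-valued H\"older inequality (\Cref{t: Holder}) yields $\sum_{s\in F}\abs{\ip{f(s)}{g(s)}}\le\sum_{s\in F}\norm{f(s)}_Y\norm{g(s)}_{Y^*}\le\norm{f}_p\norm{g}_q$, so the increasing net of partial sums of $\sum_s\abs{\ip{f(s)}{g(s)}}$ is bounded in $\L$ and hence convergent; thus $\sum_s\ip{f(s)}{g(s)}$ is absolutely convergent and so convergent by \Cref{t: Absolute convergence implies convergence}, and continuity of the modulus (\Cref{c:continuity_of_norm}) together with closedness of $\{\l\colon\l\le\norm{f}_p\norm{g}_q\}$ gives $\abs{B(f,g)}\le\norm{f}_p\norm{g}_q$. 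Each pairing $\ip{\cdot}{g(s)}$ is $\L$-linear, so $B$ is $\L$-bilinear and $\Phi(g)\in\ell^p(S,Y)^*$ with $\norm{\Phi(g)}\le\norm{g}_q$; the $c_0$-case is identical with $\norm{f}_\infty$ in place of $\norm{f}_p$ and $q=1$. Injectivity of $\Phi$ is immediate: if $\Phi(g)=\Phi(g')$, evaluating at $e_s\otimes y$ shows $g(s)(y)=g'(s)(y)$ for all $s,y$.

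The core of the proof is surjectivity, which will simultaneously give the reverse norm bound. Given $\varphi\in\ell^p(S,Y)^*$, define $g(s)\in Y^*$ by $g(s)(y)\defeq\varphi(e_s\otimes y)$; since $\norm{e_s\otimes y}_p=\norm{y}_Y$ we get $\norm{g(s)}_{Y^*}\le\norm{\varphi}$ for each $s$. When $p=1$ (so $q=\infty$) this already gives $\norm{g}_\infty=\sup_s\norm{g(s)}_{Y^*}\le\norm{\varphi}$, so suppose $1<p<\infty$. Fix a finite $F\subseteq S$ and put $t\defeq\sum_{s\in F}\norm{g(s)}_{Y^*}^q\in\L^+$; the goal is $t^{1/q}\le\norm{\varphi}$. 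For each $s$, \Cref{c: Rotation by scalar multiplication} shows that replacing any $y\in B_Y$ by $n_{\overline{g(s)(y)}}\,y$ — which still lies in $B_Y$ because $\abs{n_{\overline{g(s)(y)}}}$ is an idempotent by \Cref{p: normalise} — makes $g(s)$ assume the value $\abs{g(s)(y)}\ge0$; hence $\{g(s)(y)\colon y\in B_Y,\ g(s)(y)\ge0\}=\{\abs{g(s)(y)}\colon y\in B_Y\}$ has supremum $\norm{g(s)}_{Y^*}$ by \Cref{t:op_norm_equiv}. Set $c_s\defeq\norm{g(s)}_{Y^*}^{q-1}$. For any $y_s\in B_Y$ ($s\in F$) with $g(s)(y_s)\ge0$, the function $f\defeq\sum_{s\in F}e_s\otimes(c_s y_s)$ lies in $c_{00}(S,Y)$, satisfies $\norm{f}_p\le t^{1/p}$ (using $(q-1)p=q$), and $\varphi(f)=\sum_{s\in F}c_s\,g(s)(y_s)\ge0$, so $\varphi(f)\le\norm{\varphi}\,t^{1/p}$. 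Letting the $y_s$ range and using \Cref{l:sup_additive} and \Cref{l:order_cont_mult} to move the resulting supremum through the finite sum and the multiplications by $c_s$ gives $t=\sum_{s\in F}c_s\norm{g(s)}_{Y^*}\le\norm{\varphi}\,t^{1/p}$, and by representation theory this forces $t^{1/q}\le\norm{\varphi}$ (trivially where $t$ vanishes, and by dividing by $t^{1/p}$ elsewhere). Taking the supremum over $F$ shows $g\in\ell^q(S,Y^*)$ with $\norm{g}_q\le\norm{\varphi}$.

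It remains to note that $\Phi(g)$ and $\varphi$ agree on $c_{00}(S,Y)$ by construction, and $c_{00}(S,Y)$ is dense in $\ell^p(S,Y)$ by \Cref{l: Approximation of elements in l^p via sum}; since both functionals are continuous, $\Phi(g)=\varphi$, so $\Phi$ is surjective. Combining with the first paragraph, $\norm{g}_q\le\norm{\varphi}=\norm{\Phi(g)}\le\norm{g}_q$, so $\Phi$ is a bijective isometry, hence an isometric isomorphism. The statement for $c_0(S,Y)$ follows the same scheme: now $q=1$ and all $c_s=1$, the test function $\sum_{s\in F}e_s\otimes y_s$ lies in the unit ball of $c_0(S,Y)$, and one reads off $\sum_{s\in F}\norm{g(s)}_{Y^*}\le\norm{\varphi}$ directly.

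I expect the main obstacle to be the surjectivity step for $1<p<\infty$: replacing the classical device of choosing a near-maximiser $y_s\in B_Y$ with $\abs{g(s)(y_s)}>\norm{g(s)}_{Y^*}-\delta$ by the order-theoretic fact that the set of attainable nonnegative values of $g(s)$ on $B_Y$ has supremum exactly $\norm{g(s)}_{Y^*}$ and that this supremum interchanges with the finite sum and the scalar multiplications, together with the care needed to pass from $t\le\norm{\varphi}\,t^{1/p}$ to $t^{1/q}\le\norm{\varphi}$ without inverting a possibly non-invertible element.
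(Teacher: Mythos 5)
Your proposal is correct and follows essentially the same route as the paper: the H\"older inequality and absolute convergence give $\norm{\Phi(g)}\le\norm{g}_q$, the reverse bound comes from testing $\varphi$ against finitely supported functions built from rotated unit vectors $n_{\overline{g(s)(y)}}\,y$, the passage from $t\le\norm{\varphi}\,t^{1/p}$ to $t^{1/q}\le\norm{\varphi}$ is done by representation theory, and density of $c_{00}(S,Y)$ closes the argument. The only cosmetic difference is that you fix the coefficients $c_s=\norm{g(s)}_{Y^*}^{q-1}$ and interchange the supremum with a finite linear expression via \Cref{l:sup_additive} and \Cref{l:order_cont_mult}, whereas the paper uses the $y_s$-dependent coefficients $\abs{\ip{y_s}{g_\phi(s)}}^{q-1}$ and interchanges the supremum with the $q$-norm expression at the end.
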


\begin{proof}
Note that the existence of $B(f,g) \in \L$ for $(f,g) \in \ell^p(S,Y) \times \ell^q(S,Y^*)$ follows from the H\"{o}lder inequality (\Cref{t: Holder}) since for every $F\in \mathcal{F}(S)$, we have
\begin{align}\label{e: Application of Holder}
         \sum_{s\in F}\left| \ip{f(s)}{g(s)}  \right| \leq \sum_{s\in F} \norm{f(s)}_{Y} \norm{g(s)}_{Y^*} \leq \norm{f}_p \norm{g}_q. 
\end{align}
Thus the formal sum $\sum_{s\in S} \ip{f(s)}{g(s)}$ is absolutely convergent, hence convergent by \Cref{t: Absolute convergence implies convergence}. The same argument is valid for $(f,g) \in c_0(S,Y) \times \ell^1(S,Y^*)$.

Let $g \in \ell^q(S,Y^*)$, define the $\L$-linear map $\phi_g\colon \ell^p(S,Y) \to \L$ by $f \mapsto B(f,g)$. Since the net $\left( \left|  \sum_{s\in F}\ip{f(s)}{g(s)} \right| \right)_{F\in \mathcal{F}(S)}$ converges to $\left| B(f,g) \right|$ and since $\left|  \sum_{s\in F}\ip{f(s)}{g(s)} \right| \leq  \norm{f}_p \norm{g}_q$ for every $F\in \mathcal{F}(S)$ we have
\[
        \left| \phi_g\left( f \right) \right| \leq \norm{f}_p \norm{g}_q. 
\]Thus $\phi_g \in \ell^p(S,Y)^*$ and $\norm{\phi_g} \leq \norm{g}_q$. The same argument shows that for $g \in \ell^1(S,Y^*)$, we have that $\phi_g \in c_0(S,Y)^*$ and $\norm{\phi_g} \leq \norm{g}_1$.

Take $\phi \in \ell^p(S,Y)^*$ and define $g_\phi\colon S \to Y^*$ by $\ip{y}{g_\phi(s)} \defeq \phi(e_s \otimes y)$. We will show that $g_\phi \in \ell^q(S, Y^*)$. If $p=1$, then for every $s \in S$, we have
\begin{align*}
        \norm{g_\phi(s)}_{Y^*} = \sup_{y\in B_Y} \left| \ip{y}{g_\phi(s)} \right| = \sup_{y\in B_Y} \left| \phi(e_s \otimes y) \right| \leq \sup_{y\in B_Y} \norm{\phi} \norm{e_s \otimes y}_1 \leq \norm{\phi}.      
\end{align*}
This shows that $g_\phi \in \ell^\infty(S, Y^*)$ and $\norm{g_\phi}_\infty \leq \norm{\phi}$. If $p > 1$, consider $F\in \mathcal{F}(S)$. For $s\in F$, let $y_s \in B_Y$ and denote by $\gamma_{y_s} \in \L$ the element defined in \Cref{c: Rotation by scalar multiplication} such that $\gamma_{y_s} \ip{y_s}{g_\phi(s)} = \left| \ip{y_s}{g_\phi(s)} \right|$ and $|\gamma_{y_s}| \leq 1$. Then
\begin{align*}
    \sum_{s\in F}  \left| \ip{y_s}{g_\phi(s)} \right|^q =\ &\sum_{s\in F}  \gamma_{y_s} \ip{y_s}{g_\phi(s)} \left| \ip{y_s}{g_\phi(s)} \right|^{q-1}\\
    =\ &\sum_{s\in F} \gamma_{y_s} \phi(e_s \otimes y_s)  \left| \ip{y_s}{g_\phi(s)} \right|^{q-1}\\
    =\ &\phi\left( \sum_{s\in F} \gamma_{y_s}  \left| \ip{y_s}{g_\phi(s)} \right|^{q-1} e_s \otimes y_s \right)\\
    \leq\ &\norm{\phi} \norm{\sum_{s\in F} \gamma_{y_s}  \left| \ip{y_s}{g_\phi(s)} \right|^{q-1} e_s \otimes y_s}_p\\
    \leq\ & \norm{\phi} \left( \sum_{s\in F} \left| \ip{y_s}{g_\phi(s)} \right|^{q} \right)^{1/p}
\end{align*}
If $\l, \mu \in \L^+$, then using representation theory it is easy to see that if $\l \leq \mu \l^{1/p}$, then $\l^{1- 1/p} \leq \mu$. Applying this implication to the case where $\l = \sum_{s\in F} \left|\ip{y_s}{g_\phi(s)} \right|^q$ and $\mu = \norm{\phi}$ and since $1 -1/p = 1/q$, we conclude that
\[
        \left( \sum_{s\in F} \left|\ip{y_s}{g_\phi(s)} \right|^q \right)^{1/q}  \leq \norm{\phi}. 
\]It now follows by continuity of addition and exponentiation that
\[
      \left( \sum_{s\in F}\norm{g_\phi(s)}_{Y^*}^q \right)^{1/q} = \sup_{\substack{y_s \in B_Y \\ s\in F}} \left(\sum_{s\in F} \left|\ip{y_s}{g_\phi(s)} \right|^q \right)^{1/q} \leq \norm{\phi}.
\]
Thus $g_\phi \in \ell^q(S, Y^*)$ and $\norm{g_\phi}_q \leq \norm{\phi}$. 

If $\phi \in c_0(S,Y)^*$, define $g_\phi$ as above. Take $F \in \mathcal{F}(S)$ and for $s \in F$, let $y_s \in B_Y$. Then  
\begin{align*}
    \sum_{s\in F} \left| \ip{y_s}{g_\phi(s)} \right| =\ &\sum_{s\in F} \gamma_{y_s} \ip{y_s}{g_\phi(s)}\\
    =\ &\sum_{s\in F} \gamma_{y_s} \phi(e_s \otimes y_s)\\
    =\ &\phi\left( \sum_{s\in F} \gamma_{y_s} e_s \otimes y_s \right)\\
    \leq\ &\norm{\phi} \norm{\sum_{s\in F} \gamma_{y_s} e_s \otimes y_s}_\infty. 
\end{align*}
Since $|\gamma_{y_s}| \leq 1$ and $y_s \in B_Y$ for $s\in F$, we conclude that $g_\phi \in \ell^1(S,Y^*)$ and $\norm{g_\phi}_1 \leq \norm{\phi}$.

We show that the correspondences $g \mapsto \phi_g$ and $\phi \mapsto g_\phi$ are inverses of each other. Let $g\in \ell^q(S,Y^*)$ ($1 \leq q \leq \infty$) and fix $s\in S$ and $y\in Y$, then
\begin{align*}
        \ip{y}{g_{\phi_g}(s)}  = \phi_g\left( e_s \otimes y \right) = B\left( e_s \otimes y, g \right) = \sum_{t\in S}\ip{e_s \otimes y(t)}{g(t)} = \ip{y}{g(s)}.     
\end{align*}
Thus $g_{\phi_g} = g$. Conversely, let $\phi \in \ell^p(S,Y)^*$ (the case $\phi \in c_0(S,Y)^*$ is the same). Fix $f \in \ell^p(S,Y)$, then using the continuity of $\phi$ along with \Cref{l: Approximation of elements in l^p via sum}, we have
\[
    \phi_{g_\phi}(f) = B\left( f, g_\phi \right)  = \sum_{s\in S} \phi(e_s \otimes f(s)) = \phi\left( \sum_{s\in S}e_s \otimes f(s)\right) = \phi(f). 
\]
Thus $\phi_{g_\phi} = \phi$. Since the correspondences have already been shown to be contractive, we conclude that $\ell^p(S,Y)^* \cong \ell^q(S,Y^*)$ and $c_0(S,Y)^* \cong \ell^1(S,Y^*)$.
\end{proof}

Note that all steps of the above proof are also valid for $p = \infty$, except the very last step where the density of $c_{00}(S,Y)$ in $c_0(S,Y)$ was used (\Cref{l: Approximation of elements in l^p via sum}). 

In the case where $Y = \L$, we write $\ell^p(S) \defeq \ell^p(S,\L)$ and similarly for the other spaces of functions defined above. Since $\L \cong \L^*$ (\Cref{r: L self dual}), we obtain the following corollary. 

\begin{corol}
Let $S$ be a non-empty set and $1 \leq p < \infty$ with $q$ its conjugate index. Then $\ell^p(S)^* \cong \ell^q(S)$, implemented by the $\L$-bilinear map $B \colon \ell^p(S) \times \ell^q(S) \to \L$ defined by
\[
        B(f,g) \defeq \sum_{s\in S} f(s)g(s).
\]
Furthermore, $c_0(S)^* \cong \ell^1(S)$ through the same duality $B$.
\end{corol}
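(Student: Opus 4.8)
The plan is to obtain this corollary as a direct specialisation of the preceding theorem to the case $Y = \L$, transported along the isometric isomorphism $\L \cong \L^*$ recorded in \Cref{r: L self dual}; there is essentially nothing substantive left to prove. First I would apply the preceding theorem with $Y = \L$. Since $\ell^p(S) = \ell^p(S,\L)$ and $c_0(S) = c_0(S,\L)$ by our notational convention, this immediately yields isometric isomorphisms $\ell^p(S)^* \cong \ell^q(S,\L^*)$ for $1 \le p < \infty$, and $c_0(S)^* \cong \ell^1(S,\L^*)$, both implemented by $(f,g) \mapsto \sum_{s\in S} \ip{f(s)}{g(s)}$.

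Next I would lift the pointwise isometric isomorphism $\L \to \L^*$, $\mu \mapsto m_\mu$ (with inverse $\phi \mapsto \phi(1)$), to an isometric isomorphism $\ell^q(S,\L^*) \cong \ell^q(S,\L) = \ell^q(S)$, and likewise $\ell^1(S,\L^*) \cong \ell^1(S)$. Concretely, $g \mapsto \bigl(s \mapsto g(s)(1)\bigr)$ is $\L$-linear and bijective, and since $\norm{m_\mu} = \abs{\mu}$ it preserves the pointwise norms, $\norm{g(s)}_{\L^*} = \abs{g(s)(1)}$; as the $\ell^q$-norm (and the $c_0$-norm) of a $Y$-valued function depends on $Y$ only through the pointwise norms $\norm{\cdot}_Y$, this coordinatewise map is an isometric isomorphism. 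This is the only step requiring verification, and it is routine given the definitions of the relevant norms.

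Finally I would check that composing the two identifications sends the duality bracket to the advertised bilinear form: for $f \in \ell^p(S)$ and $g \in \ell^q(S,\L^*)$ corresponding to $\tilde g \in \ell^q(S)$ with $\tilde g(s) = g(s)(1)$, one has $\ip{f(s)}{g(s)} = g(s)(f(s)) = m_{\tilde g(s)}(f(s)) = \tilde g(s)\, f(s)$, so $B(f,g) = \sum_{s\in S} f(s)\tilde g(s)$ as claimed; the $c_0(S)$ case is identical. The main (and essentially only) obstacle is the second step's claim that an isometric isomorphism of the coefficient space lifts coordinatewise to the corresponding $\ell^q$- and $c_0$-spaces, which follows directly from the fact that those norms are built from the pointwise norms together with \Cref{l:order_cont_mult} and the continuity of exponentiation.
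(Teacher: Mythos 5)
Your proposal is correct and follows exactly the paper's route: the paper derives this corollary in one sentence by specialising the preceding theorem to $Y=\L$ and invoking the isometric isomorphism $\L\cong\L^*$ of \Cref{r: L self dual}. Your additional verification that the coordinatewise identification $\ell^q(S,\L^*)\cong\ell^q(S)$ is isometric and transports the duality bracket to $B(f,g)=\sum_{s\in S}f(s)g(s)$ is the (routine) detail the paper leaves implicit.
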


In the classical case, elements of $\ell^p(S)$ and $c_0(S)$ are always supported on a countable set. The next example shows that this does not hold in our case.

\begin{example}\label{e:uncountable support}
    Let $S$ be a nonempty set and suppose $\L$ is such that $\ell^\infty(S) \subseteq \L \subseteq \ell^0(S)$ (see \Cref{r:hyperstonean}). For $F \subseteq S$, denote by $\pi_F \in \idempotents$ the indicator function of $F$, and for $s \in S$, we define $\pi_s \defeq \pi_{\{s\}}$. Define $f \colon S \to \L$ by $f(s) = \pi_s$. The computation 
    \[\norm{f}^p_p=\sum_{s\in S} |f(s)|^p = \sum_{s\in S} \pi_s = 1\]
    shows that $f\in \ell^p(S)$ for all $1\leq p< \infty$. Let $\Eps\defeq \{\pi_C : C \subseteq S \text{ is cofinite}\},$ then $\inf \Eps =0$. Let $\eps \in \Eps$, then there exists a cofinite $C\subseteq S$ with $\eps=\pi_C$, and for all $s\in C$, $|f(s)|=\pi_s\leq \pi_C=\eps$. By definition, $f\in c_0(S).$  If $S$ is uncountable, then $f$ is supported on an uncountable set.
\end{example}

\section{The Hahn-Banach theorem and dual spaces}\label{s:hahn banach}

\begin{defn}\label{d:sublinear}
Let $X$ be an $\L$-module. A map $\sigma\colon X \to \L$ is \emph{sublinear} if 
\begin{enumerate}
    \item $\sigma(\l x)=\l \sigma(x)$ for all $\l\in \L^+$ and all $x\in X$; and 
    \item $\sigma(x+y)\leq \sigma(x)+\sigma(y)$ for all $x,y\in X$.
\end{enumerate}
\end{defn}

\begin{theorem}[Hahn-Banach]\label{t:HB}
Let $X$ be a real $\L$-module and let $Y \subsetneq X$ be a submodule. Furthermore, let $\phi \colon Y \to \L$ be $\L$-linear and $\sigma \colon X \to \L$ be sublinear such that $\phi(x) \le \sigma(x)$ for all $x \in Y$. Then there is an $\L$-linear extension $\Phi$ of $\phi$ to all of $X$ such that $\Phi(x) \le \sigma(x)$ for all $x \in X$.    
\end{theorem}

\begin{proof}
Fix $z \in X\setminus Y$. We will first show that we can extend $\phi$ to $Y \oplus \L z$ such that the extension remains bounded by $\sigma$. For any $\mu \in \L$, define the map $\Phi_\mu \colon Y \oplus \L z \to \L$ by $$\Phi_\mu(x+\l z):= \phi(x)+\mu\l,$$ which are all $\L$-linear extensions of $\phi$. For any $x,y\in Y$ we have
    \begin{align*}
         \phi(x)-\phi(y)=\phi(x-y)\le \sigma(x-y)&=\sigma(x+z-z-y)\\&\le \sigma(x+z)+\sigma(-z-y),
    \end{align*}
   which implies
   \begin{equation*}
       -\sigma(-z-y)-\phi(y) \le \sigma(x+z)-\phi(x),
   \end{equation*}
   which holds for all $y$. Since $\L$ is Dedekind complete, we define the elements 
   $$\eta:= \sup_{y\in Y}\left(-\sigma(-z-y)-\phi(y)\right) \quad \text{and}\quad \xi := \inf_{x \in Y}\left( \sigma(x+z)-\phi(x)\right)$$ in $\L$. 
   Note that $\eta \le \xi$, so let $\rho \in [\eta, \xi]$, then 
    \begin{equation}\label{e:HB}
       -\sigma(-z-y)-\phi(y) \le  \rho \le  \sigma(x+z)-\phi(x)
   \end{equation}
   for all $x,y \in Y$. Now consider $\Phi_\rho$.
    
   Suppose first that $\l\in\L^+$. By \Cref{l:approximate_with_invertibles}, there is a sequence of invertible positive elements such that $\l_n \downarrow \l$. If we replace $x$ in \eqref{e:HB} by $\l_n^{-1}x$, the second inequality yields 
   \begin{align*}
   \l_n\rho &\le \sigma(x+\l_n z)-\phi(x)\\
   &\leq \sigma(x+\l z)+\sigma(\l_n z-\l z)-\phi(x) \\
   &= \sigma(x+\l z)+(\l_n-\l)\sigma(z)-\phi(x) \\
   \end{align*} 
   and by letting $n \to \infty$ we obtain $\Phi_\rho(x+\l z) =\phi(x)+\l\rho \le \sigma(x+\l z)$ for all $x \in Y$ in this case.
   Next, if we replace $y$ by $-\l_n^{-1} x$ in \eqref{e:HB}, the first inequality yields 
   \[
   \rho \geq  -\sigma(-z+\l_n^{-1} x) - \phi(-\l_n^{-1}x)=-\l_n^{-1}\sigma(-\l_n z+x)+\l_n^{-1}\phi(x),
   \]
   and using the superaddivity of $-\sigma$, we get
   \begin{align*}
       \l_n\rho&\geq -\sigma(-\l_n z +x)+\phi(x)\\
       &=-\sigma(-\l_n z+\l z +x-\l z)+\phi(x)\\
       &\geq -\sigma(-\l_n z+\l z) -\sigma(x-\l z)+\phi(x)\\
       &=-(\l_n-\l)\sigma(-z)-\sigma(x-\l z)+\phi(x).
   \end{align*}
   By letting $n\to \infty$, we have
  $\Phi_\rho(x-\l z) =\phi(x)- \l\rho \le \sigma(x-\l z)$ for all $x \in Y$. For an arbitrary $\l \in \L$ and $x\in Y$, we have 
  $\pi_{\l^+}x+\l^+z$ is separated from $\pi_{\l^+}^cx-\l^-z$ (using $\pi_{\l^+}\in \idempotents$). By \Cref{l:disjoint-implies-additive} and our previous findings, we have
    \begin{align*}
        \Phi_\rho(x+\l z) 
        &= \Phi_\rho(\pi_{\l^+}x+\l^+z+\pi_{\l^+}^c x-\l^-z)\\
        &=\Phi_\rho(\pi_{\l^+}x+\l^+z)+\Phi_\rho(\pi_{\l^+}^cx-\l^-z)\\
        &\le \sigma(\pi_{\l^+}x+\l^+z) + \sigma(\pi_{\l^+}^cx-\l^-z)\\
        &=\sigma(\pi_{\l^+}x+\l^+z+\pi_{\l^+}^cx-\l^-z)\\
        &=\sigma(x+\l z).    
    \end{align*}
   Thus we have shown that we can extend $\phi$ to a larger subspace, while the extension remains bounded by $\sigma$. As in the classical case, the result follows by a standard application of Zorn's lemma. 
\end{proof}


As in the classical case, we can extend the Hahn-Banach theorem to the complex case. 

\begin{theorem}[Hahn-Banach for complex $\L$]\label{t:complex HB}
Let $X$ be a complex $\L$-module and let $Y \subsetneq X$ be a subspace. Furthermore, let $\phi \colon Y \to \L$ be $\L$-linear and $\sigma \colon X \to \L^+$ be a seminorm such that $|\phi(x)| \le \sigma(x)$ for all $x \in Y$. Then there is an $\L$-linear extension $\Phi$ of $\phi$ to all of $X$ such that $|\Phi(x)| \le \sigma(x)$ for all $x \in X$.    
\end{theorem}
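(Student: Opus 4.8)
The plan is to reduce the complex statement to the real Hahn--Banach theorem (\Cref{t:HB}) by the standard Bohnenblust--Sobczyk device, adapted to the $\L$-valued setting. The only ingredient that differs from the classical argument is that, in the concluding domination estimate, multiplication by a scalar of modulus one is replaced by multiplication by the element $\gamma_x$ supplied by \Cref{c: Rotation by scalar multiplication}, which satisfies $|\gamma_x|\le 1$ and rotates $\Phi(x)$ onto $|\Phi(x)|$.

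Concretely, I would proceed as follows. First restrict scalars: regard $X$ and $Y$ as modules over the real $f$-algebra $\L_\R$, and set $\psi\defeq\Re\phi\colon Y\to\L_\R$. Then $\psi$ is $\L_\R$-linear and, since $\Re\mu\le|\mu|$ for every $\mu\in\L_\C$ (immediate from \eqref{e:defn-modulus}), we get $\psi(x)\le|\phi(x)|\le\sigma(x)$ for all $x\in Y$; moreover $\sigma$, being a seminorm in the sense of \Cref{defn: l-norm and l-seminorm}, restricts to a sublinear map on the $\L_\R$-module $X$ in the sense of \Cref{d:sublinear}, because $\sigma(\l x)=|\l|\sigma(x)=\l\sigma(x)$ for $\l\in\L_\R^+$. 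Applying \Cref{t:HB} yields an $\L_\R$-linear $\Psi\colon X\to\L_\R$ extending $\psi$ with $\Psi(x)\le\sigma(x)$ for all $x\in X$. Next define $\Phi(x)\defeq\Psi(x)-i\Psi(ix)$. One checks that $\Phi$ is additive, $\L_\R$-homogeneous, and that $\Phi(ix)=\Psi(ix)+i\Psi(x)=i\Phi(x)$ by a one-line computation, so that writing a general $\mu\in\L_\C$ as $a+ib$ with $a,b\in\L_\R$ gives $\Phi(\mu x)=a\Phi(x)+ib\Phi(x)=\mu\Phi(x)$; hence $\Phi$ is $\L_\C$-linear. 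Since an $\L_\C$-linear map is determined by its real part (via $\phi(x)=\Re\phi(x)-i\Re\phi(ix)$) and $\Re\Phi=\Psi=\psi=\Re\phi$ on $Y$, the map $\Phi$ extends $\phi$. Finally, for $x\in X$ let $\gamma_x\in\L$ be as in \Cref{c: Rotation by scalar multiplication}, so $\gamma_x\Phi(x)=|\Phi(x)|$ and $|\gamma_x|\le1$; then
\[
|\Phi(x)|=\gamma_x\Phi(x)=\Phi(\gamma_x x)=\Psi(\gamma_x x)\le\sigma(\gamma_x x)=|\gamma_x|\,\sigma(x)\le\sigma(x),
\]
where the second equality is $\L_\C$-linearity, the third uses that $\Phi(\gamma_x x)=|\Phi(x)|\in\L_\R$ coincides with its real part $\Psi(\gamma_x x)$, and the last two invoke the real estimate from \Cref{t:HB} and the seminorm axioms.

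I expect the principal difficulty to be bookkeeping rather than any conceptual point: one must be careful that restriction of scalars from $\L_\C$ to $\L_\R$ really places $\Psi$ in a setting to which \Cref{t:HB} applies, and that the reconstruction $\Phi(x)=\Psi(x)-i\Psi(ix)$ is genuinely $\L_\C$-linear --- the verification $\Phi(\mu x)=\mu\Phi(x)$ for $\mu\in\L_\C$ proceeds by splitting $\mu$ into real and imaginary parts and is the step where a sign slip is easiest. The passage from the real to the complex domination bound is then routine, since \Cref{c: Rotation by scalar multiplication} furnishes exactly the $\L$-analogue of multiplication by a unimodular phase. (Note also that, unlike the classical proof, no separate Zorn's lemma argument is needed here, as the transfinite step is already absorbed into \Cref{t:HB}.)
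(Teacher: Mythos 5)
Your proposal is correct and is essentially the paper's own argument: the paper likewise restricts scalars to $\L_\R$, applies \Cref{t:HB} to $\Re\phi$ dominated by the seminorm $\sigma$, reconstructs $\Phi(x)=\Psi(x)-i\Psi(ix)$, and obtains the modulus bound via the same rotation element $\gamma_x$ from \Cref{c: Rotation by scalar multiplication}. The bookkeeping points you flag (restriction of scalars and $\L_\C$-linearity of the reconstruction) are handled in your write-up exactly as the paper does, so there is nothing to add.
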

\begin{proof}
We can also view $X$ as an $\L_\R$-module and $\Re(\phi)$ satisfies $\Re(\phi)(x) \le |\phi(x)| \le \sigma(x)$ for all $x \in Y$, so by \Cref{t:HB}, $\Re(\phi)$ has a $\L_\R$-linear extension $\Psi$ to all of $X$ such that $\Psi(x) \le \sigma(x)$ for all $x \in X$. Now $\Phi \colon X \to \L$ defined by $\Phi(x) := \Psi(x)-i\Psi(ix)$ is $\L$-linear and extends $\phi$, since $\Re(\Phi) = \Psi$. Now, for every $x \in X$, denote by $\gamma_x \in \L$ the element defined in \Cref{c: Rotation by scalar multiplication} such that $\gamma_x \Phi(x) = |\Phi(x)|$ and $|\gamma_x| \leq 1$. Then $\Phi(\gamma_x x) = \gamma_x \Phi(x) = |\Phi(x)| \in \L_\R$ and so 
\[
        |\Phi(x)| = \Phi(\gamma_x x) =  \Psi(\gamma_x x) \leq \sigma(\gamma_x x) = |\gamma_x| \sigma(x) \leq \sigma(x). \hfill \qedhere
\]
\end{proof}

The next corollary shows that any $\L$-normed space $X$ has a norming dual.

\begin{corol}\label{c:norming functional}
    Let $X$ be an $\L$-normed space. For every $x \in X$ there is an $x^* \in X^*$ with $\|x^*\|=\pi_x$ such that $x^*(x)=\|x\|$.  
\end{corol}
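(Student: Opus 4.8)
\section*{Proof proposal}

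The plan is to follow the classical route: define a norming functional on the cyclic submodule $\L x$, extend it to $X$ by Hahn--Banach, and then truncate by the support $\pi_x$.

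First I would put $Y := \L x$ and define $\phi \colon Y \to \L$ by $\phi(\l x) := \l\norm{x}$. The only point that is not formal here, and which has no analogue over a field, is well-definedness: if $\l x = \mu x$ then $(\l - \mu)x = \0$, so $|\l - \mu|\,\norm{x} = \norm{(\l-\mu)x} = 0$ and hence $\l\norm{x} = \mu\norm{x}$. Then $\phi$ is $\L$-linear with $\phi(x) = \norm{x}$, and $|\phi(\l x)| = |\l|\,\norm{x} = \norm{\l x}$, so $\phi$ is dominated on $Y$ by the $\L$-(semi)norm $\norm{\cdot}$ (a sublinear map in the real case, a seminorm in the complex case). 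If $\L x = X$ take $\Phi := \phi$; otherwise \Cref{t:HB} (real $\L$) or \Cref{t:complex HB} (complex $\L$) provides an $\L$-linear $\Phi \colon X \to \L$ extending $\phi$ with $|\Phi(y)| \le \norm{y}$ for all $y \in X$.

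Next I would set $x^* := \pi_x \Phi$. This is $\L$-linear; from $|x^*(y)| = \pi_x|\Phi(y)| \le \pi_x\norm{y}$ we get $x^* \in X^*$ with $\norm{x^*} \le \pi_x$; and $x^*(x) = \pi_x\phi(x) = \pi_x\norm{x} = \norm{\pi_x x} = \norm{x}$ since $\pi_x x = x$. The remaining work is the reverse norm estimate $\norm{x^*} \ge \pi_x$. From $\norm{x} = |x^*(x)| \le \norm{x^*}\,\norm{x} \le \norm{x}$ (the last inequality because $\norm{x^*} \le \pi_x \le 1$) we obtain $(1 - \norm{x^*})\norm{x} = 0$. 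Multiplying by the positive invertible element $(\norm{x} + n^{-1})^{-1}$ (\Cref{l:invertible}) gives $(1 - \norm{x^*})\tfrac{\norm{x}}{\norm{x}+n^{-1}} = 0$ for every $n$; letting $n \to \infty$ and invoking \Cref{l:range_projection} and \Cref{l:order_cont_mult} yields $(1 - \norm{x^*})\pi_{\norm{x}} = 0$. Since each of the relations $\pi x = x$ and $\pi\norm{x} = \norm{x}$ implies the other (via $\norm{\pi x} = \pi\norm{x}$ and positive definiteness), the supports coincide, $\pi_{\norm{x}} = \pi_x$, so $\pi_x\norm{x^*} = \pi_x$; combined with $\norm{x^*} \le \pi_x$, which forces $(1 - \pi_x)\norm{x^*} = 0$, we conclude $\norm{x^*} = \pi_x\norm{x^*} = \pi_x$.

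The main obstacle is the norm computation rather than the existence of the extension: a raw Hahn--Banach extension only gives $\norm{\Phi} \le 1$, and the key observation is that multiplying by $\pi_x$ simultaneously trims the norm down to $\pi_x$ and keeps the value $\norm{x}$ at $x$, with the lower bound $\norm{x^*} \ge \pi_x$ then resting on the convergence $\frac{\norm{x}}{\norm{x}+n^{-1}} \uparrow \pi_{\norm{x}}$ from \Cref{l:range_projection}. The well-definedness of $\phi$, the case $\L x = X$, and the degenerate case $x = \0$ (where $\pi_x = \0$ and $x^* = 0$ does the job) are all minor.
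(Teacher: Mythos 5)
Your proof is correct and follows essentially the same route as the paper: a functional on the cyclic submodule $\L x$ sending $x$ to $\norm{x}$, a Hahn--Banach extension, and the lower bound $\norm{x^*} \ge \pi_x$ ultimately extracted from \Cref{l:range_projection}. The only cosmetic differences are that the paper builds the truncation into the dominating sublinear map $\pi_x\norm{\cdot}$ (so the extension satisfies $\norm{x^*}\le\pi_x$ immediately) and obtains the lower bound by computing $\norm{\phi}=\pi_x$ already on $\L x$ via the sequence $x/(\norm{x}+n^{-1})$, whereas you multiply the extension by $\pi_x$ afterwards and recover the lower bound from the single equation $x^*(x)=\norm{x}$; both variants are valid.
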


\begin{proof}
    Let $x \in X$ and define the $\L$-linear map $\phi\colon \L x\to \L$ by $\phi(\l x):=\l \|x\|$. Clearly $\phi(x) = \norm{x}$, and for $\l \in \L$
    \[
    |\phi(\l x)| = |\l| \|x\| = |\l| \pi_{\norm{x}} \norm{x} = \pi_{\norm{x}} \norm{\l x},
    \]
    and so $|\phi|$ is dominated by the seminorm $y \mapsto \pi_{\norm{x}} \norm{y}$ thus by Hahn-Banach $\phi$ extends to an $\L$-linear map $x^* \colon X \to \L$ satisfying $|x^*(y)| \leq \pi_{\norm{x}} \norm{y}$ for all $y \in X$. Therefore $x^* \in X^*$ with $\norm{x^*} \leq \pi_{\norm{x}}$.

    On the other hand, using \Cref{l:range_projection} we obtain
    \[ 
    \phi\left( \frac{x}{\|x\|+n^{-1}} \right) = \frac{\norm{x}}{\norm{x} + n^{-1}} \uparrow \pi_{\norm{x}}
    \]
    which implies that $\norm{x^*} \geq \norm{\phi} \geq \pi_{\norm{x}}$. \Cref{l:support_equals_support_of_norm} yields $\norm{x^*} = \pi_{\norm{x}} = \pi_x$ which proves the corollary. 
\end{proof}

\begin{corol}
    For any $\L$-normed space, the map $J \colon X \to X^{**}$ defined by $J(x)(x^*):=x^*(x)$ defines an $\L$-linear isometry. 
\end{corol}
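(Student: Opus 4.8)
The plan is to check the three ingredients of the claim separately: that $J(x)$ actually belongs to $X^{**}$ for each $x$, that $J$ itself is $\L$-linear, and that $J$ preserves norms. The first two points are purely formal, and the norm-preservation is where Hahn--Banach enters, but only through its already-packaged consequence \Cref{c:norming functional}.

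First I would verify that $J(x) \in X^{**}$. The $\L$-linearity of $J(x) \colon X^* \to \L$ is immediate from the pointwise module structure on $X^*$: for $\l \in \L$ and $x_1^*, x_2^* \in X^*$ one has $J(x)(\l x_1^* + x_2^*) = (\l x_1^* + x_2^*)(x) = \l x_1^*(x) + x_2^*(x) = \l J(x)(x_1^*) + J(x)(x_2^*)$. Boundedness, and simultaneously the estimate $\norm{J(x)} \leq \norm{x}$, follows from $|J(x)(x^*)| = |x^*(x)| \leq \norm{x^*}\norm{x}$ for every $x^* \in X^*$ together with \Cref{t:op_norm_equiv}. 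Since $X^*$ is an $\L$-Banach space, $X^{**} = (X^*)^*$ makes sense, so indeed $J(x) \in X^{**}$. The $\L$-linearity of $J$ is the same computation read in the other variable: $J(\l x_1 + x_2)(x^*) = x^*(\l x_1 + x_2) = \l J(x_1)(x^*) + J(x_2)(x^*)$ for all $x^* \in X^*$, hence $J(\l x_1 + x_2) = \l J(x_1) + J(x_2)$.

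It remains to establish $\norm{x} \leq \norm{J(x)}$. By \Cref{t:op_norm_equiv}, $\norm{J(x)} = \sup_{x^* \in B_{X^*}} |J(x)(x^*)| = \sup_{x^* \in B_{X^*}} |x^*(x)|$. Fixing $x \in X$, I would apply \Cref{c:norming functional} to obtain $x^* \in X^*$ with $\norm{x^*} = \pi_x$ and $x^*(x) = \norm{x}$. Since $\pi_x \in \idempotents$ we have $\pi_x \leq 1$, so $x^* \in B_{X^*}$; and $|x^*(x)| = |\norm{x}| = \norm{x}$ because $\norm{x} \in \L^+$. Therefore $\norm{J(x)} \geq \norm{x}$, and combined with the previous paragraph this yields $\norm{J(x)} = \norm{x}$, i.e. $J$ is an $\L$-linear isometry. (Isometries are automatically injective by positive definiteness of the $\L$-norm, though this is not part of the assertion.)

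I do not expect a genuine obstacle: the substantive content, namely the existence of norming functionals, was already extracted from Hahn--Banach in \Cref{c:norming functional}, and the remainder is the classical argument carried over verbatim, valid for real and complex $\L$ alike. The only point needing a moment's care is that the norming functional has norm exactly $\pi_x$ rather than $1$; this is harmless since idempotents are bounded above by $1$, so $x^*$ still lies in $B_{X^*}$.
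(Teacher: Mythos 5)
Your proof is correct and follows essentially the same route as the paper: the upper bound $\norm{J(x)}\le\norm{x}$ from the defining inequality of the dual norm, and the lower bound from the norming functional of \Cref{c:norming functional}, noting that $\norm{x^*}=\pi_x\le 1$ places it in $B_{X^*}$. The extra verifications that $J(x)\in X^{**}$ and that $J$ is $\L$-linear are details the paper declares ``clear,'' so nothing of substance differs.
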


\begin{proof}
   We will only prove the fact that $J$ is an isometry as the $\L$-linearity of $J$ is clear. Let $x \in X$. Then for any $x^* \in B_{X^*}$ it follows that $|J(x)(x^*)|= |x^*(x)|\le \|x^*\|\|x\|\le \|x\|,$ hence $\|J(x)\|\le \|x\|$. By \Cref{c:norming functional} for any $x \in X$ there is an $x^* \in B_{X^*}$ such that $x^*(x)=\|x\|$. This shows that $\|J(x)\| \ge \|x\|$ for all $x \in X$, so $J$ is an isometry.
\end{proof}

We can use this embedding and the fact that the bidual is complete to show the existence of a completion of an $\L$-normed space.

\begin{theorem}
Let $X$ be an $\L$-normed space. Then there exists an $\L$-Banach space $\hat{X}$ such that $X$ is linearly isometric to a dense subspace of $\hat{X}$.
\end{theorem}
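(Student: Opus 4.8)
The plan is to realise the completion inside the bidual, exactly as the paragraph preceding the statement suggests. By the corollary just above, the canonical map $J \colon X \to X^{**}$ given by $J(x)(x^*) := x^*(x)$ is an $\L$-linear isometry; and since $X^*$ is an $\L$-normed space, its dual $X^{**} = (X^*)^*$ is an $\L$-Banach space by the corollary stating that the dual of any $\L$-normed space is complete. So I would set $\hat{X} \defeq \overline{J(X)}$, the closure of $J(X)$ taken in $X^{**}$, and verify the three required properties: that $\hat X$ is an $\L$-normed space, that it is complete, and that $J(X)$ is dense in it.

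First I would check that $\hat X$ is an $\L$-submodule of $X^{**}$. Since $J$ is $\L$-linear, $J(X)$ is an $\L$-subspace of $X^{**}$, hence in particular $\idempotents$-convex (any subspace is closed under the combinations $\pi x + \pi^c y$). Therefore \Cref{c:closure_equals_adherence} applies and tells us that $\overline{J(X)}$ is precisely the set of limits of convergent nets in $J(X)$. Using the joint continuity of addition (\Cref{t:addition_continuous}) and of scalar multiplication (\Cref{c:Scalar mult is continuous}), this set is closed under addition and under multiplication by elements of $\L$, so $\hat X$ is an $\L$-submodule; the restriction of the bidual norm to $\hat X$ is then visibly an $\L$-norm, making $\hat X$ an $\L$-normed space.

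Next, $\hat X$ is complete, being a closed subspace of the $\L$-Banach space $X^{**}$: as recorded in the proof of \Cref{p:c_0 closed}, a subspace of an $\L$-Banach space is closed if and only if it is complete. Hence $\hat X$ is an $\L$-Banach space. Finally, $J$ restricts to a linear isometry from $X$ onto $J(X)$ (again by the preceding corollary), and $J(X)$ is dense in $\hat X = \overline{J(X)}$ by construction, so $X$ is linearly isometric to a dense subspace of $\hat X$, as claimed.

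I do not expect a genuine obstacle; the whole argument is an assembly of results already in hand. The one point that needs care — and the reason the proof is not literally the classical one-liner — is that convergence in an $\L$-normed space is in general not topological, so one may not simply assert that the closure of a submodule is a submodule. That gap is bridged by \Cref{c:closure_equals_adherence}, which is available precisely because submodules are $\idempotents$-convex; everything else is routine.
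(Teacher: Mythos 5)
Your proposal is correct and follows essentially the same route as the paper: both set $\hat{X} \defeq \overline{J(X)}$ inside the complete bidual $X^{**}$, use that a closed subspace of an $\L$-Banach space is complete, and invoke \Cref{c:closure_equals_adherence} together with the fact that $J$ is a linear isometry. Your additional verification that $\overline{J(X)}$ is an $\L$-submodule (via $\idempotents$-convexity and continuity of the operations) is a detail the paper leaves implicit, but it is not a different argument.
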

\begin{proof}
Let $\hat{X} = \overline{J(X)}$. Then $\hat{X}$ is complete as it is a closed subspace of $X^{**}$, which is complete. By \Cref{c:closure_equals_adherence}, $J(X)$ is dense in $\overline{J(X)}$ and we know that $J$ is an $\L$-linear isometry. 
\end{proof}

For an $\L$-normed space $X$, we define the completion $\hat{X}$ abstractly as an $\L$-Banach space such that for every $\L$-Banach space $Y$, every $T \in B(X,Y)$ extends uniquely to $\hat{T} \in B(\hat{X}, Y)$ with the same norm. It then follows, just as in the classical case, that the completion is unique up to a unique isometric isomorphism.

\section{$\L$-inner product spaces}\label{s:hilbert}

We now consider inner product spaces, and we will consider the real and the complex cases simultaneously; if $\L$ is real, then $\overline\l=\l$.

\begin{defn}
Let $X$ be an $\L$-vector space. An {\it $\L$-semi inner product} 
on $X$ is a map $\ip{\cdot}{\cdot} \colon X \times X \to \L$ satisfying the following properties for all $x,y,z\in X$ and $\lambda \in \L$
\begin{enumerate}
\item $\ip{x}{x} \geq 0$ for all $x \in X$, i.e., positive semidefinite;
\item $\ip{x}{y} = \overline{\ip{y}{x}}$, i.e., (conjugate) symmetric;
\item $\ip{x+y}{z}=\ip{x}{z}+\ip{y}{z}$; and 
\item $\ip{\lambda x}{y}=\lambda \ip{x}{y}$.
\end{enumerate}
Furthermore, if $\ip{x}{x}=0$ implies $x=0$, the $\ip{\cdot}{\cdot}$ is called an $\L$-inner product and $X$ is called an $\L$-inner product space.  
\end{defn}

\begin{remark}
As in the classical case, if $X$ is an $\L$-inner product space the mapping 
\[X\ni x\mapsto \sqrt{\ip{x}{x}}\]
is an $\L$-norm on $X$. If, with this norm, $X$ is an $\L$-Banach space, then we call $X$ an $\L$-Hilbert space.
\end{remark}

As expected, for any non-empty set $S$, the space $\ell^2(S)$ is an $\L$-Hilbert space when equipped with the standard inner product $\ip{\cdot}{\cdot} \colon \ell^2(S) \times \ell^2(S) \to \L$ defined by $\ip{f}{g} \defeq \sum_{s\in S}f(s)\overline{g(s)}$, which induces the norm previously defined on $\ell^2(S)$.


The next theorem follows from representation theory and the classical Cauchy-Schwarz inequality. 

\begin{theorem}[Cauchy-Schwarz inequality] \label{thm:cauchy-schwarz}
Let $X$ be an $\L$-semi inner product space. Then, for any $x,y\in X$, we have
$|\langle x, y\rangle|\leq\norm{x}\norm{y}.$
\end{theorem}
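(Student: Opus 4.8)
The plan is to reduce the $\L$-valued Cauchy-Schwarz inequality to the classical one via the representation $C(K) \subseteq \L \subseteq C_\infty(K)$. Fix $x, y \in X$ and set $a \defeq \ip{x}{x}$, $b \defeq \ip{y}{y}$, and $c \defeq \ip{x}{y}$, all elements of $\L$ (with $a, b \geq 0$). The key algebraic fact I would exploit is the standard one: for every $\l \in \L$ (or $\L_\C$), positive semidefiniteness of the form gives $\ip{x - \l y}{x - \l y} \geq 0$, which expands (using sesquilinearity and property (ii)) to
\[
a - \bar\l c - \l \bar c + |\l|^2 b \geq 0
\]
for all $\l$. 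So the whole content is to extract $|c| \leq \sqrt{a}\sqrt{b} = \norm{x}\norm{y}$ from this family of inequalities in $\L$.

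The cleanest route is pointwise. By representation theory, it suffices to prove the inequality $|c(\omega)| \leq \sqrt{a(\omega)}\sqrt{b(\omega)}$ for all $\omega$ in the dense open set $U$ on which $a, b, c$ are all finite-valued (this is enough since both sides are continuous functions in $C_\infty(K)$ and agree on a dense set forces $\leq$ everywhere; more precisely, $\norm{x}\norm{y} - |c| \geq 0$ on a dense set and is continuous, hence $\geq 0$). For fixed $\omega \in U$, I would pick a suitable scalar witness. The subtlety is that I cannot simply plug a real number $t$ into $\l$ — $\l$ must be an element of $\L$. But I can choose $\l$ of the form $\l = \gamma_c \, t$ where $t \in \R^+ \subseteq C(K) \subseteq \L$ is a real scalar and $\gamma_c \in \L_\C$ is the unimodular-where-$c\neq 0$ element from \Cref{c: Rotation by scalar multiplication} satisfying $\gamma_c c = |c|$, $|\gamma_c| \leq 1$ (in the real case just take $\gamma_c = n_{c}$, the sign of $c$). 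Then the displayed inequality becomes, at each $\omega$, $a(\omega) - 2t|c(\omega)| + t^2 b(\omega) \geq 0$ (on $\{c \neq 0\}$ where $|\gamma_c|=1$; on $\{c=0\}$ there is nothing to prove). This is a classical quadratic in the real variable $t$; optimizing (or invoking the classical discriminant/AM-GM argument) gives $|c(\omega)|^2 \leq a(\omega) b(\omega)$, hence $|c(\omega)| \leq \sqrt{a(\omega)b(\omega)}$.

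Assembling: the pointwise inequality $|c| \leq \sqrt{a}\sqrt{b}$ holds on the dense set $U$, and since $\sqrt{a}\sqrt{b} - |c|$ is a continuous $C_\infty(K)$-function that is $\geq 0$ on a dense subset, it is $\geq 0$ everywhere on $K$; restricting to $\L$ and recalling $\norm{x} = \sqrt{\ip{x}{x}} = \sqrt a$, $\norm{y} = \sqrt b$ gives $|\ip{x}{y}| \leq \norm{x}\norm{y}$. The main obstacle — and it is minor — is the bookkeeping around the multiplicative unit-style rotation $\gamma_c$ and making sure the quadratic estimate is applied only where $\gamma_c$ is genuinely unimodular; everywhere else ($c = 0$) the claim is trivial. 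Alternatively, one can avoid pointwise reasoning entirely: from $a - 2t|c| + t^2 b \geq 0$ for every real scalar $t \geq 0$ (now an inequality in $\L$), substitute $t = n_{b}\cdot(\text{something})$ — but the $f$-algebra gymnastics of completing the square directly in $\L$ (handling the band where $b$ is not invertible via \Cref{l:approximate_with_invertibles}) is exactly the kind of argument representation theory lets us sidestep, so I would present the pointwise proof.
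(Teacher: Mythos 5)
Your proposal takes the same route as the paper, whose entire proof is the one-line remark that the result ``follows from representation theory and the classical Cauchy--Schwarz inequality'': reducing to the pointwise classical inequality via the Gram entries $a=\ip{x}{x}$, $b=\ip{y}{y}$, $c=\ip{x}{y}$ and the test scalars $\l=\gamma t$ is exactly how that remark is meant to be unpacked, and your care about only evaluating elements of $\L$ (never elements of $X$) at points of $K$ is the right way to do it. One bookkeeping correction: the cross term in $\ip{x-\l y}{x-\l y}=a-\l\bar c-\bar\l c+|\l|^2 b$ is $-2\Re(\l\bar c)$, so the rotation must satisfy $\gamma\bar c=|c|$ (i.e.\ $\gamma=n_c$, the element \Cref{c: Rotation by scalar multiplication} associates to $\bar c$), not $\gamma c=|c|$ as you wrote --- with your choice the cross term becomes $-2t\Re(c^2)/|c|$ instead of $-2t|c|$ and the discriminant argument only yields the weaker bound $\Re(c^2)\le|c|\sqrt{ab}$.
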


Following the proof of \cite[Theorem~3.1]{BusSch3} verbatim, except identifying the classical field scalars $\lambda$ in \cite[Theorem~3.1]{BusSch3} with $\lambda\cdot 1\in\L$ here, we actually obtain 
 a ``Cauchy-Schwarz equality" for $\L$-semi inner products on $\L$-vector spaces.

\begin{theorem}\label{T: CSI}\cite[Theorem~3.1]{BusSch3} If $X$ is a real, respectively complex, $\L$-semi inner product space, and $\mathbb{K}$ denotes $\R$, respectively $\C$, then for all $x,y\in X$, we have
\[
|\langle x, y\rangle| = \|x\|\|y\| - \frac{1}{2}\underset{\xi\in\mathbb{K}\setminus\{0\}}{\inf}\langle \xi x - |\xi|^{-1}y,\  \xi x - |\xi|^{-1}y\rangle.
\]
\end{theorem}

By following the classical proofs, which use only algebra, we obtain the Pythagorean theorem and the parallelogram law for $\L$-semi-inner product spaces.

\begin{theorem}[Pythagoras] \label{thm:pythagoras}
Let $H$ be an $\L$-inner product space. Then, for any $x,y\in H$, we have
\[\text{ if }\ip{x}{y}=0, \text{ then } \norm{x+y}^2=\norm{x}^2+\norm{y}^2.\]
\end{theorem}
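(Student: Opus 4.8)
The plan is to mimic the classical one-line computation, since the statement of Pythagoras's theorem only requires algebraic manipulation of the inner product together with the hypothesis $\ip{x}{y}=0$. First I would expand $\norm{x+y}^2$ using the definition of the norm induced by the $\L$-semi inner product, namely $\norm{x+y}^2 = \ip{x+y}{x+y}$. Then, applying additivity in the first argument (property (iii)) and, after using conjugate symmetry (property (ii)), additivity in the second argument as well, I obtain
\[
\ip{x+y}{x+y} = \ip{x}{x} + \ip{x}{y} + \ip{y}{x} + \ip{y}{y}.
\]

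Next I would use the hypothesis: $\ip{x}{y}=0$ gives $\ip{y}{x} = \overline{\ip{x}{y}} = \overline{0} = 0$ by conjugate symmetry, so the two cross terms vanish. This leaves $\ip{x+y}{x+y} = \ip{x}{x} + \ip{y}{y} = \norm{x}^2 + \norm{y}^2$, which is exactly the claim. One small point worth addressing explicitly is that the distributive law $\ip{x}{y+z} = \ip{x}{y} + \ip{x}{z}$ in the second slot follows from property (iii) together with property (ii): $\ip{x}{y+z} = \overline{\ip{y+z}{x}} = \overline{\ip{y}{x} + \ip{z}{x}} = \overline{\ip{y}{x}} + \overline{\ip{z}{x}} = \ip{x}{y} + \ip{x}{z}$, where conjugation is additive on $\L_\C$ (this is immediate from representation theory, or from the fact that $\ol{\cdot}$ is an algebra involution). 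In the real case there is nothing to check since conjugation is the identity.

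I do not anticipate any genuine obstacle here: unlike the Cauchy-Schwarz inequality, which the authors handle via representation theory, Pythagoras is a purely formal identity that holds in any module equipped with a biadditive, conjugate-symmetric form, so the classical proof transfers verbatim. The only thing to be mildly careful about is not invoking positive definiteness or any order-completeness of $\L$ — none of that is needed — and making sure each use of bi-additivity is justified by the stated axioms. Hence the proof is essentially the displayed computation above, and I would present it in two or three lines.
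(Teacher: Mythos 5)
Your proof is correct and matches the paper's approach exactly: the paper states that the Pythagorean theorem follows "by following the classical proofs, which use only algebra," and your expansion of $\ip{x+y}{x+y}$ via bi-additivity and conjugate symmetry is precisely that classical argument. The extra care you take to justify additivity in the second slot from axioms (ii) and (iii) is a welcome detail but not a departure from the paper's (unwritten) proof.
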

\noindent Note that the converse of Theorem \ref{thm:pythagoras} is true if $\L$ is real.

\begin{theorem}[Parallelogram law]
\label{thm:PL}
Let $H$ be an $\L$-inner product space. Then, for any $x,y\in H$, we have
\begin{equation}\label{e:PL}
\norm{x+y}^2+\norm{x-y}^2=2\norm{x}^2+2\norm{y}^2.
\end{equation}
\end{theorem}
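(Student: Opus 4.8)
The plan is to mimic the classical proof verbatim, since the $\L$-semi-inner product satisfies exactly the same algebraic identities (additivity, scalar linearity in the first slot, conjugate symmetry) that the classical proof uses, and since $\norm{x}^2 = \ip{x}{x}$ by definition of the induced norm. First I would expand $\norm{x+y}^2 = \ip{x+y}{x+y}$ using additivity in both slots (additivity in the second slot follows from additivity in the first together with conjugate symmetry), obtaining
\[
\norm{x+y}^2 = \ip{x}{x} + \ip{x}{y} + \ip{y}{x} + \ip{y}{y} = \norm{x}^2 + \norm{y}^2 + \ip{x}{y} + \ol{\ip{x}{y}}.
\]
Similarly, I would expand
\[
\norm{x-y}^2 = \ip{x}{x} - \ip{x}{y} - \ip{y}{x} + \ip{y}{y} = \norm{x}^2 + \norm{y}^2 - \ip{x}{y} - \ol{\ip{x}{y}},
\]
where I have used $\ip{-y}{z} = -\ip{y}{z}$, which follows from condition (iv) with $\l = -1$ (in the real case) or is subsumed by $\L$-linearity (in the complex case).

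Adding the two displayed equations, the cross terms $\ip{x}{y} + \ol{\ip{x}{y}}$ cancel against $-\ip{x}{y} - \ol{\ip{x}{y}}$, leaving
\[
\norm{x+y}^2 + \norm{x-y}^2 = 2\norm{x}^2 + 2\norm{y}^2,
\]
which is \eqref{e:PL}. This works uniformly whether $\L$ is real or complex, since in the real case $\ol{\ip{x}{y}} = \ip{x}{y}$ and the cancellation is even more transparent.

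There is essentially no obstacle here: everything is a finite algebraic manipulation in the commutative ring $\L$, and no order-theoretic or completeness properties of $\L$ are invoked. The only point requiring a word of care is that additivity in the second argument and the identity $\ip{\0}{z} = 0$ (hence $\ip{-y}{z} = -\ip{y}{z}$) must be derived from the stated axioms, but these are immediate: $\ip{x}{y+z} = \ol{\ip{y+z}{x}} = \ol{\ip{y}{x} + \ip{z}{x}} = \ip{x}{y} + \ip{x}{z}$, and $\ip{0\cdot y}{z} = 0 \cdot \ip{y}{z} = 0$. Thus the proof is a direct transcription of the classical argument, and I would simply write "The proof is the same as in the classical case, expanding $\norm{x \pm y}^2 = \ip{x\pm y}{x\pm y}$ and adding."
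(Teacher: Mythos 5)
Your proof is correct and matches the paper's approach exactly: the paper simply remarks that the parallelogram law follows ``by following the classical proofs, which use only algebra,'' and your expansion of $\norm{x\pm y}^2 = \ip{x\pm y}{x\pm y}$ followed by cancellation of the cross terms is precisely that classical algebraic argument. Your added care in deriving additivity in the second slot and $\ip{-y}{z}=-\ip{y}{z}$ from the stated axioms is a harmless (and welcome) extra detail.
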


We now investigate the converse of Theorem \ref{thm:PL}. 

\begin{theorem}\label{t:plaw-converse}
    Let  $(H,\norm{\cdot})$ be an $\L$-normed space, with the property that for any $x,y\in H$,\[\norm{x+y}^2+\norm{x-y}^2=2\norm{x}^2+2\norm{y}^2.\] 
    Then, there exists an $\L$-inner product $\ip{\cdot}{\cdot}$ on $H \times H$
    such that $\ip{\cdot}{\cdot}$ induces the $\L$-norm $\norm{\cdot}$ on $H$.
\end{theorem}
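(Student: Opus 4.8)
The plan is to run the classical Jordan--von Neumann polarisation argument, checking at each step that it survives the replacement of the scalar field by $\L$. Since $C(K)\subseteq\L$, the elements $\tfrac14\mathbf{1}_K$ and $\tfrac1n\mathbf{1}_K$ ($n\in\N$) lie in $\L^+$, so division by integers is available throughout. I would treat the complex case; the real case is the same argument with the shorter formula $\ip{x}{y}:=\tfrac14(\norm{x+y}^2-\norm{x-y}^2)$. Put $\ip{u}{v}_r:=\tfrac14(\norm{u+v}^2-\norm{u-v}^2)$ and define $\ip{x}{y}:=\ip{x}{y}_r+i\ip{x}{iy}_r$. Using absolute homogeneity in the forms $\norm{-u}=\norm u$ and $\norm{iu}=\norm u$ (as $\abs{-1}=\abs i=1$), one checks directly that $\ip{x}{x}=\norm x^2$, that $\ip{\cdot}{\cdot}$ is conjugate symmetric (a short computation rewriting $\norm{y\pm ix}$ as $\norm{x\mp iy}$), and that $\ip{x}{x}=0$ forces $\norm x=0$, hence $x=\0$; so it only remains to prove additivity and $\L$-homogeneity in the first slot.

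For additivity, the crucial identity is the classical one. First I would set $a=\tfrac{x+y}{2}$, $b=\tfrac{x-y}{2}$ and apply the parallelogram hypothesis to $(a\pm z)\pm b$ to obtain $\norm{x+z}^2+\norm{y+z}^2-\norm{x-z}^2-\norm{y-z}^2=2\norm{a+z}^2-2\norm{a-z}^2$, that is, $\ip{x}{z}_r+\ip{y}{z}_r=2\ip{\tfrac{x+y}{2}}{z}_r$; specialising to $y=\0$ (and $\ip{\0}{z}_r=0$) gives $\ip{x}{z}_r=2\ip{\tfrac{x}{2}}{z}_r$, and combining the two shows $\ip{x+y}{z}_r=\ip{x}{z}_r+\ip{y}{z}_r$. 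Applying this with $z$ and with $iz$ yields additivity of $\ip{\cdot}{\cdot}$ in the first variable. From additivity, $\ip{nx}{y}=n\ip{x}{y}$ for $n\in\N$, hence $\ip{\tfrac1n x}{y}=\tfrac1n\ip{x}{y}$, and with $\ip{-x}{y}=-\ip{x}{y}$ (clear from the formula) one gets $\ip{qx}{y}=q\ip{x}{y}$ for all $q\in\mathbb{Q}$. For $c\in\R$, picking $q_n\in\mathbb{Q}$ with $\abs{q_n-c}\le\tfrac1n$ gives $q_nx\to cx$ in $H$; since $u\mapsto\ip{u}{y}$ is continuous (it is built from the norm, addition, multiplication in $\L$, and multiplication by constants, all continuous) and multiplication in $\L$ is continuous, uniqueness of limits gives $\ip{cx}{y}=c\ip{x}{y}$. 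A direct manipulation of the defining formula (multiplying the arguments of the norms by $\pm i$) gives $\ip{ix}{y}_r=-\ip{x}{iy}_r$ and $\ip{ix}{iy}_r=\ip{x}{y}_r$, hence $\ip{ix}{y}=i\ip{x}{y}$, so $\ip{cx}{y}=c\ip{x}{y}$ for all $c\in\C$.

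The hard part will be upgrading $\C$-homogeneity to full $\L$-homogeneity, since the scalar constants are not dense in $\L$; the tool for this is the Freudenthal Spectral Theorem (\Cref{t:freudenthal}). First I would prove $\ip{\pi x}{y}=\pi\ip{x}{y}$ for $\pi\in\idempotents$: multiplying $\ip{\pi x}{y}_r$ by $\pi^c$ and using $\norm{\pi^c u}^2=\pi^c\norm u^2$ together with $\pi^c(\pi x\pm y)=\pm\pi^c y$ gives $\pi^c\ip{\pi x}{y}_r=0$, and likewise with $y$ replaced by $iy$, so $\pi^c\ip{\pi x}{y}=0$; symmetrically $\pi\ip{\pi^c x}{y}=0$, and since $\ip{x}{y}=\ip{\pi x}{y}+\ip{\pi^c x}{y}$ by additivity, multiplying through by $\pi$ yields $\ip{\pi x}{y}=\pi\ip{x}{y}$. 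Combining this with additivity and $\C$-homogeneity shows $\ip{sx}{y}=s\ip{x}{y}$ for every step function $s$. Finally, given $\lambda\in\L$, by \Cref{t:freudenthal} there are step functions $s_n\to\lambda$; then $s_nx\to\lambda x$ by joint continuity of scalar multiplication, so $\ip{s_nx}{y}\to\ip{\lambda x}{y}$, whereas $\ip{s_nx}{y}=s_n\ip{x}{y}\to\lambda\ip{x}{y}$ by continuity of multiplication in $\L$; uniqueness of limits gives $\ip{\lambda x}{y}=\lambda\ip{x}{y}$. This verifies all the axioms, and $\sqrt{\ip{x}{x}}=\norm x$, so $\ip{\cdot}{\cdot}$ is an $\L$-inner product inducing $\norm{\cdot}$, as required.
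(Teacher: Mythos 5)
Your proposal is correct and follows essentially the same route as the paper: the Jordan--von Neumann polarisation identity, the classical additivity argument, idempotent homogeneity obtained by splitting along $\pi$ and $\pi^c$, extension to real/complex scalars, then step functions and the Freudenthal Spectral Theorem combined with continuity of scalar multiplication and of the norm to reach full $\L$-homogeneity. The only cosmetic differences are that you establish $\C$-homogeneity (including the factor $i$) before invoking Freudenthal on complex step functions, whereas the paper works with real step functions first and treats $i$ at the end, and that you prove idempotent homogeneity by a direct ``multiply by $\pi^c$'' computation rather than via \Cref{l:disjoint-implies-additive}.
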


\begin{proof}
     Let $R \colon H\times H \to \L$ be defined by
     \[
     R(x,y)\defeq\frac14(\norm{x+y}^2-\norm{x-y}^2).
     \]
     If $\L$ is complex, we also define $\ip{\cdot}{\cdot}\colon H\times H \to \L$ by
     \begin{align}
     \label{e:polarisation} \ip{x}{y} &\defeq R(x,y)-iR(ix,y)\\
   \nonumber &=\frac14(\norm{x+y}^2-\norm{x-y}^2-i\norm{ix+y}^2+i\norm{ix-y}^2)\\
   \nonumber&=\frac14(\norm{x+y}^2-\norm{x-y}^2+i\norm{x+iy}^2-i\norm{x-iy}^2),
     \end{align}
    where we use the identity $\norm{z}=\norm{-iz}$ in the last step above. Note that both $R$ and $\ip{\cdot}{\cdot}$ are continuous in each argument. The proofs for additivity of the first argument and (conjugate) symmetry of $R$ (and $\ip{\cdot}{\cdot}$) follow exactly those of the classical case \cite[Theorem 1]{jordanvneumann}.
    To prove homogeneity of the first argument we first state and prove some claims.
    
    \noindent {\bf Claim 1.} For all $x,y\in H$ and $\pi\in \idempotents$, we have
    \[R(\pi x,y)=\pi R(x,y).\]
    \noindent {\it Proof of Claim 1.} Let $x,y\in H$ and $\pi\in \idempotents$. Using \Cref{l:disjoint-implies-additive}(ii), from the third to the fourth equality, we have
   \begin{align*}
      R(\pi x,y)&=\frac14\left[\norm{\pi x+y}^2-\norm{\pi x-y}^2\right]\\&=\frac14\left[\norm{\pi x+\pi y+\pi^c y}^2-\norm{\pi x-\pi y-\pi^c y}^2\right]\\&=\frac14\left[\norm{\pi(x+y)+\pi^c y}^2-\norm{\pi(x-y)-\pi^c y}^2\right]\\&=\frac14\left[(\pi\norm{x+y}+\pi^c\norm{ y})^2-(\pi\norm{x-y}+\pi^c \norm{y})^2\right]\\&=\frac14\left[\pi\norm{x+y}^2+\pi^c\norm{ y}^2-\pi\norm{x-y}^2-\pi^c \norm{y}^2\right]\\
       &=\frac{\pi}{4}\left[\norm{x+y}^2-\norm{x-y}^2\right]=\pi R(x,y).
   \end{align*}
    
    \noindent {\bf Claim 2.} For all $x,y\in H$ and $t\in \R$, we have \[R(tx,y)=tR(x,y).\]
    \noindent {\it Proof of Claim 2.}
    Following the idea in the classical case, we use the additivity of the first argument and induction to show that
   \[R(nx,y)=nR(x,y)\]
   for all $n\in \mathbb{N}$. 
   Then we extend this to $t\in \mathbb{Z}$, then to $t\in \mathbb{Q}$, and finally $t\in \mathbb{R}$ which follows from the continuity of $R$. 
   
    By Claims 1 and 2, we conclude that for all $x,y\in H$,  idempotents $\pi_1,\dots, \pi_n\in \L_\R$, and $t_1,\dots, t_n\in \R$, we have
    \begin{equation}\label{e:linear-combination}
   R((t_1 \pi_1+\dots+t_n \pi_n)x,y)=(t_1\pi_1+\dots+t_n\pi_n)R(x,y).
    \end{equation}
    Let $\lambda\in \L_\R$. By Freudenthal's Spectral Theorem (\Cref{t:freudenthal}), there is a sequence $(\lambda_n)$ of step functions such that $\lambda_n\to \lambda$. Therefore,    
    \[R(\l x,y)=\lim_{n\to\infty}R(\lambda_nx,y)=\lim_{n\to\infty} \lambda_nR(x,y)=\lambda R(x,y),\]
  because $R$ is continuous.
    Combining the above with \eqref{e:polarisation}, we have 
    \begin{align}
    \nonumber \ip{\l x}{y}&=R(\l x,y)-iR(i\l x,y)\\
    \label{e:homogeneity-real}    &=\l R(x,y)-i\l R(i x,y)=\l\ip{x}{y}.
     \end{align}
     Note that 
    \begin{align}
    \nonumber \ip{ix}{y}&=\frac14(\norm{ix+y}^2-\norm{ix-y}^2+i\norm{ix+iy}^2-i\norm{ix-iy}^2)\\
    \nonumber &=\frac14(\norm{x-iy}^2-\norm{x+iy}^2+i\norm{x+y}^2-i\norm{x-y}^2)\\
    \nonumber &=\frac i4(-i\norm{x-iy}^2+i\norm{x+iy}^2+\norm{x+y}^2-\norm{x-y}^2)\\
    \label{eq:homogeneity-i} &=i\ip{x}{y}.
    \end{align}
    Finally, using \eqref{e:homogeneity-real}, \eqref{eq:homogeneity-i}, and the additivity of the first argument, we conclude that $\ip{\l x}{y}=\l \ip{x}{y}$ for any $\l \in \L$.
   It is straightforward to verify that $\norm{x}=\sqrt{R(x,x)}$ and $\norm{x}=\sqrt{\ip{x}{x}}$ for any $x\in H$, and this completes the proof. 
\end{proof}

Consider a set $S$ containing at least two distinct elements $s,t\in S$. For $1 \leq p \leq \infty$, define the elements $f,g\in \ell^p(S)$ by $f \defeq e_s +  e_t$ and $g\defeq  e_s -  e_t$. It is then straightforward to check that the parallelogram law $\norm{f+g}_p^2+\norm{f-g}_p^2=2\norm{f}_p^2+2\norm{g}_p^2$ holds if and only if $p=2$. 

\subsection{Projection property}

For this section, we follow the structure of \cite[Chapter 2]{conway}. Recall the definition of  $\L$-convex sets in an $\L$-vector space given in Definition \ref{defn:l-convex}.

\begin{theorem}\label{t:projection property}
Let $H$ be an $\L$-Hilbert space and $K$ be a closed, $\L$-convex nonempty subset of $H$. Then, for any $x\in H$, there exists a unique point $k_0$ in $K$ such that
\[\norm{x-k_0}=\inf_{k\in K}\norm{x-k}.\]
\end{theorem}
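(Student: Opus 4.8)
The plan is to mimic the classical Hilbert space proof of the projection theorem, but with two modifications forced by the $\L$-valued setting: the infimum $d \defeq \inf_{k\in K}\norm{x-k}$ now lives in $\L^+$ (it exists by Dedekind completeness), and we must replace ordinary minimizing sequences by a net of the kind produced in \Cref{l:zero_distance_convex}. First I would observe that since $K$ is $\L$-convex it is in particular $\idempotents$-convex (take $\l=\pi,\mu=\pi^c$), so \Cref{l:zero_distance_convex} applies: there is a net $(k_\alpha)$ in $K$ with $\norm{x-k_\alpha}\downarrow d$. The key computation is the parallelogram law (\Cref{thm:PL}) applied to the vectors $x-k_\alpha$ and $x-k_\beta$: this gives
\[
\norm{k_\alpha-k_\beta}^2 = 2\norm{x-k_\alpha}^2 + 2\norm{x-k_\beta}^2 - 4\,\norm{x - \tfrac12(k_\alpha+k_\beta)}^2,
\]
and since $\tfrac12(k_\alpha+k_\beta)\in K$ by $\L$-convexity, the last term is $\geq d^2$. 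Hence $\norm{k_\alpha-k_\beta}^2 \leq 2\norm{x-k_\alpha}^2 + 2\norm{x-k_\beta}^2 - 4d^2$.

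Next I would argue that the right-hand side defines a net (indexed by pairs $(\alpha,\beta)$, or just exploiting monotonicity) decreasing to $0$: since $\norm{x-k_\alpha}^2 \downarrow d^2$ by \Cref{l:p power continuous} (continuity of squaring) together with order continuity of multiplication, the expression $2\norm{x-k_\alpha}^2 + 2\norm{x-k_\beta}^2 - 4d^2$ has infimum $2d^2+2d^2-4d^2=0$ over $(\alpha,\beta)$, using \Cref{l:sup_additive}. Therefore $(\norm{k_\alpha-k_\beta}^2)$ has infimum $0$ along the tails, which by \Cref{l:p power continuous}(i) (applied with $p=1/2$) shows $(k_\alpha)$ is Cauchy in $H$. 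Since $H$ is an $\L$-Banach space, $k_\alpha\to k_0$ for some $k_0\in H$; as $K$ is closed, $k_0\in K$. Continuity of the norm (\Cref{c:continuity_of_norm}) and of squaring then gives $\norm{x-k_0}^2 = \lim_\alpha \norm{x-k_\alpha}^2 = d^2$, and taking square roots (both sides in $\L^+$) yields $\norm{x-k_0}=d$, i.e.\ $k_0$ attains the infimum.

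For uniqueness, suppose $k_0,k_1\in K$ both satisfy $\norm{x-k_i}=d$. Applying the parallelogram law to $x-k_0$ and $x-k_1$ exactly as above gives $\norm{k_0-k_1}^2 \leq 2d^2+2d^2-4\norm{x-\tfrac12(k_0+k_1)}^2 \leq 4d^2-4d^2=0$, using $\tfrac12(k_0+k_1)\in K$ once more. Hence $\norm{k_0-k_1}=0$, so $k_0=k_1$ by positive definiteness of the $\L$-norm. The main obstacle I anticipate is purely bookkeeping: making precise that the double-indexed family on the right-hand side of the parallelogram estimate really has order-infimum $0$ and hence genuinely witnesses the Cauchy property in the sense of the definition (with a set $\Eps\searrow 0$), rather than just a heuristic "tends to $0$"; this is where \Cref{l:sup_additive}, \Cref{l:order_cont_mult}, and \Cref{l:p power continuous} must be invoked carefully, and where the net (as opposed to sequence) indexing needs a little care.
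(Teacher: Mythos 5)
Your proposal is correct and follows essentially the same route as the paper's proof: obtain a minimizing net from \Cref{l:zero_distance_convex}, use the parallelogram law together with $\L$-convexity to bound $\norm{k_\alpha-k_\beta}^2$ by a quantity decreasing to $0$, conclude Cauchyness and pass to the limit using completeness, closedness, and continuity of the norm, then reuse the parallelogram law for uniqueness. The ``bookkeeping'' you flag is handled in the paper exactly as you anticipate, by taking $\Eps$ to be the set of elements $\sqrt{\norm{x-k_{\alpha_0}}^2-d^2}$ indexed by $\alpha_0$ and using monotonicity of the net to get the tail bound.
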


\begin{proof}
Without loss of generality, we assume $x=0$, since translation preserves $\L$-convexity and is an isometry.
Let $\lambda = \inf_{k \in K} \norm{k}$. By \Cref{l:zero_distance_convex}, there is a net $(k_\alpha)_{\alpha \in A}$ in $K$ such that $\norm{k_\alpha} \downarrow \lambda$. Define
\[ \Eps \defeq \left\{ \sqrt{\norm{k_\alpha}^2 - \lambda^2} \colon \alpha \in A \right\}, \]
then $\Eps \searrow 0$ since $\norm{k_\alpha} \downarrow \lambda$. Let $\eps \in \Eps$, then there is an $\alpha_0$ with $\eps^2 = \norm{k_{\alpha_0}}^2 - \lambda^2$. Let $\alpha, \beta \geq \alpha_0$. Then $\lambda \leq \norm{k_\alpha}, \norm{k_\beta} \leq \norm{k_{\alpha_0}}$, and note that $(k_\alpha+k_\beta)/2\in K$ since $K$ is $\L$-convex, and so by the parallelogram law,
\begin{align*}
\norm{\frac12(k_\alpha-k_\beta)}^2&=\frac12 \norm{k_\alpha}^2+\frac12 \norm{k_\beta}^2-\norm{\frac12(k_\alpha+k_\beta)}^2\\
&\leq \frac12 \norm{k_{\alpha_0}}^2+\frac12 \norm{k_{\alpha_0}}^2-\l^2=\eps^2,
\end{align*}
so $(k_\alpha)$ is Cauchy and hence $k_\alpha \to k_0$ for some $k_0 \in K$. By continuity of the norm, $\lambda = \lim_{\alpha \in A} \norm{k_\alpha} = \norm{k_0}.$
    
To show uniqueness, suppose there exists $k_1\in K$ such that $\norm{k_1}=\l$, then
\[\l\leq \norm{\frac12(k_0+k_1)}\leq \frac12\norm{k_0}+\frac12\norm{k_1}=\l,\]
and so $\l=\norm{\frac12(k_0+k_1)}$. Furthermore, by the parallelogram law, 
\begin{align*}
    \l^2&=\norm{\frac12(k_0+k_1)}^2\\ &=\frac12\norm{k_0}^2+\frac12\norm{k_1}^2-\norm{\frac12(k_0-k_1)}^2\\
    &=\l^2-\norm{\frac12(k_0-k_1)}^2,
\end{align*}
which implies that $\norm{k_0-k_1}=0$ and thus $k_0=k_1$.
\end{proof}

At first glance it may seem as if the full assumption of $\L$-convexity is not used in the proof of \Cref{t:projection property}, since it seems that only $\idempotents$-convexity (in the application of \Cref{l:zero_distance_convex}) and the fact that if $x,y \in K$, then $(x+y)/2 \in K$ are used. However, note that these two properties yield that $K$ is convex with respect to any step function with dyadic rational coefficients, and the closedness of $K$ together with the Freudenthal Spectral theorem (\Cref{t:freudenthal}) yield $\L$-convexity.

\begin{theorem}\label{t:distance-projection}
Let $H$ be an $\L$-Hilbert space and $M$ be a closed subspace of $H$. Let $x\in H$. If $y$ is the unique element of $M$ such that
$\norm{x-y}=\inf_{z\in M}\norm{x-z}$, then $\ip{x-y}{z}=0$ for all $z\in M$. Conversely, if $y\in M$ such that $\ip{x-y}{z}=0$ for all $z\in M$, then $\norm{x-y}=\inf_{z\in M}\norm{x-z}$. 
\end{theorem}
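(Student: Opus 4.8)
The plan is to transcribe the two classical implications, taking care only with the step where the classical argument perturbs by an infinitesimally small scalar.

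For the forward implication, suppose $y\in M$ attains the distance, fix $z\in M$, and abbreviate $w:=x-y$ and $c:=\ip{w}{z}\in\L$. Since $M$ is an $\L$-submodule, $y+n^{-1}c\,z\in M$ for every $n\in\N$, so minimality of $y$ gives $\norm{w}^2\le\norm{w-n^{-1}c\,z}^2$. Expanding the right-hand side using linearity of $\ip{\cdot}{\cdot}$ in the first argument and conjugate symmetry, one computes $\norm{w-n^{-1}c\,z}^2=\norm{w}^2-2n^{-1}|c|^2+n^{-2}|c|^2\norm{z}^2$; hence $2n^{-1}|c|^2\le n^{-2}|c|^2\norm{z}^2$, and multiplying through by $n$ (which preserves the order on $\L$) gives $2|c|^2\le n^{-1}|c|^2\norm{z}^2$ for every $n$.

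The crux is now to let $n\to\infty$. Since $\L$ is Archimedean, $n^{-1}\mu\downarrow 0$ for every $\mu\in\L^+$, so $2|c|^2\le\inf_n n^{-1}|c|^2\norm{z}^2=0$; as $|c|^2\ge 0$ this forces $|c|^2=0$, whence $c=\ip{x-y}{z}=0$ (by representation theory, or since $\L$ is semiprime, $|c|^2=0$ yields $|c|=0$ and then $c=0$). This is the only delicate point: the classical proof chooses a small real parameter $t$ and lets $t\to 0$, but $\L$ has no smallest positive scalar to quantify over, so instead I multiply by $n$ first and then invoke the Archimedean property.

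For the converse, suppose $y\in M$ satisfies $\ip{x-y}{z}=0$ for all $z\in M$. Fix $z\in M$; then $y-z\in M$, so $\ip{x-y}{y-z}=0$, and applying the Pythagorean theorem (\Cref{thm:pythagoras}) to $x-z=(x-y)+(y-z)$ gives $\norm{x-z}^2=\norm{x-y}^2+\norm{y-z}^2\ge\norm{x-y}^2$. Taking square roots (order-preserving on $\L^+$ by representation theory) shows $\norm{x-y}\le\norm{x-z}$ for all $z\in M$, and since this bound is attained at $z=y$ we conclude $\norm{x-y}=\inf_{z\in M}\norm{x-z}$. Apart from the limiting step highlighted above, everything is routine: $M$ being an $\L$-submodule legitimises the perturbations $y+n^{-1}c\,z$ and the differences $y-z$, and squaring, square roots, and infima in $\L$ behave exactly as in the scalar case.
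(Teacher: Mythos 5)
Your proof is correct, and the converse half coincides with the paper's argument verbatim (Pythagoras applied to $x-z=(x-y)+(y-z)$). The forward half, however, takes a recognizably different route. The paper perturbs by an arbitrary $z\in M$ to get $2\Re\ip{x-y}{z}\leq\norm{z}^2$, then handles the complex modulus by replacing $z$ with $\overline{\zeta}z$ for $\zeta\in\mathbb{T}$ and invoking the identity $|\l|=\sup_{\zeta\in\mathbb{T}}\Re(\zeta\l)$ from \eqref{e:defn-modulus}, and finally scales $z\mapsto tz$ and lets $t\to 0$. You instead perturb in the specific direction $n^{-1}\ip{x-y}{z}\,z$, so that the cross terms collapse algebraically to $-2n^{-1}|c|^2$ and the modulus appears for free via $\overline{c}c=|c|^2$; the limiting step is then the Archimedean property of $\L$ applied after multiplying by $n$. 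What your version buys is that it bypasses \eqref{e:defn-modulus} entirely (no supremum over the unit circle), at the cost of leaning on the conjugate linearity computation $\ip{w}{cz}=\overline{c}\ip{w}{z}$ and the identity $\overline{c}c=|c|^2$ in $\L_\C$ — both clear from representation theory, as is the order-preservation of squaring that you (like the paper) use implicitly when passing from minimality of $\norm{x-y}$ to minimality of $\norm{x-y}^2$. Both proofs ultimately rest on the same Archimedean fact that $\inf_n n^{-1}\mu=0$ for $\mu\in\L^+$; your remark that this is ``the only delicate point'' is accurate for both versions.
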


\begin{proof}
    Assume that $y$ is the unique element of $M$ such that $\norm{x-y}=\inf_{z\in M}\norm{x-z}.$ Let $z\in M$. Then $y+z\in M$ and so 
    \begin{align*} 
    \norm{x-y}^2&\leq  \norm{x-(y+z)}^2\\
    &= \ip{(x-y)-z}{(x-y)-z} \\
    &=\norm{x-y}^2-2\Re\ip{x-y}{z}+\norm{z}^2,
    \end{align*}
    which implies that \begin{equation}\label{eq:orthogonal-1}
    2\Re\ip{x-y}{z}\leq \norm{z}^2.
    \end{equation}
    Therefore, for any $\zeta\in \mathbb{T}$, we have $2\Re\ip{x-y}{\overline{\zeta}z}\leq \norm{z}^2,$
    or equivalently, 
    \begin{equation}\label{eq:orthogonal-2}
    2\Re(\zeta\ip{x-y}{z})\leq \norm{z}^2.
    \end{equation}
    From \eqref{eq:orthogonal-1}, \eqref{eq:orthogonal-2}, and \eqref{e:defn-modulus} in \Cref{r:real or complex}, we conclude that
    $2\left|\ip{x-y}{z}\right|\leq \norm{z}^2.$
    Now we replace $z$ with $tz$ for arbitrary $0<t\in \R$, we have
     $2\left|\ip{x-y}{z}\right|\leq t\norm{z}^2,$
     and taking $t\to 0$, we have $\ip{x-y}{z}=0$.

     Conversely, assume $y\in M$ is such that $\ip{x-y}{z}=0$ for all $z\in M$. Let $z\in M$. Then,
     $\ip{x-y}{y-z}=0$, and by Pythagoras, we have 
     \[
         \norm{x-z}^2
        =\norm{x-y+y-z}^2=\norm{x-y}^2+\norm{y-z}^2\geq \norm{x-y}^2,
     \]
     and so $\norm{x-y}\leq \norm{x-z}.$ Taking the infimum over all $z \in M$ concludes the proof.
\end{proof}

Let $H$ be an $\L$-Hilbert space with $A \subseteq H$. As usual, the \emph{orthocomplement} of the set $A$ is defined as 
\[
            A^\perp \defeq \{x\in H: \ip{x}{y}=0 \text{ for all } y\in A\}.
\]We list a few standard results relating to orthocomplements which are needed in the sequel.

\begin{prop}
Let $H$ be an $\L$-Hilbert space with $A \subseteq H$. Then $A^{\perp}$ is a closed subspace of $H$.
\end{prop} 

\begin{prop}\label{p: Subspace and closure have the same orthocomplement}
 Let $H$ be an $\L$-Hilbert space and let $M$ be a subspace of $H$. Then $M^\perp = (\overline{M})^\perp$.   
\end{prop}

\begin{proof}
Since $M \subseteq \overline{M}$, we obtain the inclusion $(\overline{M})^\perp \subseteq M^\perp$ since orthocomplementation is subset reversing. For the reverse inclusion, take $y \in M^\perp$ and consider $x\in \overline{M}$. Since there exists a net $(x_\alpha)$ in $M$ such that $x_\alpha \to x$ and $\ip{y}{x_\alpha} = 0$ for all $\alpha$, we conclude by the continuity of the inner product that $y \in (\overline{M})^\perp$ and thus $M^\perp \subseteq (\overline{M})^\perp$.
\end{proof}

\begin{corol}\label{c:decomposition}
Let $H$ be an $\L$-Hilbert space and let $M$ be a closed subspace of $H$. Then $H=M \oplus M^\perp$.
\end{corol}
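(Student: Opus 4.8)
The plan is to combine the projection property (\Cref{t:projection property}) with the orthogonality characterisation (\Cref{t:distance-projection}). First I would observe that $M$, being an $\L$-submodule of $H$, is automatically $\L$-convex, and it is closed and nonempty by hypothesis, so \Cref{t:projection property} applies: for each $x \in H$ there is a unique $y \in M$ with $\norm{x-y} = \inf_{z \in M}\norm{x-z}$. By the first implication of \Cref{t:distance-projection}, $\ip{x-y}{z} = 0$ for all $z \in M$, that is, $x - y \in M^\perp$. Writing $x = y + (x-y)$ then exhibits $x$ as a sum of an element of $M$ and an element of $M^\perp$, so $H = M + M^\perp$.

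Next I would note that $M^\perp$ is itself an $\L$-submodule of $H$: this is immediate from the additivity and $\L$-homogeneity of $\ip{\cdot}{\cdot}$ in the first argument, so that $M + M^\perp$ really is an internal sum of $\L$-submodules. (One could also remark that $M^\perp$ is closed, being an intersection of kernels of the continuous maps $w \mapsto \ip{w}{y}$, though this is not needed for the statement.)

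Finally, for directness of the sum I would verify that $M \cap M^\perp = \{\0\}$: if $w$ lies in both, then taking $y = w$ in the defining condition of $M^\perp$ gives $\ip{w}{w} = 0$, whence $w = \0$ by positive definiteness of the $\L$-inner product. Combining this with the decomposition $H = M + M^\perp$ established above yields $H = M \oplus M^\perp$.

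I do not anticipate a genuine obstacle here, since everything is assembled from results already proven; the only points requiring a moment's care are that a subspace qualifies as an $\L$-convex set so that \Cref{t:projection property} is applicable, and that the uniqueness clause of \Cref{t:projection property} is not what gives uniqueness of the decomposition — that comes solely from positive definiteness, via $M \cap M^\perp = \{\0\}$.
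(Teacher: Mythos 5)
Your proposal is correct and follows essentially the same route as the paper: apply the projection theorem (\Cref{t:projection property}) to the closed subspace $M$ to obtain the nearest point $y$, invoke \Cref{t:distance-projection} to conclude $x-y\in M^\perp$, and deduce the decomposition. The paper's proof is terser and leaves the directness of the sum implicit, whereas you spell out $M\cap M^\perp=\{\0\}$ via positive definiteness; this is a welcome extra detail but not a different argument.
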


\begin{proof}
Let $x\in H$. By \Cref{t:distance-projection}, there exists a unique element $y\in M$ such that $\ip{x-y}{z}=0$ for all $z\in M$, that is $x-y\in M^\perp$, and this completes the proof.
\end{proof}



If $M$ is a closed subspace of $H$ and $x\in H$, then there is a unique $y\in M$ such that $x-y\in M^\perp$. Define $P_M\colon H\to M$  by $P_Mx \defeq y$. Note that $P_M$ is well defined, by the uniqueness of the corresponding element in $M$ given by \Cref{t:distance-projection}.

\begin{prop}
The function $P_M$ satisfies the following properties:
\begin{enumerate}[(i)]
\item $P_M$ is $\L$-linear and contractive;
\item $P_M^2=P_M$;
\item $\mathrm{ker}(P_M)=M^\perp$; and
\item $\mathrm{ran}(P_M)=M$.
\item $P_{M^\perp} = I - P_M$ where $I:H\to H$ is the identity function on $H$.
\end{enumerate}
\end{prop}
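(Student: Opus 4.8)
The plan is to deduce all four properties from the orthogonal decomposition $H = M \oplus M^\perp$ of \Cref{c:decomposition} together with the defining property of $P_M$, namely that $P_M x$ is the \emph{unique} element of $M$ with $x - P_M x \in M^\perp$. First I would observe that $M^\perp$ is itself an $\L$-submodule of $H$: if $u,v \in M^\perp$ and $\l,\mu \in \L$, then $\ip{\l u + \mu v}{z} = \l\ip{u}{z} + \mu\ip{v}{z} = 0$ for every $z \in M$. For $\L$-linearity of $P_M$, take $x_1,x_2 \in H$ and $\l_1,\l_2 \in \L$ and write $x_i = P_M x_i + (x_i - P_M x_i)$. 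Then $\l_1 P_M x_1 + \l_2 P_M x_2 \in M$, while $\l_1(x_1 - P_M x_1) + \l_2(x_2 - P_M x_2) \in M^\perp$, and these sum to $\l_1 x_1 + \l_2 x_2$; the uniqueness in \Cref{t:distance-projection} (equivalently \Cref{c:decomposition}) forces $P_M(\l_1 x_1 + \l_2 x_2) = \l_1 P_M x_1 + \l_2 P_M x_2$.

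For contractivity I would use that $P_M x \in M$ and $x - P_M x \in M^\perp$ are orthogonal, so $\ip{P_M x}{x - P_M x} = 0$ and \Cref{thm:pythagoras} gives $\norm{x}^2 = \norm{P_M x}^2 + \norm{x - P_M x}^2 \geq \norm{P_M x}^2$ in $\L$; since $\l \mapsto \l^{1/2}$ is monotone on $\L^+$ (by representation theory), this yields $\norm{P_M x} \leq \norm{x}$, i.e. $\norm{P_M} \leq 1$. For (ii)--(iv), the key remark is that $y \in M$ implies $y = y + \0$ with $y \in M$ and $\0 \in M^\perp$, hence $P_M y = y$. Applying this with $y = P_M x \in M$ gives $P_M^2 = P_M$; it also shows $M \subseteq \mathrm{ran}(P_M)$, and since $\mathrm{ran}(P_M) \subseteq M$ by definition, $\mathrm{ran}(P_M) = M$. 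Finally $P_M x = \0$ iff the $M$-component of $x$ is trivial iff $x = x - P_M x \in M^\perp$, so $\ker(P_M) = M^\perp$.

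I do not expect any genuine obstacle here: every step is the classical Hilbert-space argument. The only places where the $\L$-setting intervenes, rather than pure module algebra, are the verification that $M^\perp$ is an $\L$-submodule (so that the decomposition is an $\L$-module direct sum and $\L$-linearity of $P_M$ makes sense) and the passage from $\norm{P_M x}^2 \leq \norm{x}^2$ to $\norm{P_M x} \leq \norm{x}$ via monotonicity of the square root on $\L^+$; both are routine given the earlier results.
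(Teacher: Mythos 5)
Your proof is correct and follows the same route the paper intends: the paper gives no argument of its own but simply defers to the classical proof in Conway, and your adaptation (uniqueness of the $M\oplus M^\perp$ decomposition for linearity, Pythagoras plus monotonicity of the square root on $\L^+$ for contractivity) is exactly that classical argument transplanted to the $\L$-setting. The two points you flag as the only genuinely $\L$-specific steps are indeed the only ones, and both are handled correctly.
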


\begin{proof}
The proof of (i) - (iv) follows along the same line as in the classical case which can be found in \cite[Theorem~I.2.7]{conway}. The statement in (v) follows directly from the observation that $I = P_M + P_{M^\perp}$.
\end{proof}


The following result follows as in the classical case \cite[Corollary~I.2.9]{conway}. 

\begin{prop}\label{p: Double orthocomplement of closed subspace}
Let $H$ be an $\L$-Hilbert space and let $M$ be a closed subspace of $H$. Then $(M^\perp)^{\perp} = M$. 
\end{prop}

The notation $\overline{\Span}\left( A \right)$ denotes the closure of the subspace generated by $A$. 

\begin{prop}\label{p: Closed linear span is the double orthocomplement}
Let $H$ be an $\L$-Hilbert space with $A \subseteq H$. Then $(A^\perp)^{\perp} = \overline{\Span}\left( A \right)$.     
\end{prop}

\begin{proof}
Since $A \subseteq (A^\perp)^{\perp}$ and $(A^\perp)^{\perp}$ is a submodule of $H$, it is clear that $\Span \left( A \right) \subseteq (A^\perp)^{\perp}$ and since $(A^\perp)^{\perp}$ is closed, we have $\overline{\Span}\left( A \right) \subseteq (A^\perp)^{\perp}$. For the reverse inclusion, since $A \subseteq \overline{\Span}\left( A \right)$ and orthocomplementation is subset reversing, we conclude by \Cref{p: Double orthocomplement of closed subspace} that $(A^\perp)^{\perp} \subseteq ( \overline{\Span}\left( A \right)^\perp )^\perp = \overline{\Span}\left( A \right)$.
\end{proof}

\begin{corol}\label{c: Characterisation of density}
Let $H$ be an $\L$-Hilbert space and let $M$ be a subspace of $H$. Then $H = \overline{M}$ if and only if $M^\perp = \{ \0 \}$.    
\end{corol}

\begin{proof}
First assume that $H = \overline{M}$, then by \Cref{p: Subspace and closure have the same orthocomplement} we have $M^\perp = (\overline{M})^\perp = H^\perp = \{ \0 \}$. Conversely, if $M^\perp = \{ \0 \}$, by \Cref{p: Closed linear span is the double orthocomplement} we have $\overline{M} = \overline{\Span}\left( M \right) = (M^\perp)^{\perp} = \{ \0 \}^\perp = H$.
\end{proof}

\subsection{Suborthonormal systems}

We follow the terminology of \cite[Section~2.3]{edeko2023decomposition}.

\begin{defn}
Let $H$ be an $\L$-inner product space. A collection $(x_i)_{i\in I}$ in $H$ is called an \emph{orthogonal system} if $\ip{x_i}{x_j} = \mathbf{0}$ whenever $i \neq j$. An orthogonal system $(x_i)_{i\in I}$ is \emph{suborthonormal} if each $x_i$ is normalised (i.e. $\norm{x_i} \in \idempotents$ for every $i\in I$). Furthermore, if $\pi \in \idempotents$, a suborthonormal system $(x_i)_{i\in I}$ is called \emph{$\pi$-homogeneous} if $\ip{x_i}{x_j} = \pi \delta_{ij}$, and it is \emph{homogeneous} if it is $\pi$-homogeneous for some $\pi \in \idempotents$. A homogeneous suborthonormal system $(x_i)_{i\in I}$ is \emph{orthonormal} if it is $1$-homogeneous. A (sub)orthonormal system $\mathcal{B} \defeq (x_i)_{i\in I}$ in $H$ is called a \emph{(sub)orthonormal  basis} if $\mathcal{B}^{\perp} = \{ 0 \}$.
\end{defn}


We now formulate the Gram-Schmidt Orthogonalisation process in the setting of $\L$-Hilbert spaces. We omit the proof since it follows exactly like in the classical case. In the classical case, the result can be formulated in any inner product space. However, in our case we need to consider an $\L$-Hilbert space since the existence of normalisations of vectors cannot be guaranteed without completeness by \Cref{l: Normalisation exists in an L-Banach space}.

\begin{prop}[Gram-Schmidt Orthogonalisation process]
Let $H$ be an $\L$-Hilbert space with $\mathcal{L} \defeq (x_i)^n_{i = 1}$ a collection of linearly independent vectors. Then there exists a suborthonormal system $\mathcal{S} \defeq (e_i)^n_{i = 1}$ such that $\Span(\mathcal{L})= \Span(\mathcal{S})$. 
\end{prop}

The following result is proven in \cite[Proposition~2.11]{edeko2023decomposition}. We omit the proof here since it follows via a routine application of Zorn's lemma.

\begin{prop}\label{p : suborthonormal basis using zorn}
Let $H$ be an $\L$-Hilbert space. If $\mathcal{S} \subseteq H$ is a suborthonormal system, there exists a suborthonormal basis $\mathcal{B} \subseteq H$ such that $\mathcal{S} \subseteq \mathcal{B}$. 
\end{prop}

\begin{corol}
 Every $\L$-Hilbert space possesses a suborthonormal basis.      
\end{corol}

\begin{theorem}[Bessel's inequality]
Let $H$ be an $\L$-Hilbert space with $\mathcal{S}$ a suborthonormal system. For every $x\in H$ we have
\[
        \sum_{e\in \mathcal{S}} \left| \ip{x}{e} \right|^2 \leq \norm{x}^2.  
\]
\end{theorem}

\begin{proof}
Consider a finite set $F \in \mathcal{F}(\mathcal{S})$.  We define
\[
        x_F \defeq x - \sum_{e\in F} \ip{x}{e} e.
\] 
Let $f\in F$ be arbitrary. Then
\[
        \ip{x_F}{f} = \ip{x}{f} - \sum_{e\in F} \ip{x}{e} \ip{e}{f} = \ip{x}{f} - \ip{x}{f} \norm{f}, 
\]
and by \Cref{p: normalise}~(iv) we have
\begin{align}\label{eq: norm slips in}
        \ip{x}{f} \norm{f} = \ip{x}{f} \overline{\norm{f}} = \ip{x}{\norm{f} f} = \ip{x}{f}.
\end{align}
Thus $\ip{x_F}{f} = 0$ and then it is clear that $\ip{x_F}{\sum_{e\in F} \ip{x}{e} e} = 0$. By \Cref{thm:pythagoras}, we have
\[
        \norm{x}^2 = \norm{x_F + \sum_{e\in F} \ip{x}{e} e}^2 = \norm{x_F}^2 + \norm{\sum_{e\in F} \ip{x}{e} e}^2 \geq \norm{\sum_{e\in F} \ip{x}{e} e}^2.  
\]
Since $\mathcal{S}$ is a suborthonormal system, it follows by another application of \Cref{thm:pythagoras} and the argument in \eqref{eq: norm slips in} that
\[
        \norm{\sum_{e\in F} \ip{x}{e} e}^2 = \sum_{e\in F} \left| \ip{x}{e} \right|^2 \norm{e}^2 = \sum_{e\in F} \left| \ip{x}{e} \right|^2.
\] Thus $\sum_{e\in F} \left| \ip{x}{e} \right|^2 \leq \norm{x}^2$ for every $F \in \mathcal{F}(\mathcal{S})$. Since
\[
        \sum_{e\in \mathcal{S}} \left| \ip{x}{e} \right|^2 = \sup\left\lbrace \sum_{e\in F} \left| \ip{x}{e} \right|^2 ~:~ F \in \mathcal{F}(\mathcal{S})\right\rbrace
\] we conclude that $\sum_{e\in \mathcal{S}} \left| \ip{x}{e} \right|^2 \leq \norm{x}^2$. 
\end{proof}

\begin{theorem}
Let $H$ be an $\L$-Hilbert space with $\mathcal{S}$ a suborthonormal system. For every $x\in H$, the formal series $\sum_{e\in \mathcal{S}} \ip{x}{e}e$ is convergent in $H$. 
\end{theorem}


\begin{proof}
It suffices to verify that the net $\left( \sum_{e\in F} \ip{x}{e}e \right)_{F\in \mathcal{F}(\mathcal{S)}}$ is Cauchy. Since $\sum_{e\in \mathcal{S}} \left| \ip{x}{e} \right|^2$ is convergent, it is Cauchy, so there exists a set $\Eps \searrow 0$ such that for every $\eps \in \Eps$ there exists an $F_0 \in \mathcal{F}(\mathcal{S)}$ such that for all finite $F ,G \supseteq F_0$
\[
        \left| \sum_{e \in F} \left| \ip{x}{e} \right|^2 - \sum_{e \in G} \left| \ip{x}{e} \right|^2\right| \leq \eps.
\]Fix $\eps \in \Eps$ and let $F_0 \in \mathcal{F}(\mathcal{S)}$ be as above. For all finite $F,G \supseteq F_0$, we have 
\begin{align*}
        &\norm{\sum_{e\in F} \ip{x}{e}e - \sum_{e\in G} \ip{x}{e}e}^2\\
        =\ &\norm{\sum_{e\in F\setminus G} \ip{x}{e}e - \sum_{e\in G\setminus F} \ip{x}{e}e}^2\\
        =\ & \norm{\sum_{e\in F\setminus G} \ip{x}{e}e}^2 + \norm{\sum_{e\in G\setminus F} \ip{x}{e}e}^2\\
        =\ &\sum_{e\in F\setminus G} \left| \ip{x}{e} \right|^2 + \sum_{e\in G\setminus F} \left| \ip{x}{e} \right|^2 \\
        = &\sum_{e\in F} \left| \ip{x}{e} \right|^2 - \sum_{e\in F \cap G} \left| \ip{x}{e} \right|^2 + 
        \sum_{e\in G} \left| \ip{x}{e} \right|^2 - \sum_{e\in F \cap G} \left| \ip{x}{e} \right|^2 \leq 2\eps. \qedhere
\end{align*}
\end{proof}

\begin{prop}\label{p:suborthonormal basis equivalences}
Let $H$ be an $\L$-Hilbert space with $\mathcal{S}$ a suborthonormal system. Then the following statements are equivalent:
\begin{itemize}
    \item[(i)] $\mathcal{S}$ is a maximal suborthonormal system.
    \item[(ii)] $\mathcal{S}$ is a suborthonormal basis.
    \item[(iii)] $\overline{\Span}(\mathcal{S}) = H$. 
    \item[(iv)] For every $x\in H$, $x = \sum_{e\in \mathcal{S}} \ip{x}{e}e$.
    \item[(v)] For every $x,y \in H$, $\ip{x}{y} = \sum_{e\in \mathcal{S}} \ip{x}{e} \ip{e}{y}$. 
    \item[(vi)] For every $x\in H$,  $\norm{x}^2 = \sum_{e\in \mathcal{S}} \left| \ip{x}{e} \right|^2$ \quad (Parseval's identity).
\end{itemize}
\end{prop}

\begin{proof}
The proof of these equivalences is similarly to the classical case which can be found in \cite[Theorem~I.4.13]{conway}. We note in particular that the equivalence (ii) $\iff$ (iii) is proven in \Cref{c: Characterisation of density}.
\end{proof}

\subsection{Representing $\L$-Hilbert spaces as $\ell^2$-spaces}\label{ss:representation theory for L-Hilbert spaces}

In this section we prove that any $\L$-Hilbert space $H$ can be decomposed as a direct sum of $\ell^2$-spaces. We first show that a Hilbert space with a $\pi$-homogeneous suborthonormal basis is isomorphic to $\ell^2$.

\begin{theorem}\label{t:pi homogeneous hilbert space}
    Let $\pi \in \idempotents$ and let $H$ be an $\L$-Hilbert space with a $\pi$-homogeneous suborthonormal basis $\mathcal{B}$. Then $H \cong \ell^2(\mathcal{B}, \pi \L)$.
\end{theorem}
\begin{proof}
    Define $T \colon H \to \ell^2(\mathcal{B}, \pi\L)$ and $S \colon \ell^2(\mathcal{B}, \pi \L) \to H$ by
    \[ 
    Tx \defeq \left( \ip{x}{e}\right)_{e \in \mathcal{B}} \quad \quad S\left( (\l_e)_{e \in \mathcal{B}} \right) \defeq \sum_{e \in \mathcal{B}} \l_e e.
    \]
    Let $(\l_e)\in  \ell^2(\mathcal{B}, \pi \L)$. Since $\mathcal{B}$ is a suborthonormal set, we have
    \[\norm{\sum_{e \in \mathcal{B}} \l_e e}^2= \sum_{e \in \mathcal{B}} \norm{\l_e e}^2=\pi \sum_{e \in \mathcal{B}} |\l_es|^2,\]
    and so the series $\sum_{e \in \mathcal{B}} \l_e e$ converges.    
    By (iv) of \Cref{p:suborthonormal basis equivalences},  $T$ is well defined and $ST = I_H$. For any $(\lambda_b)_{b\in\mathcal{B}}\in \ell^2(\mathcal{B}, \pi \L)$, we have
    \[(\lambda_b)_{b\in\mathcal{B}} \stackrel{S}{\longmapsto} \sum_{b\in\mathcal{B}} \lambda_b b\stackrel{T}{\longmapsto}\left(\ip{\sum_{b\in\mathcal{B}} \lambda_b b}{e}\right)_{e\in\mathcal{B}}=\left(\lambda_e\ip{ e}{e}\right)_{e\in\mathcal{B}}=\left(\lambda_e\right)_{e\in\mathcal{B}}\]
    since $\lambda_e\ip{e}{e}=\pi\lambda_e=\lambda_e.$ Therefore $TS = I_{ \ell^2(\mathcal{B},\pi\L)}$  and by Parseval's identity (\Cref{p:suborthonormal basis equivalences} part (vi)), $T$ is an isomorphism of Hilbert spaces.  
    \end{proof}

To show that every $\L$-Hilbert space can be decomposed as a direct sum of homogeneous $\L$-Hilbert spaces, we start by obtaining a suborthonormal basis more refined than the one from \Cref{p : suborthonormal basis using zorn}. 

\begin{prop}\label{p:decreasing suborthonormal basis}
Let $H$ be an $\L$-Hilbert space. Then there exists an ordinal $\gamma$ and a suborthonormal basis $(b_\alpha)_{\alpha \in \gamma}$ of $H$ such that $\alpha \mapsto \norm{b_\alpha}$ is decreasing and $\norm{b_0}=\pi_H$.
\end{prop}
\begin{proof}
    Let $CS(H)$ be the set of closed subspaces of $H$. Using \Cref{l:L-ban-support-is-realized}, let $G \colon CS(H) \to H$ be a choice function such that $\norm{ G(Y)} = \pi_Y$ for all $Y \in CS(H)$. Note that if $X,Y \in CS(H)$ with $X \subseteq Y$, then $\pi_X \leq \pi_Y$, so $Y \mapsto \norm{G(Y)}$ is increasing.

    By transfinite recursion, for an ordinal $\beta$, define $b_\beta \defeq G \left( \{ b_\alpha \colon \alpha < \beta \}^\perp \right)$; note that $b_0 = G(\emptyset^\perp) = G(H)$. By a cardinality argument there is an ordinal $\alpha$ with $b_\alpha = 0$. Let $\gamma$ be the least such ordinal, then $(b_\alpha)_{\alpha \in \gamma}$  satisfies the requirements of the proposition.   
\end{proof}

Note that the next lemma also holds for arbitrary complete Boolean algebras $\idempotents$.

\begin{lemma}\label{l:increasing ordinal function}
    Let $\gamma$ be an ordinal and let $f \colon \gamma + 1 \to \idempotents$ be an increasing function with $f(0) = 0$. For $0 < \beta \leq \gamma$, define $\pi_\beta \defeq f(\beta) - \sup_{a < \beta} f(\alpha)$. Then $\sup_{0 < \beta \leq \gamma} \pi_\beta = f(\gamma)$.
\end{lemma}

\begin{proof}
    Clearly $f(\gamma)$ is an upper bound of $(\pi_\beta)_{0 < \beta \leq \gamma}$. Let $u \in \idempotents$ be an upper bound of $(\pi_\beta)_{0 < \beta \leq \gamma}$. We claim that $u \geq f(\beta)$ for all $0 \leq \beta \leq \gamma$. Note that $u \geq 0 = f(0)$, and for $0 < \beta \leq \gamma$ we prove the claim by transfinite induction, so suppose $u \geq f(\alpha)$ for all $\alpha < \beta$. Since
    \[
    f(\beta) = \pi_\beta + \sup_{\alpha < \beta} f(\alpha) = \pi_\beta \vee \sup_{\alpha < \beta} f(\alpha)
    \]
    and $u \geq \pi_\beta$, it follows that $u \geq f(\beta)$, proving the claim. In particular $u \geq f(\gamma)$ and so $\sup_{0 < \beta \leq \gamma} \pi_\beta = f(\gamma)$.
\end{proof}

If $(H_i)_{i \in I}$ is a collection of $\L$-Hilbert spaces, we define the $\ell^2$-sum $\oplus_{i \in I} H_i$ as in the classical case; note that unlike in the classical case, the elements of $\oplus_{i \in I} H_i$ can have uncountably many nonzero coordinates (similar to the case of \Cref{e:uncountable support}).

\begin{theorem}\label{t:hilbert space disjoint homogeneous sum}
    Let $H$ be an $\L$-Hilbert space. Then there exists a set $I$ and a disjoint collection $(\pi_i)_{i \in I}$ of nonzero idempotents with $\sum_{i \in I} \pi_i = \pi_H$ such that $H = \oplus_{i \in I} \, \pi_i H$ and $\pi_i H$ has $\pi_i$-homogeneous suborthonormal basis for all $i \in I$.    
\end{theorem}

\begin{proof}
    Using \Cref{p:decreasing suborthonormal basis}, we obtain an ordinal $\gamma$ and a suborthonormal basis $(b_\alpha)_{\alpha \in \gamma}$  of $H$ such that $\alpha \mapsto \norm{b_\alpha}$ is decreasing and $\norm{b_0} = \pi_H$. We also define $b_\gamma \defeq \0$. For $0 < \beta \leq \gamma$, define $\pi_\beta$ to be the size of the jump of $\alpha \mapsto \norm{b_\alpha}$ at $\beta$, i.e., $\pi_\beta \defeq \inf_{\alpha < \beta} \norm{b_\alpha} - \norm{b_\beta}$. Define $f \colon \gamma + 1 \to \idempotents$ by $f(\alpha) \defeq \pi_H - \norm{b_\alpha}$, then $f$ is increasing with $f(0) = 0$, and
    \[ 
    \pi_\beta = \inf_{\alpha < \beta} \norm{b_\alpha} - \norm{b_\beta} = \pi_H - \norm{b_\beta} - \sup_{\alpha < \beta} (\pi_H - \norm{b_\alpha}) = f(\beta) - \sup_{\alpha < \beta} f(\alpha).
    \]
    Therefore 
    \begin{equation}\label{e:sup-idempotent}
    \sup_{0 < \beta \leq \gamma} \pi_\beta = f(\gamma) = \pi_H
    \end{equation}
    by \Cref{l:increasing ordinal function}.

    Note that
    \begin{equation}\label{e:pi_beta less than b_alpha}
    \alpha < \beta \Rightarrow \pi_\beta = \inf_{\alpha' < \beta} \norm{b_{\alpha'}} - \norm{b_\beta} \leq \norm{b_\alpha}.
    \end{equation}
    If $0 < \alpha < \beta$, then by definition $\pi_\alpha$ is disjoint with $\norm{b_\alpha}$, which together with $\eqref{e:pi_beta less than b_alpha}$ shows that $\pi_\alpha$ and $\pi_\beta$ are disjoint, hence $(\pi_\beta)_{0 < \beta \leq \gamma}$ is a disjoint collection. Moreover, if $0 < \beta \leq \alpha$, then $\norm{b_\alpha} \leq \norm{b_\beta}$, and $\norm{b_\beta}$ is disjoint from $\pi_\beta$, so $\pi_\beta \norm{b_\alpha} = 0$, and if $ \alpha < \beta$, then \eqref{e:pi_beta less than b_alpha} implies that $\pi_\beta \norm{b_\alpha} = \pi_\beta$. Hence $\norm{\pi_\beta b_\alpha} = \pi_\beta \norm{b_\alpha} \in \{\pi_\beta, 0\}$. Let $I \defeq \{0 < \beta \leq \gamma \colon \pi_\beta \not= 0\}$, then by disjointness of $(\pi_i)_{i\in I}$ and \eqref{e:sup-idempotent}, $\sum_{i \in I} \pi_i = \sup_{0 < \beta \leq \gamma} \pi_\beta = \pi_H$. 
    
    It remains to prove that $\pi_i H$ has a $\pi_i$-homogeneous basis for all $i \in I$. Define 
    \[
    S \defeq \{ (i, \alpha) \in I \times \gamma \colon \pi_i b_\alpha \not= \0 \},
    \]
    and consider the collection $(\pi_i b_\alpha)_{(i, \alpha) \in S}$, which is orthogonal thanks to the disjointness of $(\pi_i)$ and the orthogonality of $(b_\alpha)$, and $\norm{\pi_i b_\alpha} = \pi_i$, so $(\pi_i b_\alpha)_{(i, \alpha) \in S}$ is suborthonormal. If $x \in H$, then the computation
    \[
    \ip{x}{b_\alpha} = \ip{x}{\pi_H b_\alpha} = \ip{x}{\sum_{i \in I} \pi_i b_\alpha} = \sum_{i \in I} \ip{x}{\pi_i b_\alpha}
    \]
    shows that the orthogonal complement of $(\pi_i b_\alpha)_{(i, \alpha) \in S}$ is trivial and hence $(\pi_i b_\alpha)_{(i, \alpha) \in S}$ is a suborthonormal basis. Fix $i \in I$. Define 
    \[S_i\defeq (\{i\}\times \gamma) \cap S. \]
    The disjointness of $(\pi_j)_{j \in I}$ now implies that the closed subspace $\pi_i H$ equals the closed $\L$-linear span of $(\pi_i b_\alpha)_{(i,\alpha) \in S_i}$, so that $(\pi_i b_\alpha)_{(i,\alpha) \in S_i}$ is a suborthonormal basis of $\pi_i H$ with $\norm{\pi_i b_\alpha} = \pi_i$. 
\end{proof}

The above theorem for KH-modules is \cite[Theorem~7.4.7(2)]{domops}, but some steps are missing in that proof. Our proof is very different: we have a simpler argument exploiting transfinite induction and recursion. 

Combining  \Cref{t:hilbert space disjoint homogeneous sum} and \Cref{t:pi homogeneous hilbert space} yields the following corollary.
\begin{corol}\label{c: Representation for L-Hilbert spaces}
    Let $H$ be an $\L$-Hilbert space.  Then there exist a set $I$ and a disjoint collection $(\pi_i)_{i \in I}$ of nonzero idempotents with $\sum_{i \in I} \pi_i = \pi_H$ and a set $S_i$ such that 
    \[H=\bigoplus_{i\in I} \ell^2(S_i,\pi_i\L).\]
\end{corol}

\subsection{Riesz Representation Theorem}

We now move towards proving the Riesz Representation Theorem for $\L$-Hilbert spaces. The statement closely follows the classical Riesz Representation Theorem's statement, but the proof is somewhat more involved. As in the classical case, the Riesz Representation Theorem allows for defining adjoints of operators between $\L$-Hilbert spaces, with which we end the section.


    %
    
    \begin{defn}
        For non-zero $\pi\in \idempotents$, 
        we say that an element $\lambda\in\L$ is \emph{$\pi$-regular}
        if $0\neq \pi\leq \pi_{\lambda}$ and there exists some $\kappa\in\L$ so
        that $\lambda\kappa=\pi.$
    \end{defn}

    \begin{remark}
            The definition of $\pi$-regularity above is important, but may be opaque. The term `regular' is adopted from \cite[Theorem~5]{kaplansky53}, however in this reference the terminology is left undefined, which makes the reference quite difficult to understand.             
            The intuition behind the concept is perhaps easier to understand through invoking representation theory. 
            In $\L$, with $\pi$ an idempotent in $\L$, for an element $\lambda \in \L$ to be $\pi$-regular means that $\lambda$ is bounded away from zero on the support of $\pi$. Therefore $\pi\lambda$ is invertible in $\pi \L$ where the multiplicative unit is, of course, $\pi$.           
    \end{remark}

    Before proving the Riesz Representation Theorem, we prove a number of easy lemmas. 

    Firstly, the product of an element in $\L$ with a non-zero idempotent below the element's support cannot vanish:
    \begin{lemma}   
        \label{l:support-no-kill} Let $X$ be an $\L$-normed space. For all
        non-zero $x\in X$ and all $\pi\in \idempotents$, if $0\neq \pi\leq \pi_{x}$
        then $\pi n_{x}\neq0$ and $\pi x\neq0$.
    \end{lemma}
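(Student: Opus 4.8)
The plan is to prove the contrapositive-flavoured statements directly from the properties recorded in \Cref{p: normalise}. Fix a non-zero $x \in X$ and $\pi \in \idempotents$ with $0 \neq \pi \leq \pi_x$.

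First I would handle $\pi n_x \neq 0$. By \Cref{p: normalise}~(ii), $\norm{n_x} = \pi_x$, and since $\pi \leq \pi_x$ is idempotent, the idempotent homogeneity of the norm gives $\norm{\pi n_x} = \pi \norm{n_x} = \pi \pi_x = \pi \neq 0$. Since a vector of non-zero norm is non-zero (by positive definiteness of the $\L$-norm), we conclude $\pi n_x \neq 0$.

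Next, for $\pi x \neq 0$: compute $\norm{\pi x} = \pi \norm{x}$ by idempotent homogeneity, and then use \Cref{p: normalise}~(iii), namely $\norm{x} = \pi_x \norm{x}$, to get $\norm{\pi x} = \pi \pi_x \norm{x} = \pi \norm{x}$; the point is to show this is non-zero. Suppose $\pi \norm{x} = 0$. Then $\norm{\pi x} = 0$, so $\pi x = \0$, which means $\pi^c x = x$, i.e. $\pi^c \in \{\varpi \in \idempotents \colon \varpi x = x\}$, and hence $\pi_x \leq \pi^c$ by \Cref{p: normalise}~(vii). But then $\pi \leq \pi_x \leq \pi^c$ forces $\pi = \pi \pi^c = 0$, contradicting $\pi \neq 0$. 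Therefore $\pi x \neq 0$.

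I do not anticipate a serious obstacle here: the statement is essentially a bookkeeping consequence of the support calculus established in \Cref{p: normalise}. The one point requiring a little care is making sure the norm really is idempotent homogeneous — but this is immediate since $\norm{\cdot}$ is absolutely homogeneous and $|\pi| = \pi$ for $\pi \in \idempotents$ — and that one genuinely invokes \Cref{p: normalise}~(vii) (the minimality characterisation of $\pi_x$) rather than trying to argue pointwise, so that the proof works verbatim for $\L$-normed spaces (not just the representation $C(K) \subseteq \L \subseteq C_\infty(K)$).
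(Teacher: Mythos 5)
Your argument is correct. The first claim ($\pi n_x \neq 0$) is proved exactly as in the paper: $\norm{\pi n_x} = \pi\norm{n_x} = \pi\pi_x = \pi \neq 0$. For the second claim the paper takes a slightly different route: it writes $\norm{\pi x} = \norm{\pi(\norm{x}n_x)} = \norm{x}\,\norm{\pi n_x}$ using \Cref{p: normalise}~(vi), thereby reducing the second claim to the first (and leaving implicit the small point that $\pi\norm{x} \neq 0$ because $0 \neq \pi \leq \pi_x = \pi_{\norm{x}}$). You instead argue by contradiction via the minimality characterisation in \Cref{p: normalise}~(vii): if $\pi x = \0$ then $\pi^c x = x$, so $\pi_x \leq \pi^c$, which together with $0 \neq \pi \leq \pi_x$ forces $\pi = \pi\pi^c = 0$. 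Your version is marginally more self-contained, since it makes explicit the support argument that the paper's last inequality quietly relies on; the paper's version is shorter because it recycles the first claim. Both are valid one-line consequences of \Cref{p: normalise}, and your detour through the hypothesis $\pi\norm{x} = 0$ could be streamlined by assuming $\pi x = \0$ directly.
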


    \begin{proof}
        Take a non-zero $x\in X$ and $\pi\in \idempotents$ and assume $0\neq \pi\leq \pi_{x}$.
        Then $\|\pi n_{x}\|=\pi\|n_{x}\|=\pi\pi_{x}=\pi\neq0$ implies $\pi n_{x}\neq0$.
        Also, $\|\pi x\|=\|\pi(\|x\|n_{x})\|=\|x\|\|\pi n_{x}\|\neq0$, so that $\pi x\neq0$.
    \end{proof}

    Secondly, any non-zero element of $\L$ has a part that is regular: 
\begin{lemma}\label{l:regularization}
    For non-zero $\lambda\in\L$ there exists a non-zero $\pi \in \idempotents$
    so that $0\neq \pi\leq \pi_{\lambda}$ so that $\lambda$ is $\pi$-regular.
\end{lemma}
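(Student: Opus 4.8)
The plan is to reduce to the case $\lambda\ge 0$ and then to extract, below $\lambda$, an idempotent ``rectangle'' $c\pi$ with $c>0$, on which $\lambda$ is bounded away from zero; inverting there produces the witness $\kappa$.

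First I would dispose of the complex case. For a general non-zero $\lambda$ we have $\pi_\lambda=\pi_{|\lambda|}$, so it suffices to produce $\kappa_0\in\L$ and $\pi\in\idempotents$ with $0\neq\pi\le\pi_{|\lambda|}$ and $|\lambda|\kappa_0=\pi$: then by \Cref{c: Rotation by scalar multiplication} the element $\kappa\defeq n_{\ol\lambda}\kappa_0$ satisfies $\lambda\kappa=(\lambda n_{\ol\lambda})\kappa_0=|\lambda|\kappa_0=\pi$, so $\lambda$ is $\pi$-regular. Hence we may assume $\lambda\in\L^+$, $\lambda\neq\0$.

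Next I would apply the Freudenthal Spectral Theorem (\Cref{t:freudenthal}(i)) to get step functions $\mu_n\uparrow\lambda$; since $\lambda\neq 0$, some $\mu_N\neq 0$, and after deleting zero summands we may write $\mu_N=\sum_{i=1}^m c_i\pi_i$ with the $\pi_i$ non-zero and pairwise disjoint idempotents and $c_i>0$ (positivity of the coefficients following from $\mu_N\ge0$). Set $\pi\defeq\pi_1$ and $c\defeq c_1>0$; then $c\pi\le\mu_N\le\lambda$, and $\pi\neq 0$. To check $\pi\le\pi_\lambda$: from $\pi_\lambda\lambda=\lambda$ we get $\pi_\lambda^c\lambda=0$, so multiplying $c\pi\le\lambda$ by the positive element $\pi_\lambda^c$ gives $0=\pi_\lambda^c\lambda\ge c\,\pi_\lambda^c\pi\ge 0$, forcing $\pi_\lambda^c\pi=0$, i.e.\ $\pi=\pi\pi_\lambda\le\pi_\lambda$. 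Thus $0\neq\pi\le\pi_\lambda$, the required idempotent. Finally, multiplying $c\pi\le\lambda$ by $\pi$ yields $\pi\lambda\ge c\pi$, so $\nu\defeq\pi\lambda+\pi^c\ge c\pi+\pi^c\ge\min(c,1)\mathbf 1_K$, which is invertible; by \Cref{l:invertible}, $\nu$ is invertible with $\nu^{-1}\in\L^+\subseteq\L$. Put $\kappa_0\defeq\pi\nu^{-1}$. Using commutativity of $\L$ and $\pi\nu=\pi^2\lambda+\pi\pi^c=\pi\lambda$, we get $\lambda\kappa_0=(\pi\lambda)\nu^{-1}=(\pi\nu)\nu^{-1}=\pi$. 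Together with the reduction of the first step, this shows $\lambda$ is $\pi$-regular.

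I do not expect a genuine obstacle here: the only points requiring care are the verification $\pi\le\pi_\lambda$ (handled by multiplying $c\pi\le\lambda$ by $\pi_\lambda^c$) and bookkeeping with idempotents in the final identity $\lambda\kappa_0=\pi$. As an alternative one can argue directly by representation theory: since $\lambda\neq 0$ pick $\omega_0\in K$ with $0<|\lambda(\omega_0)|<\infty$, choose an open $W\ni\omega_0$ on which $\tfrac12|\lambda(\omega_0)|<|\lambda|<2|\lambda(\omega_0)|$, let $V\defeq\ol W$ (clopen, as $K$ is Stonean), $\pi\defeq\mathbf 1_V\in\idempotents$, and let $\kappa$ equal $1/\lambda$ on $V$ and $0$ on $V^c$; then $\kappa\in C(K)\subseteq\L$, $\lambda\kappa=\mathbf 1_V=\pi$, and $\pi\le\pi_\lambda$ since $V\subseteq\ol{\{\lambda\neq 0\}}$.
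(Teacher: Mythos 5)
Your proof is correct and follows essentially the same route as the paper: reduce to a positive element, use the Freudenthal Spectral Theorem to find $0<c\pi\leq\lambda$ with $0\neq\pi\leq\pi_\lambda$, and then invert $\lambda$ on the part $\pi$. The only (cosmetic) differences are that the paper inverts $\pi\lambda$ inside the unital $f$-algebra $\pi\L$ and reduces the complex case by passing to a real positive/negative part, whereas you invert the globally invertible element $\pi\lambda+\pi^c$ and handle the complex case via $|\lambda|$ and $n_{\ol{\lambda}}$, also spelling out the verification that $\pi\leq\pi_\lambda$.
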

\begin{proof}
    By considering the real or imaginary positive or negative part of $\lambda$, we may assume that $\lambda$ is positive.
    By the Freudenthal Spectral Theorem (\Cref{t:freudenthal}), there exist 
    $0<\pi\leq \pi_\lambda \in\idempotents$ and $0<t\in \R$ so that 
    $0<t \pi \leq \lambda$. Now, $\pi$ is the multiplicative unit in $\pi\L$ hence is invertible in $\pi\L$. Furthermore, since $0<t \pi = t\pi^2 = \pi (t\pi) \leq \pi\lambda$ in $\pi\L$, by \Cref{l:invertible}, the element $\pi\lambda$ is invertible in $\pi\L$. Hence there exists $\kappa \in \pi\L \subseteq \L$ 
    so that $\kappa\lambda = \pi \kappa \lambda =  \kappa (\pi \lambda) = \pi$.
\end{proof}

Thirdly, regularity of an element in $\L$ is preserved under products with non-zero idempotents below the element's support:
\begin{lemma}\label{l:regulatiry-passes-down}
    For non-zero $\lambda\in\L$ and non-zero $\pi, \pi' \in \idempotents$, 
    if $\lambda$
    is $\pi$-regular and $0\neq \pi'\leq \pi$, then $\lambda$ is $\pi'$-regular.
\end{lemma}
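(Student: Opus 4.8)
The plan is to verify the two defining conditions of $\pi'$-regularity directly, reusing the witness for $\pi$-regularity after localizing it by $\pi'$. Recall that $\lambda$ being $\pi'$-regular means $0 \neq \pi' \leq \pi_\lambda$ together with the existence of some $\kappa' \in \L$ satisfying $\lambda\kappa' = \pi'$.

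First, the support condition is immediate: since $0 \neq \pi'$ by hypothesis and $\pi' \leq \pi \leq \pi_\lambda$ (the latter inequality because $\lambda$ is $\pi$-regular), we get $0 \neq \pi' \leq \pi_\lambda$ with nothing further to prove. Second, to produce the witness, let $\kappa \in \L$ be a witness for $\pi$-regularity, so $\lambda\kappa = \pi$, and set $\kappa' \defeq \pi'\kappa$, which lies in $\L$ because $\L$ is an algebra. Using commutativity and associativity of the $f$-algebra multiplication,
\[
\lambda\kappa' = \lambda\pi'\kappa = \pi'(\lambda\kappa) = \pi'\pi.
\]
Since $\pi'$ and $\pi$ are idempotents with $\pi' \leq \pi$, we have $\pi'\pi = \pi'$ — this is the standard characterization of the order on the Boolean algebra $\idempotents$, or, via representation theory, $\pi'$ is the indicator of a clopen subset of the clopen set corresponding to $\pi$. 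Hence $\lambda\kappa' = \pi'$, which completes the verification.

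There is essentially no obstacle here: the only things to check are that $\kappa' \in \L$, which is trivial, and the identity $\pi'\pi = \pi'$, which is a basic fact about idempotents. The single idea worth isolating is simply that multiplying the witness $\kappa$ by $\pi'$ restricts the equation $\lambda\kappa = \pi$ down to the smaller idempotent $\pi'$; everything else is bookkeeping.
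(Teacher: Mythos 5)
Your proof is correct and follows the same route as the paper's: define $\kappa' \defeq \pi'\kappa$ and compute $\lambda\kappa' = \pi'\pi = \pi'$. The only difference is that you also spell out the support condition $0 \neq \pi' \leq \pi_\lambda$, which the paper leaves implicit.
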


\begin{proof}
    Let $\lambda$ be $\pi$-regular and $\kappa$ such that $\kappa\lambda=\pi$.
    Defining $\kappa':=\pi' \kappa$ we have $\kappa'\lambda=\pi '\kappa\lambda=\pi'\pi=\pi'$.
\end{proof}
Fourthly, $\pi$-regular elements in $\L$ are, in a sense, ``locally invertible'':
\begin{lemma}
    \label{l:local-invertibility}
    For non-zero $\pi\in \idempotents$, if $\lambda,\mu\in\L$
    are both $\pi$-regular, then there exists $\kappa\in \pi\L$ so
        that $\kappa\lambda = \pi \mu$. 
\end{lemma}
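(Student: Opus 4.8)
The statement is essentially a one-line computation once one writes $\L$ as a commutative algebra (which it is: $C(K)\subseteq\L\subseteq C_\infty(K)$ and $C_\infty(K)$ is commutative, so all products below commute freely). The plan is to exhibit $\kappa$ explicitly. Since $\lambda$ is $\pi$-regular, fix $\kappa_\lambda\in\L$ with $\lambda\kappa_\lambda=\pi$, and set
\[
\kappa\defeq \pi\kappa_\lambda\mu .
\]
Then $\kappa\in\pi\L$ by construction, and using commutativity together with $\lambda\kappa_\lambda=\pi$ and $\pi^2=\pi$,
\[
\kappa\lambda = \pi\kappa_\lambda\mu\lambda = \pi(\kappa_\lambda\lambda)\mu = \pi\cdot\pi\cdot\mu = \pi\mu .
\]
This is exactly what is required.

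A few remarks to make the write-up clean. First, I would recall at the start that $\L$ is commutative so that the reordering of the product is justified; this follows from representation theory (or from the fact that Archimedean unital $f$-algebras are commutative). Second, I would note that only the $\pi$-regularity of $\lambda$ is actually used; the hypothesis that $\mu$ is $\pi$-regular is not needed for this particular conclusion (it is presumably included because in the intended application both elements are regular, and a symmetric statement is cleaner to cite). No Zorn's lemma, no limiting argument, and no appeal to \Cref{l:invertible} is needed here — the earlier regularization lemmas (\Cref{l:regularization}, \Cref{l:regulatiry-passes-down}) have already done the real work of producing the multipliers $\kappa_\lambda$.

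\textbf{Main obstacle.} There isn't a substantive obstacle: the only thing to be careful about is bookkeeping — checking that the chosen $\kappa$ genuinely lies in the ideal $\pi\L$ (immediate, since $\kappa=\pi\cdot(\kappa_\lambda\mu)$) and that the algebra is commutative so that $\kappa_\lambda\mu\lambda$ may be regrouped as $(\kappa_\lambda\lambda)\mu$. So the write-up is short; the proof is the display above preceded by the choice of $\kappa_\lambda$ from $\pi$-regularity of $\lambda$.
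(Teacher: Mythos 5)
Your proof is correct and is essentially the paper's own argument: the paper likewise takes $\kappa'$ with $\kappa'\lambda=\pi$ and sets $\kappa:=\kappa'\mu$, using commutativity to get $\kappa\lambda=\pi\mu$. Your only (minor) refinement is inserting the factor $\pi$ so that $\kappa\in\pi\L$ is immediate — a point the paper's proof leaves implicit — and your observation that the $\pi$-regularity of $\mu$ is not actually needed is also accurate.
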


\begin{proof}
    Let $0\neq \pi \in \idempotents $ and let $\mu,\lambda \in \L$ both be $\pi$-regular.     
    Let $\kappa' $ be such that $\kappa'\lambda = \pi$. Define $\kappa := \kappa'\mu$, so that
    $\kappa\lambda = \kappa'\mu\lambda = \kappa'\lambda\mu=\pi\mu. $
\end{proof}

The following theorem is adapted from \cite[Theorem~5]{kaplansky53}, but we add some further details. 
Classically, given a non-zero functional $\phi$ on a Hilbert space, there trivially exists an element $z\in (\ker \phi)^\perp$ so that $\phi (z) = 1$. Here, some more effort must be expended to establish the analogous step for an $\L$-Hilbert space.


\begin{theorem}[Riesz Representation Theorem]\label{t:RRT}
    Let $H$ be an $\L$-Hilbert space.    
    For every non-zero bounded $\L$-linear functional
    $\varphi \colon H\to\L$, there exists some $f_{\varphi}\in H$ so that,
    for all $h\in H$, $\varphi(h)=\ip h{f_{\varphi}}.$
\end{theorem}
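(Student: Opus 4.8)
The plan is to follow the classical Riesz argument — decompose $H=\ker\varphi\oplus(\ker\varphi)^{\perp}$ and represent $\varphi$ by a suitable multiple of an element of $(\ker\varphi)^{\perp}$ — but to build the representing vector $f_\varphi$ out of ``local'' pieces over the Boolean algebra $\idempotents$. The obstruction to a direct translation is that there need not be a single $z\in(\ker\varphi)^{\perp}$ with $\varphi(z)$ invertible (so there is no global ``unit vector'' to normalise against); this is exactly what the $\pi$-regularity machinery, in particular \Cref{l:regularization}, is designed to repair, and the pieces are then patched together by a transfinite exhaustion.

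Concretely, put $N\defeq\ker\varphi$, which is closed since $\varphi$, being bounded, is continuous (\Cref{t:op_cont_bound_equiv}); hence $H=N\oplus M$ with $M\defeq N^{\perp}$ by \Cref{c:decomposition}. Since $N\cap M=\{\0\}$, the restriction $\varphi|_M$ is injective, and combining \Cref{l:disjoint-implies-additive}(i) with \Cref{l:support-no-kill} shows it preserves supports: $\pi_{\varphi(w)}=\pi_w$ for every $w\in M$. I will call $(\pi,f)$, with $\pi\in\idempotents$, $f\in M$, $\pi f=f$, and $\ip{w}{f}=\pi\varphi(w)$ for all $w\in M$, a \emph{partial solution}, ordered by $(\pi_1,f_1)\preceq(\pi_2,f_2)$ iff $\pi_1\leq\pi_2$ and $f_1=\pi_1 f_2$; the pair $(0,\0)$ is one. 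Taking $w=f$ in the defining relation gives $\norm{f}^2=\pi\varphi(f)\leq\norm{\varphi}\norm{f}$, and since $a^2\leq ab$ forces $a\leq b$ for $a,b\in\L^{+}$ (representation theory), every partial solution will satisfy the uniform bound $\norm{f}\leq\norm{\varphi}$.

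To apply Zorn's lemma I must bound chains. Given a chain $\{(\pi_i,f_i)\}$, set $\pi_\infty\defeq\sup_i\pi_i\in\idempotents$. For $i\preceq j$ one has $f_j-f_i=\pi_i^{c}f_j$, which is supported under $\pi_i^{c}\wedge\pi_\infty$ (as $\pi_{f_j}\leq\pi_j\leq\pi_\infty$); since $\norm{f_j}\leq\norm{\varphi}$ and $\inf_i(\pi_i^{c}\wedge\pi_\infty)=0$, the net $(f_i)$ is Cauchy, hence converges to some $f_\infty\in M$ ($M$ being closed). Passing to the limit in $\ip{w}{f_i}=\pi_i\varphi(w)$ — using continuity of the inner product and of scalar multiplication together with $\pi_i\uparrow\pi_\infty$ — shows $(\pi_\infty,f_\infty)$ is a partial solution, and it dominates the chain since $\pi_j f_\infty=\lim_{k\succeq j}\pi_j f_k=f_j$. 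So Zorn's lemma provides a maximal partial solution $(\pi_*,f_*)$.

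The heart of the proof is to show that $\pi_*^{c}\varphi(w)=0$ for all $w\in M$; granting this, $\varphi(w)=\pi_*\varphi(w)=\ip{w}{f_*}$ for $w\in M$, and then for arbitrary $h=P_N h+P_M h\in H$, since $f_*\in M=N^{\perp}$, $\varphi(h)=\varphi(P_M h)=\ip{P_M h}{f_*}=\ip{h}{f_*}$, so $f_\varphi\defeq f_*$ works. Suppose instead that $\pi_*^{c}\varphi(w_0)\neq0$ for some $w_0\in M$, and set $z\defeq\pi_*^{c}w_0\in M$, $\mu\defeq\varphi(z)\neq0$. By \Cref{l:regularization} there is a nonzero $\rho\in\idempotents$ with $\rho\leq\pi_\mu\leq\pi_*^{c}$ and $\kappa\in\rho\L$ with $\kappa\mu=\rho$; then $z_*\defeq\kappa z\in M$ has $\varphi(z_*)=\rho$ and, by support preservation, $\pi_{z_*}=\rho$. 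For every $w\in M$ the element $\rho w-\varphi(w)z_*$ lies in $N\cap M=\{\0\}$, so $\rho w=\varphi(w)z_*$ and hence $\ip{w}{z_*}=\varphi(w)\norm{z_*}^2$. Applying \Cref{l:regularization} again, this time to $\norm{z_*}^2\in\L^{+}$ (which has support $\pi_{z_*}=\rho$), produces a nonzero $\sigma\leq\rho$ and a positive $\tau\in\sigma\L$ with $\tau\norm{z_*}^2=\sigma$; then $f'\defeq\tau z_*\in M$ makes $(\sigma,f')$ a partial solution with $\sigma\leq\pi_*^{c}$. Because $\sigma$ and $\pi_*$ are disjoint idempotents, $(\pi_*+\sigma,\,f_*+f')$ is again a partial solution (the gluing uses \Cref{l:disjoint-implies-additive}), and it lies strictly above $(\pi_*,f_*)$, contradicting maximality. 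I expect this extension step, together with the chain-bounding step, to be the only non-routine parts: the former is where the absence of global invertibility forces one to localise via regularization, and the latter is delicate because the discrepancies $f_j-f_i$ are not small in norm but only supported on idempotents decreasing to $0$, so completeness must be used in tandem with support bookkeeping.
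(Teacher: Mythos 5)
Your proof is correct, but it takes a genuinely different route from the paper's. The paper also reduces to $(\ker\varphi)^{\perp}$, but then invokes \Cref{l:L-ban-support-is-realized} to produce a single normalised $z$ with $\pi_z=\pi_{(\ker\varphi)^{\perp}}$, and uses the injectivity of $\varphi$ together with the regularity machinery (\Cref{l:regularization}, \Cref{l:regulatiry-passes-down}, \Cref{l:local-invertibility}) to show $\{z\}^{\perp}=\{0\}$ inside $(\ker\varphi)^{\perp}$; from this it deduces $y=\ip{y}{z}z$ for all $y$ and sets $f_\varphi=\overline{\varphi(z)}z$. You instead run Zorn's lemma directly on ``partial representing vectors'' $(\pi,f)$ indexed by $\idempotents$, with the transfinite exhaustion (which in the paper is hidden inside \Cref{l:L-ban-support-is-realized}) now tailored to $\varphi$ itself: the chain-bounding step is essentially the same support-bookkeeping Cauchy argument as in that lemma, and the extension step replaces \Cref{l:regulatiry-passes-down} and \Cref{l:local-invertibility} by two applications of \Cref{l:regularization} (once to $\varphi(z)$, once to $\norm{z_*}^2$). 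What your approach buys is independence from \Cref{l:L-ban-support-is-realized} and a representing vector built constructively piece by piece; what the paper's buys is a reusable structural fact ($y=\ip{y}{z}z$, i.e.\ $(\ker\varphi)^{\perp}$ is ``$\L$-one-dimensional'') with the Zorn argument quarantined in a lemma. Two small points deserve a sentence in a final write-up: the identity $\pi_{\varphi(w)}=\pi_w$ on $M$ does follow from \Cref{l:disjoint-implies-additive}(i) and \Cref{l:support-no-kill} as you assert, but only after observing that $(\pi_w\wedge\pi_{\varphi(w)}^{c})w\in N\cap M=\{\0\}$; and in the second regularization you need $\tau$ real (so that $\overline{\tau}=\tau$ in $\ip{w}{\tau z_*}=\overline{\tau}\ip{w}{z_*}$), which is available because $\norm{z_*}^{2}\geq 0$ and the $\kappa$ produced in the proof of \Cref{l:regularization} is then positive by \Cref{l:invertible}.
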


\begin{proof}
    We assume that $\varphi$ is injective on $H$ (if this is not the
    case, by \Cref{c:decomposition}, we may decompose $H=(\ker\varphi)\oplus(\ker\varphi)^{\bot}$
    and restrict $\varphi$ to $(\ker\varphi)^{\bot}$ where $\varphi$
    is injective).

    By Lemma~{\ref{l:L-ban-support-is-realized}}, there exists a normalised $z\in H$ so that
    $\|n_{z}\|=\|z\|=\pi_{z}=\pi_{H}.$

    We claim that $\{z\}^{\bot}=\{\0\}$. Let $w\in\{z\}^{\bot}\subseteq H$,
    but suppose that $w\neq 0$. Since $\varphi$ is injective $\varphi(w)\neq0$
    and hence (by viewing $\L$ as an $\L$-normed space) we obtain $\idempotents\ni \pi_{\varphi(w)}\neq0$.
    Since $\L$-linear maps reduce support (\Cref{l:disjoint-implies-additive}), $0\neq \pi_{\varphi(w)}\leq \pi_{w}\leq \pi_{H}=\pi_{z}.$
   
    By \Cref{l:regularization}, there exists a non-zero component $\pi_{1}$
    of $\pi_{\varphi(w)}$ so that $\varphi(w)$
    is $\pi_{1}$-regular. Therefore 
    $0\neq  \pi_{1}
    =       \pi_{1}\pi_{\varphi(w)}
    \leq    \pi_{\varphi(w)}
    \leq    \pi_{w}
    \leq    \pi_{H}
    =       \pi_{z}$,
    and hence by Lemma \ref{l:support-no-kill} we have, $\pi_{1}z\neq0$.
    By injectivity of $\varphi$, we have $\pi_{1}\varphi(z)=\varphi(\pi_{1}z)\neq0$.
    Again, by \Cref{l:regularization}, there exists a non-zero component
    $\pi_{2}$ of $\pi_{\pi_{1}\varphi(z)}$ ($\leq \pi_{1}$) so that $\pi_{1}\varphi(z)$ is $\pi_{2}$-regular.
    By \Cref{l:regulatiry-passes-down}, since $0\neq \pi_{2}\leq \pi_{1}$, we have that $\varphi(w)$ is also $\pi_{2}$-regular. 
    Now $\pi_{2}$-regularity of both $\pi_{1}\varphi(z)$
    and $\varphi(w)$ with Lemma \ref{l:local-invertibility} implies
    that there exists non-zero $\lambda\in \pi_{2}\L$ so that 
    $\lambda \pi_{2}\varphi(z)= \lambda \pi_{1}\varphi(z) = \pi_{2}\varphi(w)$.
    Therefore 
    $\varphi(\lambda \pi_{2}z-\pi_{2}w)
        =\lambda \pi_{2}\varphi(z)-\pi_{2}\varphi(w)
        =0 
    $.
    By injectivity of $\varphi$, we thus have $\lambda \pi_{2}z-\pi_{2}w=0$.
    Now, since $w\in\{z\}^{\bot}$, we have
    $0
    =\lambda \pi_{2}0
    =\lambda \pi_{2}\ip zw
    =\ip{\lambda \pi_{2}z}{\pi_{2}w}
    =\ip{\pi_{2}w}{\pi_{2}w}
    =\| \pi_{2} w\|.    
    $
    On the other hand, since $0\neq \pi_2 \leq \pi_1 \leq \pi_{\varphi(w)} \leq \pi_w$,
    so that, by \Cref{l:support-no-kill}, $\pi_2 w \neq 0$ implies $ \norm {\pi_2 w }\neq 0$, which is absurd in light of $\| \pi_{2} w\|=0$. 
    We conclude that $w=0$, and hence that $\{z\}^{\bot}=\{0\}$.

    For every $y\in H$, we claim that $y=\ip {y}{z}z$. To this end, let
    $y\in H$ be arbitrary and consider 
    \begin{align*}
        \ip{y-\ip yzz}z 
                        & =\ip yz-\ip yz\ip zz           \\
                        & =\ip y{\pi_{z}z}-\ip yz\|z\|^{2} \\
                        & =\pi_{z}\ip yz-\pi_{z}\ip yz       \\
                        & =0.
    \end{align*}
    Hence $(y-\ip yzz) \perp z$, and because $\{z\}^{\bot}=\{0\}$, we
    have $y-\ip yzz=0$, so that $y=\ip yzz$.

    Finally, define $f_{\varphi}:=\overline{\varphi(z)}z$. Now, for all
    $y\in H$, we obtain
    \[
        \varphi(y)=\varphi(\ip yzz)=\ip yz\varphi(z)=\ip y{\overline{\varphi(z)}z}=\ip y{f_{\varphi}}.\qedhere
    \]
\end{proof}

If $H_1$ and $H_2$ are $\L$-Hilbert spaces and $T \in B(H_1, H_2)$, we define an adjoint of $T$ to be a map $S \in B(H_2, H_1)$ satisfying $\ip{Tx}{y} = \ip{x}{Sy}$ for all $x \in H_1$ and $y \in H_2$.

\begin{theorem}\label{t:adjoint}
Let $H_1$ and $H_2$ be $\L$-Hilbert spaces. Then every $T \in B(H_1, H_2)$ has an adjoint $T^*$. Furthermore the map $T \mapsto T^* \colon B(H_1, H_2) \to B(H_2, H_1)$ is conjugate linear, $\norm{T^*} = \norm{T}$, and $\norm{T^*T} = \norm{T}^2$.
\end{theorem}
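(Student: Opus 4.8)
The plan is to follow the classical construction of the Hilbert-space adjoint, the one genuinely new ingredient being the $\L$-valued Riesz Representation Theorem (\Cref{t:RRT}). Fix $T \in B(H_1, H_2)$ and $y \in H_2$, and consider the map $\varphi_y \colon H_1 \to \L$, $\varphi_y(h) \defeq \ip{Th}{y}$. It is clearly $\L$-linear, and by the Cauchy-Schwarz inequality (\Cref{thm:cauchy-schwarz}) together with boundedness of $T$ we have $\abs{\varphi_y(h)} \le \norm{Th}\norm{y} \le \norm{T}\norm{y}\norm{h}$, so $\varphi_y \in H_1^*$. If $\varphi_y = 0$ set $T^* y \defeq \0$; otherwise \Cref{t:RRT} furnishes a vector, which we call $T^* y$, with $\ip{Th}{y} = \varphi_y(h) = \ip{h}{T^* y}$ for all $h \in H_1$. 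The representing vector is unique: if $\ip{h}{f} = \ip{h}{g}$ for all $h$, then putting $h = f - g$ gives $\norm{f - g}^2 = \ip{f-g}{f-g} = 0$, hence $f = g$. Thus $T^*$ is a well-defined map $H_2 \to H_1$ with $\ip{Tx}{y} = \ip{x}{T^* y}$ for all $x \in H_1$, $y \in H_2$.

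Next I would verify the structural properties. Since the inner product is conjugate-linear in its second argument (by conjugate symmetry and linearity in the first), for $y_1, y_2 \in H_2$, $\l \in \L$, and all $x \in H_1$,
\begin{align*}
\ip{x}{T^*(\l y_1 + y_2)} &= \ip{Tx}{\l y_1 + y_2} = \overline{\l}\,\ip{Tx}{y_1} + \ip{Tx}{y_2} \\
&= \ip{x}{\l\,T^* y_1 + T^* y_2},
\end{align*}
and uniqueness of the representing vector forces $T^*(\l y_1 + y_2) = \l\, T^* y_1 + T^* y_2$, so $T^*$ is $\L$-linear. For boundedness I would first note that $\norm{v} = \sup_{x \in B_{H_1}} \abs{\ip{x}{v}}$ for every $v \in H_1$: ``$\le$'' is immediate from Cauchy-Schwarz, while $x = n_v \in B_{H_1}$ gives $\ip{n_v}{v} = \norm{v}\ip{n_v}{n_v} = \norm{v}\norm{n_v}^2 = \norm{v}\pi_v = \norm{v}$ by \Cref{p: normalise}, yielding ``$\ge$''. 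Applying this to $v = T^* y$ and using \Cref{l:order_cont_mult} and \Cref{t:op_norm_equiv},
\[
\norm{T^* y} = \sup_{x \in B_{H_1}} \abs{\ip{x}{T^* y}} = \sup_{x \in B_{H_1}} \abs{\ip{Tx}{y}} \le \sup_{x \in B_{H_1}} \norm{Tx}\norm{y} = \norm{T}\norm{y},
\]
so $T^* \in B(H_2, H_1)$ with $\norm{T^*} \le \norm{T}$. Conjugate symmetry gives $\ip{T^* y}{x} = \overline{\ip{x}{T^* y}} = \overline{\ip{Tx}{y}} = \ip{y}{Tx}$, so $T$ is an adjoint of $T^*$; by the uniqueness argument above $(T^*)^* = T$, whence $\norm{T} = \norm{(T^*)^*} \le \norm{T^*} \le \norm{T}$ and $\norm{T^*} = \norm{T}$. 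Conjugate linearity of $T \mapsto T^*$ is then the one-line check $\ip{x}{(\l T + S)^* y} = \ip{(\l T + S)x}{y} = \l\,\ip{Tx}{y} + \ip{Sx}{y} = \ip{x}{(\overline{\l}\,T^* + S^*) y}$, again invoking uniqueness.

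For the $C^*$-identity, one inequality is immediate: $\norm{T^* T} \le \norm{T^*}\norm{T} = \norm{T}^2$ by \Cref{p:operators_form_banach_algebra}. Conversely, for $x \in B_{H_1}$, using Cauchy-Schwarz and $\norm{x} \le 1$,
\[
\norm{Tx}^2 = \abs{\ip{T^* T x}{x}} \le \norm{T^* T x}\,\norm{x} \le \norm{T^* T}\,\norm{x}^2 \le \norm{T^* T},
\]
so $\sup_{x \in B_{H_1}} \norm{Tx}^2 \le \norm{T^* T}$; since $\sqrt{\cdot}$ is monotone on $\L^+$ (representation theory), $\sqrt{\sup_{x \in B_{H_1}} \norm{Tx}^2}$ is an upper bound for $\{\norm{Tx} : x \in B_{H_1}\}$, so by \Cref{t:op_norm_equiv} it dominates $\norm{T}$; that is, $\norm{T}^2 \le \sup_{x \in B_{H_1}} \norm{Tx}^2 \le \norm{T^* T}$, giving $\norm{T^* T} = \norm{T}^2$. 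I do not anticipate a real obstacle: the substance sits entirely in \Cref{t:RRT}, which is already available, and the remaining work is bookkeeping; the only points needing a moment's care are the legitimacy of the $\L$-valued supremum manipulations (handled by representation theory and \Cref{l:order_cont_mult}) and the repeated appeals to uniqueness of representing vectors and adjoints.
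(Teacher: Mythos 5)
Your proposal is correct and follows essentially the same route as the paper: construct $T^*y$ via the Riesz Representation Theorem applied to $h \mapsto \ip{Th}{y}$, verify conjugate linearity and the norm equality, and obtain the $C^*$-identity from $\norm{Tx}^2 = \ip{T^*Tx}{x} \leq \norm{T^*T}$. The only differences are cosmetic bookkeeping (the paper gets $\norm{T^*}=\norm{T}$ from the symmetric double supremum $\sup_{x,y}\abs{\ip{Tx}{y}}$ rather than via $(T^*)^*=T$, and you are somewhat more careful about the zero-functional case and the supremum/square-root interchange).
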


\begin{proof}
    Suppose $T \in B(H_1, H_2)$. For $x \in H_2$, the map $y \mapsto \ip{Ty}{x}$ on $H_1$ is bounded, so by \Cref{t:RRT} this map is represented by an element, which we define as $T^*x$. It is clear that $T^* \in B(H_2, H_1)$, the equation $\ip{(\l T)x}{y} = \ip{x}{(\bar{\l} T^*)y}$ shows that $T \mapsto T^*$ is conjugate linear, and 
    \[
    \norm{T} = \sup_{x\in B_{H_1},\ y \in B_{H_2}} |\ip{Tx}{y}| = \sup_{x\in B_{H_1},\ y \in B_{H_2}} |\ip{x}{T^*y}|  = \norm{T^*}. 
    \]
    Finally, note that $\norm{T^*T} \leq \norm{T}^2$ follows from \Cref{p:operators_form_banach_algebra}. Conversely, let $x \in B_{H_1}$, then 
    \[ 
    \norm{Tx}^2 = \ip{Tx}{Tx} = \ip{T^*Tx}{x} \leq \norm{T^*Tx}\norm{x} \leq \norm{T^*T}.
    \]
    Taking the supremum over $x \in B_{H_1}$ yields $\norm{T}^2 \leq \norm{T^*T}$.
\end{proof}

\begin{defn}
    An $\L$-Banach algebra $A$ is called an $\L$-Banach *-algebra if $A$ is equipped with a conjugate $\L$-linear involutive (i.e., $a^{**} = a$ for all $a \in A$) isometric anti-homomorphism $^* \colon A \to A$. An $\L$-Banach *-algebra is called an $\L$-C*-algebra if $\norm{a^*a} = \norm{a}^2$.
\end{defn}
If $H$ is an $\L$-Hilbert space, then \Cref{t:adjoint} shows that $B(H)$ is an $\L$-C*-algebra.





\bibliographystyle{alpha}
\bibliography{L_functional_analysis_arxiv_version}

\begin{thebibliography}{OTvdW23}

\bibitem[AAZR09]{L0valuedHK}
S.~Albeverio, Sh.~A. Ayupov, A.~A. Zaitov, and J.~E. Ruziev.
\newblock Algebras of unbounded operators over the ring of measurable functions and their derivations and automorphisms.
\newblock {\em Methods Funct. Anal. Topology}, 15(2):177--187, 2009.

\bibitem[AB03]{AliprantisBurkinshaw2003}
C.D. Aliprantis and O.~Burkinshaw.
\newblock {\em Locally {S}olid {R}iesz {S}paces with {A}pplications to {E}conomics}.
\newblock American Mathematical Society, 2nd edition, 2003.

\bibitem[AB06]{AliprantisBurkinshaw2006}
C.D. Aliprantis and O.~Burkinshaw.
\newblock {\em Positive operators}.
\newblock Springer, Dordrecht, 2006.

\bibitem[AZ18]{avilesgarcia_booleanvaluedanalysis}
A.~Avil\'es and J.M. Zapata.
\newblock Boolean-valued models as a foundation for locally {$L^0$}-convex analysis and conditional set theory.
\newblock {\em J. Appl. Logics}, 5(1):389--419, 2018.

\bibitem[Azo08]{azouzi}
Y.~Azouzi.
\newblock {\em Square Mean Closed Real Riesz Spaces, Ph.D. thesis}.
\newblock PhD thesis, University of Tunis, 2008.

\bibitem[BB02]{BeattieButzmann2002}
R.~Beattie and H.-P. Butzmann.
\newblock {\em Convergence structures and applications to functional analysis}.
\newblock Kluwer Academic Publishers, Dordrecht, 2002.

\bibitem[BS18]{BusSch3}
G.~Buskes and C.~Schwanke.
\newblock Vector lattices and {$f$}-algebras: the classical inequalities.
\newblock {\em Banach J. Math. Anal.}, 12(1):191--205, 2018.

\bibitem[Con90]{conway}
J.B. Conway.
\newblock {\em A course in functional analysis}.
\newblock Graduate Texts in Mathematics. Springer-Verlag, New York, 2nd edition, 1990.

\bibitem[DDLS16]{DalesDashiellLauStrass2016}
H.G. Dales, F.K. Dashiell, Jr., T.-M. Lau, and D.~Strauss.
\newblock {\em Banach spaces of continuous functions as dual spaces}.
\newblock CMS Books in Mathematics/Ouvrages de Math\'ematiques de la SMC. Springer, Cham, 2016.

\bibitem[DEM20]{Orderconvergencenottopological}
Y.~Dabboorasad, E.~Emelyanov, and M.~Marabeh.
\newblock Order convergence is not topological in infinite-dimensional vector lattices.
\newblock {\em Uzbek Math. J.}, (1):159--166, 2020.

\bibitem[dP81]{depagter}
B.~de~Pagter.
\newblock {\em $f$-algebras and Orthomorphisms}.
\newblock PhD thesis, Rijksuniversiteit te Leiden, 1981.

\bibitem[dS73]{deSchipper}
W.J. de~Schipper.
\newblock A note on the modulus of an order bounded linear operator between complex vector lattices.
\newblock {\em Indag. Math.}, 35:355--367, 1973.
\newblock Nederl. Akad. Wetensch. Proc. Ser. A {\bf 76}.

\bibitem[EHK24]{edeko2023decomposition}
N.~Edeko, M.~Haase, and H.~Kreidler.
\newblock A decomposition theorem for unitary group representations on kaplansky--hilbert modules and the furstenberg--zimmer structure theorem.
\newblock {\em Analysis Mathematica}, 50:377--454, 2024.

\bibitem[GZWG20]{randomFA}
T.~Guo, E.~Zhang, Y.~Wang, and Z.~Guo.
\newblock Two fixed point theorems in complete random normed modules and their applications to backward stochastic equations.
\newblock {\em J. Math. Anal. Appl.}, 483(2):123644, 30, 2020.

\bibitem[JVN35]{jordanvneumann}
P.~Jordan and J.~Von~Neumann.
\newblock On inner products in linear, metric spaces.
\newblock {\em Ann. of Math. (2)}, 36(3):719--723, 1935.

\bibitem[Kan36]{kantorovich1936}
L.\~V.\ Kantorovich.
\newblock On a class of functional equations.
\newblock {\em Dokl.\ Akad.\ Nauk.\ USSR}, 4(5):211--216, 1936.

\bibitem[Kap53]{kaplansky53}
I.~Kaplansky.
\newblock Modules over operator algebras.
\newblock {\em Amer. J. Math.}, 75:839--858, 1953.

\bibitem[KK99]{booleanvaluedanalysis}
A.G. Kusraev and S.S. Kutateladze.
\newblock {\em Boolean valued analysis}, volume 494 of {\em Mathematics and its Applications}.
\newblock Kluwer Academic Publishers, Dordrecht, 1999.
\newblock Translated from the 1999 Russian original by Kutateladze and revised by the authors.

\bibitem[KKW22]{kalauch2022hahnjordan}
A.~Kalauch, W.~Kuo, and B.A. Watson.
\newblock A hahn-jordan decomposition and riesz-frechet representation theorem in riesz spaces, 2022.

\bibitem[KKW23]{kkw}
A.~Kalauch, W.~Kuo, and B.A. Watson.
\newblock Strong completeness of a class of $l^2$-type riesz spaces.
\newblock {\em arxiv}, 2023.
\newblock Preprint: https://arxiv.org/abs/2302.00969.

\bibitem[KRW17]{mixing}
W.~Kuo, M.J. Rogans, and B.A. Watson.
\newblock Mixing inequalities in {R}iesz spaces.
\newblock {\em J. Math. Anal. Appl.}, 456(2):992--1004, 2017.

\bibitem[Kus00]{domops}
A.G. Kusraev.
\newblock {\em Dominated operators}, volume 519 of {\em Mathematics and its Applications}.
\newblock Kluwer Academic Publishers, Dordrecht, 2000.
\newblock Translated from the 1999 Russian original by the author, Translation edited and with a foreword by S. Kutateladze.

\bibitem[LW10]{ppt}
C.C.A. Labuschagne and B.A. Watson.
\newblock Discrete stochastic integration in {R}iesz spaces.
\newblock {\em Positivity}, 14(4):859--875, 2010.

\bibitem[MN91]{Meyer-Nieberg1991}
P.~Meyer-Nieberg.
\newblock {\em Banach lattices}.
\newblock Springer-Verlag, Berlin, 1991.

\bibitem[OTvdW23]{Netconvergence}
M.~O'Brien, V.~G. Troitsky, and J.~H. van~der Walt.
\newblock Net convergence structures with applications to vector lattices.
\newblock {\em Quaest. Math.}, 46(2):243--280, 2023.

\bibitem[RS19]{powerseries}
M.~Roelands and C.~Schwanke.
\newblock Series and power series on universally complete complex vector lattices.
\newblock {\em J. Math. Anal. Appl.}, 473(2):680--694, 2019.

\bibitem[Tak83]{takeuti}
G.~Takeuti.
\newblock von {N}eumann algebras and {B}oolean valued analysis.
\newblock {\em J. Math. Soc. Japan}, 35(1):1--21, 1983.

\bibitem[Zaa97]{Zaanen}
A.C. Zaanen.
\newblock {\em Introduction to operator theory in {R}iesz spaces}.
\newblock Springer-Verlag, Berlin, 1997.

\end{thebibliography}

\end{document}